\newtheorem{theorem}{Theorem}[section]
\newtheorem{definition}{Definition}[section]
\newtheorem{lemma}{Lemma}[section]
\newtheorem{proposition}{Proposition}[section]
\newtheorem{remark}{Remark}[section]
\numberwithin{equation}{section} \textwidth 150mm \textheight 222mm
\newcommand{\f}{\frac}
\newcommand{\R}{\mathbb {R}}
\newcommand{\Z}{\mathbb {Z}}
\newcommand{\un}{\underline}
\newcommand{\mc}{\mathcal}
\newcommand{\D}{\text{D}}
\newcommand{\di}{\text{d}}
\begin{document}
\date{}
\title{\bf Bistable Traveling Waves for Monotone Semiflows with Applications}
\author{Jian Fang$^{1,2}$ and Xiao-Qiang
Zhao$^{2}$\thanks {Corresponding author.
 E-mail address: zhao@mun.ca} \\
{\small $^1$ Department of Mathematics, Harbin Institute of Technology}\\
{\small Harbin 150001, China}\\
{\small $^2$ Department of Mathematics and Statistics,
Memorial University of Newfoundland}\\
{\small St. John's, NL A1C 5S7, Canada }}
 \maketitle

\begin{abstract}
This paper is devoted to the study of traveling waves for monotone
evolution systems of bistable type. Under an abstract setting, we
establish the existence of bistable traveling waves for discrete and
continuous-time monotone semiflows. This result is then extended to
the cases of periodic habitat and weak compactness, respectively. We
also apply the developed theory to four classes of evolution
systems.
\end{abstract}

\

\noindent \textbf{Keywords:} Monotone semiflows, traveling waves,
bistable dynamics, periodic habitat.

\

\noindent \textbf{AMS MSC 2010}: 37C65, 35C07, 35K55, 35B40.

\section{Introduction}
In this paper, we study traveling waves for monotone (i.e.,
order-preserving) semiflows $\{Q_t\}_{t\in\mc{T}}$ with the
bistability structure on some subsets of the space
$\mc{C}:=C(\mc{H},\mc{X})$ consisting of all continuous functions
from the habitat $\mc{H}$($=\R$ or $\Z$) to the Banach lattice
$\mc{X}$, where $\mc{T}=\Z^+$ or $\R^+$ is the set of evolution
times. Here the bistability structure is generalized from a number
of studies for various evolution equations. It means that the
restricted semiflow on $\mc{X}$ admits two ordered stable
equilibria, between which all others are unstable. We focus on the
existence of traveling waves connecting these two stable equilibria,
which are called bistable traveling waves. This setting allows us to
study not only autonomous and time-periodic evolution systems in a
homogeneous habitat (media), but also those in a periodic habitat.
Besides, the obtained results can be extended to the semiflows with
weak compactness on some subsets of the space $\mc{M}$ consisting of
all monotone functions from $\R$ to $\mc{X}$.

To explain the concept of the bistability structure, we recall some
related works on typical evolution equations. Fife and McLeod
\cite{FM77,FM81} proved the existence and global asymptotic
stability of monotone traveling waves for the following
reaction-diffusion equation:
\begin{equation}\label{Eq:FifeMcleod}
u_t=u_{xx}+u(1-u)(u-a),\quad x\in\R,\, t>0,
\end{equation}
where $a\in (0,1)$. Clearly, the restriction of system
\eqref{Eq:FifeMcleod} on $\mc{X}=\R$ is the ordinary differential
equation $u'= u(1-u)(u-a)$, which admits a unique unstable
equilibrium between two ordered and stable ones. The same property
is shared by the nonlocal dispersal equation in
\cite{BatesFRW,Coville,Yagisita} and the lattice equations  in
\cite{BatesC99,Zinner91,Zinner92}. Chen \cite{Chen97} studied a
general nonlocal evolution equation $u_t=\mc{A}(u(\cdot,t))$, which
also possess the above bistability structure. Some related
investigations on discrete-time equations can be found in
\cite{Lui83,CoutinhoFernandez}. For the time-periodic
reaction-diffusion equation $u_t=u_{xx}+f(t,u)$, the spatially
homogeneous equation is a time-periodic ordinary differential
equation. In this case, the equilibrium in the bistability structure
should be understood as the time-periodic solution. Under such
bistability assumption, Alikakos, Bates and Chen
\cite{AlikakosBatesChen} obtained the existence of bistable
time-periodic traveling waves. Recently, Yagisita \cite{Yagisita}
studied bistable traveling waves for discrete and continuous-time
semiflows on the space consisting of all left-continuous and
non-decreasing functions from $\R$ to $\mc{X}=\R$ under the
assumption that there is exactly one intermediate unstable
equilibrium. It should be mentioned that the result in
\cite{Yagisita} for continuous-time semiflows requires an additional
assumption on the existence of a pair of upper and lower solutions.

Note that the restrictions on $\mc{X}=\R$ of the afore-mentioned
systems are all scalar equations, and hence, there is only one
unstable equilibrium in between two stable ones. But in the case
where $\mc{X}=\R^n$, there may be multiple unstable equilibria. This
is one of the main reasons why some ideas and techniques developed
for scalar equations can not be easily extended to higher
dimensional systems. Volpert \cite{VVV} established the existence
and stability of traveling waves for the bistable reaction-diffusion
system $u_t=D \Delta u+f(u)$ by using topological methods, where $D$
is a positive definite diagonal matrix. Fang and Zhao
\cite{FangZhaoJDDE} further extended these results to the case where
$D$ is semi-positive definite via the vanishing viscosity approach.

Consider the following parabolic equation in a cylindrical domain
$\Sigma=\R\times \Omega$:
\begin{equation}\label{Eq:BereNiren}
\begin{cases}
u_t=\Delta u+\alpha(y)u_x+f(u),& x\in \R,
y=(y_1,\cdot\cdot\cdot,y_{n-1})\in \Omega,\, t>0,\\
\f{\partial u}{\partial \nu}=0  &\text{on $\R\times
\partial\Omega\times(0,+\infty)$},
\end{cases}
\end{equation}
where $f$ is of the same type as the nonlinearity in
\eqref{Eq:FifeMcleod} and $\Omega$ is a bounded domain with smooth
boundary in $\R^{n-1}$. Obviously, the restriction of the solution
semiflow of \eqref{Eq:BereNiren} on $\mc{X}=C(\bar \Omega, \R)$
gives rise to the following $x$-independent system:
\begin{equation}\label{Eq:BereNirenSpatialHomo}
\begin{cases}
u_t=\Delta_y u+f(u),&y\in \Omega,\, t>0,\\
\f{\partial u}{\partial \nu}=0  &\text{on
$\partial\Omega\times(0,+\infty)$}.
\end{cases}
\end{equation}
One can see from Matano \cite{Matano} (or Casten and Holland
\cite{CastenHolland}) that any nonconstant steady state of
\eqref{Eq:BereNirenSpatialHomo} is linearly unstable when the domain
$\Omega$ is convex. It follows that if $\Omega$ is convex, then
\eqref{Eq:BereNiren} admits the bistability structure: its
$x$-independent system has two (constant) linearly stable steady
states, between which all others are linearly unstable. In such a
case, Berestycki and Nirenberg \cite{BereNiren} obtained the
existence and uniqueness of bistable traveling waves. In the case
where $\Omega$ is an appropriate dumbbell-shaped domain, Matano
\cite{Matano} constructed a counterexample to show that
\eqref{Eq:BereNirenSpatialHomo} has stable non-constant steady
states, and Berestycki and Hamel \cite{BerHamelnonexistence} also
proved the nonexistence of traveling waves connecting two stable
constant steady states. For bistable traveling waves in time-delayed
reaction-diffusion equations, we refer to
\cite{Schaaf,SmithZhao1,MaWu07,WangLiRuan}. For such an equation
with time delay $\tau>0$, one can choose $\mc{X}=C([-\tau,0],\R)$ so
that its solution semiflow has the bistability structure.

Recently, there is an increasing interest in reaction diffusion
equations in periodic habitats. A typical example is
\begin{equation}\label{Eq:PeriodicDiff}
u_t=(d u_x)_x +f(u), \quad x\in \R,\, t>0,
\end{equation}
where $d\in C^1(\R,\R)$ is a positive periodic function with period
$r>0$. Define $\mc{Y}:=C([0,r],\R)$ and $C_{per}(\R,\R):=\{f\in
C(\R,\R):f(x+r)=f(x),\forall x\in \R\}$. It is easy to see that
\begin{equation*}
C(\R,\R)=\{f\in C(r\Z, \mc{Y}): f(ri)(r)=f(r(i+1))(0),\forall i\in
\Z\}:=\mc{K},
\end{equation*}
and that any element in $C_{per}(\R,\R)$ is a constant function in
$\mc{K}$. Thus, the solution semiflow of \eqref{Eq:PeriodicDiff} on
$C(\R,\R)$ can be regarded as a conjugate semiflow on $\mc{K}$, and
hence, the bistability structure should be understood as: the
restriction of the solution semiflow of \eqref{Eq:PeriodicDiff} on
$C_{per}(\R,\R)$ has two ordered $r$-periodic steady states, between
which all others are unstable. Assuming that the function $f$ is of
bistable type, Xin \cite{Xin} obtained the existence of spatially
periodic (pulsating) traveling wave as long as $d$ is sufficiently
close to a positive constant in a certain sense (see also
\cite{Xin93,XinReview}). However, whether the solution semiflow of
\eqref{Eq:PeriodicDiff} admits the bistability structure remains an
open problem. We will give an affirmative answer in section 6.3 and
further improve Xin's existence result. Meanwhile, a counterexample
will be constructed to show that the solution semiflow of
\eqref{Eq:PeriodicDiff} has no bistability structure in the general
case of varying $d(x)$. More recently, Chen, Guo and Wu
\cite{ChenGuoWu} proved the existence, uniqueness and stability of
spatially periodic traveling waves for one-dimensional lattice
equations in a periodic habitat under the bistability assumption.
There are also other types of bistable waves (see,
e.g.,\cite{BereHamelMatano,Shen99almost}). For monostable systems in
periodic habitats, we refer to
\cite{BerHamCPAM,BerHamNad1,BerHamNad2,GuoHamel,GuoWu,LiangZhaoJFA}
and references therein.

In general, there are multiple intermediate unstable equilibria in
between two stable ones in the case where the space $\mc{X}$ is high
dimensional. Meanwhile, it is possible for the given system to have
intermediate unstable time-periodic orbits in $\mc{X}$. These give
more difficulties to the study of bistable semiflows than monostable
ones, whose restricted systems on $\mc{X}$ have only one unstable
and one stable equilibria. To overcome these difficulties, we will
show that all these unstable equilibria and all points in these
periodic orbits are unordered in $\mc{X}$ under some appropriate
assumptions. With this in mind, a bistable system can be regarded as
the union of two monostable systems although such a union is not
unique. From this point of view, we establish a link between
monostable subsystems and the bistable system itself, which plays a
vital role in the propagation of bistable traveling waves. This link
is stated in terms of spreading speeds of monostable subsystems (see
assumption (A6)). For spreading speeds of various monostable
evolution systems, we refer to
\cite{AroWei2,BerHamNad1,GuoHamel,LiangZhao,LiangZhaoJFA,Lui1989,
Wein1982,Wein2002,ZhaoSurvey} and references therein.

In our investigation, we consider seven cases: (I)\, $\mc{T}=\Z^+$
and $\mc{H}=\R$;\, (II)\, $\mc{T}=\Z^+$ and $\mc{H}=\Z$;\, (III)\,
$\mc{T}=\R^+$ and $\mc{H}=\R$;\,  (IV)\, $\mc{T}=\R^+$ and $\mc{H}=
\Z$;\, (V)\, periodic habitat; \, (VI)\, weak compactness;\, (VII)\,
time periodic. For the case (I), we combine the above observations
for general bistable semiflows and Yagisita's perturbation idea in
\cite{Yagisita} to prove the existence of traveling waves. For the
case (III), we use the bistable traveling waves
$\phi_\pm(x+c_{\pm,s})$ of discrete-time semiflows
$\{(Q_s)^n\}_{n\ge 0}$ to approximate the bistable wave of the
continuous-time semiflow $\{Q_t\}_{t\ge 0}$. This new approach
heavily relies on an estimation of the boundedness of
$\f{1}{s}c_{\pm,s}$ as $s\to 0$, which is proved by the bistability
structure of the semiflow (see inequalities
\eqref{BistabilityInequ1} and (\ref{BistabilityInequ2})). It turns
out that our result does not require the additional assumption on
the existence of a pair of upper and lower solutions as in
\cite{Yagisita}. In the case (II), both the evolution time $\mc{T}$
and the habitat $\mc{H}$ are discrete, a traveling wave $\psi(i+cn)$
of $\{Q^n\}_{n\ge 0}$ cannot be well-defined in the usual way
because the wave speed $c$ and hence, the domain of $\psi$ is
unknown. So we define it to be a traveling wave of an associated map
$\tilde{Q}$. However, $\tilde{Q}$ has much weaker compactness than
$Q$. To overcome this difficulty, we establish a variant of Helly's
theorem for monotone functions from $\R$ to $\mc{X}$ in the
Appendix, which is also of its own interest. This discovery also
enables us to study monotone semiflows in a periodic habitat and
with weak compactness, respectively. Further, we can deal with the
case (IV) by the similar idea as in the case (III) because now
traveling waves in the case (II) are defined on $\R$. Traveling
waves for a time-periodic system can be obtained with the help of
the discrete-time semiflow generated by the associated Poincar\'e
map. Motivated by the discussions in \cite[Section 5]{LiangZhaoJFA},
we can regard a semiflow in a periodic habitat as a conjugate
semiflow in a homogeneous discrete habitat, and hence, we can employ
the arguments for the cases (II) and (IV) to establish the existence
of spatially periodic bistable traveling waves.

The rest of this paper is organized as follows. In section 2, we
present our main assumptions. Section 3 is focused on discrete-time,
continuous-time, and time-periodic compact semiflows on some subsets
of $\mc{C}$. In section 4, we extend our results to compact
semiflows in a periodic habitat. In section 5, we further
investigate semiflows with weak compactness. In section 6, we apply
the abstract results to four classes of evolution systems: a
time-periodic reaction-diffusion system, a parabolic system in a
cylinder, a parabolic equation with periodic diffusion, and a
time-delayed reaction-diffusion equation. A short appendix section
completes the paper.

\section{Notations and assumptions}

Throughout this paper, we assume that $\mc{X}$ is an ordered Banach
space with the norm $\|\cdot\|_{\mc{X}}$ and the cone $\mc{X}^+$.
Further, we assume that $\mc{X}$ is also a vector lattice with the
following monotonicity condition:
\begin{equation*}
|x|_{\mc{X}}\le |y|_{\mc{X}} \Rightarrow \|x\|_{\mc{X}}\le
\|y\|_{\mc{X}},
\end{equation*}
where $|z|_{\mc{X}}:=\sup\{z,-z\}$. Such a Banach space is called a
Banach lattice. We use $C(M, \R^d)$ to denote the set of all
continuous functions from the compact metric space $M$ to the
$d$-dimensional Euclidean space $\R^d$. We equip $C(M, \R^d)$ with
the maximum norm and the standard cone consisting of all nonnegative
functions. Then $C(M, \R^d)$ is a special Banach lattice, which will
be used in this paper. For more general information about Banach
lattices, we refer to the book \cite{Schaefer}.

Let the spatial habitat $\mc{H}$ be the real line $\mathbb{R}$ or
the lattice
\begin{equation*}
r\mathbb{Z}:=\{\cdot\cdot\cdot, -2r,-r,0,r,2r,\cdot\cdot\cdot\}
\end{equation*}
for some positive number $r$. For simplicity, we let $r=1$. We say a
function $\phi:\mc{H}\to \mc{X}$ is bounded if the set
$\{\|\phi(x)\|_{\mc{X}}:x\in\mc{H}\}\subset \mc{X}$ is bounded.
Throughout this paper, we always use $\mc{B}$ to denote the set of
all bounded functions from $\R$ to $\mc{X}$, and $\mc{C}$ to denote
the set of all bounded and continuous functions from $\mc{H}$ to
$\mc{X}$. Moreover, any element in $\mc{X}$ can be regarded as a
constant function in $\mc{B}$ and $\mc{C}$.

In this paper, we equip $\mc{C}$ with the compact open topology,
that is, a sequence $\phi_n$ converges to $\phi$ in $\mc{C}$ if and
only if  $\phi_n(x)$ converges to $\phi(x)$ in $\mc{X}$ uniformly
for $x$ in any bounded subset of $\mc{H}$. The following norm on
$\mc{C}$ can induce such topology:
\begin{equation}\label{NormDef}
\|\phi\|_{\mc{C}}=\sum_{k=1}^\infty \f{\max_{|x|\le
k}\|\phi(x)\|_{\mc{X}}}{2^k}, \quad \forall \phi\in\mc{C}.
\end{equation}
Clearly, if $\mc{H}=\mathbb{Z}$, then $\phi_n\to \phi$ with respect
to the compact open topology if and only if $\phi_n(x)\to \phi(x)$
for every $x\in \mathbb{Z}$.

We assume that $Int(\mc{X}^+)$ is not empty. For any $u,v\in
\mc{X}$, we write $u\ge v$ provided $u-v\in \mc{X}^+$, $u>v$
provided $u\ge v$ but $u\neq v$, and $u\gg v$ provided $u-v\in Int
(\mc{X}^+)$. A set $E\subset \mc{X}$ is said to be totally unordered
if any two elements (if exist) are unordered. For any
$\phi,\psi\in\mc{C}$, we write $\phi\ge \psi$ provided $\phi(x)\ge
\psi(x)$ for all $x\in\mc{H}$, $\phi>\psi$ provided $\phi\ge \psi$
but $\phi\neq \psi$, and $\phi\gg \psi$ provided $\phi(x)\gg
\psi(x)$ for all $x\in\mc{H}$. For any $\gamma\in \mc{X}$ with
$\gamma>0$, we define $\mc{X}_\gamma:=\{u\in \mc{X}: \gamma\ge u\ge
0\}$, $\mc{C}_\gamma:=\{\phi\in\mc{C}:\gamma\ge \phi\ge 0\}$ and
$\mc{B}_\gamma:=\{\phi\in\mc{B}:\gamma\ge \phi\ge 0\}$. For any
$\phi,\psi\in\mc{C}$, we write the interval $[\phi,\psi]_{\mc{C}}$
to denote the set $\{w\in \mc{C}: \phi\le w\le \psi\}$,
$[[\phi,\psi]]_{\mc{C}}$ to denote the set $\{w\in \mc{C}: \phi\ll
w\ll \psi\}$, and similarly, we can write the intervals
$[\phi,\psi]]_{\mc{C}}$ and $[[\phi,\psi]_{\mc{C}}$. And for any
$u\le v$ in $\mc{X}$, we can write the intervals
$[u,v]_{\mc{X}},[[u,v]]_{\mc{X}},[[u,v]_{\mc{X}}$ and
$[u,v]]_{\mc{X}}$ in a similar way.

Let $\beta\in Int(\mc{X}^+)$ and $Q$ be a map from $\mc{C}_\beta$ to
$\mc{C}_\beta$. Let $E$ be the set of all fixed points of $Q$
restricted on $\mc{X}_\beta$.

\begin{definition}\label{DefStabFix}
For the map $Q:\mc{X}_\beta\to \mc{X}_\beta$, a fixed point $\alpha
\in E$ is said to be strongly stable from below if there exist a
number $\delta>0$ and a unit vector $e\in Int (\mc{X}^+)$ such that
\begin{equation}\label{Eq:DefinitionStability}
Q[\alpha-\eta e]\gg \alpha-\eta e\quad \text{for any}\quad \eta\in
(0,\delta].
\end{equation}
Strong instability from below is defined by reversing the inequality
\eqref{Eq:DefinitionStability}. Similarly, we can define strong
stability (instability) from above.
\end{definition}

Given $y\in\mc{H}$, define the translation operator $T_y$ on
$\mc{B}$ by $T_y[\phi](x)=\phi(x-y)$. Assume that $0$ and $\beta$
are in $E$. We impose the following hypothesis on $Q$:
\begin{enumerate}
\item[(A1)]({\it Translation Invariance}) $T_y\circ Q [\phi]=Q\circ T_y [\phi],
\forall \phi\in\mc{C}_\beta,y\in\mathcal{H}$.
\item[(A2)]({\it
Continuity}) $Q:\mc{C}_\beta\to \mc{C}_\beta$ is continuous with
respect to the compact open topology.
\item[(A3)]({\it Monotonicity}) $Q$ is order preserving in the sense that $Q[\phi]\ge
Q[\psi]$ whenever $\phi\ge \psi$ in $\mc{C}_\beta$.
\item[(A4)]({\it Compactness})
$Q:\mc{C}_\beta\to \mc{C}_\beta$ is compact with respect to the
compact open topology.
\item[(A5)]({\it
Bistability}) Two fixed points $0$ and $\beta$ are strongly stable
from above and below, respectively, for the map $Q:\mc{X}_\beta\to
\mc{X}_\beta$, and the set $E\setminus\{0,\beta\}$ is totally
unordered.
\end{enumerate}

Note that the above bistability assumption is imposed on the
spatially homogeneous map $Q:\mc{X}_\beta\to \mc{X}_\beta$. We allow
the existence of other fixed points on the boundary of
$\mc{X}_\beta$ so that the theory is applicable to two species
competitive evolution systems. The non-ordering property of
$E\setminus\{0,\beta\}$ can be obtained by the strong instability of
all fixed points in this set if the semiflow is eventually strongly
monotone. More precisely, a sufficient condition for hypothesis (A5)
to hold is:
\begin{enumerate}
\item[(A5$'$)]({\it
Bistability}) $Q:\mc{X}_\beta\to \mc{X}_\beta$ is eventually
strongly monotone in the sense that there exists
$m_1\in\mathbb{Z}_+$ such that $Q^{m}[u]\gg Q^{m}[v]$ for all $m\ge
m_1$ whenever $u> v$ in $\mc{X}_\beta$. Further, for the map
$Q:\mc{X}_\beta\to \mc{X}_\beta$, two fixed points $0$ and $\beta$
are strongly stable from above and below, respectively, and each
$\alpha\in E\setminus\{0,\beta\}$ (if exists) is strongly unstable
from both below and above.
\end{enumerate}

The following figures illustrate the bistability structures in (A5)
and (A5$'$).
\begin{figure}[h]
\centering \includegraphics[width = 0.48\textwidth,height
=0.20\textwidth]{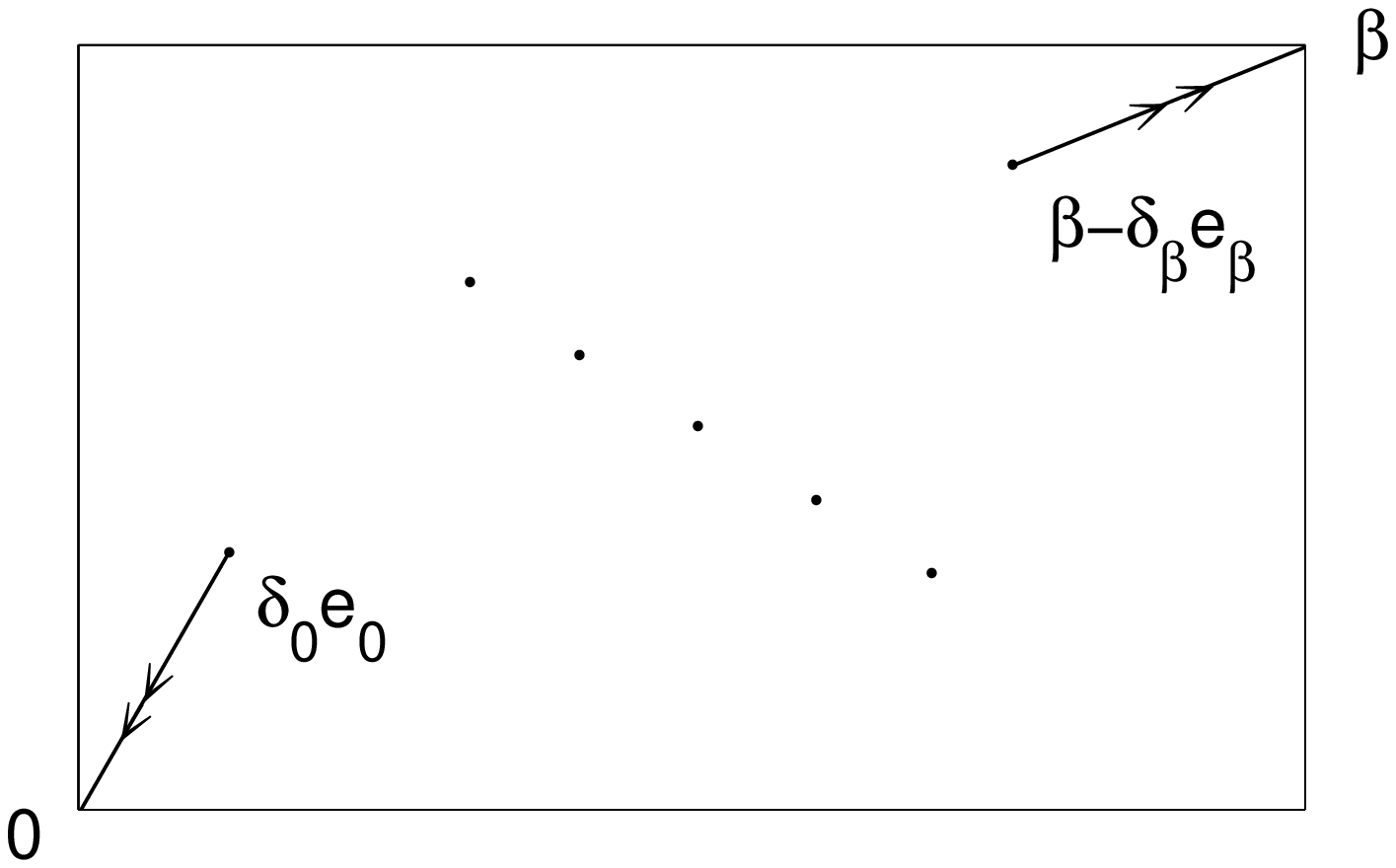}
\includegraphics[width = 0.49\textwidth,height
=0.20\textwidth]{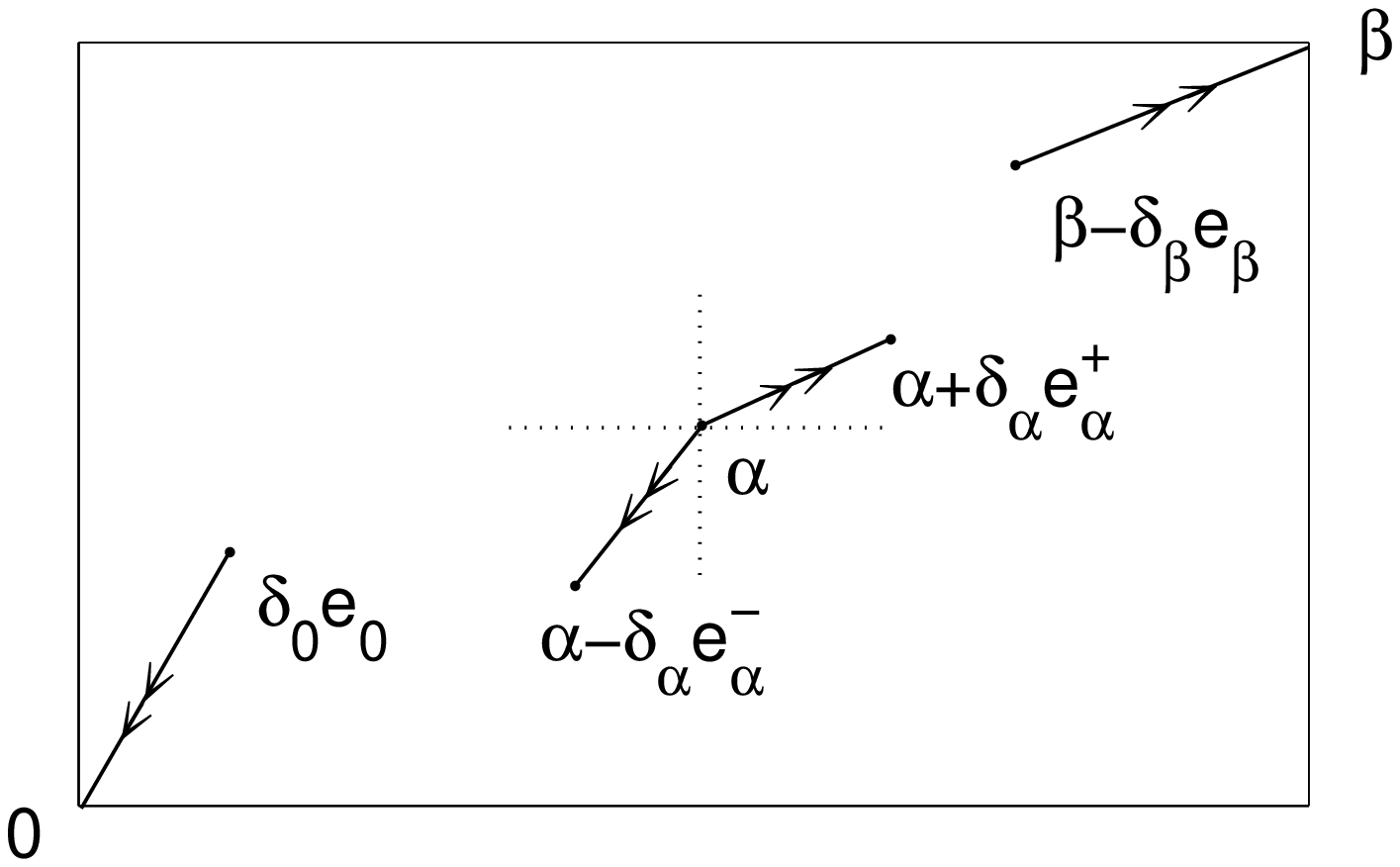} \caption{{\footnotesize (Left) The set $E$
satisfying (A5).\quad (Right) The set $E$ satisfying
(A5$'$).}}\label{fig1}
\end{figure}
Next we show that the assumption (A5$'$) implies (A5). In
applications, however, one may find other weaker sufficient
conditions than (A5$'$) for (A5) to hold.

\begin{proposition}\label{BistabilityMap}
If (A5 $'$) holds, then for any $\alpha_1,\alpha_2\in
E\setminus\{0,\beta\}$, we have $\alpha_1\not<\alpha_2$ and
$\alpha_2\not<\alpha_1$.
\end{proposition}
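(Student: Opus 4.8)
The plan is to argue by contradiction. Suppose, contrary to the claim, that some $\alpha_1,\alpha_2\in E\setminus\{0,\beta\}$ satisfy $\alpha_1<\alpha_2$ (the case $\alpha_2<\alpha_1$ being symmetric). First I would upgrade this strict inequality to a strong one: since $\alpha_1<\alpha_2$ and $Q:\mc{X}_\beta\to\mc{X}_\beta$ is eventually strongly monotone by (A5$'$), applying $Q^{m_1}$ and using $Q^{m_1}[\alpha_i]=\alpha_i$ gives $\alpha_1\ll\alpha_2$. This interior ordering is what later lets me fit the distinguished instability directions strictly between the two equilibria.

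Next I would trap a monotone orbit between them. Using the strong instability of $\alpha_1$ from above, pick $\delta>0$ and a unit vector $e\in Int(\mc{X}^+)$ with $Q[\alpha_1+\eta e]\gg\alpha_1+\eta e$ for $\eta\in(0,\delta]$, and take $\eta$ so small that in addition $\alpha_1+\eta e\ll\alpha_2$ (possible since $\alpha_2-\alpha_1\in Int(\mc{X}^+)$ is an interior point). Set $z_0:=\alpha_1+\eta e$. Then $Q[z_0]\gg z_0$ and $Q[z_0]\le Q[\alpha_2]=\alpha_2$, so by (A3) the iterates $z_n:=Q^n[z_0]$ form a nondecreasing sequence with $\alpha_1\ll z_0\le z_n\le\alpha_2$. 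Because $\{z_n\}$ lies in the order interval $\mc{X}_\beta$ it is norm bounded, and since $Q$ preserves constant functions (by (A1)) and is compact (A4), the sequence is precompact in $\mc{X}$; being monotone it converges to some $\zeta$, and continuity (A2) yields $Q[\zeta]=\zeta$. As $\zeta\ge Q[z_0]\gg z_0\gg\alpha_1>0$ and $\zeta\le\alpha_2<\beta$, we get $0<\zeta<\beta$, so $\zeta\in E\setminus\{0,\beta\}$; in particular $\zeta$ is strongly unstable from below by (A5$'$).

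The heart of the proof, and the step I expect to be the main obstacle, is to show that the monotone approach $z_n\uparrow\zeta$ is incompatible with the strong instability of $\zeta$ from below. Let $\delta',e'$ be the corresponding instability data, so $Q[\zeta-se']\ll\zeta-se'$ for $s\in(0,\delta']$. The difficulty is that instability is postulated only along the single ray $\zeta-se'$, whereas the orbit approaches $\zeta$ from an a priori uncontrolled direction. I would resolve this with eventual strong monotonicity once more: one first checks $z_n<\zeta$ for all $n$ (if $z_n=\zeta$ then applying $Q^{m_1}$ to $z_{n-1}<\zeta$ would give $\zeta\ll\zeta$), and then $z_n<\zeta$ yields $z_{n+m_1}\ll\zeta$, so $\zeta-z_n\in Int(\mc{X}^+)$ for all large $n$. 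Since $e'$ is a fixed vector, an interior point dominates a small positive multiple of it, giving $z_n\le\zeta-s_n e'$ with $s_n>0$; the lattice monotonicity of the norm forces $s_n=\|s_n e'\|\le\|\zeta-z_n\|\to0$, hence $s_n\in(0,\delta']$ eventually. Fixing such an $n_0$ and writing $s:=s_{n_0}$, the instability inequality together with (A3) propagates the bound: if $z_m\le\zeta-se'$ then $z_{m+1}=Q[z_m]\le Q[\zeta-se']\ll\zeta-se'$, so by induction $z_m\le\zeta-se'$ for all $m\ge n_0$. Letting $m\to\infty$ and using that $\mc{X}^+$ is closed gives $\zeta\le\zeta-se'$, i.e. $se'\le0$, contradicting $s>0$ and $e'\in Int(\mc{X}^+)$. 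This contradiction proves the proposition.
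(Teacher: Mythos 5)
Your proof is correct and follows essentially the same route as the paper's: upgrade $\alpha_1<\alpha_2$ to $\alpha_1\ll\alpha_2$ via eventual strong monotonicity, perturb upward from $\alpha_1$ along its instability direction to produce a nondecreasing orbit that converges (by compactness and continuity) to an intermediate fixed point, and then contradict that fixed point's strong instability from below. The only difference is in how the final contradiction is extracted --- the paper takes a maximal $\eta$ with $u_{n_1}\le\alpha-\eta e_\alpha$ and contradicts maximality in one step, whereas you propagate the bound $z_m\le\zeta-se'$ forward for all $m\ge n_0$ and pass to the limit --- an equally valid, and arguably cleaner, variant of the same argument.
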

\begin{proof}
Without loss of generality, we only show $\alpha_1\not< \alpha_2$.
Assume, for the sake of contradiction, that $\alpha_1<\alpha_2$.
Then $\alpha_1=Q^{m_1}[\alpha_1]\ll Q^{m_1}[\alpha_2]=\alpha_2$.
Since $\alpha_1$ is strongly unstable from above, there exists
$\delta_{\alpha_1}>0$ and $e_{\alpha_1}\in Int(\mc{X}^+)$ such that
$u_0:=\alpha_1+\delta_{\alpha_1}e_{\alpha_1}\in[[\alpha_1,\alpha_2]]_{\mc{X}}$
and $Q[u_0]\gg u_0$. Define the recursion $u_{n+1}=Q[u_n],n\ge 0$.
Then $u_n$ is convergent to some $\alpha\in \mc{X}$ with
$\alpha_1\ll \alpha\le \alpha_2$ due to hypothesis (A4). By the
eventual strong monotonicity of $Q$, we see that
\begin{equation*}
u_n=Q^{m_1}[u_{n-m_1}]\ll Q^{m_1}[u_{n+1-m_1}]=u_{n+1}\ll
Q^{m_1}[\alpha]=\alpha, \forall n\ge m_1.
\end{equation*}
Since $\alpha$ is strongly unstable from below, we can find
$\delta_{\alpha}>0$ and $e_{\alpha}\in Int \mc{X}^+$ such that
$Q[\alpha-\delta e_\alpha]\ll \alpha-\delta e_\alpha,\forall
\delta\in(0,\delta_\alpha]$. Choose $n_1\ge m_1$ such that
$u_{n_1}\ge \alpha-\delta_\alpha e_\alpha$. Define
$\eta:=\sup\{\delta\in(0,\delta_\alpha]: u_{n_1}\le \alpha-\delta
e_\alpha\}$. Thus, $u_{n_1}\not\ll \alpha-\eta \delta_\alpha$. On
the other hand, we have
\begin{equation*}
u_{n_1}\ll u_{n_1+1}=Q[u_{n_1}]\le Q[\alpha-\eta e_\alpha]\ll
\alpha-\eta e_\alpha,
\end{equation*}
a contradiction.
\end{proof}

Due to assumption (A5), a bistable system $\{Q^n\}_{n\ge 0}$ can be
regarded as the union of two monostable systems. More precisely,
assuming that $\alpha\in E\setminus \{0,\beta\}$, we have two
monostable sub-systems: $\{Q^n\}_{n\ge 0}$ restricted on
$[0,\alpha]_{\mc{C}}$ and $[\alpha,\beta]_{\mc{C}}$, respectively.
With this in mind, next we construct an initial function
$\phi_\alpha^-$ so that we can define the leftward asymptotic speed
of propagation of $\phi_\alpha^-$, and hence, present our last
assumption.

Note that in (A5) we do not require $\alpha\gg 0$ or $\alpha\ll
\beta$. But (A5) is sufficient to guarantee that $\alpha$ and
$\beta$ can be separated by two neighborhoods in
$[\alpha,\beta]_{\mc{X}}$, and a similar claim is valid for $0$ and
$\alpha$ (see Lemma 3.1). In view of assumption (A5), we can find a
positive number $\delta_\beta>0$ and a unit vector $e_\beta\in
Int(\mc{X}^+)$ such that
\begin{equation}\label{deltabeta}
Q[\beta-\eta e_\beta]\gg \beta-\eta e_\beta,\quad \forall
\eta\in(0,\delta_\beta].
\end{equation}
Define
\begin{equation*}
\theta^-:=\sup\left\{\theta\in [0,1]:\theta
\alpha+(1-\theta)\beta\in [\beta-\delta_\beta
e_\beta,\beta]_\mc{X}\right\}.
\end{equation*}
Let
\begin{equation}\label{defvaalpha-}
v_\alpha^-=\theta^-\alpha+(1-\theta^-)\beta.
\end{equation}
Choose a nondecreasing initial function $\phi_\alpha^-\in
\mc{C}_\beta$ with the property that
\begin{equation}\label{propertiesAlpha-}
\phi_\alpha^-(x)=\alpha,\forall x\le -1, \quad \text{and}\quad
\phi_\alpha^-(x)=v_\alpha^-, \forall x\ge 0.
\end{equation}
It then follows from assumptions (A1)-(A2) and (A5) that
\begin{equation*}
\lim_{x\to
+\infty}Q[\phi_\alpha^-](x)=Q[\phi_\alpha^-(+\infty)](0)=Q[v_\alpha^-]\ge
Q[\beta-\delta_\beta e_\beta]\gg  \beta-\delta_\beta e_\beta,
\end{equation*}
and hence, there exits $\sigma>0$ such that
\begin{equation*}
Q[\phi_\alpha^-](x)\gg \beta-\delta_\beta e_\beta, \forall x\ge
\sigma-1.
\end{equation*}
Define a sequence $a_{n,\sigma}$ of points in $\mc{X}$ as follows:
\begin{equation*}
a_{n,\sigma}=Q^n[\phi_\alpha^-](\sigma n),\quad n\ge 1.
\end{equation*}
Then we have
\begin{equation*}
a_{2,\sigma}=Q^2[\phi_\alpha^-](2\sigma)=Q[Q[\phi_\alpha^-]
(\cdot+\sigma)](\sigma)\ge Q[\phi_\alpha^-](\sigma)=a_{1,\sigma}.
\end{equation*}
By induction, we see that $a_{n,\sigma}$ is nondecreasing in $n$.
Thus, assumption (A4) implies that $a_{n,\sigma}$ tends to a fixed
point $e$ with $e\ge a_{1,\sigma} \gg \beta -\delta_\beta e_\beta$.
Therefore, $e=\beta$.

By the above observation, we have
\begin{equation*}
\beta\ge \lim_{n\to \infty,x\ge \sigma n}Q^n[\phi_\alpha^-](x)\ge
\lim_{n\to \infty} Q^n[\phi_\alpha^-](\sigma n)= \lim_{n\to \infty}
a_{n,\sigma}=\beta,
\end{equation*}
and hence,
\begin{equation*}
(-\infty,-\sigma]\subset
\Lambda(\phi_\alpha^-):=\left\{c\in\R:\lim_{n\to \infty,x\ge
-cn}Q^n[\phi_\alpha^-](x)=\beta\right\}.
\end{equation*}
Define
\begin{equation}\label{defcmiunsstar}
c_-^*(\alpha,\beta):=\sup \Lambda (\phi_\alpha^-).
\end{equation}
Clearly, $c_-^*(\alpha,\beta)\in [-\sigma,+\infty]$ and
$(-\infty,c_-^*(\alpha,\beta))\subset\Lambda (\phi_\alpha^-)$. We
further claim that $c_-^*(\alpha,\beta)$ is independent of the
choice of $\phi_\alpha^-$ as long as $\phi_\alpha^-$ has the
property \eqref{propertiesAlpha-}. Indeed, for any given $\phi$ with
the property \eqref{propertiesAlpha-}, we have
\begin{equation*}
\phi_\alpha^-(x-1)\le \phi(x)\le \phi_\alpha^-(x+1),\quad\forall
x\in \mc{H}.
\end{equation*}
It then follows that for any $c\in \Lambda (\phi_\alpha^-)$ and
$\epsilon>0$,
\begin{eqnarray*}
\beta &&=\lim_{n\to \infty,x\ge -cn}Q^n[\phi_\alpha^-](x)=\lim_{n\to
\infty,x\ge -(c-\epsilon) n}Q^n[\phi_\alpha^-](x-1)
\nonumber\\
&&\le \lim_{n\to \infty,x\ge -(c-\epsilon) n}Q^n[\phi](x) \le
\lim_{n\to \infty,x\ge -(c-\epsilon)
n}Q^n[\phi_\alpha^-](x+1)\nonumber\\
&& = \lim_{n\to \infty,x\ge -c n}Q^n[\phi_\alpha^-](x)=\beta,
\end{eqnarray*}
which implies that $c-\epsilon \in \Lambda (\phi)$ and hence, $\sup
\Lambda (\phi_\alpha^-)=\sup \Lambda (\phi)$. For convenience, we
may call $c_-^*(\alpha,\beta)$ as the leftward asymptotic speed of
propagation of $\phi_\alpha^-$.

Following the above procedure, we can find $\delta_0>0,e_0\in
Int(\mc{X}^+)$ such that
\begin{equation}\label{delta0}
Q[\eta e_0]\ll \eta e_0, \forall \eta\in (0,\delta_0].
\end{equation}
Here we emphasis that $\delta_0,e_0$ above and
$\delta_\beta,e_\beta$ will play a vital role in the whole paper
because they describe the local stability of fixed points $0$ and
$\beta$. Similarly, we can define $\theta^+:=\sup\{\theta\in[0,1]:
\theta \alpha\in [0,\delta_0e_0]_\mc{X}\}$ and $v_\alpha^+:=\theta^+
\alpha$. Let $\phi_\alpha^+\in \mc{C}_\beta$ be a nondecreasing
initial function with the property that
\begin{equation*}
\phi_\alpha^+(x)=\alpha,\forall x\geq  1, \quad \text{and}\quad
\phi_\alpha^+(x)=v_\alpha^+, \forall x\leq 0.
\end{equation*}
Due to the same reason, we can define the number
\begin{equation}\label{Defcplusstar}
c_+^*(0,\alpha):=\sup\left\{c\in\R:\lim_{n\to \infty,x\le
cn}Q^n[\phi_\alpha^+](x)=0\right\},
\end{equation}
which is called the rightward asymptotic speed of propagation of
$\phi_\alpha^+$. As showed above, these two speeds are bounded
below, but may be plus infinity. To better understand these two
spreading speeds, we use Figure \ref{fig2}\,(Left) to explain them.

\begin{figure}[h]
\centering \includegraphics[width = 0.49\textwidth]{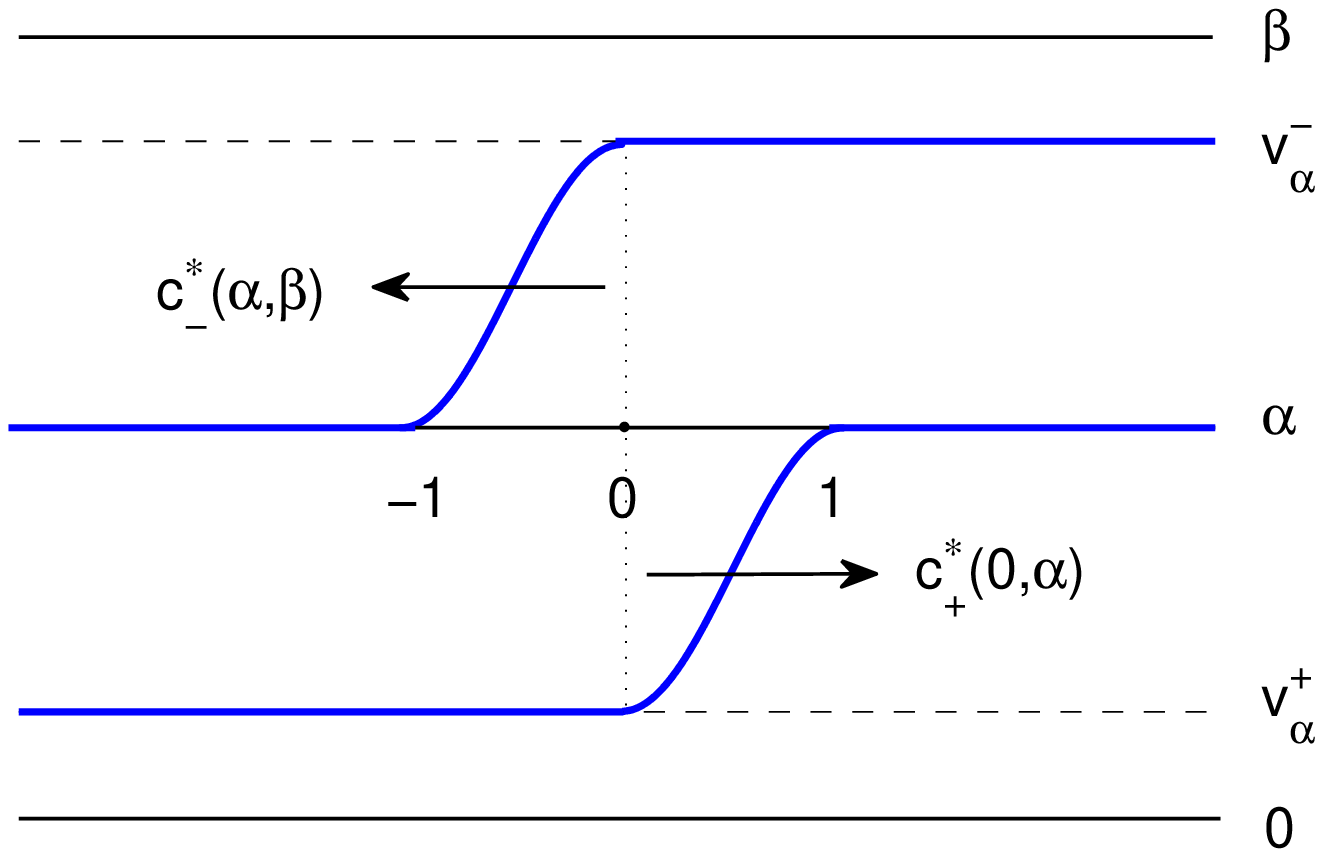}
\includegraphics[width = 0.49\textwidth]{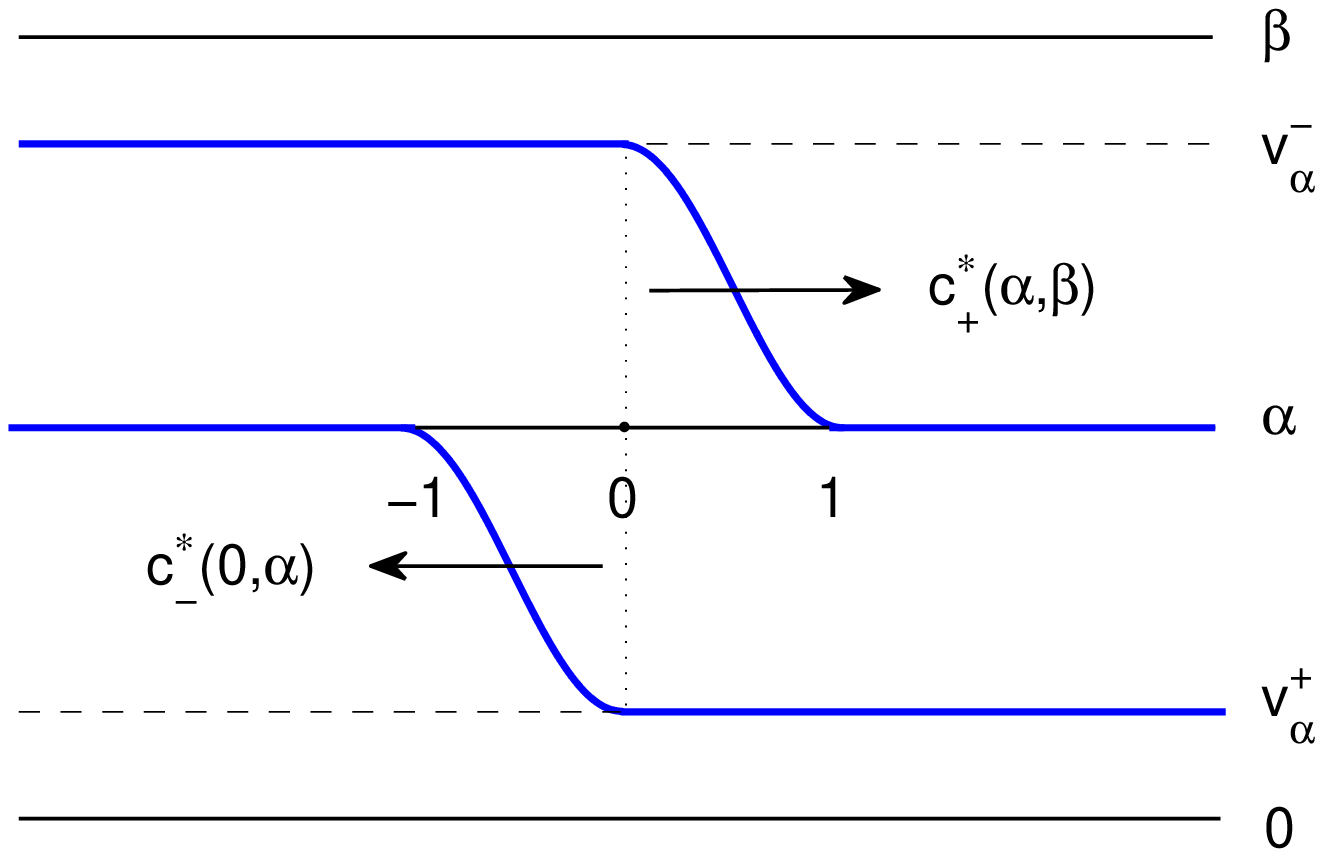}
\caption{{\footnotesize (Left) $c_-^*(\alpha,\beta)$ and
$c_+^*(0,\alpha)$.\quad (Right) $c_+^*(\alpha,\beta)$ and
$c_-^*(0,\alpha)$.}}\label{fig2}
\end{figure}

Now we are ready to state our last assumption on $Q$:
\begin{enumerate}
\item[(A6)] ({\it Counter-propagation}) For each $\alpha\in E\setminus\{0,\beta\}$,
$c_-^*(\alpha,\beta)+c_+^*(0,\alpha)>0$.
\end{enumerate}

Assumption (A6) assures that two initial functions in the left hand
side of Figure \ref{fig2} will eventually propagate oppositely
although one of these two speeds may be negative. It is interesting
to note that assumption (A6) is nearly necessary for the propagation
of a bistable traveling wave. Indeed, if a monotone evolution system
admits a bistable traveling wave, then it is usually unique (up to
translation) and globally attractive (see, e.g., Remark
\ref{stabilityremark}). This implies that the solution starting from
the initial data $\f{1}{2}(\phi_\alpha^+ +\phi_\alpha^-)$ converges
to a phase shift of the bistable wave. If
$c_-^*(\alpha,\beta)+c_+^*(0,\alpha)<0$, then the comparison
principle would force the solutions starting from $\phi_\alpha^\pm$
to split the bistable wave.

Comparing with the definition of spreading speeds (short for
asymptotic speeds of spread/propagation) for monostable semiflows
(see, e.g., \cite{AroWei2,LiangZhaoJFA}), one can find that the
leftward spreading speed of the monostable subsystem $\{Q^n\}_{n\ge
0}$ restricted on $[\alpha,\beta]_\mc{C}$ is shared by a large class
of initial functions, and in many applications, it equals
$c_-^*(\alpha,\beta)$. A similar observation holds for
$c_+^*(0,\alpha)$. Thus, for a specific bistable system, the
assumption (A6) can be verified by using the properties of spreading
speeds for monostable subsystems.

\begin{remark}
If we consider the non-increasing traveling waves, then we can
similarly define the numbers $c_+^*(\alpha,\beta)$ and
$c_-^*(0,\alpha)$ (See Figure \ref{fig2}(Right)). As such, (A6)
should be stated as $c_+^*(\alpha,\beta)+c_-^*(0,\alpha)>0$.
\end{remark}

\section{Semiflows in a homogeneous habitat}
We say a habitat is homogeneous for the semiflow
$\{Q_t\}_{t\in\mc{T}}$ on a metric space $\mc{E}\subset \mc{C}$ if
\begin{equation*}
Q_t[\phi](x-y)=Q_t[\phi(\cdot-y)](x),\quad \forall
\phi\in\mc{E},x,y\in\mc{H}, t\in \mc{T}.
\end{equation*}
In this section, we will establish the existence of bistable
traveling waves for the semiflow $\{Q_t\}_{t\in\mc{T}}$ on $\mc{E}$
in the following order: discrete-time semiflows in a continuous
habitat, discrete-time semiflows in a discrete habitat,
time-periodic semiflows, continuous-time semiflows in a continuous
habitat, and continuous-time semiflows in a discrete habitat.

\subsection{Discrete-time semfilows in a continuous habitat} In this
case, time $\mc{T}$ is discrete and habitat $\mc{H}$ is continuous:
$\mc{T}=\Z^+$ and $\mc{H}=\R$.  For convenience, we use $Q$ to
denote $Q_1$, and consider the semiflow $\{Q^n\}_{n\ge 0}$, where
$Q^n$ is the $n$-th iteration of $Q$.

\begin{definition}\label{DefDisCont}
$\psi(x+cn)$ with $\psi\in \mc{C}$ is said to be a traveling wave
with speed $c\in\mathbb{R}$ of the discrete semiflow $\{Q^n\}_{n\ge
0}$ if $Q^n[\psi](x)=\psi(x+cn),\forall x\in\R, n\ge 0$. We say that
$\psi$ connects $0$ to $\beta$ if $\psi(-\infty):=\lim_{x\to
-\infty}\psi(x)=0$ and $\psi(+\infty):=\lim_{x\to
+\infty}\psi(x)=\beta$.
\end{definition}

We first show that $0$ and $\beta$ are two isolated fixed points of
$Q$ in $\mc{X}_\beta$ if (A5) holds.

\begin{lemma}\label{IslatedEquilibria}
Let $\delta_0,e_0$ and $\delta_\beta, e_\beta$ be chosen such that
\eqref{delta0} and \eqref{deltabeta} hold, respectively. Then $E\cap
\mc{X}_{\delta_0e_0}=\{0\}$ and $E\cap [\beta-\delta_\beta
e_\beta,\beta]_\mc{X}=\{\beta\}$.
\end{lemma}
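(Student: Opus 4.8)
The plan is to prove both statements by the same \emph{sliding} argument, exploiting that strong stability of $0$ and $\beta$ yields a \emph{strict} inequality which upgrades a weak order relation to a strong one and thereby contradicts a minimality condition. I treat $0$ first; the case of $\beta$ is entirely symmetric. Suppose, for contradiction, that there is $\alpha\in E$ with $\alpha\neq 0$ and $0\le\alpha\le\delta_0 e_0$, and set
\[
\eta^*:=\inf\{\eta\in(0,\delta_0]:\alpha\le\eta e_0\}.
\]
The defining set is nonempty since it contains $\delta_0$, and it is an up-set in $(0,\delta_0]$ because $\eta'e_0-\alpha=(\eta e_0-\alpha)+(\eta'-\eta)e_0\in\mc{X}^+$ whenever $\eta\le\eta'$ and $\alpha\le\eta e_0$. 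First I would rule out $\eta^*=0$: if $\alpha\le\eta e_0$ for arbitrarily small $\eta>0$, then letting $\eta\to 0$ and using that $\mc{X}^+$ is closed gives $\alpha\le 0$, whence $\alpha=0$ since $\alpha\ge 0$, a contradiction. Closedness of $\mc{X}^+$ together with the infimum property also gives $\alpha\le\eta^*e_0$.

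The crucial step combines the fixed-point property with monotonicity (A3) and \eqref{delta0}. Since $\alpha=Q[\alpha]$, $\alpha\le\eta^*e_0$, and $\eta^*\in(0,\delta_0]$, I obtain
\[
\alpha=Q[\alpha]\le Q[\eta^* e_0]\ll\eta^* e_0,
\]
so in fact $\eta^*e_0-\alpha\in \operatorname{Int}(\mc{X}^+)$. Because $\operatorname{Int}(\mc{X}^+)$ is open and $\eta e_0-\alpha=(\eta^*e_0-\alpha)-(\eta^*-\eta)e_0\to\eta^*e_0-\alpha$ as $\eta\uparrow\eta^*$, there exists $\eta<\eta^*$ (still in $(0,\delta_0]$) with $\eta e_0-\alpha\in\operatorname{Int}(\mc{X}^+)\subset\mc{X}^+$, i.e. $\alpha\le\eta e_0$. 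This contradicts the definition of $\eta^*$, so no such $\alpha$ exists and $E\cap\mc{X}_{\delta_0 e_0}=\{0\}$.

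For $\beta$ the roles of the cone are reversed: assuming $\alpha\in E$ with $\alpha\neq\beta$ and $\beta-\delta_\beta e_\beta\le\alpha\le\beta$, I set $\eta^*:=\inf\{\eta\in(0,\delta_\beta]:\beta-\eta e_\beta\le\alpha\}$ and run the identical three steps, now invoking \eqref{deltabeta} to pass from $\alpha\ge\beta-\eta^*e_\beta$ to $\alpha=Q[\alpha]\ge Q[\beta-\eta^*e_\beta]\gg\beta-\eta^*e_\beta$, and again sliding $\eta$ strictly below $\eta^*$ to reach a contradiction; hence $E\cap[\beta-\delta_\beta e_\beta,\beta]_\mc{X}=\{\beta\}$.

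The one point demanding care — rather than a genuine obstacle — is the interplay between the \emph{closedness} of $\mc{X}^+$, needed to attain the weak inequality $\alpha\le\eta^*e_0$ at the infimum, and the \emph{openness} of $\operatorname{Int}(\mc{X}^+)$, needed to push $\eta$ strictly below $\eta^*$ once the strong-stability estimate has strengthened $\le$ into $\ll$. Everything else is routine, relying only on monotonicity (A3), the nonemptiness of $\operatorname{Int}(\mc{X}^+)$, and the strict inequalities \eqref{delta0} and \eqref{deltabeta} furnished by the bistability hypothesis (A5).
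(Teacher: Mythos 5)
Your proof is correct and takes essentially the same route as the paper's: the paper likewise defines $\bar{\delta}:=\inf\{\delta\in(0,\delta_0]:\alpha\in[0,\delta e_0]_{\mc{X}}\}$, observes that $\alpha\le\bar{\delta}e_0$ but $\alpha\not\in[0,\bar{\delta}e_0]]_{\mc{X}}$, and contradicts this via $\alpha=Q[\alpha]\le Q[\bar{\delta}e_0]\ll\bar{\delta}e_0$, with the $\beta$-case handled symmetrically. The only difference is that you spell out two details the paper leaves implicit, namely ruling out a zero infimum by closedness of $\mc{X}^+$ and the openness argument that turns the strict relation $\ll$ into a violation of the infimum.
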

\begin{proof}
Assume, for the sake of contradiction, that $0\neq\alpha\in E\cap
\mc{X}_{\delta_0e_0}$. Define the number
$\bar{\delta}\in(0,\delta_0]$ by
\begin{equation*}
\bar{\delta}:=\inf\{\delta\in(0,\delta_0]: \alpha\in [0,\delta
e_0]_\mc{X}\}.
\end{equation*}
Then it follows that $\alpha \le \bar{\delta} e_0$ but
$\alpha\not\in [0,\bar{\delta}e_0]]_\mc{X}$. However, by the
monotonicity of $Q$ and the fact that $0$ is strongly stable, we
have
\begin{equation*}
\alpha=Q[\alpha]\le Q[\bar{\delta}e_0]\ll \bar{\delta}e_0.
\end{equation*}
This contradicts $\alpha\not\in [0,\bar{\delta}e_0]]_\mc{X}$. And
hence, $E\cap \mc{X}_{\delta_0e_0}=\{0\}$. Similarly, we have $E\cap
[\beta-\delta_\beta e_\beta,\beta]_\mc{X}=\{\beta\}$.
\end{proof}

Choose $\delta>0$ such that
\begin{equation}\label{delta}
\delta<\min\{\delta_0,\delta_\beta\}\quad \text{and} \quad \delta
e_0\ll\beta-\delta e_\beta.
\end{equation}
Assume that $\un{\psi}$ and $\bar{\psi}$ are two nondecreasing
functions in $C(\mathbb{R},\mc{X}_\beta)$ with the properties that
\begin{eqnarray*}
\un{\psi}(x)=
\begin{cases}
0,& x\le 0\\
\beta-\delta e_\beta, & x\ge 1
\end{cases}
\quad \text{and} \quad \bar{\psi}(x)=
\begin{cases}
\delta e_0, & x\le -1\\
\beta, & x\ge 0.
\end{cases}
\end{eqnarray*}
Clearly $\un{\psi}\le \bar{\psi}$. And we have the following
observation.

\begin{lemma}\label{UpperLowerSolu1}
Assume that $Q$ satisfies (A1)-(A3) and (A5). Then there exists a
positive rational number $\bar{c}$ such that for any $c\ge \bar{c}$,
we have
\begin{equation*}
Q[\un{\psi}](x)\ge \un{\psi}(x-c)\quad \text{and}\quad
Q[\bar{\psi}](x)\le \bar{\psi}(x+c)\quad \text{for any $x\in\R $}.
\end{equation*}
\end{lemma}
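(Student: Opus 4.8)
The plan is to read $\un\psi$ as a right-shifted subsolution and $\bar\psi$ as a left-shifted supersolution, using that each function is constant outside a unit interval and that, by (A2), $Q$ maps $\mc C_\beta$ into itself, so $0\le Q[\phi]\le\beta$ for every $\phi\in\mc C_\beta$. On the ``flat'' pieces the two inequalities are then automatic: wherever $\un\psi(x-c)=0$ we have $Q[\un\psi](x)\ge 0=\un\psi(x-c)$, and wherever $\bar\psi(x+c)=\beta$ we have $Q[\bar\psi](x)\le\beta=\bar\psi(x+c)$. So the entire content is to control $Q[\un\psi]$ near $+\infty$ and $Q[\bar\psi]$ near $-\infty$, which is precisely where the local stability of $\beta$ and $0$ enters.

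First I would evaluate the two boundary limits. Since $\un\psi(\cdot+x)\to\beta-\delta e_\beta$ (a constant function) uniformly on bounded sets as $x\to+\infty$, translation invariance (A1) together with continuity in the compact open topology (A2) gives, exactly as in the computation of $\lim_{x\to+\infty}Q[\phi_\alpha^-](x)$ carried out before \eqref{defcmiunsstar},
\[
\lim_{x\to+\infty}Q[\un\psi](x)=Q[\beta-\delta e_\beta].
\]
Because $\delta<\delta_\beta$ by \eqref{delta}, the strong stability of $\beta$ from below, \eqref{deltabeta}, yields $Q[\beta-\delta e_\beta]\gg\beta-\delta e_\beta$. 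The mirror computation, using $\bar\psi(\cdot+x)\to\delta e_0$ as $x\to-\infty$ together with \eqref{delta0} and $\delta<\delta_0$, gives
\[
\lim_{x\to-\infty}Q[\bar\psi](x)=Q[\delta e_0]\ll\delta e_0 .
\]

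Next I would turn these strict limiting inequalities into uniform thresholds on half-lines. Because a strong inequality is an open condition (the sets $\{u:u\gg\beta-\delta e_\beta\}$ and $\{u:u\ll\delta e_0\}$ are open, as $Int(\mc X^+)$ is open), the two convergences supply constants $M_1,M_2>0$ such that $Q[\un\psi](x)\ge\beta-\delta e_\beta$ for all $x\ge M_1$ and $Q[\bar\psi](x)\le\delta e_0$ for all $x\le -M_2$. I would then fix any positive rational $\bar c\ge\max\{M_1,M_2\}$; such a $\bar c$ exists since $\mathbb Q$ is unbounded. For $c\ge\bar c$ the first inequality is checked by splitting at $x=c$: for $x\le c$ we have $\un\psi(x-c)=0\le Q[\un\psi](x)$, while for $x\ge c\ (\ge M_1)$ we have $Q[\un\psi](x)\ge\beta-\delta e_\beta\ge\un\psi(x-c)$, the last step because $\beta-\delta e_\beta$ is the largest value taken by $\un\psi$ (and $\un\psi$ stays nondecreasing, $Q$ being order preserving by (A3)). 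The second inequality is verified symmetrically by splitting at $x=-c$, using that $\delta e_0$ is the smallest value of $\bar\psi$.

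The one genuinely delicate point is the passage to the boundary limits: it is essential that (A1)--(A2) identify $\lim_{x\to\pm\infty}Q[\,\cdot\,](x)$ with the value of $Q$ at the relevant constant state, and that bistability supplies there a strict inequality ($\gg$ at $\beta$, $\ll$ at $0$) rather than a mere $\ge$; only a strict inequality survives the conversion into a uniform bound valid on an entire unbounded half-line. Everything after that is the elementary two-region comparison above, so I expect no further obstacle.
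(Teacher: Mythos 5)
Your proposal is correct and follows essentially the same route as the paper: both compute $\lim_{x\to+\infty}Q[\un{\psi}](x)=Q[\beta-\delta e_\beta]\gg\beta-\delta e_\beta$ and $\lim_{x\to-\infty}Q[\bar{\psi}](x)=Q[\delta e_0]\ll\delta e_0$ via (A1)--(A2) and (A5), then conclude by a two-region comparison using the flat pieces of $\un{\psi}$ and $\bar{\psi}$. The only (harmless) difference is how the limit is upgraded to a half-line bound: the paper fixes a single point $y_0$ with $Q[\un{\psi}](y_0)\ge\beta-\delta e_\beta$ and uses that $Q[\un{\psi}]$ is nondecreasing (a consequence of (A1) and (A3)), whereas you invoke the openness of $\{u:u\gg\beta-\delta e_\beta\}$ to get the bound on an entire half-line directly from the limit.
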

\begin{proof}
Assume that $x_n \to +\infty$ be an increasing sequence in
$\mathbb{R}$. Then the sequence $\psi_n:=\un{\psi}(\cdot+x_n)$
converges to $\beta-\delta e_\beta$ in $\mc{C}_\beta$ since
$\un{\psi}(x)=\beta-\delta e_\beta,\forall x\ge 1$. It then follows
from (A1)-(A2) and (A5) that
\begin{equation*}
Q[\un{\psi}](+\infty)=\lim_{n\to \infty}
Q[\un{\psi}](x_n)=\lim_{n\to
\infty}Q[\un{\psi}(\cdot+x_n)](0)=Q[\beta-\delta e_\beta]\gg
\beta-\delta e_\beta.
\end{equation*}
Therefore, there exists a positive $y_0\in\mathbb{R}$ such that
$Q[\un{\psi}](y_0)\ge \beta-\delta e_\beta$. Note that
$Q[\un{\psi}](x)$ is nondecreasing in $x$. Then for any $c\ge y_0$
we have
\begin{equation*}
Q[\un{\psi}](x)\ge Q[\un{\psi}](y_0)\ge \beta-\delta e_\beta \ge
\un{\psi}(x-c) ,\forall x\ge y_0
\end{equation*}
and
\begin{equation*}
Q[\un{\psi}](x)\ge 0=\un{\psi}(0) \ge \un{\psi}(x-y_0)\ge
\un{\psi}(x-c),\forall x< y_0,
\end{equation*}
which means $Q[\un{\psi}](x) \ge \un{\psi}(x-c),\forall c\ge y_0$.
Similarly, we have
\begin{equation*}
Q[\bar{\psi}](-\infty)=\lim_{n\to \infty}
Q[\un{\psi}](-x_n)=\lim_{n\to
\infty}Q[\bar{\psi}(\cdot-x_n)](0)=Q[\delta e_0]\ll \delta
e_0=\bar{\psi}(-\infty),
\end{equation*}
and hence, there exists $z_0>0$ such that $Q[\bar{\psi}](x)\le
\bar{\psi}(x+c),\forall c\ge z_0$. Choosing $\bar{c}=
\max\{y_0,z_0\}$, we complete the proof.
\end{proof}

Let $\kappa_n:=\f{n+\bar{c}}{n}$. Clearly, $\kappa_n,\forall n\ge
1$, is a rational number. For any $\xi\in\R$, define the map
$A_\xi:\mc{B}\to\mc{B}$ by $A_\xi[\phi](x)=\phi(\xi x),\forall
x\in\mathbb{R}$. Define $\un{\psi}_n,\bar{\psi}_n\in \mc{C}_\beta$
by
\begin{equation*}
\un{\psi}_n(x)=\un{\psi}\left(x-(n+\bar{c})\right)\quad
\text{and}\quad
\bar{\psi}_n(x)=\bar{\psi}\left(x+(n+\bar{c})\right).
\end{equation*}

\begin{lemma}\label{FixedPoint1}
Assume that $Q$ satisfies (A1)-(A5). Then for each $n\in\mathbb{N}$,
$G_n:=Q\circ A_{\kappa_n}$ has a fixed point $\phi_n$ in
$\mc{C}_\beta$ such that $\phi_n$ is nondecreasing and
$\un{\psi}_n\le \phi_n\le \bar{\psi}_n$.
\end{lemma}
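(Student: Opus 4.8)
The plan is to produce $\phi_n$ as the limit of the monotone iteration $G_n^k[\un{\psi}_n]$ as $k\to\infty$, trapped inside the order interval $[\un{\psi}_n,\bar{\psi}_n]_{\mc{C}}$. Three ingredients are needed: that $G_n$ preserves order, preserves the nondecreasing property, and has enough compactness for the iteration to converge; the two ``endpoint'' inequalities $G_n[\un{\psi}_n]\ge \un{\psi}_n$ and $G_n[\bar{\psi}_n]\le \bar{\psi}_n$; and a monotone convergence argument.

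First I would record the structural properties of $G_n=Q\circ A_{\kappa_n}$. Since $\kappa_n>0$, the rescaling $A_{\kappa_n}$ is order preserving and maps nondecreasing functions to nondecreasing functions; composed with $Q$ (monotone by (A3), compact by (A4), continuous by (A2)), this makes $G_n$ monotone, continuous, and compact on $\mc{C}_\beta$. Moreover (A1) together with (A3) shows that $Q$ sends nondecreasing functions to nondecreasing functions: if $\phi$ is nondecreasing then $T_y\phi\le\phi$ for $y\ge 0$, whence $T_yQ[\phi]=Q[T_y\phi]\le Q[\phi]$; so $G_n$ preserves the nondecreasing property as well.

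The crux is the endpoint inequalities. For the lower one I would \emph{not} compare $A_{\kappa_n}[\un{\psi}_n]$ with $\un{\psi}_n$ directly (the rescaling moves the front of $\un{\psi}_n$ from $n+\bar{c}$ to $n$, and such a comparison loses too much), but instead verify the sharper bound $A_{\kappa_n}[\un{\psi}_n]\ge T_n\un{\psi}$. This follows by comparing the arguments $\kappa_n x-(n+\bar{c})$ and $x-n$ of $\un{\psi}$, whose difference is $\tfrac{\bar{c}}{n}(x-n)$, and using that $\un{\psi}$ is nondecreasing and vanishes on $(-\infty,0]$. Applying (A3), (A1) and Lemma \ref{UpperLowerSolu1} then gives $G_n[\un{\psi}_n](x)=Q[A_{\kappa_n}\un{\psi}_n](x)\ge Q[T_n\un{\psi}](x)=Q[\un{\psi}](x-n)$; since $\bar{c}$ was chosen in Lemma \ref{UpperLowerSolu1} so that $Q[\un{\psi}](y)\ge\beta-\delta e_\beta$ for $y\ge\bar{c}$, evaluation at the front location $x=n+\bar{c}$ yields $G_n[\un{\psi}_n](n+\bar{c})\ge\beta-\delta e_\beta$. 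Because $G_n[\un{\psi}_n]$ is nondecreasing in $x$, while $\un{\psi}_n\le\beta-\delta e_\beta$ with $\un{\psi}_n\equiv 0$ on $(-\infty,n+\bar{c}]$, this single-point estimate upgrades to $G_n[\un{\psi}_n]\ge\un{\psi}_n$ everywhere. The supersolution inequality $G_n[\bar{\psi}_n]\le\bar{\psi}_n$ is obtained symmetrically, using $A_{\kappa_n}[\bar{\psi}_n]\le T_{-n}\bar{\psi}$ and the companion bound $Q[\bar{\psi}](y)\le\delta e_0$ for $y\le-\bar{c}$ from Lemma \ref{UpperLowerSolu1}.

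With these in hand I would close by the standard monotone iteration. Setting $\un{\psi}_n^{(k)}:=G_n^k[\un{\psi}_n]$, the subsolution inequality together with monotonicity of $G_n$ makes $\{\un{\psi}_n^{(k)}\}_k$ nondecreasing in $k$, while the supersolution inequality and $\un{\psi}_n\le\bar{\psi}_n$ keep it below $\bar{\psi}_n$; thus the sequence stays in the closed order interval $[\un{\psi}_n,\bar{\psi}_n]_{\mc{C}}$. Compactness of $G_n$ from (A4) furnishes a convergent subsequence, and monotonicity promotes this to convergence of the whole sequence in the compact open topology to some $\phi_n\in[\un{\psi}_n,\bar{\psi}_n]_{\mc{C}}$; continuity of $G_n$ from (A2) then gives $\phi_n=G_n[\phi_n]$, and $\phi_n$ is nondecreasing as a pointwise limit of the nondecreasing iterates. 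I expect the endpoint inequalities---specifically recognizing the correct rescaled comparison $A_{\kappa_n}\un{\psi}_n\ge T_n\un{\psi}$ and locating the front so that Lemma \ref{UpperLowerSolu1} applies at $x=n+\bar{c}$---to be the main obstacle; the monotone convergence step is routine given (A2)--(A4).
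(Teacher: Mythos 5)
Your proof is correct and takes essentially the same approach as the paper: your key comparison $A_{\kappa_n}[\un{\psi}_n]\ge T_n\un{\psi}$ is exactly the paper's inequality $\un{\psi}_n(\cdot+\bar{c})\le A_{\kappa_n}[\un{\psi}_n]$ in disguise (since $T_n\un{\psi}=\un{\psi}_n(\cdot+\bar{c})$), and the concluding monotone iteration under (A2)--(A4) is the same. The only cosmetic difference is that you close the endpoint inequalities by a single-point evaluation (at $x=n+\bar{c}$, using the bound $Q[\un{\psi}](\bar{c})\ge\beta-\delta e_\beta$ established inside the proof of Lemma \ref{UpperLowerSolu1}) plus monotonicity of $G_n[\un{\psi}_n]$, whereas the paper applies the functional inequality $\un{\psi}(x)\le Q[\un{\psi}](x+\bar{c})$ of that lemma directly, which finishes in one line.
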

\begin{proof}
We first show that $\un{\psi}_n\le G_n[\un{\psi}_n]$. Indeed, when
$x< n$ we have
\begin{equation*}
\un{\psi}_n(x+\bar{c})\le \un{\psi}_n(n+\bar{c})=\un{\psi}(0)=0\le
A_{\kappa_n}[\un{\psi}_n](x);
\end{equation*}
when $x\ge n$ we have
\begin{equation*}
A_{\kappa_n}[\un{\psi}_n](x)=\un{\psi}_n(\kappa_n
x)=\un{\psi}_n(x+\f{\bar{c}}{n}x)\ge \un{\psi}_n(x+\bar{c}),
\end{equation*}
and hence, $\un{\psi}_n(x+\bar{c})\le A_{\kappa_n}[\un{\psi}_n](x)$
for all $x\in\R$. Consequently, by the monotonicity of $Q$ and
$\un{\psi}(x)\le Q[\un{\psi}](x+\bar{c})$(see Lemma
\ref{UpperLowerSolu1}) we obtain
\begin{equation*}
\un{\psi}_n(x)\le Q[\un{\psi}_n](x+\bar{c})\le Q\circ
A_{\kappa_n}[\un{\psi}_n ](x)=G_n[\un{\psi}_n ](x).
\end{equation*}
Similarly, we have $\bar{\psi}_n\ge G_n[\bar{\psi}_n]$. It then
follows that
\begin{equation}\label{LimitFunctionIncreasing}
\un{\psi}_n\le G_n^k[\un{\psi}_n ]\le G_n^k[\bar{\psi}_n]\le
\bar{\psi}_n,\quad \forall k\in \mathbb{N}.
\end{equation}
For any $k\ge 1$, we have
\begin{equation}\label{UsingCompactness1}
G_n^k[\un{\psi}_n]=G_n\circ G_n^{k-1}[\un{\psi}_n]\in
G_n[\mc{C}_\beta].
\end{equation}
Since $G_n$ is order preserving and $\un{\psi}_n(x)$ is
nondecreasing in $x$, we know that $G_n^k[\un{\psi}_n](x)$ is
nondecreasing both in $k$ and $x$. Recall that $G_n$ is compact due
to assumption (A4). It then follows that $G_n^k[\un{\psi}_n]$
converges in $\mc{C}_\beta$. Denote the limit by $\phi_n$. By
inequality \eqref{LimitFunctionIncreasing}, we also get
$\un{\psi}_n\le \phi_n\le \bar{\psi}_n$. Moreover, $\phi_n(x)$ is
also nondecreasing due to Proposition \ref{BasicProp}(2). And
obviously,
\begin{equation*}
\phi_n=\lim_{k\to \infty} G_n^{k+1}[\un{\psi}_n]=G_n [\lim_{k\to
\infty}G_n^k[\un{\psi}_n]]=G_n [\phi_n].
\end{equation*}
This completes the proof.
\end{proof}

The following lemma reveals a relation between the wave speeds of
monostable traveling waves in the sub-monostable systems and the
numbers defined in \eqref{defcmiunsstar} and \eqref{Defcplusstar}.
\begin{lemma}\label{MiniWaveSpeedDSF}
Let $c_-^*(\alpha,\beta)$ and $c_+^*(0,\alpha)$ be defined as in
\eqref{defcmiunsstar} and \eqref{Defcplusstar}. Assume that $Q$
satisfies (A3). Then the following statements are valid:
\begin{enumerate}
\item[(1)]If $\psi(x+ct)$ is a monotone traveling wave connecting $\alpha$ to $\beta$
of the discrete semiflow $\{Q^n\}_{n\ge 1}$, then the speed $c\ge
c_-^*(\alpha,\beta)$.
\item[(2)]If $\psi(x+ct)$ is a monotone traveling wave connecting $0$ to $\alpha$
of the discrete semiflow $\{Q^n\}_{n\ge 1}$, then the speed $c\le
-c_+^*(0,\alpha)$.
\end{enumerate}
\end{lemma}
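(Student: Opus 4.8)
The plan is to prove both statements by direct comparison, deducing the speed inequalities from the monotonicity (A3) and translation invariance (A1) of $Q$ together with the defining propagation property of the asymptotic speeds. I would carry out (1) in full and obtain (2) by the mirror-image argument. For (1), recall that the wave satisfies $Q^n[\psi](x)=\psi(x+cn)$, that $\psi$ is nondecreasing with $\psi(-\infty)=\alpha$ and $\psi(+\infty)=\beta$, and hence $\alpha\le\psi(x)\le\beta$ for all $x$. The first step is a fitting step: I want a number $b$ with $\phi_\alpha^-(x)\le\psi(x+b)$ for all $x\in\R$. For $x\le -1$ this is immediate, since $\phi_\alpha^-(x)=\alpha\le\psi(x+b)$; for $x\ge -1$ we have $\psi(x+b)\ge\psi(b-1)$ and $\phi_\alpha^-(x)\le v_\alpha^-$, so it suffices to choose $b$ so large that $\psi(b-1)\ge v_\alpha^-$, using $\psi(+\infty)=\beta\ge v_\alpha^-$. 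Once the fitting holds, applying $Q^n$ and combining (A1) with (A3) gives
\[
Q^n[\phi_\alpha^-](x)\le Q^n[\psi(\cdot+b)](x)=\psi(x+b+cn),\qquad \forall x\in\R,\ n\ge 0.
\]

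Next, fix any $c'\in\Lambda(\phi_\alpha^-)$. By the definition of $\Lambda(\phi_\alpha^-)$, $Q^n[\phi_\alpha^-](x)\to\beta$ uniformly for $x\ge -c'n$; in particular the value at $x=-c'n$ converges to $\beta$. Evaluating the displayed inequality at $x=-c'n$ yields $Q^n[\phi_\alpha^-](-c'n)\le\psi\bigl(b+(c-c')n\bigr)$. If $c<c'$, then $b+(c-c')n\to-\infty$, so the right-hand side tends to $\alpha$ while the left-hand side tends to $\beta$; letting $n\to\infty$ and using the closedness of the cone $\mc{X}^+$ gives $\beta\le\alpha$, contradicting $\alpha<\beta$. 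Hence $c\ge c'$ for every $c'\in\Lambda(\phi_\alpha^-)$, and therefore $c\ge\sup\Lambda(\phi_\alpha^-)=c_-^*(\alpha,\beta)$.

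Statement (2) follows by the symmetric argument. Now $\psi$ is nondecreasing with $\psi(-\infty)=0$ and $\psi(+\infty)=\alpha$, so $0\le\psi\le\alpha$, and I fit a translate of $\psi$ \emph{below} $\phi_\alpha^+$: choosing $b$ so negative that $\psi(x+b)\le v_\alpha^+$ for all $x\le 1$ (possible since $\psi(-\infty)=0\le v_\alpha^+$), one checks $\psi(x+b)\le\phi_\alpha^+(x)$ for all $x$, whence $\psi(x+b+cn)\le Q^n[\phi_\alpha^+](x)$. Taking $c'<c_+^*(0,\alpha)$ and evaluating at $x=c'n$, where $Q^n[\phi_\alpha^+](c'n)\to 0$, I get $\psi\bigl(b+(c+c')n\bigr)\to 0$; if $c>-c'$ the left side tends to $\alpha>0$, a contradiction, so $c\le -c'$ for all such $c'$, giving $c\le -c_+^*(0,\alpha)$.

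The step I expect to be the genuine obstacle is the fitting step, namely the claim that $\psi(b)\ge v_\alpha^-$ for $b$ large (and, dually, $\psi(b)\le v_\alpha^+$ for $b$ small). This is transparent when $\alpha\ll\beta$, but the standing setting explicitly allows $\alpha$ to lie on the boundary of $\mc{X}_\beta$, in which case $\beta-v_\alpha^-=\theta^-(\beta-\alpha)$ may sit on the boundary of $\mc{X}^+$; then $\psi(b)\to\beta$ \emph{in norm} does not by itself force $\psi(b)\ge v_\alpha^-$ \emph{in the order}. To close this gap I would exploit that $v_\alpha^-$ lies on the segment $[\alpha,\beta]_{\mc{X}}$ while $\psi$ is monotone with values in $[\alpha,\beta]_{\mc{X}}$, so that the deficit $\beta-\psi(b)$ is controlled in the $\beta-\alpha$ direction; alternatively, under the stronger hypothesis (A5$'$) one may first apply a fixed number of iterates $Q^{m_1}$, whose eventual strong monotonicity yields $Q^{m_1}[v_\alpha^-]\ll\beta$, after which the fitting of $Q^{m_1}[\phi_\alpha^-]$ under a translate of $\psi$ is immediate and the conclusion is unchanged because iterating $Q$ does not alter the speed $c$.
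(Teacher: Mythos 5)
Your overall skeleton is exactly the paper's: fit $\phi_\alpha^-$ below a translate of $\psi$, iterate to get $Q^n[\phi_\alpha^-](x)\le\psi(x+b+cn)$ from (A1) and (A3), and evaluate along $x=-c'n$ with $c'\in\Lambda(\phi_\alpha^-)$ so that $c<c'$ would force $\beta\le\alpha$ in the limit; statement (2) is the mirror image. (The paper phrases the last step as a contradiction with a rational $\f{q}{p}\in(c,c_-^*(\alpha,\beta))$ rather than ranging over all $c'\in\Lambda$, but that difference is cosmetic.) The problem is precisely the step you flagged and then left open: the fitting inequality $\psi(b)\ge v_\alpha^-$. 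Under the hypotheses of the lemma neither of your proposed repairs closes it, so the proof has a genuine gap there.

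Repair (a) rests on the principle that if $w\in[\alpha,\beta]_{\mc{X}}$ and $\|\beta-w\|_{\mc{X}}$ is small, then the deficit $\beta-w$ is dominated by $\theta^-(\beta-\alpha)$. That principle is false when $\beta-\alpha$ lies on the boundary of the cone: in $\mc{X}=C([0,1],\R)$ take $\beta\equiv 1$, $\alpha(s)=1-s$, and $w_\epsilon(s)=1-\min(s,\epsilon)$; then $w_\epsilon\in[\alpha,\beta]_{\mc{X}}$ and $\|\beta-w_\epsilon\|_{\mc{X}}=\epsilon\to 0$, yet for every $\epsilon>0$ and every $\theta^-\in(0,1)$ one has $\beta-w_\epsilon\not\le\theta^-(\beta-\alpha)$ (look at $s<\epsilon$), so $w_\epsilon\not\ge v_\alpha^-$. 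Hence norm convergence of $\psi$ to $\beta$ inside the order interval does not by itself yield the fitting, and "the deficit is controlled in the $\beta-\alpha$ direction" is a hope, not an argument. Repair (b) is a correct computation, but it assumes (A5$'$), which is strictly stronger than the standing hypothesis (A5); the lemma is invoked in the proof of Theorem 3.1 exactly under (A5) (to exclude the possibility $\phi_-(+\infty)=\alpha=\phi_+(-\infty)$), so proving it only under (A5$'$) narrows the scope of the main theorem. What the paper does instead---and this is the ingredient missing from your writeup---is to exploit the specific construction of $v_\alpha^-$ as the convex combination $\theta^-\alpha+(1-\theta^-)\beta$: using Lemma \ref{IslatedEquilibria} it separates $\alpha$ and $\beta$ by balls $\mc{N}(\alpha,\gamma)$, $\mc{N}(\beta,\gamma)$ in the metric space $[\alpha,\beta]_{\mc{X}}$, observes $\alpha<u$ for every $u=\beta+v\in\mc{N}(\beta,\gamma)$, and then scales,
\begin{equation*}
v_\alpha^-=\theta^-\alpha+(1-\theta^-)\beta<\theta^-(\beta+v)+(1-\theta^-)\beta=\beta+\theta^- v,
\end{equation*}
to conclude $v_\alpha^-<w$ for all $w\in\mc{N}(\beta,\theta^-\gamma)$, which is what licenses choosing $\phi_\alpha^-\le\psi$. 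Your instinct that this is the delicate point is sound (indeed, even the paper's scaling step needs care about whether the rescaled point $\beta+v$ stays in $[\alpha,\beta]_{\mc{X}}$, exactly the boundary issue you identified), but a complete proof must use this interplay between the definition of $\theta^-$, Lemma \ref{IslatedEquilibria}, and the order structure, rather than only $\psi(+\infty)=\beta$ in norm or a strengthened monotonicity hypothesis.
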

\begin{proof}
We only prove the statement (1) since the proof for (2) is similar.
In view of Lemma \ref{IslatedEquilibria}, we see that $\alpha$ and
$\beta$ can be separated by balls $\mc{N}(\alpha,\gamma)$ and
$\mc{N}(\beta,\gamma)$ with radius $\gamma<\delta_\beta/2$ in the
metric space $[\alpha,\beta]_\mc{X}$. Then $\alpha<u, \forall u\in
\mc{N}(\beta,\gamma)$. We write $u\in \mc{N}(\beta,\gamma)$ as the
form $u=\beta+v$. Recall the definition of $v_\alpha^-$ in
\eqref{defvaalpha-}, it then follows that
\begin{equation*}
v_\alpha^- = \theta^-\alpha+(1-\theta^-)\beta<\theta^- (\beta+v)
+(1-\theta^-)\beta= \beta+\theta^- v,
\end{equation*}
which implies that
\begin{equation*}
v_\alpha^-< w, \quad \forall w\in \mc{N}(\beta,\theta^- \gamma).
\end{equation*}
Since $\psi(-\infty)=\alpha$ and $\psi(+\infty)=\beta$, there must
exist a nondecreasing initial function $\phi_\alpha^-$ with property
\eqref{propertiesAlpha-} such that $\phi_\alpha^- \le \psi$. Assume,
for the sake of contradiction, that $c<c_-^*(\alpha,\beta)$. Choose
a rational number $\f{q}{p}\in(c,c_-^*(\alpha,\beta))$ with $p,q\in
\Z$. It then follows from \eqref{defcmiunsstar} that
\begin{eqnarray*}
\beta &=&\lim_{n\to \infty}Q^{pn}[\phi_\alpha^-](-\f{q}{p}\times
pn)\le \lim_{n\to \infty}Q^{pn}[\psi](-qn)\nonumber\\
&=&\lim_{n\to \infty} \psi(-q n+cpn)=\psi(-\infty)=0,
\end{eqnarray*}
a contradiction. Thus, we have $c\ge c_-^*(\alpha,\beta)$.
\end{proof}

Now we are ready to prove the main result of this subsection.
\begin{theorem}\label{ThDisCont}
Assume that $Q$ satisfies (A1)-(A6). Then there exists
$c\in\mathbb{R}$ such that the discrete semiflow $\{Q^n\}_{n\ge 1}$
admits a non-decreasing traveling wave with speed $c$ and connecting
$0$ to $\beta$ .
\end{theorem}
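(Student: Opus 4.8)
The plan is to construct the bistable wave as a limit of the fixed points $\phi_n$ of the perturbed maps $G_n = Q\circ A_{\kappa_n}$ produced in Lemma~\ref{FixedPoint1}. Recall that $A_{\kappa_n}$ compresses the spatial variable by the factor $\kappa_n = \frac{n+\bar c}{n}\to 1$, so heuristically a fixed point $\phi_n = G_n[\phi_n] = Q[A_{\kappa_n}[\phi_n]]$ encodes the profile of a traveling wave whose speed per step is approximately $\frac{\bar c}{n}\to 0$; as $n\to\infty$ the vanishing perturbation should force $Q[\phi_n]$ to become a genuine translate of $\phi_n$. First I would fix the translation normalization: since each $\phi_n$ is nondecreasing with $\un{\psi}_n\le\phi_n\le\bar\psi_n$, I would recenter by an interface condition, e.g.\ translate $\phi_n$ so that the first coordinate where $\phi_n$ crosses the threshold $\beta-\delta e_\beta$ (or meets some fixed level in $[[0,\beta]]_{\mc{X}}$) sits at the origin. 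This pins down the otherwise free translation degree of freedom.

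Next I would extract a convergent subsequence. Each $\phi_n$ lies in $\mc{C}_\beta$ and is monotone, and $Q$ is compact by (A4); after the normalization the family $\{\phi_n\}$ is precompact in the compact-open topology (Helly-type monotonicity plus (A4) give convergence of a subsequence $\phi_{n_k}\to\phi$ with $\phi$ nondecreasing). Passing to the limit in $\phi_{n_k} = Q[A_{\kappa_{n_k}}[\phi_{n_k}]]$, using $\kappa_{n_k}\to 1$ together with continuity (A2), should yield $\phi = Q[\phi]$, i.e.\ $\phi$ is a fixed point of $Q$ itself --- but that is only a stationary profile, speed $c=0$. To recover a nonzero speed I would instead keep track of the induced shifts: the correct object to pass to the limit is the relation $Q^{n_k}[\phi_{n_k}]$ versus a translate of $\phi_{n_k}$, producing in the limit $Q[\phi](x) = \phi(x+c)$ for some $c\in\R$, where $c$ arises as the limit of the normalized shifts $\bar c/n_k$ rescaled appropriately. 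Concretely I would define $c$ as the limit of the displacement needed to realign $Q[\phi_{n_k}]$ with $\phi_{n_k}$ and show this displacement sequence is bounded, then pass to a further subsequence so that it converges.

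The genuinely hard part is twofold. First, I must rule out the degenerate limits $\phi\equiv 0$ and $\phi\equiv\beta$ (and, more subtly, that $\phi$ is a translate of an intermediate profile that fails to connect $0$ to $\beta$): the normalization by the interface level keeps the limit profile pinned between $0$ and $\beta$ at the origin, so $\phi(0)\notin\{0,\beta\}$, excluding the constant limits. Second, and this is where assumption (A6) is essential, I must prove that the limiting profile $\phi$ actually connects $0$ to $\beta$, i.e.\ $\phi(-\infty)=0$ and $\phi(+\infty)=\beta$, rather than stalling at some intermediate fixed point $\alpha\in E\setminus\{0,\beta\}$. The limit $\phi$ is monotone, so $\phi(\pm\infty)$ exist and are fixed points of $Q$; if $\phi(+\infty)=\alpha<\beta$ (or $\phi(-\infty)=\alpha>0$) were intermediate, then $\phi$ would split into a wave of the upper monostable subsystem on $[\alpha,\beta]_{\mc{C}}$ and one of the lower subsystem on $[0,\alpha]_{\mc{C}}$, whose speeds are controlled by $c_-^*(\alpha,\beta)$ and $-c_+^*(0,\alpha)$ via Lemma~\ref{MiniWaveSpeedDSF}. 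Since both pieces share the single wave speed $c$, consistency forces $-c_+^*(0,\alpha)\ge c\ge c_-^*(\alpha,\beta)$, contradicting (A6)'s inequality $c_-^*(\alpha,\beta)+c_+^*(0,\alpha)>0$. I would organize this final contradiction carefully, since handling the case where $\phi$ is sandwiched between two intermediate equilibria simultaneously (both tails stuck) requires applying the speed comparison on both subsystems, and this is exactly the step where the total-unorderedness of $E\setminus\{0,\beta\}$ from (A5) and the counter-propagation (A6) do the decisive work.
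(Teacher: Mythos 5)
Your scaffolding (the fixed points $\phi_n$ of $G_n=Q\circ A_{\kappa_n}$ from Lemma~\ref{FixedPoint1}, compactness to pass to a limit, then Lemma~\ref{MiniWaveSpeedDSF} plus (A6) to exclude intermediate equilibria) matches the paper's, but there is a genuine gap at the decisive step, and it stems from your choice of a \emph{single} translation normalization. If you recenter $\phi_n$ only at the interface where it enters $[\beta-\delta e_\beta,\beta]_{\mc{X}}$, the limit $\phi$ does satisfy $\phi(+\infty)=\beta$ (by Lemma~\ref{IslatedEquilibria}), but its other tail may stall: $\phi(-\infty)=\alpha\in E\setminus\{0,\beta\}$. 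In that case $\phi$ does \emph{not} ``split into'' an upper wave and a lower wave: a single monotone profile connects exactly $\phi(-\infty)$ to $\phi(+\infty)$, so all you get is one monostable wave from $\alpha$ to $\beta$, and Lemma~\ref{MiniWaveSpeedDSF} yields only the one-sided bound $c\ge c_-^*(\alpha,\beta)$. That is perfectly consistent with (A6) --- monostable subsystems genuinely admit such waves --- so no contradiction can be extracted, and your claimed chain $-c_+^*(0,\alpha)\ge c\ge c_-^*(\alpha,\beta)$ has no derivation: its left half requires a \emph{second} wave connecting $0$ to $\alpha$, which your construction never produces. The lower transition layer of $\phi_n$, located near $a_n:=\sup\{x:\phi_n(x)\in[0,\delta e_0]_{\mc{X}}\}$, is simply lost in your limit, because the gap between $a_n$ and your chosen interface $b_n$ may tend to $+\infty$.

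This is exactly why the paper runs \emph{both} normalizations on the same $\phi_n$ along a common subsequence: it sets $\phi_{-,n}=\phi_n(\cdot+a_n)$ and $\phi_{+,n}=\phi_n(\cdot+b_n)$, extracts limits $\phi_\mp$ with speeds $c_\mp=-\bar c\,\xi_\mp$ where $\xi_-=\lim a_n/n$ and $\xi_+=\lim b_n/n$, and since $a_n\le b_n$ it obtains the crucial ordering $c_-\ge c_+$. A diagonal argument then shows $\phi_-(+\infty)$ and $\phi_+(-\infty)$ are ordered, so by (A5) either one of the two waves already connects $0$ to $\beta$, or both stalled limits equal one common $\alpha\in E\setminus\{0,\beta\}$; in the latter case Lemma~\ref{MiniWaveSpeedDSF} applied to \emph{each} wave gives $c_+\ge c_-^*(\alpha,\beta)$ and $c_-\le -c_+^*(0,\alpha)$, which together with $c_-\ge c_+$ contradicts (A6). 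Without the companion wave and the comparison $c_-\ge c_+$, the argument cannot close. (A secondary point: your worry that the naive limit gives $\phi=Q[\phi]$, and your proposed fix of realigning $Q[\phi_{n_k}]$ with $\phi_{n_k}$, are both off target. Because the recentering shifts grow linearly, the conjugated scaling $x\mapsto\kappa_n(x+a_n)-a_n$ converges to the translation $x\mapsto x-c_-$ with $c_-=-\bar c\lim a_n/n$, so the wave relation $Q[\phi_-](x)=\phi_-(x+c_-)$ falls out automatically; no separate boundedness-of-displacement argument is needed.)
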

\begin{proof}
We spend three steps to complete the proof. Firstly, we construct
$\phi_+,\phi_-\in\mc{C}_\beta$, $c_+\le c_-\in\mathbb{R}$ such that
\begin{equation*}
Q[\phi_+](x)=\phi_+(x+c_+)\quad \text{and}\quad
Q[\phi_-](x)=\phi_-(x+c_-)
\end{equation*}
with
\begin{equation*}
\phi_-(0)\in(0,\delta e_0]_\mc{X}\quad \text{and}\quad \phi_+(0)\in
[\beta-\delta e_\beta,\beta)_\mc{X}.
\end{equation*}
Indeed, let $\phi_n$ be obtained in Lemma \ref{FixedPoint1}. Since
$0\ll \bar{\psi}(-1)=\delta e_0\ll \un{\psi}(1)=\beta-\delta
e_\beta\ll \beta$ and $\un{\psi}_n\le \phi_n\le\bar{\psi}_n$, we
have
\begin{equation*}
\bar{\psi}(-1)=\bar{\psi}_n(-1-(n+\bar{c}))
\ge\phi_n(-1-(n+\bar{c}))
\end{equation*}
and
\begin{equation*}
\un{\psi}(1)=\un{\psi}_n(1+(n+\bar{c})) \le\phi_n(1+(n+\bar{c})).
\end{equation*}
Now we define $a_n,b_n\in\mathbb{R}$ as follows:
\begin{equation*}
a_n:=\sup_{x\in\mathbb{R}}\{\phi_n(x)\in [0,\delta
e_0]_{\mc{X}}\},\quad b_n:=\inf_{x\in\mathbb{R}}\{\phi_n(x)\in
[\beta-\delta e_\beta,\beta]_{\mc{X}}\}.
\end{equation*}
It then follows that
\begin{equation*}
-1-(n+\bar{c})\le a_n\le b_n\le 1+(n+\bar{c})
\end{equation*}
and
\begin{equation*}
\phi_n(a_n)\le \delta e_0\le \beta-\delta e_\beta\le \phi_n(b_n).
\end{equation*}
Define $\phi_{-,n}(x):=\phi_n(x+a_n)$ and
$\phi_{+,n}(x):=\phi_n(x+b_n)$. Then
\begin{equation*}
\phi_{-,n}=\phi_n(\cdot+a_n)
=G_n[\phi_n](\cdot+a_n)=Q[\phi_n(\kappa_n\cdot)](\cdot+a_n)=Q[\phi_n(\kappa_n(\cdot+a_n))]\in
Q[\mc{C}_\beta].
\end{equation*}
Similarly, $\phi_{+,n}=Q[\phi_n(\kappa_n(\cdot+b_n))]\in
Q[\mc{C}_\beta]$. Thus, there exists a subindex (still denoted by
$n$), two nondecreasing functions $\phi_-,\phi_+\in \mc{C}_\beta$
and $\xi_-,\xi_+\in[-1,1]$ with $\xi_-\le \xi_+$ such that
\begin{equation*}
\lim_{n\to \infty}\f{a_{n}}{n}=\xi_-,\quad \lim_{n\to
\infty}\f{b_{n}}{n}=\xi_+,\quad \lim_{n\to \infty}\phi_{-,n}=\phi_-
\quad \text{and}\quad \lim_{n\to \infty}\phi_{+,n}=\phi_+.
\end{equation*}
Obviously, $\phi_-(0)=\lim_{n\to \infty}\phi_{n}(a_{n})$ and
$\phi_+(0)=\lim_{n\to \infty}\phi_{n}(b_{n})$. By the definitions of
$a_n$ and $b_n$, we immediately have $\phi_-(0)\neq 0$ and
$\phi_+(0)\neq \beta$, and hence $0<\phi_-(0)\le
\bar{\psi}(-1)=\delta e_0$ and $\beta-\delta e_\beta =
\un{\psi}(1)\le \phi_+(0)<\beta$. Define $c_-:=-\bar{c}\xi_-$ and
$c_+:=-\bar{c}\xi_+$. Obviously, $c_-\ge c_+$ because $\xi_-\le
\xi_+$. Now we want to only prove $Q[\phi_-](x)=\phi_-(x+c_-)$
because the proof of the other one is similar. Note that the
following limit is uniform for $x$ in any bounded subset $M\subset
\mathbb{R}$
\begin{equation*}
\lim_{n\to\infty}\kappa_{n}(x+a_n)-a_n =\lim_{n\to\infty}
\left(x+\bar{c}\cdot\f{x+a_n}{n}\right) =x-c_-.
\end{equation*}
It then follows that for any $\in\mathbb{R}$, we have
\begin{eqnarray}\label{Eq:PhiMinus}
&&\phi_-(x+c_-)\nonumber\\
&&=\lim_{n\to \infty}\phi_{-,n}(x+c_-)=\lim_{n\to
\infty}\phi_n(x+c_-+a_n)=\lim_{n\to \infty}G_n[\phi_n](x+c_-+a_n)\nonumber\\
&&=\lim_{n\to \infty} Q[\phi_n(\kappa_n\cdot)](x+c_-+a_n)=\lim_{n\to
\infty}Q[
\phi_n(\kappa_n(\cdot+a_n))](x+c_-)\nonumber\\
&&=\lim_{n\to \infty}Q[
\phi_{-,n}(\kappa_n(\cdot+a_n)-a_n)](x+c_-)=Q[\phi_-](x),
\end{eqnarray}
where the last equality is obtained from Proposition
\ref{SequenceInC}(2) and the continuity of $Q$.

Secondly, we prove that $\phi_\pm(x)$ obtained in the first step
have the following properties:
\begin{enumerate}
\item[(i)] $\phi_-(-\infty)=0$ and $\phi_+(+\infty)=\beta$;

\item[(ii)] $\phi_-(+\infty)$ and $\phi_+(-\infty)$ are ordered.
\end{enumerate}
Indeed, let $x_n \to +\infty$ be an increasing sequence in
$\mathbb{R}$. Note that $\phi_-(x_n)=Q[\phi_-(\cdot-c_-+x_n)](0)\in
Q[\mc{C}_\beta](0)$, which is precompact in $\mc{X}_\beta$. It then
follows that there exists a subindex $\{n_l\}$ and $v\in
\mc{X}_\beta$ such that $\lim_{l\to \infty}\phi_-(x_{n_l})=v$,
which, together with the fact that $\phi_-$ is nondecreasing and
proposition \ref{SequenceInC}(1), implies that
$\phi_-(+\infty):=\lim_{x\to +\infty}\phi_-(x)=v$. Besides, from
\eqref{Eq:PhiMinus} we see that $\phi_+(+\infty)\in \mc{X}_\beta$ is
a fixed point of $Q$. Similar results hold for $\phi_-(-\infty)$ and
$\phi_+(\pm\infty)$. Recall that $\phi_-(-\infty)\le \phi_-(0)\le
\delta e_0$ and $\phi_+(+\infty)\ge \phi_+(0)\ge \beta-\delta
e_\beta$, which, together with the choice of $\delta$, implies that
$\phi_-(-\infty)=0$ and $\phi_+(+\infty)=\beta$. Further, since any
two real numbers are ordered, we see that there exist sequences
\begin{equation*}
\{n\}_{n\ge 0}\supset \{n_{1m}\}_{m\ge 1}\supset \{n_{2m}\}_{m\ge
2}\supset \cdot\cdot\cdot\supset \{n_{km}\}_{m\ge 1}\cdot \cdot
\cdot
\end{equation*}
such that for each $k\ge 1$,
\begin{equation*}
k+a_{n_{km}}\le-k+b_{n_{km}},\forall m\ge 1 \quad \text{or}\quad
k+a_{n_{km}}\ge-k+b_{n_{km}},\forall m\ge 1.
\end{equation*}
Define $\Gamma_1:=\{k\in\mathbb{N}:
k+a_{n_{km}}\le-k+b_{n_{km}},\forall m\ge 1 \}$ and $\Gamma_2:=
\mathbb{N}\setminus \Gamma_1$. Then either $\Gamma_1$ or $\Gamma_2$
has infinitely many elements. If $\Gamma_1$ does, then there holds
\begin{equation*}
\phi_{-,n_{km}}(k)=\phi_{n_{km}}(k+a_{n_{km}})\le
\phi_{n_{km}}(-k+b_{n_{km}})=\phi_{+,n_{km}}(-k),\forall
k\in\Gamma_1,m\in\mathbb{N}.
\end{equation*}
This implies that $\phi_-(k)\le \phi_+(-k),\forall k\in\Gamma_1$,
and hence, $\phi_-(+\infty)\le \phi_+(-\infty)$. If $\Gamma_2$ has
infinitely many elements, then we have $\phi_-(+\infty)\ge
\phi_+(-\infty)$ by a similar argument. Thus, $\phi_-(+\infty)$ and
$\phi_+(-\infty)$ must be ordered in $\mc{X}_\beta$.

Finally, we prove that either $\phi_-$ or $\phi_+$ connects $0$ to
$\beta$. Indeed, we have shown in the second step that
$\phi_-(+\infty)$ and $\phi_+(-\infty)$ are ordered. It then follows
from the bistability assumption (A5) that there are only three
possibilities:
\begin{enumerate}
\item[(i)]$\beta=\phi_-(+\infty)\ge \phi_+(-\infty)$;

\item[(ii)]$\phi_-(+\infty)\ge \phi_+(-\infty)=0$;

\item[(iii)]$\phi_-(+\infty)=\alpha=\phi_+(-\infty)$ for some
$\alpha\in E\setminus\{0,\beta\}$.
\end{enumerate}
We further claim that the possibility (iii) cannot happen.
Otherwise, Lemma \ref{MiniWaveSpeedDSF} implies that $c_+\ge
c_-^*(\alpha,\beta)$ and $c_-\le -c_+^*(0,\alpha)$. Since $c_-\ge
c_+$, it then follows that
\begin{equation*}
0\ge c_+ +(-c_-)\ge c_-^*(\alpha,\beta)+c_+^*(0,\alpha),
\end{equation*}
which contradicts assumption (A6). Thus, either (i) or (ii) holds,
and hence, we complete the proof.
\end{proof}

\subsection{Discrete-time semiflows in a discrete habitat}

In this case, both time $\mc{T}$ and habitat $\mc{H}$ are discrete:
$\mc{T}=\Z^+$ and $\mc{H}=\Z$. Without confusion, we consider the
semiflow $\{Q^n\}_{n\ge 0}$ in a metric space $\mc{E}\subset
\mc{C}$. Since the habitat is discrete, we cannot use the definition
of traveling waves with a unknown speed as in Definition
\ref{DefDisCont}. This is because the wave profile $\psi(x)$ may not
be well-defined for all $x\in\R$. So we start with the modification
of the definition of traveling waves in a discrete habitat.

\begin{definition}\label{DefDisDis}
$\psi(x+cn)$ with $\psi\in \mc{B}$ is said to be a traveling wave
with speed $c\in\mathbb{R}$ of the discrete semiflow $\{Q^n\}_{n\ge
0}$ if there exists a countable set $\Gamma\subset \R$ such that
$Q[\psi(\cdot+x)](i)=\psi(i+x+c),\forall i\in\Z, x\in\R\setminus
\Gamma$.
\end{definition}

By Definition \ref{DefDisDis} and Proposition \ref{SetProp1}, it
follows that there exists $x_0\in \R$ such that
$Q^n[\psi(\cdot+x_0)](i)=\psi(i+x_0+cn),\forall i\in\Z, n\ge0$.
Define $\phi(x):=\psi(x+x_0),\forall x\in\R$. Then, with a little
abuse of notation, we have $Q^n[\phi](i)=\phi(i+cn),\forall i\in\Z,
n\ge0$. Such a definition of traveling waves is motivated by the
idea employed in the proof of Theorem \ref{ThDisDis}.

Let $\beta\gg 0$ be a fixed point of $Q$. Define
$\tilde{Q}:\mc{B}_\beta\to \mc{B}_\beta$ by
\begin{equation*}
\tilde{Q}[\phi](x)=Q[\phi(\cdot+x)](0),\quad \forall x\in \R.
\end{equation*}
Then we see from  \cite[Lemma 2.1]{LiangZhao} that $\tilde{Q}$
satisfies (A1)-(A3) and (A5) with $Q=\tilde{Q}$ and
$\mc{C}_\beta=\mc{B}_\beta$ if $Q$ itself satisfies (A1)-(A3) and
(A5). Further, if $Q$ satisfies (A4), then the set
$\tilde{Q}[\mc{B}_\beta](x)\subset \mc{X}_\beta$ is precompact for
any $x\in\R$.

For $\tilde{Q}:\mc{B}_\beta\to \mc{B}_\beta$, we have similar
results as in Lemma \ref{UpperLowerSolu1} and \ref{FixedPoint1}.
\begin{lemma}\label{UpperLowerSolu2}
Assume that $Q$ satisfies (A1)-(A3) and (A5). Then there exists a
positive rational number $\bar{c}$ such that for any $c\ge \bar{c}$,
we have
\begin{equation*}
\tilde{Q}[\un{\psi}](x)\ge \un{\psi}(x-c)\quad \text{and}\quad
\tilde{Q}[\bar{\psi}](x)\le \bar{\psi}(x+c)\quad \text{for any
$x\in\R $}.
\end{equation*}
\end{lemma}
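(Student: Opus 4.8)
The plan is to transcribe the proof of Lemma~\ref{UpperLowerSolu1} almost verbatim, with $Q$ replaced by $\tilde{Q}$, exploiting the fact recorded just before the statement (via \cite[Lemma 2.1]{LiangZhao}) that $\tilde{Q}$ inherits (A1)--(A3) and (A5) from $Q$, now with $\mc{B}_\beta$ playing the role of $\mc{C}_\beta$. The crucial observation is that Lemma~\ref{UpperLowerSolu1} never invokes the compactness hypothesis (A4); it uses only monotonicity, continuity, and the strong stability of $0$ and $\beta$. Hence the markedly weaker compactness of $\tilde{Q}$ (only $\tilde{Q}[\mc{B}_\beta](x)$ precompact for each fixed $x$) is irrelevant here, and the argument carries over.

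First I would record two structural facts. Since $\un{\psi}$ is nondecreasing and $Q$ is order preserving, the map $x\mapsto \tilde{Q}[\un{\psi}](x)=Q[\un{\psi}(\cdot+x)](0)$ is nondecreasing: for $x_1\le x_2$ one has $\un{\psi}(\cdot+x_1)\le \un{\psi}(\cdot+x_2)$, whence monotonicity of $Q$ and evaluation at $0$ give the claim; likewise for $\tilde{Q}[\bar{\psi}]$. Next I would compute the limit at $+\infty$, which is the only place the discrete habitat really enters. Because $\un{\psi}(y)=\beta-\delta e_\beta$ for all $y\ge 1$ and every bounded subset of $\mc{H}=\Z$ is finite, the shifted profiles $\un{\psi}(\cdot+x)$, regarded as elements of $C(\Z,\mc{X}_\beta)$, converge in the compact-open topology to the constant $\beta-\delta e_\beta$ as $x\to+\infty$. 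Continuity of $Q$ (A2) then yields
\[
\lim_{x\to+\infty}\tilde{Q}[\un{\psi}](x)=Q[\beta-\delta e_\beta]\gg \beta-\delta e_\beta,
\]
the strict inequality being \eqref{deltabeta}. In particular there is $y_0>0$ with $\tilde{Q}[\un{\psi}](y_0)\ge \beta-\delta e_\beta$.

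The lower estimate then follows from the same two-case splitting as in Lemma~\ref{UpperLowerSolu1}. For $x\ge y_0$, monotonicity of $\tilde{Q}[\un{\psi}]$ together with $\un{\psi}\le \beta-\delta e_\beta$ gives $\tilde{Q}[\un{\psi}](x)\ge \beta-\delta e_\beta\ge \un{\psi}(x-c)$; for $x<y_0\le c$ one has $\un{\psi}(x-c)=0\le \tilde{Q}[\un{\psi}](x)$. Thus $\tilde{Q}[\un{\psi}](x)\ge \un{\psi}(x-c)$ for every $c\ge y_0$. The upper estimate is entirely symmetric: since $\bar{\psi}(y)=\delta e_0$ for $y\le -1$, the shifts $\bar{\psi}(\cdot+x)$ converge to $\delta e_0$ as $x\to-\infty$, so $\lim_{x\to-\infty}\tilde{Q}[\bar{\psi}](x)=Q[\delta e_0]\ll \delta e_0$ by \eqref{delta0}, producing $z_0>0$ with $\tilde{Q}[\bar{\psi}](-z_0)\le \delta e_0$ and, using $\bar{\psi}\ge \delta e_0$, the inequality $\tilde{Q}[\bar{\psi}](x)\le \bar{\psi}(x+c)$ for all $c\ge z_0$. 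Choosing $\bar{c}$ to be any rational number with $\bar{c}\ge \max\{y_0,z_0\}$ finishes the proof.

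I do not expect a genuine obstacle, precisely because the proof of Lemma~\ref{UpperLowerSolu1} was compactness-free. The one subtlety worth stating explicitly is that the limiting step invokes continuity of the original $Q$ on $\mc{C}_\beta$ rather than any refined property of $\tilde{Q}$, and that the required convergence $\un{\psi}(\cdot+x)\to \beta-\delta e_\beta$ holds in the compact-open topology only because bounded subsets of $\Z$ are finite, so that pointwise convergence automatically upgrades to uniform convergence on bounded sets.
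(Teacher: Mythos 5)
Your proposal is correct and is essentially the paper's own (implicit) argument: the paper states Lemma~\ref{UpperLowerSolu2} without proof precisely because the proof of Lemma~\ref{UpperLowerSolu1} transfers verbatim to $\tilde{Q}$, which is what you carry out, including the two points that need checking --- that the argument never uses (A4), and that $\un{\psi}(\cdot+x)|_{\Z}\to\beta-\delta e_\beta$ (resp.\ $\bar{\psi}(\cdot+x)|_{\Z}\to\delta e_0$) in the compact-open topology on $C(\Z,\mc{X}_\beta)$, where convergence is just pointwise convergence. The only cosmetic improvement over the paper is your explicit choice of a rational $\bar{c}\ge\max\{y_0,z_0\}$, which the statement requires but the proof of Lemma~\ref{UpperLowerSolu1} glosses over.
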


\begin{lemma}\label{FixedPoint2}
Assume that $Q$ satisfies (A1)-(A5). Then for each $n\in\mathbb{N}$,
$\tilde{G}_n:=\tilde{Q}\circ A_{\kappa_n}$ has a fixed point
$\tilde{\phi}_n$ in $\mc{B}_\beta$ such that $\tilde{\phi}_n$ is
nondecreasing and $\un{\psi}_n\le \tilde{\phi}_n\le \bar{\psi}_n$.
\end{lemma}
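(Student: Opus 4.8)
The plan is to transcribe the proof of Lemma~\ref{FixedPoint1}, replacing $Q$, $G_n$ and $\mc{C}_\beta$ by $\tilde{Q}$, $\tilde{G}_n$ and $\mc{B}_\beta$, and to supply a new convergence argument at the point where the compactness hypothesis (A4) was previously invoked. First I would check that $\un{\psi}_n\le \tilde{G}_n[\un{\psi}_n]$ and $\bar{\psi}_n\ge \tilde{G}_n[\bar{\psi}_n]$. The elementary estimate $\un{\psi}_n(x+\bar{c})\le A_{\kappa_n}[\un{\psi}_n](x)$ for all $x$ (obtained by splitting into the cases $x<n$ and $x\ge n$) is identical to the one in Lemma~\ref{FixedPoint1}, since it only manipulates the profiles. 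Combining it with the monotonicity and translation invariance of $\tilde{Q}$ (which hold by \cite[Lemma~2.1]{LiangZhao}) and with the inequality $\un{\psi}(x)\le \tilde{Q}[\un{\psi}](x+\bar{c})$ furnished by Lemma~\ref{UpperLowerSolu2} in place of Lemma~\ref{UpperLowerSolu1}, one gets $\un{\psi}_n\le \tilde{G}_n[\un{\psi}_n]$, and symmetrically $\bar{\psi}_n\ge \tilde{G}_n[\bar{\psi}_n]$. Since $\tilde{G}_n$ is order preserving and $A_{\kappa_n}$ preserves monotonicity for $\kappa_n>0$, it then follows that $\un{\psi}_n\le \tilde{G}_n^k[\un{\psi}_n]\le \bar{\psi}_n$, that $\tilde{G}_n^k[\un{\psi}_n]$ is nondecreasing in $k$, and that each $\tilde{G}_n^k[\un{\psi}_n]$ is a nondecreasing function of $x$.

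The new ingredient, which I expect to be the main obstacle, is the passage to the limit, because $\tilde{Q}$ has only the weak compactness that $\tilde{Q}[\mc{B}_\beta](x)$ is precompact in $\mc{X}_\beta$ for each fixed $x$, rather than the compact-open compactness of $Q$ used in Lemma~\ref{FixedPoint1}. I would therefore argue pointwise. Fix $x\in\R$. For $k\ge 1$ we have $\tilde{G}_n^k[\un{\psi}_n](x)=\tilde{Q}\big[A_{\kappa_n}\tilde{G}_n^{k-1}[\un{\psi}_n]\big](x)\in \tilde{Q}[\mc{B}_\beta](x)$, so the sequence $\big(\tilde{G}_n^k[\un{\psi}_n](x)\big)_{k\ge 1}$ is nondecreasing in $k$ and contained in a fixed precompact subset of $\mc{X}_\beta$. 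Because the cone of $\mc{X}$ is normal (this is exactly the Banach-lattice monotonicity condition $|x|_{\mc{X}}\le|y|_{\mc{X}}\Rightarrow\|x\|_{\mc{X}}\le\|y\|_{\mc{X}}$), a nondecreasing sequence lying in a precompact set is norm convergent: a convergent subsequence produces a limit that dominates the whole sequence, and normality then squeezes the tail. Denote the limit by $\tilde{\phi}_n(x)$. As a pointwise limit of nondecreasing functions sandwiched between $\un{\psi}_n$ and $\bar{\psi}_n$, the profile $\tilde{\phi}_n$ is nondecreasing and lies in $\mc{B}_\beta$ with $\un{\psi}_n\le \tilde{\phi}_n\le \bar{\psi}_n$.

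It remains to verify that $\tilde{\phi}_n$ is a fixed point of $\tilde{G}_n$. The decisive observation is that, since the habitat $\mc{H}=\Z$ is discrete, the compact-open topology on $\mc{C}_\beta=C(\Z,\mc{X}_\beta)$ coincides with pointwise convergence, so the continuity assumption (A2) makes $Q$ continuous for pointwise limits. As $\tilde{G}_n[\phi](x)=Q\big[(i\mapsto \phi(\kappa_n(i+x)))\big](0)$ depends on $\phi$ only through its values, the pointwise convergence $\tilde{G}_n^k[\un{\psi}_n]\to \tilde{\phi}_n$ forces $\tilde{G}_n^{k+1}[\un{\psi}_n](x)=\tilde{G}_n\big[\tilde{G}_n^k[\un{\psi}_n]\big](x)\to \tilde{G}_n[\tilde{\phi}_n](x)$ for each $x$. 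Comparing this with $\tilde{G}_n^{k+1}[\un{\psi}_n](x)\to \tilde{\phi}_n(x)$ yields $\tilde{G}_n[\tilde{\phi}_n]=\tilde{\phi}_n$, completing the argument. In short, the only genuinely new work is the monotone-convergence-in-a-normal-cone argument of the second paragraph; the remainder is a faithful transcription of the proof of Lemma~\ref{FixedPoint1}.
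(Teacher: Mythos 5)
Your proposal is correct and follows essentially the same route as the paper: establish the sandwich inequality by transcribing the proof of Lemma \ref{FixedPoint1} with $\tilde{Q}$ and Lemma \ref{UpperLowerSolu2}, obtain pointwise convergence of the iterates from their monotonicity in $k$ together with the precompactness of $\tilde{Q}[\mc{B}_\beta](x)$ (equivalently $Q[\mc{C}_\beta](0)$), and pass to the limit in the fixed-point relation using that the compact-open topology on $C(\Z,\mc{X}_\beta)$ is pointwise convergence. The only difference is cosmetic: you spell out the normal-cone squeezing argument behind ``a monotone sequence in a precompact set converges,'' which the paper leaves implicit in its ``It then follows.''
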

\begin{proof}
By the same arguments as in the proof of Lemma \ref{FixedPoint1}, we
can obtain a similar inequality as \eqref{LimitFunctionIncreasing}:
\begin{equation*}
\un{\psi}_n\le \tilde{G}_n^k[\un{\psi}_n ]\le
\tilde{G}_n^k[\bar{\psi}_n]\le \bar{\psi}_n,\quad \forall
k\in\mathbb{N}.
\end{equation*}
Define $w_{n,1}:=\un{\psi}_n$ and
$w_{n,k+1}:=\tilde{G}_n[w_{n,k}],k\ge 1$. Then
\begin{equation}\label{Zeq1}
w_{n,k+1}(x)=\tilde{Q}\circ
A_{\kappa_n}[w_{n,k}](x)=\tilde{Q}[w_{n,k}(\kappa_n\cdot)](x)=Q[w_{n,k}(\kappa_n(\cdot+x))](0).
\end{equation}
Note that $Q[\mc{C}_\beta]$ is compact and $w_{n,k}$ is
nondecreasing in $k$. It then follows that for any fixed
$x\in\mathbb{R}$, $w_{n,k}(x)$ converges in $\mc{X}_\beta$. Denote
the limit by $\tilde{\phi}_n(x)$. Then $\tilde{\phi}_n(x)$ is
nondecreasing in $x\in\mathbb{R}$ and $\un{\psi}_n\le
\tilde{\phi}_n\le \bar{\psi_n}$. Taking $k\to \infty$ in
\eqref{Zeq1}, we arrive at
$\tilde{\phi}_n(x)=Q[\tilde{\phi}_n(\kappa_n(\cdot+x))](0)$.
Consequently,
\begin{equation*}
\tilde{\phi}_n=\tilde{Q}[\tilde{\phi}_n(\kappa_n\cdot)]=\tilde{Q}\circ
A_{\kappa_n}[\tilde{\phi}_n]=\tilde{G}_n[\tilde{\phi}_n].
\end{equation*}
This completes the proof.
\end{proof}

To overcome the difficulty due to the lack of compactness for
$\tilde{Q}$, we will use the properties of monotone functions
established in the Appendix to show the convergence of a sequence in
$\tilde{Q}[\mc{B}_\beta]$.
\begin{theorem}\label{ThDisDis}
Assume that $\mc{X}=C(M,\R^d)$ and $Q$ satisfies (A1)-(A6). Then
there exists $c\in\mathbb{R}$ such that the semiflow $\{Q^n\}_{n\ge
1}$ on $\mc{C}_\beta$ admits a nondecreasing traveling wave
$\psi(x+cn)$ with speed $c$ and connecting $0$ to $\beta$. Further,
$\psi$ is either left or right continuous.
\end{theorem}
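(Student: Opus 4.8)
The plan is to run the three-step scheme of the proof of Theorem \ref{ThDisCont} essentially verbatim, but with $Q$ replaced by the auxiliary map $\tilde Q:\mc{B}_\beta\to\mc{B}_\beta$, the fixed points $\phi_n$ of $G_n$ replaced by the fixed points $\tilde\phi_n$ of $\tilde G_n=\tilde Q\circ A_{\kappa_n}$ furnished by Lemma \ref{FixedPoint2}, and the compactness of $Q$ in the compact open topology replaced by the Helly-type compactness for monotone functions from $\R$ to $\mc{X}$ established in the Appendix. The whole point of working with $\tilde Q$ is that the shifted profiles constructed below lie in $\tilde Q[\mc{B}_\beta]$, for which only the pointwise precompactness of $\tilde Q[\mc{B}_\beta](x)$ in $\mc{X}_\beta$ is available; the Helly theorem is precisely the device that upgrades this weak compactness into convergence of a subsequence of profiles.

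First I would reproduce Step~1. Using $\tilde\phi_n$ from Lemma \ref{FixedPoint2}, set $a_n:=\sup\{x:\tilde\phi_n(x)\in[0,\delta e_0]_\mc{X}\}$ and $b_n:=\inf\{x:\tilde\phi_n(x)\in[\beta-\delta e_\beta,\beta]_\mc{X}\}$, and define $\tilde\phi_{-,n}(x):=\tilde\phi_n(x+a_n)$, $\tilde\phi_{+,n}(x):=\tilde\phi_n(x+b_n)$. Exactly as before, $\un\psi_n\le\tilde\phi_n\le\bar\psi_n$ forces $-1-(n+\bar c)\le a_n\le b_n\le 1+(n+\bar c)$, and since $\tilde\phi_n=\tilde G_n[\tilde\phi_n]$ and $\tilde Q$ is translation invariant, each $\tilde\phi_{\pm,n}$ belongs to $\tilde Q[\mc{B}_\beta]$, so every value $\tilde\phi_{\pm,n}(x)$ sits in the precompact set $\tilde Q[\mc{B}_\beta](x)$. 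As the $\tilde\phi_{\pm,n}$ are nondecreasing, the Helly-type theorem of the Appendix yields a subsequence (still indexed by $n$) along which $\tilde\phi_{\pm,n}$ converges, at every point of $\R$ outside a common countable set, to nondecreasing limits $\phi_\pm\in\mc{B}_\beta$, which I take to be left-continuous. Passing to a further subsequence I also extract $\xi_\pm$ as the limits of $a_n/n$ and $b_n/n$ in $[-1,1]$ with $\xi_-\le\xi_+$, and set $c_-:=-\bar c\,\xi_-\ge c_+:=-\bar c\,\xi_+$.

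The crux is Step~2: showing $\tilde Q[\phi_\pm](y)=\phi_\pm(y+c_\pm)$ for all $y$ outside a countable set, which, via the translation identity $Q[\phi(\cdot+x)](i)=\tilde Q[\phi](x+i)$, is exactly the traveling-wave property of Definition \ref{DefDisDis}. Here the difficulty is that $Q$ is continuous only for the compact open topology (on $C(\Z,\mc{X})$ this is pointwise convergence on $\Z$), whereas I have merely pointwise-off-a-countable-set convergence of the profiles. I would pass to the limit in $\tilde\phi_n(x)=Q[\tilde\phi_n(\kappa_n(\cdot+x))](0)$ by rewriting $\tilde\phi_n(\kappa_n(i+a_n))=\tilde\phi_{-,n}\big(\kappa_n(i+a_n)-a_n\big)$ and noting $\kappa_n(i+a_n)-a_n\to i-c_-$; a monotone squeeze (the standard fact that $g_n\to g$ off a countable set together with $s_n\to s$ at a continuity point of $g$ gives $g_n(s_n)\to g(s)$, encapsulated in the Appendix analog of Proposition \ref{SequenceInC}) makes these evaluations converge to $\phi_-(i-c_-)$ whenever $\phi_-$ is continuous at each $i-c_-$. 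Since the discontinuity set of $\phi_-$ is countable, the set of $y$ for which some $i+y$ is a discontinuity point is countable; for $y$ outside it every evaluation of $\phi_-(\cdot+y)$ on $\Z$ is a genuine pointwise limit, and the compact-open continuity of $Q$ then yields $\tilde Q[\phi_-](y)=\phi_-(y+c_-)$, and likewise for $\phi_+$. The normalizations $\phi_-(0)\in(0,\delta e_0]_\mc{X}$ and $\phi_+(0)\in[\beta-\delta e_\beta,\beta)_\mc{X}$ (read off the one-sided limits at the possible jump at $0$), the identities $\phi_-(-\infty)=0$ and $\phi_+(+\infty)=\beta$ via Lemma \ref{IslatedEquilibria} and the choice of $\delta$, and the ordering of $\phi_-(+\infty)$ and $\phi_+(-\infty)$ via the nested-subsequence dichotomy, all transfer verbatim from Theorem \ref{ThDisCont}.

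Finally, Step~3 rules out the intermediate case $\phi_-(+\infty)=\alpha=\phi_+(-\infty)$ with $\alpha\in E\setminus\{0,\beta\}$: the speed-comparison Lemma \ref{MiniWaveSpeedDSF}, applied now to $\tilde Q$ (its proof uses only (A3) and the definitions \eqref{defcmiunsstar} and \eqref{Defcplusstar}), gives $c_+\ge c_-^*(\alpha,\beta)$ and $c_-\le-c_+^*(0,\alpha)$, whence $0\ge c_+-c_-\ge c_-^*(\alpha,\beta)+c_+^*(0,\alpha)$, contradicting (A6). Thus one of $\phi_\pm$ connects $0$ to $\beta$ and provides the desired nondecreasing traveling wave $\psi(x+cn)$, with $c=c_-$ or $c=c_+$ accordingly. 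Since $\psi$ is a bounded monotone function it has at most countably many discontinuities, and its left- and right-continuous modifications agree off this countable set and hence both satisfy the identity of Definition \ref{DefDisDis}; choosing either one yields the asserted left- or right-continuity. I expect Step~2, namely justifying the passage of the limit through $\tilde Q$ under only pointwise-almost-everywhere convergence, to be the main obstacle, and it is exactly there that the Appendix's Helly-type theorem and the pointwise precompactness of $\tilde Q[\mc{B}_\beta](x)$ are indispensable.
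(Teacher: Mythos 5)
Your proposal follows the paper's own proof essentially verbatim: the same reduction to $\tilde{Q}$ and the fixed points of Lemma \ref{FixedPoint2}, the same extraction of shifted profiles converging off a countable set via the Appendix, the same use of the monotone-squeeze Proposition \ref{Convergence} together with countable translates of the exceptional set (Proposition \ref{SetProp1}) to pass the limit through $Q$, and the same counter-propagation ending via Lemma \ref{MiniWaveSpeedDSF} and (A6). The only cosmetic deviations --- taking both limit profiles left-continuous instead of left/right-continuous (compensated by your one-sided normalizations at $0$), and describing Theorem \ref{HellyTh} as if it were itself a selection theorem, whereas in the paper the subsequence is first extracted on $\mathbb{Q}$ by diagonalization using precompactness of $Q[\mc{C}_\beta](0)$ and Theorem \ref{HellyTh} then upgrades this to convergence off a countable set --- do not affect correctness.
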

\begin{proof}
As in the proof of Theorem \ref{ThDisCont}, we define
\begin{equation*}
\tilde{a}_n:=\sup_{x\in\R}\{\tilde{\phi}_n(x)\in [0,\delta
e_0]_{\mc{X}}\},\quad
\tilde{b}_n:=\inf_{x\in\R}\{\tilde{\phi}_n(x)\in[\beta-\delta
e_\beta,\beta]_{\mc{X}}\}.
\end{equation*}
Then $-1-(n+\bar{c})\le \tilde{a}_n\le \tilde{b}_n\le
1+(n+\bar{c})$. Note that for any $x\in\R$, we have
\begin{equation*}
\tilde{\phi}_n(x)=\tilde{G}_n[\tilde{\phi}_n](x)
=\tilde{Q}[\tilde{\phi}_n(\kappa_n\cdot)](x)=Q[\tilde{\phi}_n
(\kappa_n(\cdot+x))](0)\in Q[\mc{C}_\beta](0).
\end{equation*}
Since $Q[\mc{C}_\beta](0)$ is precompact in $\mc{X}_\beta$, it then
follows that for any $x\in\R$, $\tilde{\phi}_n(x^-):=\lim_{y\uparrow
x} \tilde{\phi}_n(y)$ and $\tilde{\phi}_n(x^+):=\lim_{y\downarrow x}
\tilde{\phi}_n(y)$ both exist. And hence, by the definitions of
$\tilde{a}_n$ and $\tilde{b}_n$, we have
\begin{equation*}
\tilde{\phi}_n(\tilde{a}_n^-)\le \delta e_0\le \beta-\delta
e_\beta\le \tilde{\phi}_n(\tilde{b}_n^+),
\end{equation*}
but
\begin{equation*}
\tilde{\phi}_n(\tilde{a}_n^+)\not\in [0,\delta e_0]]_{\mc{X}}\quad
\text{and}\quad \tilde{\phi}_n(\tilde{b}_n^-)\not\in [[\beta-\delta
e_\beta,\beta]_{\mc{X}}.
\end{equation*}
Define $\tilde{\phi}_{-,n}(x):=\tilde{\phi}_n(x+\tilde{a}_n)$ and
$\tilde{\phi}_{+,n}(x):=\tilde{\phi}_n(x+\tilde{b}_n)$. Then
\begin{equation*}
\tilde{\phi}_{-,n}(0^-)\le \delta e_0\le \beta-\delta e_\beta\le
\tilde{\phi}_{+,n}(0^+),
\end{equation*}
but
\begin{equation*}
\tilde{\phi}_{-,n}(0^+)\not\in [0,\delta e_0]]_{\mc{X}}\quad
\text{and}\quad \tilde{\phi}_{+,n}(0^-)\not\in [[\beta-\delta
e_\beta,\beta]_{\mc{X}}.
\end{equation*}
Since $\tilde{\phi}_n=\tilde{G}_n[\tilde{\phi}_n]$, we have
\begin{equation*}
\tilde{\phi}_{-,n}(x)=\tilde{G}_n[\tilde{\phi}_n] (x+\tilde{a}_n)
=Q[\tilde{\phi}_n(\kappa_n(\cdot+\tilde{a}_n+x))](0)\in
Q[\mc{C}_\beta](0).
\end{equation*}
Similarly,
$\tilde{\phi}_{+,n}(x)=Q[\tilde{\phi}_n(\kappa_n(\cdot+\tilde{b}_n+x))](0)\in
Q[\mc{C}_\beta](0)$. Let $\mathbb{Q}$ be the set of all rational
numbers, and  $\{x_l\}_{l\ge 1}\subset \mathbb{Q}$ be an increasing
sequence converging to $x$. Using
$\tilde{\phi}_n=\tilde{G}_n[\tilde{\phi}_n]$ again, we see that for
any $i\in\Z$ and $l\ge 1$,
\begin{equation*}
\tilde{\phi}_n(\kappa_n(i+\tilde{a}_n+x_l))
=Q[\tilde{\phi}_n(\kappa_n(\cdot
+\kappa_n(i+\tilde{a}_n+x_l)))](0)\in Q[\mc{C}_\beta](0).
\end{equation*}
Similarly, $\tilde{\phi}_n(\kappa_n(i+\tilde{b}_n+x_l))\in
Q[\mc{C}_\beta](0)$. Note that $Q[\mc{C}_\beta](0)$ is precompact in
$\mc{X}_\beta$ and that $\mathbb{Q}$ is countable. It then follows
that there exists a subindex (still denoted by $\{n\}$) and
$\xi_-\le \xi_+\in\mathbb{R}$ such that $\lim_{n\to
\infty}\tilde{a}_{n}/n=\xi_-, \lim_{n\to
\infty}\tilde{b}_{n}/n=\xi_+$ and that for any $x\in\mathbb{Q}$,
$i\in\Z$ and $l\ge 1$, sequences $\tilde{\phi}_{\pm,n}(x),
\tilde{\phi}_n(\kappa_n(i+\tilde{a}_n+x_l))$ and
$\tilde{\phi}_n(\kappa_n(i+\tilde{b}_n+x_l))$ converge in
$\mc{X}_\beta$. And hence, the following limits
\begin{equation*}
\lim_{l\to \infty}\lim_{n\to
\infty}\tilde{\phi}_{-,n}(x_l)=\lim_{l\to \infty}\lim_{n\to
\infty}Q[\tilde{\phi}_n(\kappa_n(\cdot+\tilde{a}_n+x_l))](0)
\end{equation*}
and
\begin{equation*}
\lim_{l\to \infty}\lim_{n\to
\infty}\tilde{\phi}_{+,n}(x_l)=\lim_{l\to \infty}\lim_{n\to
\infty}Q[\tilde{\phi}_n(\kappa_n(\cdot+\tilde{b}_n+x_l))](0)
\end{equation*}
both exist. This means the limits
\begin{equation*}
\lim_{y\in\mathbb{Q},y\uparrow x}\lim_{n\to
\infty}\tilde{\phi}_{\pm,n}(y)\quad\text{and}\quad
\lim_{y\in\mathbb{Q},y\downarrow x}\lim_{n\to
\infty}\tilde{\phi}_{\pm,n}(y)
\end{equation*}
exist for all $x\in\R$. Define
\begin{equation*}
\hat{\phi}_-(x):=
\begin{cases}
\lim_{n\to \infty}\tilde{\phi}_{-,n}(x),& x\in\mathbb{Q}\\
\lim_{y\in\mathbb{Q},y\uparrow x}\lim_{n\to
\infty}\tilde{\phi}_{-,n}(x), & x\in \R\setminus \mathbb{Q},
\end{cases}
\end{equation*}
and
\begin{equation*} \hat{\phi}_+(x):=
\begin{cases}
\lim_{n\to \infty}\tilde{\phi}_{+,n}(x),& x\in\mathbb{Q}\\
\lim_{y\in\mathbb{Q},y\downarrow x}\lim_{n\to
\infty}\tilde{\phi}_{+,n}(x), & x\in \R\setminus \mathbb{Q}.
\end{cases}
\end{equation*}
Clearly, $\hat{\phi}_\pm$ are nondecreasing functions in
$\mc{B}_\beta$ and for any $x\in \R\setminus \mathbb{Q}$,
$\hat{\phi}_\pm(x^\pm)$ all exist. Hence, we see from Theorem
\ref{HellyTh}, that there exists a countable subset $\Gamma_1$ of
$\R$ such that $\tilde{\phi}_{\pm,n}(x)$ converges to
$\hat{\phi}_\pm(x)$ for all $x\in\R\setminus \Gamma_1$. Define
\begin{equation*}
\tilde{\phi}_-(x):=\lim_{y\in\mathbb{Q},y\uparrow x}\lim_{n\to
\infty}\tilde{\phi}_{-,n}(y),\quad \forall x\in \R.
\end{equation*}
and
\begin{equation*}
\tilde{\phi}_+(x):=\lim_{y\in\mathbb{Q},y\downarrow x}\lim_{n\to
\infty}\tilde{\phi}_{+,n}(y),\quad \forall x\in \R.
\end{equation*}
Thus, $\tilde{\phi}_- (x)$ is left continuous and
$\tilde{\phi}_+(x)$ is right continuous. Note that
$\tilde{\phi}_\pm(x)=\hat{\phi}_\pm(x)$ for all $x\in \R\setminus
\mathbb{Q}$. It then follows that $\tilde{\phi}_{\pm,n}(x)$
converges to $\tilde{\phi}_\pm(x)$ for $x\in \R\setminus \Gamma_2$,
where $\Gamma_2:=\mathbb{Q}\cup \Gamma_1$ is also countable.

Let $y_k\in\R\setminus \Gamma_2$ be an increasing sequence
converging to $0$ and $z_k\in \R\setminus \Gamma_2$ be an increasing
sequence converging to $1$, respectively. Note that
\begin{equation*}
\tilde{\phi}_-(0)=\lim_{k\to \infty}\tilde{\phi}_- (y_k)=\lim_{k\to
\infty}\lim_{n\to \infty}\tilde{\phi}_{-,n} (y_k) \le \delta e_0,
\end{equation*}
and
\begin{equation*}
\tilde{\phi}_-(1)=\lim_{k\to \infty}\tilde{\phi}_- (z_k)=\lim_{k\to
\infty}\lim_{n\to \infty}\tilde{\phi}_{-,n} (z_k)\not\in[0,\delta
e_0]]_{\mc{X}}.
\end{equation*}
Similarly, we have $\tilde{\phi}_+(0)\ge \beta-\delta e_\beta$ but
$\tilde{\phi}_+(-1)\not\in[[\beta-\delta e_\beta,\beta]_{\mc{X}}$.
Define $c_-:=-\bar{c}\xi_-$ and $c_+:=-\bar{c}\xi_+$. Obviously,
$c_-\ge c_+$ since $\xi_-\le \xi_+$. Now we want to prove
$Q[\tilde{\phi}_-(\cdot+x)](0)=\tilde{\phi}_-(x+c_-), \, \forall
x\in\R\setminus \Gamma_2$. Note that
\begin{equation*}
\lim_{n\to\infty}\kappa_{n}(x+\tilde{a}_n)-\tilde{a}_n
=\lim_{n\to\infty} \left(x+\bar{c}\cdot\f{x+\tilde{a}_n}{n}\right)
=x-c_-.
\end{equation*}
It then follows that
\begin{eqnarray*}
\tilde{\phi}_-(x+c_-)&&=\lim_{n\to
\infty}\tilde{\phi}_{-,n}(x+c_-)=\lim_{n\to
\infty}\tilde{\phi}_n(x+c_-+\tilde{a}_n)=\lim_{n\to \infty}\tilde{G}_n[\tilde{\phi}_n](x+c_-+\tilde{a}_n)\nonumber\\
&&=\lim_{n\to \infty}
\tilde{Q}[\tilde{\phi}_n(\kappa_n\cdot)](x+c_-+\tilde{a}_n)=\lim_{n\to
\infty}\tilde{Q}[
\tilde{\phi}_n(\kappa_n(\cdot+a_n))](x+c_-)\nonumber\\
&&=\lim_{n\to \infty}\tilde{Q}[
\tilde{\phi}_{-,n}(\kappa_n(\cdot+\tilde{a}_n)-\tilde{a}_n)](x+c_-)\nonumber\\
&&=\lim_{n\to \infty} Q[
\tilde{\phi}_{-,n}(\kappa_n(\cdot+x+c_-+\tilde{a}_n)-\tilde{a}_n)](0).
\end{eqnarray*}
In view of Proposition \ref{Convergence}, we obtain that
$\tilde{\phi}_-(x+c_-)=Q[\tilde{\phi}_-(\cdot+x)](0)$
$x\in\R\setminus \Gamma_2$. A similar result also holds for
$\tilde{\phi}_+$.

Now, the same argument as in the proof of Theorem \ref{ThDisCont}
completes the proof.
\end{proof}

\subsection{Time-periodic semiflows}

Let $\omega\in \mc{T}$ be a positive number, where $\mc{T}=\R^+$ or
$\Z^+$. Recall that a family of mappings $\{Q_t\}_{t\in \mc{T}}$ is
said to be an $\omega$-time periodic semiflow on a metric space
$\mc{E}\subset\mc{C}$ provided that it has the following properties:
\begin{enumerate}
\item[(i)]$Q_0[\phi]=\phi,\,  \forall \phi\in \mc{E}$.
\item[(ii)]$Q_t\circ Q_\omega[\phi]=Q_{t+\omega}[\phi],\, \forall t\ge
0,\, \phi\in \mc{E}$.
\item[(iii)]$Q_t[\phi]$ is continuous jointly in $(t,\phi)$ on
$[0,\infty)\times \mc{E}$.
\end{enumerate}
The mapping $Q_\omega$ is called the Poincar\'e map associated with
this periodic semiflow.

\begin{definition}\label{DefPerTime}
\begin{enumerate}
\item[(i)]In the case where $\mc{H}=\R$, $U(t,x+ct)$ is said to be
an $\omega$-time periodic traveling wave with speed $c$ of the
semiflow $\{Q_t\}_{t\in \mc{T}}$ if $Q_t[U(0,\cdot)](x)=U(t,x+ct)$
and $U(t,x)=U(t+\omega,x)$ for all $t\in \mc{T},x\in \R$.
\item[(ii)]In the case where $\mc{H}=\Z$, $U(t,x+ct)$ is said to be
an $\omega$-time periodic traveling wave with speed $c$ of the
semiflow $\{Q_t\}_{t\in \mc{T}}$ if there exists a countable subset
$\Gamma\subset \R$ such that $Q_t[U(0,\cdot+x)](0)=U(t,x+ct)$ for
all $t\in \mc{T},x\in \R$ and $U(t,x)=U(t+\omega,x)$ for all $t\in
\mc{T},x\in \R\setminus\Gamma$.
\end{enumerate}
\end{definition}

\begin{theorem}\label{ThTimePeri}
Let $\beta(t)$ be a strongly positive $\omega$-time periodic orbit
of $\{Q_t\}_{t\in\mc{T}}$ restricted on $\mc{X}$. Assume that
$Q:=Q_\omega$ satisfies hypotheses (A1)-(A6) with $\beta=\beta(0)$.
Then $\{Q_t\}_{t\in\mc{T}}$ admits a traveling wave $U(t,x+ct)$ with
$U(t,-\infty)=0$ and $U(t,+\infty)=\beta(t)$ uniformly for $t\in
\mc{T}$. Furthermore, $U(t,x)$ is nondecreasing in $x\in\R$.
\end{theorem}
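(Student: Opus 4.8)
The plan is to reduce the time-periodic problem to the discrete-time results already established by working with the Poincar\'e map $Q=Q_\omega$. Since $Q$ satisfies (A1)--(A6) with $\beta=\beta(0)$ by hypothesis, Theorem \ref{ThDisCont} (when $\mc{H}=\R$) or Theorem \ref{ThDisDis} (when $\mc{H}=\Z$) yields a nondecreasing profile $\psi$ and a discrete speed $c'\in\R$ such that $Q_\omega[\psi](x)=\psi(x+c')$ (in the appropriate sense, with a countable exceptional set in the lattice case) and $\psi$ connects $0$ to $\beta(0)$. The natural candidate for the periodic wave is obtained by transporting $\psi$ along the semiflow: setting $c:=c'/\omega$, I would define
\begin{equation*}
U(t,y):=Q_t[\psi](y-ct),\qquad t\ge 0,\ y\in\R,
\end{equation*}
(and $U(t,y):=Q_t[\psi(\cdot+y-ct)](0)$ in the lattice case). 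By construction $U(0,\cdot)=\psi$ and $Q_t[U(0,\cdot)](x)=U(t,x+ct)$, so the defining traveling-wave identity in Definition \ref{DefPerTime} holds automatically; the real content is to verify $\omega$-periodicity, monotonicity, and the boundary behavior.

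First I would check periodicity. Using the semiflow property $Q_{t+\omega}=Q_t\circ Q_\omega$, the relation $Q_\omega[\psi]=\psi(\cdot+c\omega)$, and the translation invariance (A1) of $Q_t$ (valid for the whole semiflow since the habitat is homogeneous), a short computation gives
\begin{equation*}
U(t+\omega,y)=Q_t[Q_\omega[\psi]](y-ct-c\omega)=Q_t[\psi](y-ct)=U(t,y),
\end{equation*}
so that $U(t,\cdot)=U(t+\omega,\cdot)$ (off a countable set in the lattice case, as required). Monotonicity in $y$ follows because $\psi$ is nondecreasing and each $Q_t$ is monotone and translation invariant: for $y_1\le y_2$ one rewrites $U(t,y_2)=Q_t[\psi(\cdot+(y_2-y_1))](y_1-ct)\ge Q_t[\psi](y_1-ct)=U(t,y_1)$.

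Next I would establish the limits $U(t,-\infty)=0$ and $U(t,+\infty)=\beta(t)$. For fixed $t$, letting $x\to+\infty$, the translates $\psi(\cdot+x)$ converge to the constant $\beta(0)$ in the compact-open topology; the continuity (A2) of $Q_t$ then gives $Q_t[\psi](x)=Q_t[\psi(\cdot+x)](0)\to Q_t[\beta(0)]=\beta(t)$, the last equality because $\beta(t)$ is the orbit of $\beta(0)$, and the limit at $-\infty$ is handled identically using $Q_t[0]=0$. The principal technical point is to upgrade these pointwise-in-$t$ limits to convergence uniform in $t\in\mc{T}$, which is what the statement demands. By periodicity it suffices to treat the compact interval $t\in[0,\omega]$. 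I would note that the map $x\mapsto\psi(\cdot+x)$ extends continuously to $x=+\infty$ with value the constant $\beta(0)$, so that $\{\psi(\cdot+x):x\in[0,+\infty]\}$ is compact in $\mc{C}_\beta$; the joint continuity (iii) of the semiflow then renders $(t,x)\mapsto Q_t[\psi(\cdot+x)](0)$ continuous on the compact set $[0,\omega]\times[0,+\infty]$, hence uniformly continuous, and its value at $x=+\infty$ is $\beta(t)$. Since $ct$ stays bounded for $t\in[0,\omega]$, this forces $U(t,y)=Q_t[\psi](y-ct)\to\beta(t)$ uniformly in $t$ as $y\to+\infty$. I expect this uniform boundary estimate to be the main obstacle; the remaining properties, and the passage from the $\mc{H}=\R$ construction to the $\mc{H}=\Z$ one via the associated map $\tilde{Q}$, are routine adaptations of the arguments in the proofs of Theorems \ref{ThDisCont} and \ref{ThDisDis}.
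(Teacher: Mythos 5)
Your proposal is correct and follows essentially the same route as the paper's proof: apply Theorem \ref{ThDisCont} (resp.\ Theorem \ref{ThDisDis} via $\tilde{Q}$) to the Poincar\'e map $Q_\omega$, set $U(t,x)=Q_t[\psi](x-ct)$ (the paper writes this as $T_{ct}Q_t[\psi](x)$), and verify the wave identity, $\omega$-periodicity via the semigroup property and translation invariance, monotonicity, and the boundary limits. In fact you supply more detail than the paper does on the one point it leaves implicit, namely that the limits $U(t,\pm\infty)$ hold uniformly in $t$, which you obtain from joint continuity of the semiflow on the compactified set of translates.
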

\begin{proof}
Case 1. $\mc{H}=\R$. Since the map $Q_\omega$ satisfies (A1)-(A6),
there exits $c\in \R$ and a nondecreasing function $\phi\in \mc{C}$
connecting $0$ to $\beta(0)$ such that
$Q_\omega[\phi](x)=\phi(x+c\omega)$. Clearly, $T_{c\omega}Q_\omega
[\phi]=\phi$. Define $U(t,x):=T_{ct} Q_t[\phi](x)$. Then we have
$U(t,x+ct)=Q_t[\phi](x)=Q_t[U(0,\cdot)](x)$, and
\begin{equation*}
U(t+\omega,x)=T_{ct+c\omega}Q_{t+\omega}[\phi](x)=T_{ct} Q_t
T_{c\omega} Q_\omega[\phi](x)=T_{ct} Q_t [\phi](x)=U(t,x).
\end{equation*}
Note that $Q_t[\beta(0)]=\beta(t)$ and that $\phi$ is nondecreasing
and connecting $0$ to $\beta(0)$. It then follows that
$U(t,-\infty)=0$ and $U(t,+\infty)=\beta(t)$.

Case 2. $\mc{H}=\Z$. Since the map $Q_\omega$ satisfies (A1)-(A6),
there exits $c\in \R$, a countable subset $\Gamma\subset \R$ and a
nondecreasing function $\phi\in \mc{B}$ connecting $0$ to $\beta(0)$
such that $\tilde{Q}_\omega[\phi](x)=\phi(x+c\omega),\forall x\in
\R\setminus \Gamma$. Clearly, $T_{c\omega}\tilde{Q}_\omega
[\phi](x)=\phi(x),\forall x\in \R\setminus \Gamma$. Define
$U(t,x):=T_{ct} \tilde{Q}_t[\phi](x)$. Thus, we have
\begin{equation*}
U(t,x+ct)=\tilde{Q}_t[\phi](x)=\tilde{Q}_t[U(0,\cdot)](x)=Q_t[U(0,\cdot+x)](0),
\, \, \forall x\in \R,
\end{equation*}
and
\begin{equation*}
U(t+\omega,x)=T_{ct+c\omega}\tilde{Q}_{t+\omega}[\phi](x)=T_{ct}
\tilde{Q}_t T_{c\omega} \tilde{Q}_\omega[\phi](x)=T_{ct} \tilde{Q}_t
[\phi](x)=U(t,x),
\end{equation*}
for all $x\in \R\setminus \Gamma$. Note that
$Q_t[\beta(0)]=\beta(t)$ and that $\phi$ is nondecreasing and
connecting $0$ to $\beta(0)$. It then follows that $U(t,-\infty)=0$
and $U(t,+\infty)=\beta(t)$.
\end{proof}

\subsection{Continuous-time semiflows in a continuous habitat}
In this subsection, we consider continuous-time semiflows in the
continuous habitat $\mc{H}=\R$. Recall that a family of mappings
$\{Q_t\}_{t\ge 0}$ is said to be a semiflow on a metric space
$\mc{E}\subset\mc{C}$ provided that $Q_t:\mc{E}\to \mc{E}$ satisfies
the following properties:
\begin{enumerate}
\item[(1)]$Q_0[\phi]=\phi,\, \forall \phi\in \mc{E}$.
\item[(2)]$Q_t\circ Q_s[\phi]=Q_{t+s}[\phi],\, \forall  t,s\ge 0, \,
\phi\in \mc{E}$.
\item[(3)]$Q_t[\phi]$ is continuous jointly in $(t,\phi)$ on
$[0,\infty)\times \mc{E}$.
\end{enumerate}

Before moving to the study of traveling waves of the semiflow
$\{Q_t\}_{t\ge 0}$, we first investigate the spatially homogeneous
system, that is, the system restricted on $\mc{X}$. Let $\beta\gg 0$
be an equilibrium in $\mc{X}$. For each $t>0$, we use $\Sigma_t$ to
denote the set of all fixed points of the map $Q_t$ restricted on
$\mc{X}_\beta$. Clearly, the equilibrium set of the semiflow is
$\Sigma:=\cap_{t>0}\Sigma_t$, which is a subset of $\Sigma_t$ for
any $t>0$. The subsequent results indicates that the instability of
intermediate equilibria of the semiflow implies the nonordering
property of all intermediate fixed points of each time-$t$ map.
\begin{proposition}\label{BistabilitySemiflow}
For any given $t>0$, if the map $Q_t$ satisfies the bistability
assumption (A5$'$) with $E=\Sigma$, then $Q_t$ satisfies (A5) with
$E=\Sigma_t$.
\end{proposition}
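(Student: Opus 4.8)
The plan is to deduce the whole statement from Proposition~\ref{BistabilityMap} by verifying that the single map $Q_t$ satisfies (A5$'$) with the possibly larger set $E=\Sigma_t$ in place of $\Sigma$. Observe first that the strong stability of $0$ from above and of $\beta$ from below, together with the eventual strong monotonicity of $Q_t$, are literally part of the hypothesis (A5$'$) with $E=\Sigma$ and make no reference to the intermediate set; in particular the stability clause demanded by the target conclusion (A5) is inherited verbatim. Hence it suffices to show that $\Sigma_t\setminus\{0,\beta\}$ is totally unordered, and for this I would check the one remaining clause of (A5$'$) for $Q_t$, namely that every $\alpha\in\Sigma_t\setminus\{0,\beta\}$ is strongly unstable from below and from above. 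Once this is available, Proposition~\ref{BistabilityMap} applied to the map $Q_t$ with $E=\Sigma_t$ yields the desired nonordering, and (A5) with $E=\Sigma_t$ follows.

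The gap to be bridged is that (A5$'$) only supplies the strong instability of the genuine equilibria $\Sigma\setminus\{0,\beta\}=\left(\bigcap_{s>0}\Sigma_s\right)\setminus\{0,\beta\}$, whereas $\Sigma_t$ may contain fixed points of $Q_t$ that are not equilibria of the semiflow. I would use the semiflow itself as the bridge. First, eventual strong monotonicity forces $0\ll\alpha\ll\beta$ for every $\alpha\in\Sigma_t\setminus\{0,\beta\}$, since $\alpha=Q_t^{m_1}[\alpha]\gg Q_t^{m_1}[0]=0$ and $\alpha=Q_t^{m_1}[\alpha]\ll Q_t^{m_1}[\beta]=\beta$. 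Next, because $Q_s\circ Q_t=Q_t\circ Q_s$, the $t$-periodic orbit $O_\alpha:=\{Q_s[\alpha]:0\le s\le t\}$ lies entirely in $\Sigma_t$. If $O_\alpha=\{\alpha\}$, then $\alpha\in\Sigma$ and its strong instability is granted directly by (A5$'$); thus the real content is to establish the strong instability (for $Q_t$) of the points carried by the \emph{nonconstant} periodic orbits.

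This last point is the main obstacle. My approach would be contrapositive: if some $\alpha\in\Sigma_t\setminus\{0,\beta\}$ failed to be strongly unstable from below, I would try to produce a $Q_t$-subsolution $u_0\le\alpha$, $u_0\neq\alpha$, whose forward iterates increase to $\alpha$; upgrade it to a \emph{semiflow} subsolution, for instance by passing to $v:=\sup_{0\le s\le t}Q_s[u_0]$, which the commutation relation and $t$-periodicity render a subsolution for every time ($Q_\tau[v]\ge v$ for all $\tau\ge 0$); and then let the semiflow carry $v$ monotonically up to an equilibrium $\gamma=\lim_{s\to\infty}Q_s[v]\in\Sigma$. The strong instability of $\gamma$ from below furnished by (A5$'$) would contradict this monotone convergence by the very endgame of Proposition~\ref{BistabilityMap} (taking the extremal $\eta$ with $u_n\le\gamma-\eta e_\gamma$ and pushing it down one step through $Q_t[\gamma-\eta e_\gamma]\ll\gamma-\eta e_\gamma$). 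The delicate part, which I expect to be the crux, is guaranteeing that the monotonically attained limit $\gamma$ is an \emph{intermediate} equilibrium rather than $0$ or $\beta$: the supremum over a nonconstant orbit can escape above $\alpha$, so a priori the constructed subsolution may flow all the way up to $\beta$. Controlling this requires the nonordering theory of periodic orbits for eventually strongly monotone semiflows, and it is precisely here that the instability of the intermediate equilibria must be leveraged to exclude order-stable intermediate recurrence and thereby trap $\gamma$ strictly between $0$ and $\beta$.
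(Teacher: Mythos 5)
Your reduction to Proposition~\ref{BistabilityMap} requires you to first verify (A5$'$) for $Q_t$ with $E=\Sigma_t$, i.e.\ to prove that every point of $\Sigma_t\setminus\{0,\beta\}$ (in particular every point on a nonconstant $t$-periodic orbit) is strongly unstable from below and above, and your argument for this has a genuine gap at its very first step. The negation of strong instability from below (Definition~\ref{DefStabFix}) says only: for every $\delta>0$ and every unit $e\in Int(\mc{X}^+)$ there exists $\eta\in(0,\delta]$ with $Q_t[\alpha-\eta e]\not\ll \alpha-\eta e$. In an ordered Banach space $\not\ll$ does not imply $\ge$, so this yields no subsolution $u_0\le\alpha$ with $Q_t[u_0]\ge u_0$, and the contrapositive machinery cannot get started. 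Two later steps are also unsupported: the supremum $v=\sup_{0\le s\le t}Q_s[u_0]$ need not exist, since $\mc{X}=C(M,\R^d)$ is not order complete; and the part you yourself flag as the crux --- showing the limiting equilibrium $\gamma$ is neither $0$ nor $\beta$ --- is essentially the content of the proposition, so deferring it to ``the nonordering theory of periodic orbits'' leaves the argument circular. Note also that your target is strictly stronger than what (A5) demands: (A5) only asks that $\Sigma_t\setminus\{0,\beta\}$ be totally unordered, and nothing in the hypotheses guarantees that fixed points of $Q_t$ lying on nonconstant periodic orbits are strongly unstable in the order sense of Definition~\ref{DefStabFix}.

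The paper never attempts to establish instability of the spurious fixed points; it proves the nonordering directly in two steps. First, any $u\in\Sigma\setminus\{0,\beta\}$ is unordered with any $v\in\Sigma_t\setminus\{0,\beta\}$: if $u\ll v$, strong instability of $u$ from above gives $u_0=u+\delta e\in[[u,v]]_{\mc{X}}$ with $Q_t[u_0]\gg u_0$, and the convergence criterion for monotone semiflows (\cite[Theorem 1.2.1]{Smith}) makes the iterates converge to an equilibrium of the \emph{semiflow}; this is precisely how the limit is forced into $\Sigma$, where (A5$'$) supplies strong instability from below and the endgame of Proposition~\ref{BistabilityMap} gives a contradiction. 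Second, two ordered elements $u<v$ of $\Sigma_t\setminus\Sigma$ cannot exist, because $[u,v]_{\mc{X}}$ would have to meet $\Sigma$: choosing $u\ll u_n\ll u_{n+1}\ll v$ on the segment joining them, the omega limit sets satisfy $u\le\omega(u_n)\le v$ and increase (\cite[Theorem 1.3.7]{Smith}); a compactness argument in the Hausdorff metric shows a subsequential limit of $\{\omega(u_n)\}$ is an invariant set that must be a singleton, hence an equilibrium in $[u,v]_{\mc{X}}$, contradicting the first step (and $u,v\notin\Sigma$). These two steps are what would have to replace your contrapositive for the proof to close.
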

\begin{proof}
Let $t_0>0$ be given. We first show that any two points $u\in
\Sigma\setminus \{0,\beta\}$ and $v\in \Sigma_{t_0}\setminus
\{0,\beta\}$ are unordered. Assume, for the sake of contradiction,
that $u$ and $v$ are ordered. Without loss of generality, we also
assume that $u<v$. Then the eventual strong monotonicity implies
that $u\ll v$. Since $u$ is strongly unstable from above, there
exist a unit vector $e\in Int(\mc{X}^+)$ and a number $\delta>0$
such that $Q_{t_0}[u+\delta e]\gg u+\delta e$ with $u+\delta e\in
[[u,v]]_{\mc{X}}$. From \cite[Theorem 1.2.1]{Smith}, we see that
$(Q_{t_0})^n[u+\delta e]$ is eventually strongly increasing and
converges to some $\alpha\in \Sigma$. Note that
$\alpha\in[[u,v]_{\mc{X}}$ is strongly  unstable from below. Hence,
by the same arguments as in the proof of Proposition
\ref{BistabilityMap}, we obtain a contradiction.

Next we show the set $\Sigma_{t_0}\setminus\Sigma$ is unordered. For
this purpose, we see from the first step that it suffices to prove
that for any two ordered elements $u<v$ in
$\Sigma_{t_0}\setminus\Sigma$, $[u,v]_{\mc{X}}\cap \Sigma
\neq\emptyset$. Indeed, by the eventual strong monotonicity, we have
$u\ll v$. Then, we can choose a sequence $\{u_n\}$ on the segment
connecting $u$ and $v$ such that $u\ll u_n \ll u_{n+1}\ll v, \quad
\forall n\geq 1$. By \cite[Theorem 1.3.7]{Smith}, it follows that $
\omega (u)\leq \omega (u_n)\leq \omega (u_{n+1})\leq \omega(v),
\forall n\geq 1$. Clearly, we have $\omega (u)=\{Q_tu: \, t\in
[0,t_0]\}$ and $\omega (v)=\{Q_tv: \, t\in [0,t_0]\}$, and hence
$u\leq \omega (u_n)\leq v,\, \forall n\geq 1$. Note that
$\cup_{n\geq 1}\omega (u_n)$ is contained in the compact set
$\overline{Q_{t_0}[\mc{X}_\beta]}$. In the compact metric space
consisting of all nonempty compact subsets of
$\overline{Q_{t_0}[\mc{X}_\beta]}$ with Hausdorff distance $d_H$,
the sequence $\{\omega (u_n):\, n\ge 1\}$ has a convergent
subsequence. Without loss of generality, we assume that for some
nonempty compact set $\varpi\subset
\overline{Q_{t_0}[\mc{X}_\beta]}$, $\lim_{n\to \infty}d_H(\omega
(u_n), \varpi)=0$. Since each $\omega (u_n)$ is invariant for the
semiflow $\{Q_t\}_{t\geq 0}$, so is the compact set $\varpi$, that
is, $Q_t\varpi=\varpi, \, \forall t\geq 0$. For any given $x,y\in
\omega$, there exist two sequences of points $x_n,y_n\in\varpi
(u_n)$ such that $x_n\to x$ and $y_n\to y$ as $n\to \infty$. Since
$\omega (u_n)\leq \omega (u_{n+1})$, we have $x_n\leq y_{n+1}$ and
$y_n\leq x_{n+1},\, \forall n\geq 1$. Letting $n\to \infty$, we then
have $x\leq y$ and $y\leq x$, and hence $x=y$. This implies that
$\varpi$ is a singleton, that is, $\varpi=\{\alpha\}$. By the
invariance of $\varpi$ for the semiflow, we see that $\alpha$ is an
equilibrium. Since $u\leq \omega (u_n)\leq v,\, \forall n\geq 1$, it
follows that $\alpha\in [u,v]_X$.
\end{proof}

For a continuous-time semiflow $\{Q_t\}_{t\ge 0}$, we need the
following definition of traveling waves.
\begin{definition}\label{DefContCont}
$\psi(x+ct)$ with $\psi\in \mc{C}$ is said to be a traveling wave
with speed $c\in\mathbb{R}$ of the continuous-time semiflow
$\{Q_t\}_{t\ge0}$ if $Q_t[\psi](x)=\psi(x+ct),\forall x\in\R, t\ge
0$.
 we say that $\psi$ connects $0$ to $\beta$ if
$\psi(-\infty)=0$ and $\psi(+\infty)=\beta$.
\end{definition}
\begin{theorem}\label{ThContCont}
Assume that for each $t>0$, the map $Q_t$ satisfies assumptions
(A1),(A3)-(A5) with $E=\Sigma_t$, and the time-one map $Q_1$
satisfies (A6) with $E=\Sigma$. Then there exists $c\in\mathbb{R}$
such that $\{Q_t\}_{t\ge 0}$ admits a non-decreasing traveling wave
with speed $c$ and connecting $0$ to $\beta$ .
\end{theorem}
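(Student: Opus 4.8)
The plan is to obtain the continuous-time wave as a limit, over the step sizes $s=1/k$ ($k\in\mathbb N$), of the partial bistable waves of the discrete semiflows $\{(Q_s)^n\}_{n\ge0}$. First I would fix, once and for all, unit vectors $e_0,e_\beta$ and a number $\delta>0$ serving as strong-stability data simultaneously for all $Q_s$ with $s=1/k$ small, i.e. so that \eqref{delta0} and \eqref{deltabeta} hold with $Q$ replaced by any such $Q_s$ as well as by $Q_1$. This uniform choice is extracted from the strong stability of $0$ and $\beta$ for the semiflow together with its joint continuity, and it is what makes a common normalization of the approximating waves possible. For each fixed $k$ the map $Q_s$ then satisfies (A1)--(A5) (with $E=\Sigma_s$; continuity (A2) comes from the joint continuity of $\{Q_t\}$), so I can run the first two steps of the proof of Theorem \ref{ThDisCont} verbatim for $Q_s$ — these use only (A1)--(A5), not (A6) — to produce nondecreasing $\phi_\pm^{(k)}\in\mc{C}_\beta$ and speeds $c_+^{(k)}\le c_-^{(k)}$ with $Q_s[\phi_\pm^{(k)}](x)=\phi_\pm^{(k)}(x+c_\pm^{(k)})$, the normalizations $\phi_-^{(k)}(0)\le\delta e_0$, $\phi_+^{(k)}(0)\ge\beta-\delta e_\beta$ (with $\phi_-^{(k)}(y)\not\le\delta e_0$ for $y>0$ and $\phi_+^{(k)}(y)\not\ge\beta-\delta e_\beta$ for $y<0$), the end states $\phi_-^{(k)}(-\infty)=0$, $\phi_+^{(k)}(+\infty)=\beta$, and $\phi_-^{(k)}(+\infty),\phi_+^{(k)}(-\infty)\in\Sigma_s$ ordered.

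The hard part — and the real novelty — is to bound the \emph{continuous} speeds $\gamma_\pm^{(k)}:=c_\pm^{(k)}/s=k\,c_\pm^{(k)}$ uniformly in $k$. The naive bound $|c_\pm^{(k)}|\le\bar c_s$ from Lemma \ref{UpperLowerSolu1} is worthless here, since $\bar c_s$ does not tend to $0$ with $s$. Instead I would exploit the identity $(Q_s)^{k}=Q_1$, which turns the wave relation into $Q_1[\phi_\pm^{(k)}](x)=\phi_\pm^{(k)}(x+\gamma_\pm^{(k)})$: the number $\gamma_\pm^{(k)}$ is exactly the shift of the profile under the \emph{time-one} map. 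Now compare $\phi_-^{(k)}$ with the nondecreasing step function equal to $\delta e_0$ on $(-\infty,0]$ and to $\beta$ on $(0,\infty)$; applying the monotone map $Q_1$ and using \eqref{delta0} (so that $Q_1[\delta e_0]\ll\delta e_0$) yields a number $y_2$ depending only on $Q_1$ with $\phi_-^{(k)}(-y_2+\gamma_-^{(k)})\le\delta e_0$, whence, by the strictness of the normalization, $\gamma_-^{(k)}\le y_2$. Symmetrically, comparing $\phi_+^{(k)}$ from below with the step function equal to $0$ on $(-\infty,0)$ and to $\beta-\delta e_\beta$ on $[0,\infty)$ and using \eqref{deltabeta} gives $\gamma_+^{(k)}\ge -y_1$. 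Together with $\gamma_+^{(k)}\le\gamma_-^{(k)}$ this produces the key bistability estimates
\begin{equation}\label{BistabilityInequ1}
\frac1s c_-^{(k)}=\gamma_-^{(k)}\le y_2,
\end{equation}
\begin{equation}\label{BistabilityInequ2}
\frac1s c_+^{(k)}=\gamma_+^{(k)}\ge -y_1,
\end{equation}
so that $-y_1\le\gamma_+^{(k)}\le\gamma_-^{(k)}\le y_2$ for all $k$.

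Granting the bound, the limit passage is routine. Since $Q_1$ is compact (A4) and $Q_1[\phi_\pm^{(k)}]=\phi_\pm^{(k)}(\cdot+\gamma_\pm^{(k)})$ with $\gamma_\pm^{(k)}$ bounded, I can pass to a subsequence along which $\gamma_\pm^{(k)}\to c_\pm$ (hence $c_+\le c_-$), $Q_1[\phi_\pm^{(k)}]\to\Phi_\pm$ in $\mc{C}_\beta$, and therefore $\phi_\pm^{(k)}=Q_1[\phi_\pm^{(k)}](\cdot-\gamma_\pm^{(k)})\to\phi_\pm:=\Phi_\pm(\cdot-c_\pm)$, a nondecreasing element of $\mc{C}_\beta$ with $Q_1[\phi_\pm]=\phi_\pm(\cdot+c_\pm)$. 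To upgrade this to all $t$, fix $t\ge0$ and set $n=\lfloor tk\rfloor$; then $Q_{n/k}[\phi_\pm^{(k)}]=\phi_\pm^{(k)}(\cdot+n c_\pm^{(k)})$ with $n/k\to t$ and $n c_\pm^{(k)}=(n/k)\gamma_\pm^{(k)}\to t c_\pm$, so the joint continuity of $\{Q_t\}$ gives $Q_t[\phi_\pm](x)=\phi_\pm(x+c_\pm t)$ for every $t\ge0$ and $x\in\R$.

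Finally I would identify the end states exactly as in Theorem \ref{ThDisCont}. Letting $x\to\pm\infty$ in $Q_t[\phi_\pm](x)=\phi_\pm(x+c_\pm t)$ and using (A1)--(A2) shows that $\phi_-(\pm\infty)$ and $\phi_+(\pm\infty)$ are fixed points of \emph{every} $Q_t$, i.e. they lie in $\Sigma$; this is the decisive point that lets (A6) be imposed only on $Q_1$ with $E=\Sigma$. The normalizations pass to the limit, giving $\phi_-(0)\neq0$ with $\phi_-(-\infty)\le\delta e_0$ and $\phi_+(0)\neq\beta$ with $\phi_+(+\infty)\ge\beta-\delta e_\beta$, so Lemma \ref{IslatedEquilibria} forces $\phi_-(-\infty)=0$ and $\phi_+(+\infty)=\beta$; a subsequence argument on the relative positions of the two normalization points (as in Step 2 of Theorem \ref{ThDisCont}) shows $\phi_-(+\infty)$ and $\phi_+(-\infty)$ are ordered in $\Sigma$. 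Since $Q_1[\phi_\pm]=\phi_\pm(\cdot+c_\pm)$, each $\phi_\pm$ is a monotone wave of $\{(Q_1)^n\}$, so Lemma \ref{MiniWaveSpeedDSF} applies with $Q=Q_1$: were the intermediate case $\phi_-(+\infty)=\alpha=\phi_+(-\infty)$ with $\alpha\in\Sigma\setminus\{0,\beta\}$ to hold, it would give $c_+\ge c_-^*(\alpha,\beta)$ and $c_-\le-c_+^*(0,\alpha)$, whence $0\ge c_+-c_-\ge c_-^*(\alpha,\beta)+c_+^*(0,\alpha)$, contradicting (A6). Hence either $\phi_-$ or $\phi_+$ connects $0$ to $\beta$, and it is the desired nondecreasing traveling wave, with $c=c_-$ or $c=c_+$ accordingly. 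The crux of the argument is the uniform speed bound \eqref{BistabilityInequ1}--\eqref{BistabilityInequ2}; once it is available, everything else parallels the discrete-time theorem.
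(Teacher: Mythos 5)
Your overall architecture is exactly the paper's: approximate by bistable waves of the discrete semiflows $\{(Q_s)^n\}_{n\ge 0}$ with $s\downarrow 0$ (built from the first two, (A6)-free, steps of Theorem \ref{ThDisCont}), prove a uniform bound on the rescaled speeds, pass to the limit by compactness of $Q_1$ and joint continuity, and use (A6) for $Q_1$ with $E=\Sigma$ together with Lemma \ref{MiniWaveSpeedDSF} to exclude the intermediate case. The genuine difference is in the speed bound, which both you and the paper single out as the crux. The paper first extracts the limit profiles $\psi_\pm$ and then argues by contradiction: if $\f{1}{s_k}c_{-,s_k}\to +\infty$, iterating the discrete wave relation $\langle \f{1}{s_k}\rangle$ times and using joint continuity yields $Q_1[\delta e_0]\ge \psi_-(+\infty)\not\in[0,\delta e_0]]_{\mc{X}}$, contradicting $Q_1[\delta e_0]\ll\delta e_0$ (inequalities \eqref{BistabilityInequ1}--\eqref{BistabilityInequ2}). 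You instead restrict to $s=1/k$ (harmless: subsequences of $\{1/k\}$ still satisfy $(Q_{1/k})^k=Q_1$), read $\gamma_\pm^{(k)}=k\,c_\pm^{(k)}$ as the exact shift of the profile under the time-one map, and bound it \emph{before} any limit by sandwiching the normalized profile between the fixed comparison functions $\un{\psi},\bar{\psi}$ of Lemma \ref{UpperLowerSolu1} applied to $Q_1$ and invoking the sup/inf definition of the normalization point. I checked this chain: $\psi_{-,s_k}\le\bar\psi$, hence $\psi_{-,s_k}(x+\gamma_-^{(k)})=Q_1[\psi_{-,s_k}](x)\le Q_1[\bar\psi](x)\le\bar\psi(x+z_0)$, and evaluating at $x=-1-z_0$ forces $\gamma_-^{(k)}\le 1+z_0$; it is correct, quantitative, and avoids the paper's double-limit computation and the need to have convergence of profiles in hand first. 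The limit passage and endgame (end states are equilibria of every $Q_t$, hence in $\Sigma$; ordering; exclusion of case (iii)) then coincide with the paper's third step.

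The one step that would fail as written is your opening claim that a single triple $\delta,e_0,e_\beta$ can be chosen so that \eqref{delta0} and \eqref{deltabeta} hold simultaneously for $Q_1$ and for all $Q_{1/k}$ with $k$ large, ``extracted from strong stability and joint continuity.'' The hypotheses provide stability data separately for each $Q_t$, and joint continuity cannot produce the strict inequality $Q_s[\delta e_0]\ll\delta e_0$ uniformly as $s\downarrow 0$, since $Q_s[\delta e_0]\to \delta e_0$; in the smooth finite-dimensional case one could take a Perron eigenvector of the linearization at $0$, but no such structure is available in this abstract setting. Fortunately your proof does not need the claim: construct the waves of $Q_{1/k}$ using the stability data of $Q_{1/k}$ itself, and then \emph{renormalize} them with respect to the fixed $Q_1$-levels $\delta e_0$ and $\beta-\delta e_\beta$, which enter your speed bound only as crossing levels, never as stability data for $Q_{1/k}$. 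That the waves do cross these levels for large $k$ is exactly what the paper's first step establishes (convergence of the end states $\alpha_{\pm,s_k}$ to equilibria of the semiflow, Lemma \ref{IslatedEquilibria} for $Q_1$, and a three-case analysis); after this renormalization the pre-limit end states lie only in $\Sigma_{1/k}$ rather than being $0$ and $\beta$, but your final identification of the limiting end states is unaffected. With that substitution your argument is complete.
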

\begin{proof}
Let $e_0,e_\beta$ and $\delta$ be chosen as in \ref{delta0},
\ref{deltabeta} and \ref{delta}, respectively. We proceeds with
three steps.

Firstly, we show that there exists $s_k\downarrow 0$ such that each
discrete semiflow $\{Q_{s_k}^n\}_{n\ge 0}$ admits two nondecreasing
traveling waves $\psi_{\pm,s_k}(x+c_{\pm,s_k}t)$ with $c_{-,s_k}\ge
c_{+,s_k}$ and $\psi_{\pm,s_k}$ has the following properties:
\begin{equation*}
0<\psi_{-,s_k}(0)\le \delta e_0\quad \text{and}\quad  \beta-\delta
e_\beta \le \psi_{+,s_k}(0) <\beta,
\end{equation*}
but
\begin{equation*}
\psi_{-,s_k}(0)\not\in [0,\delta e_0]]_{\mc{X}}\quad \text{and}\quad
\psi_{+,s_k}(0)\not\in[[\beta-\delta e_\beta,\beta]_{\mc{X}}.
\end{equation*}
Indeed, since for each $s>0$ the map $Q_s$ satisfies (A1)-(A5), from
the first two steps of the proof for Theorem \ref{ThDisCont},  we
see that for the discrete semiflow $\{(Q_s)^n\}_{n\ge 0}$, there
exists two nondecreasing traveling waves $\phi_{\pm,
s}(x+c_{\pm,s}t)$ with the following properties:
\begin{enumerate}
\item[(1)] $\phi_{-,s}$ connects $0$ to some $\alpha_{-,s}\in
E_s\setminus \{0\}$ and $\phi_{+,s}$ connects some $\alpha_{+,s}\in
E_s\setminus \{\beta\}$ to $\beta$;

\item[(2)] $\alpha_{-,s}$ and $\alpha_{+,s}$ are ordered and $c_{-,s}\ge
c_{+,s}$.
\end{enumerate}
By a similar argument as in \cite[Theorem 1.3.7]{ZhaoBook}, it then
follows that both $\alpha_{\pm,s}$ have a subsequence
$\alpha_{\pm,s_k}$ which tends to an equilibrium of the semiflow as
$s_k\to 0$, say the limit $\alpha_-$ and $\alpha_+$, respectively.
Since $\alpha_{-,s}$ and $\alpha_{+,s}$ are ordered, it follows from
Proposition \ref{BistabilitySemiflow} that there are only three
possibilities for the relation of $\alpha_-$ and $\alpha_+$:
\begin{equation*}
\text{(i) $\beta=\alpha_-\ge\alpha_+$; (ii) $\alpha_-\ge\alpha_+=0$;
and (iii) $\alpha_-=\alpha_+\in E\setminus \{0,\beta\}$.}
\end{equation*}
If $\alpha_-=\beta$, then for sufficiently large $k$ we can define
\begin{equation*}
a_{s_k}:=\sup\{x\in\mathbb{R}:\phi_{-,s_k}(x)\in[0,\delta
e_0]_{\mc{X}}\},\quad
b_{s_k}:=\inf\{x\in\mathbb{R}:\phi_{-,s_k}(x)\in[\beta-\delta
e_\beta,\beta]_{\mc{X}}\}.
\end{equation*}
And hence, $\psi_{-,s}(x):=\psi_s(x+a_s)$ and
$\psi_{+,s}(x):=\psi_s(x+b_s)$ are the required traveling waves. If
$\alpha_+=0$, then for sufficient large $k$ we can define
\begin{equation*}
a_{s_k}:=\sup\{x\in\R:\phi_{+,s_k}(x)\in[0,\delta e_0]_{\mc{X}}\},\,
b_{s_k}:=\inf\{x\in\R:\phi_{+,s_k}(x)\in[\beta-\delta
e_\beta,\beta]_{\mc{X}}\}.
\end{equation*}
And hence, $\psi_{-,s_k}(x):=\phi_{+,s_k}(x+a_{s_k})$ and
$\psi_{+,s_k}(x):=\phi_{+,s_k}(x+b_{s_k})$ are the required
traveling waves. If $\alpha_-=\alpha_+\in E\setminus \{0,\beta\}$,
then by Lemma \ref{IslatedEquilibria} we have $\alpha_-=\alpha_+\in
E\setminus \{[0,\delta e_0]_\mc{X}\cup [\beta-\delta
e_\beta,\beta]_\mc{X}\}$. Consequently, for sufficiently large $k$
we can define
\begin{equation*}
a_{s_k}:=\sup\{x\in\R:\phi_{-,s_k}(x)\in[0,\delta e_0]_{\mc{X}}\},\,
b_{s_k}:=\inf\{x\in\R:\phi_{-,s_k}(x)\in[\beta-\delta
e_\beta,\beta]_{\mc{X}}\}.
\end{equation*}
And hence, $\psi_{-,s_k}(x):=\phi_{-,s_k}(x+a_{s_k})$ and
$\psi_{+,s_k}(x):=\phi_{-,s_k}(x+b_{s_k})$ are the required
traveling waves.

Secondly, we show that there exists a subindex, still denoted by
$s_k$, such that $\psi_{\pm,s_k}\to \psi_{\pm}$ in $\mc{C}_\beta$
and $\f{1}{s_k}c_{\pm, s_k}\to c_\pm\in\R$. Indeed, for each
$s_k>0$, there exists an integer $m_k>0$ such that $m_ks_k>2$. Then
\begin{equation}\label{PhiSConvergent}
\psi_{-,s_k}=T_{m_kc_{s_k}}\circ Q_{m_ks_k}[\psi_{-,s_k}]=Q_2\circ
Q_{m_ks_k-2}\circ T_{m_kc_{s_k}}[\psi_{-,s_k}]\in Q_1\circ
Q_1[\mc{C}_\beta].
\end{equation}
Clearly, the compactness of $Q_1$ implies that the set $Q_1\circ
Q_1[\mc{C}_\beta]$ is precompact in $\mc{C}_\beta$. Thus, there
exists subsequence, still denoted by $s_k$, and nonincreasing
functions $\psi_-,\psi_+\in \mc{C}_\beta$ with $0<\psi_-(0)\le
\delta e_0$ and $\beta-\delta e_\beta \le \psi_+(0) <\beta$ such
that $\psi_{-,s_k}\to \psi_-$ and $\psi_{+,s_k}\to \psi_+$ in
$\mc{C}_\beta$. Also we claim that $\psi_{\pm,s_k}(\pm\infty)$ all
exist. Indeed, from \eqref{PhiSConvergent} we see that there exists
$\phi_{s_k}\in\mc{C}_\beta$ such that $ Q_1\circ Q_1[\phi_{s_k}]\to
\psi_-$. Note that $\{Q_1[\phi_{s_k}]\}_{k\ge 1}$ also has a
convergent subsequence with the limit $\phi\in\mc{C}_\beta$. And
hence, by the uniqueness of limit we have $Q_1[\phi]=\psi_-$. Note
that $\psi_-(k)=Q_1[\phi](k)=Q_1[\phi(\cdot+k)](0)$ and
$\{Q_1[\phi(\cdot+k)]\}_{k\ge 1}$ has a convergent subsequence. It
then follows that $\psi_-(\pm\infty)$ exist because $\psi_-$ is
nonincreasing. Similarly, $\psi_+(\pm\infty)$ exist. Also, we have
\begin{equation}\label{Connecting0orBeta}
\psi_-(-\infty)\le \psi_-(0)\le \delta e_0\quad \text{and}\quad
\psi_+(+\infty)\ge \psi_+(0)\ge \beta-\delta e_\beta,
\end{equation}
but
\begin{equation*}
\psi_{-}(0)\not\in [0,\delta e_0]]_{\mc{X}}\quad \text{and}\quad
\psi_{+}(0)\not\in[[\beta-\delta e_\beta,\beta]_{\mc{X}}.
\end{equation*}
Consequently, by the monotonicity of $\psi_\pm$, we have
\begin{equation}\label{relation}
\psi_-(x)\not\in [0,\delta e_0]]_{\mc{X}},\forall x>0\quad
\text{and}\quad \psi_{+}(x)\not\in[[\beta-\delta
e_\beta,\beta]_{\mc{X}},\forall x<0.
\end{equation}
Since $\psi_-$ and $\psi_+$ are the limits of the sequence of
monotone functions with different translations, respectively, we can
employ the same arguments as in the second step of the proof of
Theorem \ref{ThDisCont} to show that $\psi_-(+\infty)$ and
$\psi_+(-\infty)$ are ordered.

To prove that $\f{1}{s_k}c_{\pm, s_k}$ have convergent subsequences,
we only need to prove that $\f{1}{s_k}c_{-, s_k}$ is bounded above
and $\f{1}{s_k}c_{+, s_k}$ is bounded below because $c_{-,s_k}\ge
c_{+,s_k}$. Assume, for the sake of contradiction, that some
subsequence, still say $\f{1}{s_k}c_{-,s_k}$, tends to $+\infty$.
Note that for each $s>0$ there exists $n_s\in \mathbb{Z}^+$ such
that the integer part of $\f{1}{s}$, denoted by $\langle
\f{1}{s}\rangle$, equals $n_s$ and $\f{1}{n_s+1}<s\le \f{1}{n_s}$.
Hence, $s\langle \f{1}{s}\rangle\to 1$ as $s\to 0$. It then follows
that
\begin{equation*}
\lim_{k\to \infty}\langle \f{1}{s_k}\rangle c_{-,s_k}=\lim_{k\to
\infty} \f{1}{s_k}c_{-,s_k}\times s_k\langle
\f{1}{s_k}\rangle=\lim_{k\to \infty} \f{1}{s_k}c_{-,s_k}=+\infty.
\end{equation*}
Thus, using the first observation in \eqref{relation}, we have
\begin{eqnarray}\label{BistabilityInequ1}
Q_1[\delta e_0]&&\ge
Q_1[\psi_-(-\infty)]=Q_1[\psi_-(-\infty)](0)=\lim_{x\to
-\infty}Q_1[\psi_-(\cdot+x)](0)\nonumber\\
&&=\lim_{x\to -\infty}Q_1[\psi_-](x)=\lim_{x\to -\infty}\lim_{k\to
\infty}
(Q_{s_k})^{\langle\f{1}{s_k}\rangle }[\psi_{-,s_k}](x)\nonumber\\
&&=\lim_{x\to -\infty}\lim_{k\to \infty} \psi_{-,s_k}(x+\langle
\f{1}{s_k}\rangle c_{s_k})\ge  \lim_{x\to -\infty}\lim_{y\to
+\infty}\lim_{k\to \infty}
\psi_{-,s_k}(y)\nonumber\\
&&=\lim_{y\to +\infty}\psi_-(y)=\psi_-(+\infty)\not\in [0,\delta
e_0]]_{\mc{X}},
\end{eqnarray}
which contradicts the fact that $Q_1[\delta e_0]\ll \delta e_0$.
Similarly, if $\f{1}{s_k}c_{+, s_k}\to -\infty$, then the second
observation in \eqref{relation} implies that
\begin{eqnarray}\label{BistabilityInequ2}
Q_1[\beta-\delta e_\beta]&&\le
Q_1[\psi_+(+\infty)]=Q_1[\psi_+(+\infty)](0)=\lim_{x\to
+\infty}Q_1[\psi_+(\cdot+x)](0)\nonumber\\
&&=\lim_{x\to +\infty}Q_1[\psi_+](x)=\lim_{x\to +\infty}\lim_{k\to
\infty}
(Q_{s_k})^{\langle\f{1}{s_k}\rangle }[\psi_{+,s_k}](x)\nonumber\\
&&=\lim_{x\to +\infty}\lim_{k\to \infty} \psi_{+,s_k}(x+\langle
\f{1}{s_k}\rangle c_{+,s_k})\le  \lim_{x\to +\infty}\lim_{y\to
-\infty}\lim_{k\to \infty}
\psi_{+,s_k}(y)\nonumber\\
&&=\lim_{y\to -\infty}\psi_+(y)=\psi_+(-\infty)\not\in
[[\beta-\delta e_\beta,\beta]_{\mc{X}},
\end{eqnarray}
which contradicts the fact that $Q_1[\beta-\delta e_\beta]\gg
\beta-\delta e_\beta$.
 Consequently, $\f{1}{s_k}c_{\pm, s_k}$ are bounded.

Finally, we show that either $\psi_-(x+c_-t)$ or $\psi_+(x+c_+t)$
established in the second step is a traveling wave connecting $0$ to
$\beta$. Indeed, for any $t>0$, there exists $m_k\in\mathbb{Z}$ and
$r_k\in [0,s_k)$ such that $t=m_k s_k-r_k$. Clearly, $r_k\to 0$ as
$k\to \infty$. Then we have
\begin{eqnarray*}
&&Q_t[\psi_{\pm}]=\lim_{k\to\infty}Q_{t+r_k}[\psi_{\pm,s_k}]=\lim_{k\to\infty}
Q_{m_ks_k}[\psi_{\pm,s_k}]=\lim_{k\to\infty}
\psi_{\pm,s_k}(\cdot+m_kc_{\pm,s_k})\\
&&=\lim_{k\to\infty}
\psi_{\pm,s_k}\left(\cdot+(t+r_k)\f{1}{s_k}c_{\pm,s_k}\right)=\psi_{\pm}(\cdot+c_\pm
t),
\end{eqnarray*}
where the last equality follows from Proposition
\ref{SequenceInC}(2). From the equality
$Q_t[\psi_{\pm}]=\psi_{\pm}(\cdot+ct),\forall t\ge0$, we see that
$\psi(\pm\infty)$ are equilibria. Recall that $\psi_-(-\infty)\le
\delta e_0\le \psi(+\infty)$ and $\psi_+(+\infty)\ge \beta-\delta
e_\beta\ge\psi_+(-\infty)$. It then follows that
$\psi_-(-\infty)=0,\psi_+(+\infty)=\beta$, and there are only three
possibilities for $\psi_-(+\infty)$ and $\psi_+(-\infty)$:
\begin{enumerate}
\item[(i)]$\beta=\psi_-(+\infty)>\psi_+(-\infty)$;

\item[(ii)]$\psi_-(+\infty)>\psi_+(-\infty)=0$;

\item[(iii)] $\psi_-(+\infty)=\alpha=\psi_+(-\infty)$ for some $\alpha\in
\Sigma\setminus \{0,\beta\}$.
\end{enumerate}
Since the time-one map $Q_1$ satisfies (A6) with $E=\Sigma$, we can
employ the same arguments as in the proof of Lemma
\ref{MiniWaveSpeedDSF} to exclude the possibility (iii). Thus,
either (i) or (ii) holds, and hence, we complete the proof.
\end{proof}

\subsection{Continuous-time semiflows in a discrete habitat}

In this case, time $\mc{T}=\mathbb{R}^+$ and habitat $\mc{H}=\Z$.
Let $\beta\gg0$ be an equilibrium of the semiflow $\{Q_t\}_{t\ge
0}$. We start with the definition of traveling waves for this case.

\begin{definition}\label{DefContDis}
$\psi(i+ct)$ with $\psi\in \mc{B}_\beta$ is said to be a traveling
wave with speed $c\in\mathbb{R}$ of the continuous-time semiflow
$\{Q_t\}_{t\ge0}$ if $Q_t[\psi](i)=\psi(i+ct),\forall i\in\Z,t\ge
0$. Clearly, $\psi$ is continuous if $c\neq 0$.
\end{definition}

For each $t>0$, define $\tilde{Q}_t:\mc{B}_\beta\to \mc{B}_\beta$ by
$\tilde{Q}_s[\phi](x)=Q_s[\phi(\cdot+x)](0)$. Then it is easy to see
the following result holds.
\begin{lemma}
$\{\tilde{Q}_t\}_{t\ge 0}$ has the following properties:
\begin{enumerate}
\item[(i)]$\tilde{Q_0}[\phi]=\phi,\forall \phi\in
\mc{B}$.
\item[(ii)]$\tilde{Q}_t\circ
\tilde{Q}_s[\phi]=\tilde{Q}_{t+s}[\phi],\, \forall t,s\ge 0,\,
\phi\in \mc{B}$.
\item[(iii)]For fixed $x\in\R$, if $t_n\to
t$ and $\phi_n(i+x)\to \phi(i+x)$ in $\mc{X}$ for any
$i\in\mathbb{Z}$, then $\tilde{Q}_{t_n}[\phi_n](x)\to
\tilde{Q}_{t}[\phi](x)$ in $\mc{X}$.
\end{enumerate}
\end{lemma}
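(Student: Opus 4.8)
The plan is to derive the three properties directly from the corresponding structural features of the underlying semiflow $\{Q_t\}_{t\ge 0}$ on $\mc{C}_\beta$, namely $Q_0=\mathrm{id}$, the semiflow identity $Q_t\circ Q_s=Q_{t+s}$, the translation invariance (A1), and the joint continuity of $Q_t[\phi]$ in $(t,\phi)$. Property (i) is immediate: since $Q_0$ is the identity on $\mc{C}_\beta$, for every $x\in\R$ we have $\tilde{Q}_0[\phi](x)=Q_0[\phi(\cdot+x)](0)=\phi(0+x)=\phi(x)$, whence $\tilde{Q}_0[\phi]=\phi$.

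For property (ii) the one step I would isolate is the commutation of the ``inner'' integer shift with $Q_s$. Fixing $\phi\in\mc{B}$ and $x\in\R$, for each $i\in\Z$ one has $\phi(\cdot+i+x)=T_{-i}[\phi(\cdot+x)]$ (as functions on $\Z$), and hence, applying (A1) with the integer translation $y=-i\in\mc{H}=\Z$,
\begin{equation*}
\tilde{Q}_s[\phi](i+x)=Q_s[\phi(\cdot+i+x)](0)=Q_s\big[T_{-i}\phi(\cdot+x)\big](0)=\big(T_{-i}Q_s[\phi(\cdot+x)]\big)(0)=Q_s[\phi(\cdot+x)](i).
\end{equation*}
In other words, the function $i\mapsto \tilde{Q}_s[\phi](i+x)$ on $\Z$ coincides with $Q_s[\phi(\cdot+x)]$. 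Substituting this into the definition of $\tilde{Q}_t$ and invoking the semiflow identity for $\{Q_t\}$ then gives
\begin{equation*}
\tilde{Q}_t\circ\tilde{Q}_s[\phi](x)=Q_t\big[\tilde{Q}_s[\phi](\cdot+x)\big](0)=Q_t\big[Q_s[\phi(\cdot+x)]\big](0)=Q_{t+s}[\phi(\cdot+x)](0)=\tilde{Q}_{t+s}[\phi](x),
\end{equation*}
which is (ii).

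For property (iii), fix $x\in\R$ and suppose $t_n\to t$ and $\phi_n(i+x)\to\phi(i+x)$ in $\mc{X}$ for every $i\in\Z$. The plan is to read the hypothesis as convergence of the shifted profiles in the compact open topology of $\mc{C}_\beta$: since the habitat is $\Z$, every bounded subset is finite, so (as noted after \eqref{NormDef}) the pointwise convergence $\phi_n(i+x)\to\phi(i+x)$ for all $i\in\Z$ is precisely $\phi_n(\cdot+x)\to\phi(\cdot+x)$ in $\mc{C}_\beta$. The joint continuity of $Q_t[\phi]$ in $(t,\phi)$ then yields $Q_{t_n}[\phi_n(\cdot+x)]\to Q_t[\phi(\cdot+x)]$ in $\mc{C}_\beta$, and evaluating at $0\in\Z$ (a continuous operation for the compact open topology) produces $\tilde{Q}_{t_n}[\phi_n](x)=Q_{t_n}[\phi_n(\cdot+x)](0)\to Q_t[\phi(\cdot+x)](0)=\tilde{Q}_t[\phi](x)$, as required.

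The only step demanding care is the shift-commutation used in (ii); once the identity $\tilde{Q}_s[\phi](i+x)=Q_s[\phi(\cdot+x)](i)$ is in hand, the remaining verifications are routine applications of the semiflow axioms, so I do not expect any genuine obstacle beyond bookkeeping of the translation variable.
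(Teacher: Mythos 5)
Your proof is correct, and it is exactly the argument the paper intends: the paper states this lemma with no proof at all (``it is easy to see''), so the routine verification is all that was ever required. Your three steps --- $Q_0=\mathrm{id}$ for (i); the shift-commutation identity $\tilde{Q}_s[\phi](i+x)=Q_s[\phi(\cdot+x)](i)$ obtained from (A1) with $y=-i\in\Z$, combined with the semiflow identity $Q_t\circ Q_s=Q_{t+s}$, for (ii); and the observation that on the discrete habitat $\Z$ pointwise convergence coincides with compact-open convergence, so that joint continuity of $(t,\phi)\mapsto Q_t[\phi]$ plus continuity of evaluation at $0$ gives (iii) --- supply precisely the bookkeeping the paper omits.
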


We combine the ideas in the proofs of Theorems \ref{ThDisDis} and
\ref{ThContCont} to prove the following result for continuous-time
semiflows in a discrete habitat $\Z$.
\begin{theorem}\label{ThContDis}
Let $\mc{X}=C(M, \R^d)$. Assume that for each $t>0$, the map $Q_t$
satisfies (A1), (A3)-(A5) with $E=\Sigma_t$, and that the time-one
map $Q_1$ satisfies (A6) with $E=\Sigma$. Then there exists
$c\in\mathbb{R}$ such that $\{Q_t\}_{t\ge 0}$ admits a
non-decreasing traveling wave with speed $c$ and connecting $0$ to
$\beta$.
\end{theorem}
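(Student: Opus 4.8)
The plan is to run the $s\downarrow0$ approximation of Theorem~\ref{ThContCont} while replacing its use of compactness in $\mc{C}_\beta$ by the weak-compactness machinery of Theorem~\ref{ThDisDis}, since here the continuous-time and discrete-habitat difficulties occur together. Fix $s>0$. Since $Q_s$ satisfies (A1),(A3)--(A5) with $E=\Sigma_s$, I would apply the construction in the first part of the proof of Theorem~\ref{ThDisDis} (which needs only (A1)--(A5)) to the discrete-time semiflow $\{(Q_s)^n\}_{n\ge0}$ through the associated map $\tilde Q_s[\phi](x)=Q_s[\phi(\cdot+x)](0)$ on $\mc{B}_\beta$. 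This yields two nondecreasing profiles $\psi_{-,s}$ (left continuous) and $\psi_{+,s}$ (right continuous) in $\mc{B}_\beta$, with per-step speeds $c_{-,s}\ge c_{+,s}$, such that $\psi_{-,s}$ connects $0$ to some $\alpha_{-,s}\in\Sigma_s\setminus\{0\}$ and $\psi_{+,s}$ connects some $\alpha_{+,s}\in\Sigma_s\setminus\{\beta\}$ to $\beta$. Exactly as in Step~1 of Theorem~\ref{ThContCont}, I would translate each profile so that $\psi_{-,s}(0)\in(0,\delta e_0]_{\mc{X}}$ with $\psi_{-,s}(0^+)\notin[0,\delta e_0]]_{\mc{X}}$, and dually $\beta-\delta e_\beta\le\psi_{+,s}(0)<\beta$ with $\psi_{+,s}(0^-)\notin[[\beta-\delta e_\beta,\beta]_{\mc{X}}$, the one-sided conditions being the natural substitutes for the continuous case in the discrete habitat.

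Next I would choose $s_k\downarrow0$ and let $\alpha_{\pm,s_k}\to\alpha_\pm\in\Sigma$; Proposition~\ref{BistabilitySemiflow} then forces $\alpha_-,\alpha_+$ into the usual three ordered cases. To extract convergent profiles I cannot use compactness of $\tilde Q_{s_k}$ directly, so I would observe that once $m_ks_k>2$ the semiflow property lets me write each value $\psi_{\pm,s_k}(x)$ as an element of $Q_2[\mc{C}_\beta](0)=Q_1[Q_1[\mc{C}_\beta]](0)$, which is precompact in $\mc{X}_\beta$ with $\mc{X}=C(M,\R^d)$ because $Q_1$ satisfies (A4). This pointwise precompactness along a countable dense set of translates feeds the variant of Helly's theorem (Theorem~\ref{HellyTh}) exactly as in Theorem~\ref{ThDisDis}, producing nondecreasing limits $\psi_-$ (left continuous) and $\psi_+$ (right continuous) in $\mc{B}_\beta$ with $\psi_{\pm,s_k}(x)\to\psi_\pm(x)$ off a countable set $\Gamma$, the one-sided boundary conditions surviving the limit. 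A further argument as in the second step of Theorem~\ref{ThDisCont} shows $\psi_-(+\infty)$ and $\psi_+(-\infty)$ are ordered.

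The crux, and the step I expect to be hardest, is to bound $\f{1}{s_k}c_{\pm,s_k}$ so that the speeds $c_\pm:=\lim_{k\to\infty}\f{1}{s_k}c_{\pm,s_k}\in\R$ exist. Here I would reproduce the bistability estimates \eqref{BistabilityInequ1}--\eqref{BistabilityInequ2}. Writing $\langle1/s_k\rangle$ for the integer part of $1/s_k$ and using $s_k\langle1/s_k\rangle\to1$, if $\f{1}{s_k}c_{-,s_k}\to+\infty$ then $\langle1/s_k\rangle c_{-,s_k}\to+\infty$, so the iterate $(\tilde Q_{s_k})^{\langle1/s_k\rangle}$ drives the value of $\psi_{-,s_k}$ at $-\infty$ up to $\psi_-(+\infty)$, giving $Q_1[\delta e_0]\ge\psi_-(+\infty)\notin[0,\delta e_0]]_{\mc{X}}$ and contradicting \eqref{delta0}; the symmetric computation with \eqref{deltabeta} rules out $\f{1}{s_k}c_{+,s_k}\to-\infty$. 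The genuine difficulty is that all of these limit interchanges must now be carried out for monotone (not continuous) functions evaluated off the countable exceptional set, so each passage to the limit has to be justified through Helly convergence and Proposition~\ref{Convergence} rather than uniform-on-compacta convergence.

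Finally, for arbitrary $t>0$ I would write $t=m_ks_k-r_k$ with $r_k\to0$ and pass to the limit in $(\tilde Q_{s_k})^{m_k}[\psi_{\pm,s_k}](x)=\psi_{\pm,s_k}(x+m_kc_{\pm,s_k})$, noting $m_kc_{\pm,s_k}=(t+r_k)\f{1}{s_k}c_{\pm,s_k}\to c_\pm t$; using the joint continuity of $\tilde Q_t$ together with Proposition~\ref{Convergence} yields $Q_t[\psi_\pm(\cdot+x)](0)=\psi_\pm(x+c_\pm t)$ for $x\in\R\setminus\Gamma$, which is the traveling-wave identity of Definition~\ref{DefContDis}. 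Consequently $\psi_\pm(\pm\infty)$ are equilibria of $\{Q_t\}_{t\ge0}$, and since $\psi_-(-\infty)\le\delta e_0$ and $\psi_+(+\infty)\ge\beta-\delta e_\beta$ we obtain $\psi_-(-\infty)=0$ and $\psi_+(+\infty)=\beta$, leaving only the three ordered possibilities for $\psi_-(+\infty)$ and $\psi_+(-\infty)$. The intermediate case $\psi_-(+\infty)=\alpha=\psi_+(-\infty)$ with $\alpha\in\Sigma\setminus\{0,\beta\}$ is excluded by feeding the speed comparison of Lemma~\ref{MiniWaveSpeedDSF} into assumption (A6) for $Q_1$ exactly as in Theorem~\ref{ThContCont}, so one of $\psi_-,\psi_+$ is the desired nondecreasing traveling wave connecting $0$ to $\beta$.
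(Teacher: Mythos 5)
Your overall architecture is the paper's: approximate by the discrete-time waves of $\{(Q_{s_k})^n\}_{n\ge0}$ through $\tilde Q_{s_k}$ and the Helly machinery of Theorem \ref{ThDisDis}, secure uniform-in-$k$ pointwise precompactness by factoring wave values through $Q_2[\mc{C}_\beta](0)=Q_1[Q_1[\mc{C}_\beta]](0)$, bound $\f{1}{s_k}c_{\pm,s_k}$ with the bistability estimates \eqref{BistabilityInequ1}--\eqref{BistabilityInequ2}, and exclude the intermediate equilibrium with Lemma \ref{MiniWaveSpeedDSF} and (A6). All of this matches the paper, and your explicit $Q_1\circ Q_1$ factorization spells out a detail the paper only gestures at by citing the proofs of Theorems \ref{ThDisDis} and \ref{ThContCont}.

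The gap is in your final step. The identity you obtain, $Q_t[\psi_\pm(\cdot+x)](0)=\psi_\pm(x+c_\pm t)$ for $x\in\R\setminus\Gamma$, is \emph{not} the traveling-wave identity of Definition \ref{DefContDis}: unlike Definition \ref{DefDisDis}, the continuous-time definition admits no exceptional set, demanding $Q_t[\psi](i)=\psi(i+ct)$ for every $i\in\Z$ and every $t\ge 0$. Moreover the exceptional set you produce is genuinely $t$-dependent: to identify $\lim_{k\to\infty}\tilde\psi_{-,s_k}\bigl(x+(t+r_k)\f{1}{s_k}c_{-,s_k}\bigr)$ with $\psi_-(x+c_-t)$ via Proposition \ref{Convergence}, the target point $x+c_-t$ must avoid the countable set where Helly convergence or continuity of the limit profile fails; so for a fixed translate of the profile and a fixed lattice site $i$, the identity may fail for countably many times $t$, and since $t$ ranges over an uncountable set no single spatial translation (the trick that works in Theorem \ref{ThDisDis}) can remove all bad pairs $(i,t)$. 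The paper closes exactly this hole with an argument you omit: it splits into the cases $c_-=0$ (where a good choice of $x_0$ suffices) and $c_-\neq 0$, and in the latter case proves that the limit profile $\tilde\psi_-$ is continuous on all of $\R$, by writing an arbitrary point as $y=x_0+c_-t_0$ with $\tilde\psi_-$ continuous at every $x_0+i$, approaching $y$ from both sides through good points $y_{\pm,k}=x_{\pm,k}+c_-t_{\pm,k}$, and using the joint continuity of the semiflow to get $\tilde\psi_-(y^-)=\tilde\psi_-(y^+)=Q_{t_0}[\tilde\psi_-(\cdot+x_0)](0)$. Only after this continuity upgrade does Proposition \ref{Convergence} apply at every point, giving the wave identity for all $x$ and all $t\ge0$ as the definition requires; without it, your construction does not deliver an object satisfying Definition \ref{DefContDis}.
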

\begin{proof}
Let $\delta,e_0,e_\beta$ be chosen as in
\eqref{delta},\eqref{delta0} and \eqref{deltabeta}. We proceeds with
three steps.

Firstly, since for any $s>0$ the map $Q_s$ satisfies assumptions
(A1)-(A5), it then follows from the proof of Theorems \ref{ThDisDis}
and \ref{ThContCont} that there exists $s_k\downarrow 0$ such that
$\{(Q_{s_k})^n\}_{n\ge 0}$ admits two nondecreasing traveling waves
$\tilde{\psi}_{\pm,s_k}(x+c_{\pm,s_k}n)$ with $c_{-,s_k}\ge
c_{+,s_k}$, that is, there exists countable subset $\Theta_k$ such
that
\begin{equation*}
\tilde{Q}_{s_k}[\tilde{\psi}_{\pm,s_k}](x)=\tilde{\psi}_{\pm,s_k}(x+c_{\pm,s_k}),\quad
\forall x\in \R\setminus \Theta_k.
\end{equation*}
Furthermore, $\tilde{\psi}_{-,s_k}$ is left continuous and
$\tilde{\psi}_{+,s_k}$ is right continuous with the following
properties:
\begin{equation*}
0<\tilde{\psi}_{-,s_k}(0)\le \delta e_0\quad \text{and}\quad
\beta-\delta e_\beta \le \tilde{\psi}_{+,s_k}(0) <\beta,
\end{equation*}
but
\begin{equation*}
\tilde{\psi}_{-,s_k}(0)\not\in [0,\delta e_0]]_{\mc{X}}\quad
\text{and}\quad \tilde{\psi}_{+,s_k}(0)\not\in[[\beta-\delta
e_\beta,\beta]_{\mc{X}}.
\end{equation*}

Secondly, we show that for the above sequence $s_k$, there exist a
countable set $\tilde{\Gamma}\subset \R$ and a subsequence, still
denoted by $s_k$, such that $\f{1}{s_k}c_{\pm,s_k}\to c_\pm\in\R$
and that $\tilde{\psi}_{\pm,s_k}(x)$ converges in $\mc{X}$ for all
$x\in\R\setminus \tilde{\Gamma}$. Indeed, let
$\Theta=\cup_{k=1}^\infty\Theta_k$. Hence, $\Theta$ is countable and
\begin{equation*}
\tilde{Q}_{s_k}[\tilde{\psi}_{\pm,s_k}](x)=\tilde{\psi}_{\pm,s_k}(x+c_{\pm,s_k}),\quad
\forall k\ge 1, x\in \R\setminus \Theta.
\end{equation*}
From Proposition \ref{SetProp2}, we see that there exists another
countably dense set $\Gamma\subset\R$ such that $\Gamma\cap
\Theta=\emptyset$. By the same arguments as in the proof of Theorem
\ref{ThDisDis}, we can show that
\begin{equation*}
\tilde{\psi}_-(x):=\lim_{y\in\Gamma,y\uparrow x}\lim_{k\to
\infty}\tilde{\psi}_{-,s_k}(y),\quad \forall x\in \R,
\end{equation*}
and
\begin{equation*}
\tilde{\psi}_+(x):=\lim_{y\in\Gamma,y\downarrow x}\lim_{k\to
\infty}\tilde{\psi}_{+,s_k}(y),\quad \forall x\in \R,
\end{equation*}
are well-defined and all $\tilde{\psi}_\pm(\pm \infty)$ exist.
Furthermore, $\tilde{\psi}_-(+\infty)$ and $\tilde{\psi}_+(-\infty)$
are ordered in $\mc{X}$ and
\begin{equation*}
\tilde{\psi}_-(-\infty)\le \psi_-(0)\le \delta e_0\quad
\text{and}\quad \tilde{\psi}_+(+\infty)\ge \tilde{\psi}_+(0)\ge
\beta-\delta e_\beta,
\end{equation*}
but
\begin{equation*}
\tilde{\psi}_{-}(0)\not\in [0,\delta e_0]]_{\mc{X}}\quad
\text{and}\quad \tilde{\psi}_{+}(0)\not\in[[\beta-\delta
e_\beta,\beta]_{\mc{X}}.
\end{equation*}
Further, $\tilde{\psi}_\pm(x^\pm)$ exist for all $x\in\R\setminus
\Gamma$. Hence, it follows from Theorem \ref{HellyTh} that there
exists a countable subset $\tilde{\Gamma}$ of $\R$ such that
\begin{equation}\label{Eq1}
\tilde{\psi}_{\pm,s_k}(x)\to \tilde{\psi}_\pm(x),\quad \forall x\in
\R\setminus \tilde{\Gamma}.
\end{equation}
By similar arguments as in the second step of the proof of Theorem
\ref{ThContCont}, we can show that $\f{1}{s_k}c_{\pm, s_k}$ are
bounded.

Finally, we prove that either $\tilde{\psi}_-(x+c_-t)$ or
$\tilde{\psi}_+(x+c_+t)$ is a nondecreasing traveling wave
connecting $0$ to $\beta$. Indeed, from \eqref{Eq1} and Proposition
\ref{SetProp1}, we see that there exists a countable subset
$\Gamma_1$ of $\R$ such that
\begin{equation*}
\tilde{\psi}_{\pm,s_k}(i+x)\to \tilde{\psi}_\pm(i+x),\quad \forall
i\in\Z, x\in \R\setminus \Gamma_1.
\end{equation*}
Hence, for any $x\in \R\setminus \Gamma_1$ and $t>0$, we have
\begin{eqnarray}\label{Eq:Limitation}
\tilde{Q}_t[\tilde{\psi}_-](x)&&=Q_t[\tilde{\psi}_-(\cdot+x)](0)=\lim_{k\to
\infty}Q_{t+r_k}[\tilde{\psi}_{-,s_k}(\cdot+x)](0)\nonumber\\
&&=\lim_{k\to \infty}Q_{m_k
s_k}[\tilde{\psi}_{-,s_k}(\cdot+x)](0)=\lim_{k\to
\infty}\tilde{Q}_{m_ks_k}[\tilde{\psi}_{-,s_k}](x)\nonumber\\
&&=\lim_{k\to\infty}
(\tilde{Q}_{s_k})^{m_k}[\tilde{\psi}_{-,s_k}](x)=\lim_{k\to
\infty} \tilde{\psi}_{-,s_k}(x+m_kc_{-,s_k})\nonumber\\
&&=\lim_{k\to \infty}
\tilde{\psi}_{-,s_k}(x+(t+r_k)\f{1}{s_k}c_{-,s_k}).
\end{eqnarray}
In the case where $c_-=0$, we can choose $x_0$ such that
\begin{equation*}
Q_t[\tilde{\psi}_-(x_0+\cdot)](i)=\lim_{k\to \infty}
\tilde{\psi}_{-,s_k}(x+(t+r_k)\f{1}{s_k}c_{-,
s_k})=\tilde{\psi}_-(x_0+i),\forall i\in\mathbb{Z}.
\end{equation*}
In the case where $c_-\neq 0$, we know that there exists a countable
subset $\Gamma_2$ of $\R$ such that
\begin{equation*}
\tilde{Q}_t[\tilde{\psi}_-](x)=\lim_{k\to \infty}
\tilde{\psi}_{-,s_k}(x+(t+r_k)\f{1}{s_k}c_{-,s_k})=\tilde{\psi}_-(x+c_-t),\quad
\forall x\not\in \Gamma_1, x+c_-t\in\Gamma_2.
\end{equation*}
Without loss of generality, we assume that $c_->0$. For any
$y\in\R$, we can choose $x_0\in\R$ and $t_0\ge 0$ such that
$x_0+c_-t_0=y$ and $\tilde{\psi}_-(x)$ is continuous at $x=x_0+i$
for all $i\in\Z$. Now one can find $x_{\pm,k}\in \R\setminus
\Gamma_1$ and $t_{\pm, k}\to t_0$ with $y_{\pm ,k}:=x_\pm
+c_-t_{\pm,k}\in \R\setminus \Gamma_2$ such that $y_{-,k}\uparrow y$
and $y_{+,k}\downarrow y$. Note that
\begin{equation*}
\tilde{\psi}_-(y^-):=\lim_{k\to
\infty}\tilde{\psi}_-(y_{-,k})=\lim_{k\to \infty}
Q_{t_{-,k}}[\tilde{\psi}_-(\cdot+x_{-,k})](0)=Q_{t_0}[\tilde{\psi}_-(\cdot+x_0)](0)
\end{equation*}
and
\begin{equation*}
\tilde{\psi}_-(y^+):=\lim_{k\to
\infty}\tilde{\psi}_-(y_{+,k})=\lim_{k\to \infty}
Q_{t_{+,k}}[\tilde{\psi}_-(\cdot+x_{+,k})](0)=Q_{t_0}[\tilde{\psi}_-(\cdot+x_0)](0).
\end{equation*}
Thus, $\tilde{\psi}_-(x)$ is continuous in $x\in \R$. And hence,
again by Proposition \ref{Convergence} and the equality
\eqref{Eq:Limitation}, we have
$\tilde{Q}_t[\tilde{\psi}_-](x)=\tilde{\psi}_-(x+c_-t)$ for all
$x\in \R$ and $t\ge 0$. Therefore, $\tilde{\psi}_-(x+c_-t)$ is a
traveling wave connecting $0$ to some $\alpha_-\in
\Sigma\setminus\{0\}$. Similarly, we can construct the traveling
wave $\tilde{\psi}_+(x+c_+t)$ connecting some $\alpha_+\in
\Sigma\setminus\{\beta\}$. Besides, $\alpha_-$ and $\alpha_+$ are
ordered. Now the rest of the proof is essentially the same as in the
proof of Theorem \ref{ThContCont}.
\end{proof}

\section{Semiflows in a periodic habitat}
A typical example of evolution systems in a periodic habitat is
\begin{equation}\label{Eq:MediaPer}
u_t=(d(x)u_x)_x+f(u),\,  t>0,x\in\R,
\end{equation}
where $d(x)$ is a positive periodic function of $x\in \R$. Under the
assumption that $f$ has exactly three ordered zeros $0<a<1$ and
$f'(0)<0, f'(a)>0,f'(1)<0$, Xin \cite{Xin} employed perturbation
methods to obtain the existence of spatially periodic traveling wave
$V(x+ct,x)$ with $V(-\infty,\cdot)=0$ and $V(+\infty,\cdot)=1$
provided that $d(x)$ is sufficiently closed to a positive constant
in certain sense (see also \cite{XinReview}). For a general positive
periodic function $d(x)$, the existence of such a traveling wave
remains open. We will revisit this problem in subsection 6.3.

A map $Q:\mc{E}\to \mc{E}\subset \mc{C}$ is said to be spatially
periodic with a positive period $r\in \mc{H}$ if $Q\circ
T_r=T_r\circ Q$, where $T_r$ is the $r$-translation operator.
Similarly, a semiflow $\{Q_t\}_{t\in \mc{T}}$ on $\mc{E}\subset
\mc{C}$ is said to be spatially periodic with a positive period
$r\in \mc{H}$ if $Q_t\circ T_r=T_r\circ Q_t$ for all $t\ge 0$.
\begin{definition}\label{TwPerMedia}
\begin{enumerate}
\item[(i)] An $r$-periodic function $\beta(x)$ is said to be an $r$-periodic
steady state of the map $Q$ (semiflow $\{Q_t\}_{t\in \mc{T}}$) if
$Q[\beta]=\beta (Q_t[\beta]=\beta,\forall t\in \mc{T})$.
\item[(ii)]$V(x+ct,x)$ is said to be a spatially $r$-periodic traveling wave with
speed $c$ of the semiflow $\{Q_t\}_{t\in \mc{T}}$ if
$Q_t[V(\cdot,\cdot)](x)=V(x+ct,x)$ and $V(\cdot,x)$ is $r$-periodic
in $x$. Besides, we say that $V(\xi,x)$ connects $0$ to $\beta(x)$
if $\lim_{\xi\to -\infty}\|V(\xi,x)\|_{\mc{X}}=0$ and $\lim_{\xi\to
+\infty}\|V(\xi,x)-\beta(x)\|_{\mc{X}}=0$ uniformly for $x\in
\mc{H}$.
\end{enumerate}
\end{definition}

Motivated by \cite[Section 5]{LiangZhaoJFA}, we can regard a
spatially periodic semiflow on $\mc{E}\subset\mc{C}$ as a spatially
homogeneous semiflow on another phase space. For any positive $h\in
\mc{H}$, define $[0,h]_{\mc{H}}:=\{l\in \mc{H}:0\le l\le h\}$. We
use $\mc{Y}$ to denote $C([0,r]_{\mc{H}},\mc{X})$ and $\mc{S}$ to
denote the set of all bounded functions from $r\Z$ to $\mc{Y}$.
Clearly, $\mc{Y}$ can be regarded as a subspace of $\mc{S}$. Let
$\mc{Y}^+=C([0,r]_{\mc{H}},\mc{X}^+)$ and $\mc{S}^+$ be the set of
all bounded functions from $r\Z$ to $\mc{Y}^+$. We equip $\mc{Y}$
with the norm $\|u\|_{\mc{Y}}=\max\{\|u(x)\|_{\mc{X}}:x\in
[0,r]_{\mc{H}}\}$ and $\mc{S}$ with the compact open topology. Thus,
$\mc{Y}$ is a Banach lattice with the norm $\|\cdot\|_{\mc{Y}}$ and
the cone $\mc{Y}^+$.

Let
\begin{equation*}
\mc{K}:=\{f\in \mc{S}:f(ri)(r)=f(r(i+1))(0),\forall i\in \Z\}.
\end{equation*}
It is easy to see that
\begin{equation*}
\mc{K}\cap \mc{Y}=\{f\in \mc{S}:f(ri)\equiv f(rj) \,  \text{and}\,
f(ri)(0)=f(ri)(r),\forall i,j\in \Z\}.
\end{equation*}
For any $\phi\in \mc{C}$, define $\tilde{\phi}\in \mc{S}$ by
\[
\tilde{\phi}(ri)(y)=\phi(ri+y), \, \, \forall i\in \Z, \, y\in
[0,r]_{\mc{H}}.
\]
Then we have the following observation.
\begin{lemma}\label{SetEquivalent}
For any $f\in \mc{K}$, there exists a unique $\phi_f\in \mc{C}$ such
that $\tilde{\phi_f}=f$. Further, if $f\in \mc{K}\cap \mc{Y}$, then
$\phi_f$ is $r$-periodic.
\end{lemma}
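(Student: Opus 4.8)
The plan is to build the inverse of $\phi \mapsto \tilde\phi$ by hand and verify that it carries $\mc{K}$ into $\mc{C}$. Given $f \in \mc{K}$, each $x \in \mc{H}$ has a unique representation $x = ri + y$ with $i \in \Z$ and $y \in [0,r)_{\mc{H}}$, so I would simply set $\phi_f(x) := f(ri)(y)$; with this half-open convention there are no competing decompositions, so $\phi_f$ is well defined outright. The compatibility condition $f(ri)(r) = f(r(i+1))(0)$ defining $\mc{K}$ is not needed for well-definedness, but it is exactly what glues the cells together continuously and makes the reconstruction exact at the cell endpoints.

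First I would check that $\phi_f \in \mc{C}$. On each open cell $(ri, r(i+1))$ we have $\phi_f(x) = f(ri)(x - ri)$, which is continuous because $f(ri) \in \mc{Y} = C([0,r]_{\mc{H}}, \mc{X})$; boundedness follows from $\|\phi_f(x)\|_{\mc{X}} = \|f(ri)(y)\|_{\mc{X}} \le \|f(ri)\|_{\mc{Y}}$ together with the boundedness of $f$ in $\mc{S}$. The only junction to examine is a lattice point $ri$: there the right-hand value is $\phi_f(ri) = f(ri)(0)$, while the left limit is $\lim_{x \uparrow ri} f(r(i-1))(x - r(i-1)) = f(r(i-1))(r)$, and the $\mc{K}$-condition (with index $i-1$) forces $f(r(i-1))(r) = f(ri)(0)$, yielding continuity across $ri$.

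Next I would confirm $\tilde{\phi_f} = f$ and uniqueness. For $y \in [0,r)_{\mc{H}}$ we read off $\tilde{\phi_f}(ri)(y) = \phi_f(ri + y) = f(ri)(y)$ directly, while at the right endpoint $\tilde{\phi_f}(ri)(r) = \phi_f(r(i+1)) = f(r(i+1))(0) = f(ri)(r)$ by the $\mc{K}$-condition, so the slices agree on all of $[0,r]_{\mc{H}}$; uniqueness is immediate, since any $\psi \in \mc{C}$ with $\tilde\psi = f$ satisfies $\psi(ri + y) = f(ri)(y) = \phi_f(ri + y)$ for every $x = ri + y$, hence $\psi = \phi_f$. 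Finally, if $f \in \mc{K} \cap \mc{Y}$, the displayed description of $\mc{K} \cap \mc{Y}$ says every slice $f(ri)$ equals one fixed $g \in \mc{Y}$, so for $x = ri + y$ with $y \in [0,r)_{\mc{H}}$ we get $\phi_f(x + r) = f(r(i+1))(y) = g(y) = f(ri)(y) = \phi_f(x)$, i.e. $\phi_f$ is $r$-periodic. I expect the only genuine obstacle to be the junction/endpoint bookkeeping, namely recognizing that the $\mc{K}$-compatibility is precisely what is required both for continuity at the lattice points and for the endpoint identity $\tilde{\phi_f}(ri)(r) = f(ri)(r)$; everything else is a direct unwinding of the definitions.
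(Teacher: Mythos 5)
Your proposal is correct and is essentially the paper's own argument: both construct $\phi_f$ from the decomposition $x = ri + y$ and let the compatibility condition $f(ri)(r) = f(r(i+1))(0)$ do the gluing. The only cosmetic difference is that the paper allows the ambiguous closed-interval decomposition $y \in [0,r]_{\mc{H}}$ and invokes the $\mc{K}$-condition for well-definedness, whereas you fix the half-open convention $y \in [0,r)_{\mc{H}}$ and invoke it instead for continuity across lattice points and for the endpoint identity $\tilde{\phi_f}(ri)(r) = f(ri)(r)$; your version also spells out the continuity, boundedness, and uniqueness checks that the paper compresses into ``is a well-defined function in $\mc{C}$'' and ``Clearly, $\tilde{\phi_f}=f$.''
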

\begin{proof}
For any $x\in \mc{H}$, we can find $i\in \Z$ and $y\in[0,r]_\mc{H}$
such that $x=ri+y$. It is easy to see that such decomposition of $x$
is unique when $x\in \mc{H}\setminus r\Z$ and is in two possible
ways when $x\in r\Z$. More precisely, when $x\in r\Z$, it can be
decomposed into either $x=r(i+1)+0$ or $x=ri+r$ for some $i\in \Z$.
Note that $f(r(i+1))(0)=f(ri)(r)$. It then follows that
$\phi_f(x)=\phi_f(ri+y):=f(ri)(y)$ is a well-defined function in
$\mc{C}$. Clearly, $\tilde{\phi_f}=f$. If $f(ri)= u, \forall i\in
\Z$, then $\phi_f(ri)= u, \forall i\in \Z$, which implies that
$\phi_f$ is $r$-periodic.
\end{proof}

If we define $F:\mc{C}\to \mc{K}$ by $F(\phi)=\tilde{\phi}$, then
$F$ is a homeomorphism between $\mc{C}$ and $\mc{K}$. Let $\beta(x)$
be a strongly positive $r$-periodic steady state of the semiflow
$\{Q_t\}_{t\ge 0}$. With a little abuse of notation, we use
$\mc{C}_\beta$ to denote the set $\{\phi\in \mc{C}:0\le \phi\le
\beta\}$. Now we can define a semiflow $\{P_t\}_{t\in \mc{T}}$ on
$\mc{K}_{\tilde{\beta}}:=\{f\in \mc{K}:\, 0\leq  f\leq
\tilde{\beta}\}$ by
\begin{equation}\label{DefinitionOfPt}
P_t[f]=F\circ Q_t[\phi_f],\quad \forall f\in
\mc{K}_{\tilde{\beta}},\, t\in \mc{T}.
\end{equation}
Clearly, $P_t\circ F=F\circ Q_t,\forall t\in \mc{T}$, which implies
that semiflows $\{Q_t\}_{t\in \mc{T}}$ and $\{P_t\}_{t\in \mc{T}}$
are topologically conjugate. Moreover, $\{P_t\}_{t\in \mc{T}}$ is
spatially homogeneous and $\tilde{\beta}$ is its equilibrium. Thus,
we see that the semiflow $\{Q_t\}_{t\in \mc{T}}$ on $\mc{C}_\beta$
has a spatially $r$-periodic traveling wave if the semiflow
$\{P_t\}_{t\in\mc{T}}$ on $\mc{K}_{\tilde{\beta}}$ has a traveling
wave. Before stating the main result, we first introduce the
bistability assumption. Let $\beta(x)\gg 0$ be an $r$-periodic
steady state of the semiflow $\{Q_t\}_{t\in \mc{T}}$. Assume that
$0$ is a trivial steady state. Define
\begin{equation*}
\Pi_{\beta}:=\{\phi\in \mc{C}: \, \phi(x)=\phi(x+r),\, 0\le
\phi(x)\le \beta(x),\,  \forall x\in \mc{H}\}.
\end{equation*}

As in Definition \ref{DefStabFix}, we can define the strong
stability of periodic steady states for a map $Q$ in the space of
periodic functions.
\begin{definition}\label{DefStabPer}
A steady state $\alpha\in\Pi_\beta$ is said to be strongly stable
from below for the map $Q:\Pi_\beta\to \Pi_\beta$ if there exist a
positive number $\delta_\alpha^+$ and a strongly positive element
$e_\alpha^+\in \Pi_\beta$ such that
\begin{equation}\label{Eq:DefStabPer}
Q[\alpha -\eta e_\alpha^+]\gg \alpha - \eta e_\alpha^+,\,\forall
\eta\in (0,\delta_\alpha^+].
\end{equation}
The strong instability from below is defined by reversing the
inequality \eqref{Eq:DefStabPer}. Similarly, we can define strong
stability (instability) from above.
\end{definition}

We need the following bistability assumption on the spatially
$r$-periodic map $Q$.
\begin{enumerate}
\item[(A5$''$)]({it Bistability}) $0$ and $\beta\gg0$ are two strongly stable
$r$-periodic steady states from above and below, respectively, for
$Q:\Pi_\beta\to \Pi_\beta$,  and the set of all intermediate
$r$-periodic steady states are totally unordered in $\Pi_\beta$.
\end{enumerate}

We note that a sufficient condition for the non-ordering property of
all intermediate $r$-periodic steady states is: $Q:\Pi_\beta\to
\Pi_\beta$ is eventually strongly monotone and all intermediate
fixed points are strongly unstable from both above and below.

\begin{theorem}\label{ThMediaPeri}
Let $\mc{X}=C(M,\R^d)$. Assume that for any $t>0$, the map $Q_t$
satisfies (A2)-(A4) and the bistability assumption (A5$''$).
Further, assume that the map $P_1:=FQ_1F^{-1}$ satisfies assumption
(A6) with $\mc{C}$ and $\beta$ replaced by $\mc{K}$ and
$\tilde{\beta}$, respectively. Then the spatially $r$-periodic
semiflow $\{Q_t\}_{t\in \mc{T}}$ admits an $r$-periodic traveling
wave $V(x,x+ct)$. Besides, $V(x,\xi)$ is nondecreasing in $\xi$ and
connecting $0$ to $\beta(x)$.
\end{theorem}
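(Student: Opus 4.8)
The plan is to remove the spatial heterogeneity by means of the topological conjugacy $F\colon\mc{C}\to\mc{K}$, which satisfies $P_t\circ F=F\circ Q_t$, and thereby reduce the problem to one for a \emph{spatially homogeneous} semiflow in the discrete habitat $r\Z$, to which the existence theorems of Section~3 apply. The semiflow $\{P_t\}_{t\in\mc{T}}$ acts on $\mc{K}_{\tilde\beta}$; since shifting the lattice index $ri\mapsto r(i+1)$ corresponds under $F$ to the translation $T_r$ on $\mc{C}$, and $Q_t T_r=T_r Q_t$ by spatial periodicity, $\{P_t\}$ is homogeneous on $r\Z$. Its state space is $\mc{Y}=C([0,r],\mc{X})\cong C([0,r]\times M,\R^d)$, again of the form $C(M',\R^d)$ with $M'=[0,r]\times M$ compact, so the structural hypothesis $\mc{X}=C(M,\R^d)$ required by Theorems~\ref{ThDisDis} and \ref{ThContDis} is met by $\mc{Y}$. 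A traveling wave of $\{P_t\}$ in $r\Z$ will then be pulled back through $F^{-1}$ to the desired spatially $r$-periodic wave of $\{Q_t\}$.

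First I would verify that $\{P_t\}$ satisfies (A1)--(A6) on $\mc{K}_{\tilde\beta}$ (reading $\mc{K},\tilde\beta,\mc{Y}$ for $\mc{C},\beta,\mc{X}$). Assumption (A1) is the homogeneity just noted; since $F$ and $F^{-1}$ are order-isomorphisms and homeomorphisms, the continuity, monotonicity and compactness hypotheses (A2)--(A4) pass from $Q_t$ to $P_t=FQ_tF^{-1}$, while (A6) for $P_1$ is assumed outright. The main obstacle is the bistability hypothesis (A5). By Lemma~\ref{SetEquivalent}, $f\in\mc{K}$ lies in $\mc{K}\cap\mc{Y}$ exactly when $\phi_f$ is $r$-periodic, so the restriction of $P_t$ to the spatially constant states $(\mc{K}\cap\mc{Y})_{\tilde\beta}$ is order-conjugate, via $F$, to $Q_t\colon\Pi_\beta\to\Pi_\beta$. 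Under this conjugacy a strongly positive periodic direction $e_\alpha^{\pm}\in\Pi_\beta$ corresponds to a strongly positive constant element of $\mc{K}\cap\mc{Y}$, so the strong-stability/instability inequalities and the totally-unordered conclusion of (A5$''$) for $Q_t$ translate verbatim into (A5) for $P_t$ with $E=\Sigma_t$. The point requiring care is that this passage to the $\mc{Y}$-valued, $r\Z$-indexed description neither creates spurious intermediate fixed points nor destroys the non-ordering, which is exactly what the bijection $f\leftrightarrow\phi_f$ guarantees; and the non-ordering of the semiflow equilibrium set $\Sigma$, needed when the discrete time-step tends to $0$ (cf.\ Proposition~\ref{BistabilitySemiflow}), follows at once from $\Sigma\subseteq\Sigma_1$ together with (A5) for $P_1$.

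With (A1)--(A6) in hand, I would apply Theorem~\ref{ThContDis} when $\mc{T}=\R^+$ (respectively Theorem~\ref{ThDisDis} when $\mc{T}=\Z^+$) to $\{P_t\}$ on $\mc{K}_{\tilde\beta}$, producing a speed $c$ and a nondecreasing profile $\Psi\colon\R\to\mc{Y}$ with $\Psi(-\infty)=0$ and $\Psi(+\infty)=\tilde\beta$ such that $\Psi(\xi+ct)$ is a traveling wave of $\{P_t\}$. Because the profiles in those proofs arise by applying $P_t$ and passing to monotone limits of data taken in the closed, shift-invariant set $\mc{K}$, the limit again obeys the matching identity $\Psi(\xi)(r)=\Psi(\xi+r)(0)$ defining $\mc{K}$. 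Finally I would set, for $x=ri+y$ with $y\in[0,r]$, $V(\xi,x):=\Psi(\xi-y)(y)$ extended $r$-periodically in $x$; the matching identity makes this well defined, and for each fixed $t$ the map $x\mapsto V(x+ct,x)$ equals $F^{-1}(\Psi(\cdot+ct))(x)\in\mc{C}$, hence is continuous. Monotonicity of $\Psi$ gives that $V$ is nondecreasing in its moving-frame variable, and the limits $\Psi(\pm\infty)$, which hold uniformly over $[0,r]$ since convergence in $\mc{Y}$ is uniform, yield $\lim_{\xi\to-\infty}\|V(\xi,x)\|_{\mc{X}}=0$ and $\lim_{\xi\to+\infty}\|V(\xi,x)-\beta(x)\|_{\mc{X}}=0$ uniformly in $x$, which is precisely the connecting condition of Definition~\ref{TwPerMedia}. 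The only residual checks are the membership of $\Psi$ in $\mc{K}$ and the continuity of $V$ across lattice points, both of which reduce to that same matching identity.
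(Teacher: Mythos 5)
Your proposal is correct and is essentially the paper's own argument: conjugate $\{Q_t\}$ through $F$ to the spatially homogeneous semiflow $\{P_t\}$ on $\mc{K}_{\tilde\beta}$ over the lattice $r\Z$, verify (A1)--(A6) there (with (A5) inherited from (A5$''$) and the state space $\mc{Y}=C([0,r]\times M,\R^d)$ still of the form required by the lattice theorems), apply Theorems \ref{ThDisDis} and \ref{ThContDis}, and pull the resulting wave back through $F^{-1}$. The only difference is bookkeeping in the final step: the paper writes each shifted profile as a semiflow value $\tilde h_t:=P_t[g]\in\mc{K}$ and sets $V(\xi,x)=h_{(\xi-x)/c}(x)$ (treating $c=0$ separately), which unravels to exactly your formula $V(\xi,x)=\Psi(\xi-y)(y)$ but secures the matching identity $\Psi(\xi)(r)=\Psi(\xi+r)(0)$ somewhat more directly than your appeal to the profile being a limit of data in $\mc{K}$.
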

\begin{proof}
Let $t\ge 0$ be fixed and $P_t$ be defined as in
\eqref{DefinitionOfPt}. Then it is easy to see that $P_t$ satisfies
(A1)-(A5) with $\mc{C}_\beta$ replaced by $\mc{K}_{\tilde{\beta}}$.
From Theorems \ref{ThDisDis} and \ref{ThContDis},  we see that
$\{P_t\}_{t\in \mc{T}}$ admits a traveling waves $U(x+ct)$ with $U$
connecting $0$ to $\tilde{\beta}$. By the definitions of traveling
waves in a discrete habitat (see Definitions \ref{DefDisDis} and
\ref{DefContDis}), we can find $x_0\in \R$ such that $g:=U
(\cdot+x_0)\in \mc{K}_{\tilde{\beta}}$ and
$P_t[g](ri)=U(ri+ct+x_0),\forall i\in \mathbb{Z}$. By Lemma
\ref{SetEquivalent}, we can find $\psi, h_t \in \mc{C}$ such that
$\tilde{\psi}=g$ and $\tilde{h}_t=U(\cdot+ct+x_0)$, and hence,
$P_t[\tilde{\psi}]=\tilde{h_t}$. By the topological conjugacy of
$Q_t$ and $P_t$, we have $Q_t[\psi]=h_t$. Note that
$\tilde{\psi}=g=U(\cdot+x_0)=\tilde{h_0}$. It then follows from
Lemma \ref{SetEquivalent} that $\psi=h_0$. If $c=0$, then we obtain
$Q_t[\psi]=h_t\equiv h_0=\psi$, which implies that $\psi$ is a
traveling wave with speed zero. If $c\neq 0$, then we define
$V(\xi,x):=h_{\f{\xi-x}{c}}(x)$. Consequently,
\begin{equation*}
V(x+ct,x)=h_t(x)=Q_t[\psi](x)=Q_t[h_0](x)=Q_t[V(\cdot,\cdot)](x),\forall
x\in\mc{H},t\ge 0.
\end{equation*}
This completes the proof.
\end{proof}

To finish this section, we remark that the bistability structure can
be obtained for equation \eqref{Eq:MediaPer} under appropriate
conditions so that the existence result in \cite{Xin,XinReview} is
improved (see the details in subsection 6.3). Further, Theorem
\ref{ThMediaPeri} with $\mc{H}=Z$ and $\mc{X}=\R$ can be used to
rediscover the existence result in \cite{ChenGuoWu} for one
dimensional lattice equation under the bistability assumption.

\section{Semiflows with weak compactness}

In assumption (A4) of section 2, we assume that $Q:\mc{C}_\beta\to
\mc{C}_\beta$ is compact with respect to the compact open topology.
In this section, we establish the existence of bistable waves under
some weaker compactness assumptions.

Let $\tau>0$ be a fixed number. It is well known that the time-$t$
solution map of time-delayed reaction-diffusion equations such as
\begin{equation}\label{DelayEquation}
\f{\partial u}{\partial t}=\f{\partial^2 u}{\partial
x^2}+f(u(t,x),u(t-\tau,x)),
\end{equation}
is compact with respect to the compact open topology if and only if
$t>\tau$, where the phase space $\mc{C}$ is chosen as $C(\R,
C([-\tau,0],\R))$. The first purpose of this section is to show that
our results are still valid for this kind of evolution equations by
introducing an alternative assumption (A4$'$).

In order to state this assumption, we need some notations for
time-delayed evolution systems. Let $\tau\in \mc{T}$ be a positive
number, $\mc{F}$ be a Banach lattice with the positive cone
$\mc{F}^+$ having non-empty interior, $\beta\in Int(\mc{F}^+)$, and
$\mc{X}_\beta=C([-\tau,0],\mc{F}_\beta)$. For any
$\phi\in\mc{C}_\beta$, we can regard it as an element in
$C([-\tau,0]\times\mc{H},\mc{F}^+)$. For any subset $B$ of
$[-\tau,0]\times\mc{H}$, we define $\phi|_B$ as the restriction of
$\phi$ on $B$.
\begin{enumerate}
\item[(A4$'$)]({\it Compactness}) There exists
$s\in(0,\tau]$ such that
\begin{enumerate}
\item[(i)]$Q[\phi](\theta,x)=\phi(\theta+s,x)$ whenever $\theta+s\le 0$.
\item[(ii)]For any
$\epsilon \in (0,s)$, the set $Q[\mc{C}_\beta]|_{
[-s+\epsilon,0]\times \mc{H}}$ is precompact.
\item[(iii)]For any subset $\mc{J}\subset \mc{C}_\beta$ with $\mc{J}(0,\cdot)\subset
C(\mc{H},\mc{Y}_\beta)$ being precompact, the set
$Q[\mc{J}]|_{[-s,0]\times \mc{H}}$ is precompact.
\end{enumerate}
\end{enumerate}

This assumption was motivated by \cite[Assumption
(A6$'$)]{LiangZhao}. Let us use equation \eqref{DelayEquation} to
explain (A4$'$). For any $t>\tau$, one can directly verify that the
solution map $Q_t$ satisfies (A4) by rewriting \eqref{DelayEquation}
as an integral form (see, e.g., \cite{Wu}); and for any $t\in (0,
\tau]$, one can show that $Q_t$ satisfies (A4$'$) (i) and (ii) by
the same arguments. For (A4$'$) (iii), we provide a proof below.

Let $T(0)=I$, and for any $t\in(0,\tau]$, let $T(t)$ be the time-$t$
map of the heat equation $u_t=\Delta u$. Then \eqref{DelayEquation}
can be written as the following form:
\begin{equation*}
u(t,x;\phi)=T(t)\phi(x)+\int_0^tT(t-s)f(u(s,u(s-\tau)))(x)ds,
\end{equation*}
and hence, $Q_t[\phi](\theta,x)=u(t+\theta,x)$. Note that for any
$\phi\in \mc{C}_\beta$, $T(t)\phi\to \phi$ with respect to the
compact open topology as $t\to 0$. It then follows from the
triangular inequality and the absolute continuity of integrals that
for any compact subset $\mc{H}_1\subset \R$, the set
$Q_t[\mc{J}]|_{[-t,0]\times \mc{H}_1}$ is equi-continuous, and
hence, $Q[\mc{J}]|_{[-t,0]\times \R}$ is precompact in
$\mc{C}_\beta$.

\begin{lemma}\label{DelayEqCompactness}
Let $A_\xi,\xi\ge 1$, be defined as in section 3 and $\beta\in
Int(\mc{F}^+)$. Assume that $Q:\mc{C}_\beta\to \mc{C}_\beta$
satisfies (A4$'$). Then there exists an integer $m_0$ such that
$\cup_{\xi\in [1,1+\delta]}(Q\circ
A_\xi)^{m_0}[\mc{C}_\beta]\subset\mc{C}_\beta$ is precompact when
$\mc{H}=\R$, and $\cup_{\xi\in [1,2]}(\tilde{Q}\circ
A_\xi)^{m_0}[\mc{B}_\beta](x)\subset\mc{X}_\beta$ is precompact for
any $x\in \R$ when $\mc{H}=\Z$.
\end{lemma}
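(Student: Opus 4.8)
The plan is to exploit the delay structure built into (A4$'$): by (A4$'$)(i) each application of $Q$ advances the history variable $\theta$ by $s$, while (A4$'$)(ii)--(iii) manufacture precompactness on the slice freshly exposed near $\theta=0$. Iterating $G_\xi:=Q\circ A_\xi$ thus sweeps precompact data leftward across the whole history window $[-\tau,0]$, and once the number of iterations exceeds $\tau/s$ the window is entirely filled. I will therefore set $m_0:=\lceil\tau/s\rceil+2$ and treat $\mc{H}=\R$ first, the discrete case being identical after replacing $Q$ by $\tilde{Q}$ and reading every precompactness statement pointwise in $x\in\R$.

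First I would record the shift identity coming from (A4$'$)(i) and the definition $A_\xi[\phi](x)=\phi(\xi x)$: for any $\psi\in\mc{C}_\beta$ and $\theta\le -s$,
\[
G_\xi[\psi](\theta,x)=Q[A_\xi\psi](\theta,x)=(A_\xi\psi)(\theta+s,x)=\psi(\theta+s,\xi x).
\]
Applying this $k$ times yields, for every $\theta\le -ks$,
\[
G_\xi^{\,m}[\phi](\theta,x)=G_\xi^{\,m-k}[\phi]\bigl(\theta+ks,\ \xi^{k}x\bigr),
\]
so that on the band $\theta\in(-(k+1)s,-ks]$ the value of the $m$-th iterate is a spatially rescaled value of the $(m-k)$-th iterate read on the recent slice $(-s,0]$. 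Since $\theta$ is confined to $[-\tau,0]$, only $k\le\lfloor\tau/s\rfloor$ occur, and the factors $\xi^{k}$ stay in the bounded set $[1,(1+\delta)^{\lceil\tau/s\rceil}]$.

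Next I would run the bootstrap that furnishes precompactness of the recent slice for the iterates. Consider the statement $P(m)$: the $\theta=0$ section of $\bigcup_{\xi\in[1,1+\delta]}G_\xi^{\,m}[\mc{C}_\beta]$ is precompact in $C(\mc{H},\mc{F}_\beta)$. Since $A_\xi$ maps $\mc{C}_\beta$ into itself, $G_\xi[\mc{C}_\beta]\subset Q[\mc{C}_\beta]$, and (A4$'$)(ii) restricted to $\theta=0$ gives $P(1)$. For the induction, put $\mc{J}:=\bigcup_{\xi}A_\xi\,G_\xi^{\,m}[\mc{C}_\beta]$, so that $\bigcup_{\xi}G_\xi^{\,m+1}[\mc{C}_\beta]=Q[\mc{J}]$; the $\theta=0$ section of $\mc{J}$ is the family of $\xi$-rescalings of the precompact section supplied by $P(m)$, and rescaling over the compact parameter interval $[1,1+\delta]$ preserves precompactness in the compact--open topology, so $\mc{J}(0,\cdot)$ is precompact. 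Then (A4$'$)(iii) shows that $\bigcup_{\xi}G_\xi^{\,m+1}[\mc{C}_\beta]\big|_{[-s,0]\times\mc{H}}$ is precompact on the full closed band $[-s,0]$; in particular $P(m+1)$ holds, so by induction the closed-band precompactness is available for every iterate of order $\ge 2$.

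Finally I would glue the bands. For $\theta\in[-\tau,0]$ let $k=k(\theta)\le\lfloor\tau/s\rfloor$ be the index with $\theta+ks\in(-s,0]$; by the iteration identity $G_\xi^{\,m_0}[\phi](\theta,\cdot)$ is the $(\theta+ks)$-section of $G_\xi^{\,m_0-k}[\phi]$ rescaled by $\xi^{k}$, and $m_0-k\ge 2$ by the choice of $m_0$. The previous step makes each such section precompact on $[-s,0]$, and composing with the bounded rescaling $x\mapsto\xi^{k}x$ preserves precompactness on compact subsets of $\mc{H}$; as $[-\tau,0]$ is covered by the finitely many bands $k=0,\dots,\lfloor\tau/s\rfloor$, a finite union of precompact sets together with the continuity of the iterates in $\theta$ yields that $\bigcup_{\xi\in[1,1+\delta]}G_\xi^{\,m_0}[\mc{C}_\beta]$ is precompact in $\mc{C}_\beta$. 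The discrete case runs verbatim with $\tilde{Q}$ and $\xi\in[1,2]$, all precompactness being tested in $\mc{X}_\beta$ for each fixed $x\in\R$. I expect the main obstacle to be the boundary bookkeeping at the break points $\theta=-js$: the open interval in (A4$'$)(ii) leaves a gap there, which must be closed by the closed-band precompactness of (A4$'$)(iii). This is precisely why the bootstrap $P(m)$ must be established before (iii) can be invoked, and why $m_0$ is taken two steps beyond $\tau/s$.
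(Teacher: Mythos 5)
Your proposal is correct and follows essentially the same route as the paper's proof: use (A4$'$)(i) to write each iterate of $Q\circ A_\xi$ in bands of width $s$, use (A4$'$)(ii) to get precompactness of the freshly exposed slice after the first application, then bootstrap with (A4$'$)(iii) through roughly $\tau/s$ iterations until the whole window $[-\tau,0]$ is covered, using that rescaling over a compact parameter set preserves precompactness in the compact open topology. Your write-up is in fact slightly more careful than the paper's about the endpoint gap left by the open band in (ii) (hence your choice $m_0=\lceil\tau/s\rceil+2$ versus the paper's $m_0$ with $s\in(\tfrac{\tau}{m_0+1},\tfrac{\tau}{m_0}]$), but the underlying argument is identical.
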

\begin{proof}
We only prove the case where $\mc{H}=\R$ since the proof for
$\mc{H}=Z$ is essentially similar. Let $s$ and $\tau$ be defined in
(A4$'$). For such $s$ and $\tau$, there exists $m_0\in \mathbb{N}$
such that $s\in(\f{1}{m_0+1}\tau,\f{1}{m_0}\tau]$. By assumption
(A4$'$)(i), we see that for any $\xi\ge 1$ and $\phi_0\in
\mc{C}_\beta$,
\begin{eqnarray*}
\phi_1^\xi(\theta,x):=Q\circ A_\xi[\phi_0](\theta,x)=
\begin{cases}
\phi_0(\theta+s,\xi x),& \theta+s\le 0\\
Q[\phi_0(\xi\cdot)](\theta,x), &\theta+s>0,
\end{cases}
\end{eqnarray*}
This implies that for any $\xi\ge 1$ and $\epsilon<
s-\f{1}{m_0+1}\tau$,
\begin{equation*}
\cup_{\xi\in[1,2]}Q\circ A_\xi[\mc{C}_\beta]|_{
[-s+\epsilon,0]\times \R} \subset Q[\mc{C}_\beta]|_{
[-s+\epsilon,0]\times \R}.
\end{equation*}
Since $Q[\mc{C}_\beta]|_{ [-s+\epsilon,0]\times \R}$ is precompact,
as assumed in (A4$'$(ii)), it then follows that
$\cup_{\xi\in[1,2]}Q\circ A_\xi[\mc{C}_\beta](0,\cdot)\subset
C(\mathbb{R},\mc{Y}_\beta)$ is precompact. By (A4$'$)(iii) and
similar arguments as above, we have
\begin{eqnarray*}
\phi_2^{\xi}(\theta,x)&:=&Q\circ A_\xi[\phi_1^\xi](\theta,x)=
\begin{cases}
\phi_1^\xi(\theta+s,\xi x),& \theta+s\le 0\\
Q[\phi_1^\xi(\xi\cdot)](\theta,x), &\theta+s>0
\end{cases}
\nonumber\\
&=&
\begin{cases}
\phi_0(\theta+2s,\xi^2x),& \theta+2s\le 0\\
Q[\phi_0(\xi\cdot)](\theta+s,\xi x), &0< \theta+2s\le s\\
Q[\phi_1^\xi(\xi\cdot)](\theta,x), &\theta+s>0,
\end{cases}
\end{eqnarray*}
This implies that $\cup_{\xi\in[1,2]}(Q\circ
A_\xi)^2[\mc{C}_\beta]|_{ [-2s+\epsilon,0]\times \mathbb{R}}$ is
precompact. Consequently, $\cup_{\xi\in[1,2]}(Q\circ
A_\xi)^2[\mc{C}_\beta](0,\cdot)\subset C(\mathbb{R},\mc{Y}_\beta)$
is compact. By induction, we have
\begin{eqnarray*}
\phi_{m_0+1}^{\xi}(\theta,x)&&:=Q\circ
A[\phi_{m_0}^{\xi}](\theta,x)=
\begin{cases}
\phi_{m_0}^{\xi}(\theta+s,\xi x),& \theta+s\le 0\\
Q[\phi_{m_0}^{\xi}(\xi\cdot)](\theta,x), &\theta+s>0
\end{cases}
\nonumber\\
&&=\cdot\cdot\cdot\nonumber\\
&&=
\begin{cases}
Q[\phi_0(\xi\cdot)](\theta+(m_0+1)s,\xi^{m_0} x), &0< \theta+(m_0+1)s\le s\\
Q[\phi_1^\xi(\xi\cdot)](\theta+m_0s,\xi^{m_0-1}x), &0<\theta+m_0s\le s\\
\cdot\cdot\cdot\\
Q[\phi_{m_0-1}^{\xi}(\xi\cdot)](\theta+s,\xi x),&0<\theta+s\le s\\
Q[\phi_{m_0}^{\xi}](\theta,x),&\theta+s>0.
\end{cases}
\end{eqnarray*}
This implies that $\cup_{\xi\in[1,2]}(Q\circ
A_\xi)^{m_0}[\mc{C}_\beta]$ is precompact in $\mc{C}_\beta$.
\end{proof}

\begin{theorem}\label{Delay}
All results in Theorems \ref{ThDisCont}-\ref{ThContDis} and
\ref{ThMediaPeri} are valid if we replace (A4) with (A4$'$).
\end{theorem}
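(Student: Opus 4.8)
The plan is to trace the proofs of Theorems \ref{ThDisCont}--\ref{ThContDis} and \ref{ThMediaPeri}, isolate the precise places where the full compactness (A4) is invoked, and show that each appeal can be routed through the partial compactness of a fixed iterate supplied by Lemma \ref{DelayEqCompactness}. First I would record two consequences of that lemma. Since the union there is taken over the \emph{closed} interval $[1,1+\delta]$, which contains $\xi=1$ and $A_1=\mathrm{Id}$, the set $Q^{m_0}[\mc{C}_\beta]$ is itself precompact. Moreover, as $\kappa_n=1+\f{\bar c}{n}\to 1$, there is $n_0$ with $\kappa_n\in[1,1+\delta]$ for all $n\ge n_0$, so $G_n^{m_0}[\mc{C}_\beta]=(Q\circ A_{\kappa_n})^{m_0}[\mc{C}_\beta]$ is precompact for every such $n$. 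In the discrete-habitat cases I would use instead the per-point precompactness of $(\tilde Q\circ A_\xi)^{m_0}[\mc{B}_\beta](x)$ from the second half of Lemma \ref{DelayEqCompactness}. Note that Lemmas \ref{UpperLowerSolu1} and \ref{UpperLowerSolu2} rest only on (A1)--(A3) and (A5), hence remain untouched.

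The first use of (A4) is in Lemma \ref{FixedPoint1} (and \ref{FixedPoint2}), where compactness of $G_n$ makes the nondecreasing iteration $G_n^k[\un{\psi}_n]$ convergent. I would instead note that for $k\ge m_0$ and $n\ge n_0$,
\[
G_n^k[\un{\psi}_n]=G_n^{m_0}\big[G_n^{k-m_0}[\un{\psi}_n]\big]\in G_n^{m_0}[\mc{C}_\beta],
\]
a precompact set, so the tail of this monotone sequence lies in a compact set and has a convergent subsequence; monotonicity in $k$ then upgrades this to convergence of the whole sequence, producing the fixed point $\phi_n$ exactly as before. The same reasoning, carried out pointwise in $x$, yields $\tilde\phi_n$ in the discrete habitat.

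The second and more delicate use of (A4) is in Theorems \ref{ThDisCont}--\ref{ThContDis}, where the translated profiles $\phi_{\pm,n}=\phi_n(\cdot+a_n)$ were placed in the precompact set $Q[\mc{C}_\beta]$. The algebraic point I would establish is the commutation identity
\[
G_n^{m_0}[\psi](\cdot+a)=G_n^{m_0}\big[\psi(\cdot+\kappa_n^{m_0}a)\big],\qquad \forall a\in\R,\ \psi\in\mc{C}_\beta,
\]
obtained by iterating the single-step relation $Q[A_{\kappa_n}\psi](\cdot+a)=Q\big[A_{\kappa_n}(\psi(\cdot+\kappa_n a))\big]$, itself a consequence of the translation invariance (A1) together with $A_{\kappa_n}[\psi](\cdot+a)=A_{\kappa_n}[\psi(\cdot+\kappa_n a)]$. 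Since $\phi_n$ is fixed by $G_n$, hence by $G_n^{m_0}$, this gives $\phi_{-,n}=G_n^{m_0}[\phi_n(\cdot+\kappa_n^{m_0}a_n)]$ and likewise $\phi_{+,n}=G_n^{m_0}[\phi_n(\cdot+\kappa_n^{m_0}b_n)]$, both in the precompact set $G_n^{m_0}[\mc{C}_\beta]$, so a convergent subsequence of $\{\phi_{\pm,n}\}_{n\ge n_0}$ can be extracted as before. For the existence of the one-sided limits $\phi_\pm(\pm\infty)$ in the second step of Theorem \ref{ThDisCont}, I would replace $\phi_-(x_n)=Q[\phi_-(\cdot-c_-+x_n)](0)$ by its $m_0$-fold version $\phi_-(x_n)=Q^{m_0}[\phi_-](x_n-m_0c_-)=Q^{m_0}[\phi_-(\cdot+x_n-m_0c_-)](0)\in Q^{m_0}[\mc{C}_\beta](0)$, precompact by the first paragraph. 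All the ordering statements and the (A6)-based exclusion of a connection through an intermediate equilibrium use no compactness and carry over verbatim.

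Finally, the continuous-time Theorems \ref{ThContCont} and \ref{ThContDis} invoke compactness only through $Q_1\circ Q_1[\mc{C}_\beta]$ in \eqref{PhiSConvergent}; here I would peel off the compact iterate $Q_{m_0}=Q_1^{m_0}$ furnished by Lemma \ref{DelayEqCompactness} applied to $Q_1$ instead of $Q_2$, requiring $m_ks_k>m_0$ so that $\psi_{\pm,s_k}\in Q_{m_0}[\mc{C}_\beta]$, and then proceed unchanged; the discrete-habitat case uses the $\tilde Q_{m_0}$ analogue. Theorems \ref{ThTimePeri} and \ref{ThMediaPeri} reduce to these cases through the Poincar\'e map and the spatial conjugacy $F$, so they need nothing new. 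I expect the main obstacle to be the bookkeeping behind the commutation identity for $G_n^{m_0}$ and the discipline of routing \emph{every} appeal to compactness through the single available compact operator $G_n^{m_0}$ (or $Q^{m_0}$, $Q_{m_0}$) rather than through $Q$ itself; once that re-routing is in place, the remainder of each proof is unchanged.
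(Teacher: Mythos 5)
Your proposal is correct and follows essentially the same route as the paper: the paper's own proof is a two-sentence sketch saying precisely that one re-routes every appeal to (A4) through the compact iterate supplied by Lemma \ref{DelayEqCompactness}. Your write-up simply fills in the details the paper omits (notably the commutation identity $G_n^{m_0}[\psi](\cdot+a)=G_n^{m_0}[\psi(\cdot+\kappa_n^{m_0}a)]$ that places the translated profiles $\phi_{\pm,n}$ inside the precompact set $G_n^{m_0}[\mc{C}_\beta]$, and the use of $Q_{m_0}$ in place of $Q_1\circ Q_1$ in \eqref{PhiSConvergent}), all of which is sound.
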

\begin{proof}
Following the proof of theses theorems, we only need to modify the
parts where we use the compactness assumption (A4). At these parts,
by Lemma \ref{DelayEqCompactness} we can easily complete the proof.
\end{proof}

Note that the solution maps of the integro-differential equation
\begin{equation*}
u_t=J* u-u+f(u)
\end{equation*}
satisfy neither (A4) nor (A4$'$). The second purpose of this section
is to modify our developed theory in such a way that it applies to
these integro-differential systems.

Let $\mc{M}$ denote the set of all nondecreasing functions from
$\mathbb{R}$ to $\mc{X}$ and $\beta\in \mc{X}^+$. We equip $\mc{M}$
with the compact open topology. Assume that $Q$ maps $\mc{M}_\beta$
to $\mc{M}_\beta$. Let $E$ denote the set of fixed point of $Q$
restricted on $\mc{X}_\beta$. Suppose that $0$ and $\beta$ are in
$E$. We impose the following assumptions on $Q$:
\begin{enumerate}
\item[(B1)]({\it Translation Invariance}) $T_y\circ Q [\phi]=Q\circ T_y [\phi],
\forall \phi\in\mc{M}_\beta,y\in\R$.
\item[(B2)]({\it
Continuity}) $Q:\mc{M}_\beta\to \mc{M}_\beta$ is continuous in the
sense that if $\phi_n \to \phi$ in $\mc{M}_\beta$, then
$Q[\phi_n](x)\to Q[\phi](x)$ in $\mc{X}_\beta$ for almost all $x\in
\R$.
\item[(B3)]({\it Monotonicity}) $Q$ is order preserving in the sense that $Q[\phi]\ge
Q[\psi]$ whenever $\phi\ge \psi$ in $\mc{M}_\beta$.
\item[(B4)]({\it Weak Compactness}) For any fixed $x\in \R$, the set
$Q[\mc{M}_\beta](x)$ is precompact in $\mc{X}_\beta$.
\item[(B5)]({\it
Bistability}) Fixed points $0$ and $\beta$ are strongly stable from
above and below, respectively, for the map $Q:\mc{X}_\beta\to
\mc{X}_\beta$, and the set $E\setminus\{0,\beta\}\subset
\mc{X}_\beta$ is totally unordered.
\item[(B6)] ({\it Counter-propagation}) For each $\alpha\in E\setminus\{0,\beta\}$,
$c_-^*(\alpha,\beta)+c_+^*(0,\alpha)>0$.
\end{enumerate}

Comparing assumptions (A1)-(A6) and (B1)-(B6), one can find that the
assumptions of translation invariance, monotonicity, bistability and
counter-propagation are the same. The difference lies in the
assumptions of continuity and compactness. Clearly, compactness
assumption (B4) is much weaker than (A4).

\begin{theorem}\label{DisWeakComp}
Let $\mc{X}=C(M, \R^d)$ and assume that $Q:\mc{M}_\beta\to
\mc{M}_\beta$ satisfies (B1)-(B6). Then there exists $c\in \R$ and
$\psi\in \mc{M}_\beta$ connecting $0$ to $\beta$ such that
$Q[\psi](x)=\psi(x+c)$ for all $x\in \R$.
\end{theorem}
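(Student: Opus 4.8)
The plan is to transcribe the proofs of Theorems \ref{ThDisCont} and \ref{ThDisDis} to the space $\mc{M}_\beta$, with the full compactness of $Q$ replaced by the pointwise precompactness (B4) and the variant of Helly's theorem, Theorem \ref{HellyTh}. First I would reuse the nondecreasing barriers $\un{\psi},\bar{\psi}$, the dilations $A_{\kappa_n}$, and the functions $\un{\psi}_n,\bar{\psi}_n$ from Section 3; all of these lie in $\mc{M}_\beta$, and since $A_{\kappa_n}$ is composition with the increasing map $x\mapsto\kappa_n x$, the maps $G_n:=Q\circ A_{\kappa_n}$ send $\mc{M}_\beta$ into itself and are order preserving by (B3). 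As in Lemma \ref{UpperLowerSolu1} one gets $\un{\psi}_n\le G_n[\un{\psi}_n]$ and $\bar{\psi}_n\ge G_n[\bar{\psi}_n]$. For each fixed $x$ the sequence $G_n^k[\un{\psi}_n](x)$ is nondecreasing in $k$ and lies in the precompact set $Q[\mc{M}_\beta](x)$, so it converges; arguing as in Lemma \ref{FixedPoint2}, this yields a nondecreasing $\phi_n$ with $\un{\psi}_n\le\phi_n\le\bar{\psi}_n$ and $\phi_n=G_n[\phi_n]$, the passage to the limit inside $Q$ being legitimate by the monotone pointwise convergence together with (B2).

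Next I would set $a_n:=\sup\{x:\phi_n(x)\in[0,\delta e_0]_{\mc{X}}\}$ and $b_n:=\inf\{x:\phi_n(x)\in[\beta-\delta e_\beta,\beta]_{\mc{X}}\}$, and define $\phi_{-,n}:=\phi_n(\cdot+a_n)$, $\phi_{+,n}:=\phi_n(\cdot+b_n)$, which as in Theorem \ref{ThDisCont} are squeezed between translates of the barriers and obey the level constraints $\phi_{-,n}(0)\le\delta e_0$, $\phi_{+,n}(0)\ge\beta-\delta e_\beta$. Because only (B4) is available I cannot pass to a limit in $\mc{M}_\beta$ directly; instead, exactly as in Theorem \ref{ThDisDis}, I would fix the countable dense set $\mathbb{Q}$, use the pointwise precompactness and a diagonal argument to extract a subsequence along which $\phi_{\pm,n}$ converge on $\mathbb{Q}$, build the one-sided limits to obtain nondecreasing $\hat{\phi}_\pm$, and invoke Theorem \ref{HellyTh} to get convergence to left-, resp.\ right-continuous limits $\phi_\pm\in\mc{M}_\beta$ off a countable set. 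Along the same subsequence $a_n/n\to\xi_-$ and $b_n/n\to\xi_+$ with $\xi_-\le\xi_+$, and I put $c_\pm:=-\bar{c}\,\xi_\pm$, so that $c_-\ge c_+$.

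I would then read off the tails and the wave identities. The second step of Theorem \ref{ThDisCont}, together with Lemma \ref{IslatedEquilibria} and the choice \eqref{delta} of $\delta$, gives $\phi_-(-\infty)=0$, $\phi_+(+\infty)=\beta$, that all the limits $\phi_\pm(\pm\infty)$ are fixed points of $Q$, and that $\phi_-(+\infty)$ and $\phi_+(-\infty)$ are ordered. Passing to the limit in $\phi_n=G_n[\phi_n]$ along the subsequence—using $\kappa_n(x+a_n)-a_n\to x-c_-$ uniformly on bounded sets, the continuity (B2), and Proposition \ref{Convergence}—yields $Q[\phi_-](x)=\phi_-(x+c_-)$ for all $x$ outside a countable set, and similarly for $\phi_+$. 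Since $x\mapsto Q[\phi_-](x)$ and $x\mapsto\phi_-(x+c_-)$ are both nondecreasing and coincide on a dense set, they have identical one-sided limits everywhere; as $\phi_-$ is left-continuous this forces $Q[\phi_-](x)=\phi_-(x+c_-)$ for every $x\in\R$, and the same for $\phi_+$.

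Finally, the ordering of $\phi_-(+\infty)$ and $\phi_+(-\infty)$ leaves only the three cases of (B5). The intermediate case $\phi_-(+\infty)=\alpha=\phi_+(-\infty)$ with $\alpha\in E\setminus\{0,\beta\}$ is ruled out as in Theorem \ref{ThDisCont}: the argument of Lemma \ref{MiniWaveSpeedDSF}, which uses only monotonicity (B3), gives $c_+\ge c_-^*(\alpha,\beta)$ and $c_-\le -c_+^*(0,\alpha)$, whence $0\ge c_+-c_-\ge c_-^*(\alpha,\beta)+c_+^*(0,\alpha)$, contradicting (B6). Thus one of $\phi_\pm$ connects $0$ to $\beta$ and is the desired wave. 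I expect the main difficulty to be the interplay of weak compactness and the possible discontinuity of the profiles: one must run the diagonal extraction and Theorem \ref{HellyTh} so that a single subsequence simultaneously controls the crossing points $a_n,b_n$, the translates, and the image under $Q$, and then upgrade the wave identity from almost everywhere—which is all that (B2) provides—to every point by exploiting the monotonicity and one-sided continuity of the limits.
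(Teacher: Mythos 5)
Your proposal follows exactly the route the paper takes: the paper's proof of Theorem \ref{DisWeakComp} is literally a three-sentence instruction to rerun the proof of Theorem \ref{ThDisCont}, replacing each use of the strong compactness (A4) by the pointwise-precompactness/Helly machinery of Theorem \ref{ThDisDis}, and your first three paragraphs carry that out faithfully (the barriers and $G_n:=Q\circ A_{\kappa_n}$, the crossing points $a_n,b_n$, the diagonal extraction over $\mathbb{Q}$ plus Theorem \ref{HellyTh}, Proposition \ref{Convergence} for the shifted arguments, and the exclusion of the intermediate equilibrium via the argument of Lemma \ref{MiniWaveSpeedDSF} and (B6)). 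Up to that point your write-up is, if anything, more detailed than the paper's.

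The genuine gap is in your last step, the upgrade of the wave identity from ``outside an exceptional set'' to ``for every $x\in\R$''. Set $F:=Q[\phi_-]$ and $G:=\phi_-(\cdot+c_-)$. From monotonicity and agreement of $F$ and $G$ on a dense set you do get $F(x^\pm)=G(x^\pm)$ for all $x$, and left-continuity of $G$ then gives $F(x^-)=G(x)$, hence $G(x)\le F(x)\le G(x^+)$ everywhere. But to conclude $F(x)=G(x)$ at \emph{every} $x$ you need left-continuity of $F=Q[\phi_-]$, not of $\phi_-$, and nothing in (B1)--(B6) provides it: a nondecreasing function may take any intermediate value inside a jump of a left-continuous function with which it agrees densely (take $G(x)=0$ for $x\le 0$, $G(x)=1$ for $x>0$, and $F=G$ off $\{0\}$ with $F(0)=1/2$). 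The points at risk are precisely the jumps of $\phi_-(\cdot+c_-)$ lying in the exceptional set, and these are exactly the points where Helly-type convergence (and the ``almost all'' in (B2)) gives no information, so the two deficits compound rather than cancel. To be fair, the paper's own proof is silent on this issue: Theorem \ref{ThDisDis} asserts its identity only off a countable set, consistent with Definition \ref{DefDisDis}, while Theorem \ref{DisWeakComp} claims it for all $x$; bridging that difference needs an ingredient beyond (B1)--(B6) --- for instance that $Q[\phi]$ is always one-sidedly continuous in $x$, or that $Q$ is insensitive to modification of its argument on null sets (both hold in the intended integro-differential applications, where $Q[\phi]$ is in fact continuous in $x$, and then dense agreement of the two monotone functions does force equality everywhere). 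As written, your left-continuity argument does not close this gap.
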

\begin{proof}
Combining the proofs of Theorems \ref{ThDisCont} and \ref{ThDisDis},
we can obtain the result. More precisely, one can repeat the proof
of Theorem \ref{ThDisCont} except for the parts where the
compactness assumption (A4) are used. For these parts, one use the
idea in Theorem \ref{ThDisDis}, where $\tilde{Q}$ has the same
compactness property as $Q$.
\end{proof}

In the rest of this section, we say $\{Q_t\}_{t\ge 0}$ is a semiflow
on $\mc{M}_\beta$ provided that $Q_0=I$; $Q_t\circ Q_s=Q_{t+s},
\forall t,s>0$; and $Q_{t_n}[\phi_n](x)\to Q_t[\phi](x)$ in
$\mc{X}_\beta$ for almost all $x\in \R$ whenever $t_n\to t$ and
$\phi_n\to \phi$ in $\mc{M}_\beta$.

\begin{theorem}\label{ContWeakComp}
Let $\mc{X}=C(M, \R^d)$. Assume that $\{Q_t\}_{t\ge 0}$ is a
semiflow on $\mc{M}_\beta$, and for any $t>0$, the map $Q_t$
satisfies (B1) and (B3)-(B6). Then there exist $c\in \R$ and
$\psi\in \mc{M}_\beta$ connecting $0$ to $\beta$ such that
$Q_t[\psi](x)=\psi(x+ct)$ for all $x\in \R$.
\end{theorem}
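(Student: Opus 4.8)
The plan is to run the vanishing-time-step approximation of Theorem \ref{ThContCont}, but with every appeal to the strong compactness (A4) replaced by the weak-compactness machinery developed for Theorems \ref{ThDisDis} and \ref{DisWeakComp}: the Helly-type selection of Theorem \ref{HellyTh} together with the rational-point/pointwise limiting scheme. Since for each fixed $s>0$ the map $Q_s$ satisfies (B1) and (B3)--(B6), Theorem \ref{DisWeakComp} applies to the discrete-time semiflow $\{(Q_s)^n\}_{n\ge 0}$. Carrying out the first steps of that proof (which itself fuses the arguments of Theorems \ref{ThDisCont} and \ref{ThDisDis}) produces, for each $s>0$, two nondecreasing profiles $\psi_{-,s},\psi_{+,s}\in\mc{M}_\beta$ and speeds $c_{-,s}\ge c_{+,s}$ together with a countable set $\Theta_s\subset\R$ such that $Q_s[\psi_{\pm,s}](x)=\psi_{\pm,s}(x+c_{\pm,s})$ for all $x\in\R\setminus\Theta_s$, normalized at the origin by $0<\psi_{-,s}(0)\le\delta e_0$, $\psi_{-,s}(0)\not\in[0,\delta e_0]]_{\mc{X}}$ and $\beta-\delta e_\beta\le\psi_{+,s}(0)<\beta$, $\psi_{+,s}(0)\not\in[[\beta-\delta e_\beta,\beta]_{\mc{X}}$, exactly as in the first step of the proof of Theorem \ref{ThContDis}.

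Next I would fix a sequence $s_k\downarrow 0$ and extract limits. The decisive estimate, as in Theorem \ref{ThContCont}, is the boundedness of $\f{1}{s_k}c_{\pm,s_k}$: writing $\langle\f{1}{s_k}\rangle$ for the integer part of $1/s_k$ and iterating $Q_{s_k}$ that many times, the strong stability of $0$ and $\beta$ yields the inequalities \eqref{BistabilityInequ1} and \eqref{BistabilityInequ2}, whose conclusions contradict $Q_1[\delta e_0]\ll\delta e_0$ and $Q_1[\beta-\delta e_\beta]\gg\beta-\delta e_\beta$ unless $\f{1}{s_k}c_{-,s_k}$ is bounded above and $\f{1}{s_k}c_{+,s_k}$ is bounded below; since $c_{-,s_k}\ge c_{+,s_k}$, both quotients are then bounded and, along a subsequence, converge to numbers $c_\pm\in\R$. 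Because the profiles are monotone and $Q_s[\mc{M}_\beta](x)$ is precompact for each fixed $x$ by (B4), the Helly-type Theorem \ref{HellyTh}, applied through the scheme of Theorem \ref{ThDisDis}, extracts a further subsequence along which $\psi_{\pm,s_k}(x)$ converges, for all $x$ outside a countable set, to nondecreasing limits $\psi_\pm\in\mc{M}_\beta$ whose one-sided values inherit the normalization above.

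Finally I would pass to the limit in the semiflow identity. For any $t>0$ I choose $m_k\in\Z^+$ and $r_k\in[0,s_k)$ with $t=m_k s_k-r_k$, so $r_k\to 0$; applying $(Q_{s_k})^{m_k}$ and using the joint continuity of $\{Q_t\}_{t\ge 0}$ on $\mc{M}_\beta$ together with Propositions \ref{SetProp1} and \ref{Convergence} gives $Q_t[\psi_\pm](x)=\psi_\pm(x+c_\pm t)$ for almost all $x$, exactly as in equation \eqref{Eq:Limitation} of Theorem \ref{ThContDis}. As in that proof, this relation forces $\psi_\pm$ to be continuous when $c_\pm\neq 0$ (and the case $c_\pm=0$ is settled by the shift argument there), upgrading the identity to all $x\in\R$. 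The limits $\psi_\pm(\pm\infty)$ are then equilibria, the normalization forces $\psi_-(-\infty)=0$ and $\psi_+(+\infty)=\beta$, and the ordering of $\psi_-(+\infty)$ and $\psi_+(-\infty)$ leaves only the three possibilities (i)--(iii) of Theorem \ref{ThContCont}. The counter-propagation hypothesis (B6), invoked through the speed comparison of Lemma \ref{MiniWaveSpeedDSF}, rules out the intermediate case (iii), so one of $\psi_-,\psi_+$ connects $0$ to $\beta$.

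I expect the main obstacle to be the bookkeeping of the two interacting limits. The passage $s_k\to 0$ and the Helly selection each discard a countable exceptional set, and these sets shift under the translations built into the construction and under composition with the non-compact maps $Q_t$; reconciling them so that the almost-everywhere wave identity can be promoted to an everywhere identity --- in particular establishing the continuity of the limiting profile when $c_\pm\neq 0$ --- is the delicate part. This is precisely where the appendix results (Theorem \ref{HellyTh} and Propositions \ref{SetProp1} and \ref{Convergence}) must be combined with the joint continuity of $\{Q_t\}_{t\ge 0}$ on $\mc{M}_\beta$, since the absence of (A4) removes the direct compactness of profiles in $\mc{C}_\beta$ that was freely used in Theorem \ref{ThContCont}.
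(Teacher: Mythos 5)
Your proposal is correct and takes essentially the same route as the paper: the paper's own proof of Theorem \ref{ContWeakComp} is a one-line instruction to combine the proofs of Theorems \ref{ThContCont} and \ref{ThContDis} (just as Theorem \ref{DisWeakComp} combines Theorems \ref{ThDisCont} and \ref{ThDisDis}), which is exactly the vanishing-time-step approximation with the bistability bound on $\f{1}{s_k}c_{\pm,s_k}$ and the Helly-type selection that you carry out. Your write-up is in fact more explicit than the paper's, and the countable-exceptional-set bookkeeping you flag as the delicate point is precisely what the cited appendix results (Theorem \ref{HellyTh}, Propositions \ref{SetProp1} and \ref{Convergence}) are designed to handle.
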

\begin{proof}
As in the proof of Theorem \ref{DisWeakComp}, we can prove the
conclusion by combing the proofs of Theorems \ref{ThContCont} and
\ref{ThContDis}.
\end{proof}

Similarly, we can define $\omega$-time periodic semiflows on
$\mc{M}_\beta$ and then obtain the following result.
\begin{theorem}
Let $\mc{X}=C(M, \R^d)$. Assume that $\{Q_t\}_{t\ge 0}$ is an
$\omega$-time periodic semiflow on $\mc{M}_\beta$. Let $\beta(t)$ be
a strongly positive periodic solution of $\{Q_t\}_{t\ge 0}$
restricted on $\mc{X}_\beta$. Further, assume that the Poincar\'e
map $Q_\omega$ satisfies (B1) and (B3)-(B6) with $\beta=\beta(0)$.
Then there exist $c\in \R$ and $\phi(t,x)$ with $\phi(t,-\infty)=0$
and $\phi(t,+\infty)=\beta(t)$ such that $Q_t[\psi](x)=\psi(t,x+ct)$
for all $x\in \R$. Besides, $\phi(t,\cdot)\in \mc{M}_\beta$ and
$\phi(t,\cdot)$ is $\omega$-periodic in $t\ge 0$.
\end{theorem}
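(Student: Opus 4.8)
The plan is to reduce the assertion to the already-established discrete-time result, Theorem \ref{DisWeakComp}, applied to the Poincar\'e map $Q_\omega$, and then to lift the resulting profile to the full periodic semiflow by the same change of variables used in Theorem \ref{ThTimePeri}. First I would check that $Q_\omega$ satisfies (B1)--(B6): (B1) and (B3)--(B6) are assumed directly, while (B2) follows from the continuity of the semiflow on $\mc{M}_\beta$ by taking $t_n\equiv\omega$ in its definition. Hence Theorem \ref{DisWeakComp} yields a number $c_0\in\R$ and a function $\psi\in\mc{M}_\beta$ connecting $0$ to $\beta(0)$ with $Q_\omega[\psi](x)=\psi(x+c_0)$ for all $x\in\R$. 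Setting $c:=c_0/\omega$, this reads $T_{c\omega}Q_\omega[\psi]=\psi$.

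Next I would define the candidate wave by $\phi(t,x):=T_{ct}Q_t[\psi](x)$, so that $\phi(0,\cdot)=\psi$ and $\phi(t,x+ct)=Q_t[\psi](x)$; this is precisely the traveling-wave identity $Q_t[\psi](x)=\phi(t,x+ct)$. The monotonicity of $\phi(t,\cdot)$ in $x$ follows from (B3) together with the spatial homogeneity of the semiflow: for $a>0$ one has $T_a\psi\le\psi$, hence $T_aQ_t[\psi]=Q_t[T_a\psi]\le Q_t[\psi]$, so $Q_t[\psi]\in\mc{M}_\beta$ and thus $\phi(t,\cdot)\in\mc{M}_\beta$. For the $\omega$-periodicity in $t$ I would compute, using $Q_{t+\omega}=Q_t\circ Q_\omega$, the commutation of each $Q_t$ with the translation $T_{c\omega}$, and the identity $T_{c\omega}Q_\omega[\psi]=\psi$,
\begin{equation*}
\phi(t+\omega,x)=T_{ct}T_{c\omega}Q_tQ_\omega[\psi](x)=T_{ct}Q_t(T_{c\omega}Q_\omega[\psi])(x)=T_{ct}Q_t[\psi](x)=\phi(t,x).
\end{equation*}
Here I use that the semiflow is spatially homogeneous, i.e.\ every $Q_t$ commutes with translations, which is the standing assumption underlying (B1); this is exactly the step used in the proof of Theorem \ref{ThTimePeri}.

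It then remains to verify the boundary conditions $\phi(t,-\infty)=0$ and $\phi(t,+\infty)=\beta(t)$ uniformly in $t$, and I expect this to be the only delicate point, because the weak continuity (B2) guarantees convergence only for almost every $x$, so one cannot simply evaluate the limiting map at the single point $x=0$. To get around this I would use a shift trick. Since $\phi(t,\cdot)$ is nondecreasing, the limits $\phi(t,\pm\infty)$ exist. Translation invariance gives $Q_t[\psi(\cdot+y)](x)=Q_t[\psi](x+y)$, and as $y\to+\infty$ the shifts $\psi(\cdot+y)$ increase to the constant $\beta(0)$ in $\mc{M}_\beta$; applying (B2) along a sequence $y_n\to+\infty$ yields $Q_t[\psi(\cdot+y_n)](x)\to Q_t[\beta(0)](x)=\beta(t)$ for almost every $x$. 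Choosing one admissible $x_0$ and using $Q_t[\psi(\cdot+y_n)](x_0)=Q_t[\psi](y_n+x_0)$, together with the fact that the limit at $+\infty$ of the monotone function $Q_t[\psi]$ is unaffected by the constant shift $x_0$, I obtain $\phi(t,+\infty)=\beta(t)$. The same argument with $y\to-\infty$ and $Q_t[0]=0$ gives $\phi(t,-\infty)=0$. Finally, writing $\psi(t,\cdot):=\phi(t,\cdot)$ records the statement as given, and the uniformity in $t$ is read off from the $\omega$-periodicity and the pinching $0\le\phi(t,\cdot)\le\beta(t)$.
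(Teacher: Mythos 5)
Your proposal is correct and follows exactly the route the paper intends for this theorem (which it states without written proof): apply Theorem \ref{DisWeakComp} to the Poincar\'e map $Q_\omega$, then lift the profile by $\phi(t,x):=T_{ct}Q_t[\psi](x)$ as in the proof of Theorem \ref{ThTimePeri}, re-deriving the limits $\phi(t,\pm\infty)$ by the shift-and-squeeze argument because (B2) only yields almost-everywhere convergence. One cosmetic remark: you need not invoke (B3) for the maps $Q_t$ with $t\neq\omega$ (it is not assumed for them) to get monotonicity of $\phi(t,\cdot)$ in $x$, since by definition the semiflow maps $\mc{M}_\beta$ into $\mc{M}_\beta$, so $Q_t[\psi]$ is automatically nondecreasing.
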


\section{Applications}
In this section, we apply the obtained abstract results to four
kinds of monotone evolution systems: a time-periodic
reaction-diffusion system, a parabolic system in a cylinder, a
parabolic equation with variable diffusion, and a nonlocal and
time-delayed reaction-diffusion equation.

\subsection{A time-periodic reaction-diffusion system}
Consider the time-periodic reaction-diffusion system
\begin{equation}\label{Application1}
\f{\partial u}{\partial t}=A\Delta u+f(t,u),\quad x\in\mathbb{R},
\end{equation}
where $u=(u_1,\cdot\cdot\cdot,u_n)^T$,
$A=diag\{d_1,\cdot\cdot\cdot,d_n\}$ with each $d_i>0$ and
$f=(f_1,\cdot\cdot\cdot,f_n)^T$ is $\omega$-periodic in $t\ge 0$
(i.e., $f(t,\cdot)=f(t+\omega,\cdot)$). The existence of periodic
bistable traveling waves of \eqref{Application1} with $n=1$ was
proved in \cite{AlikakosBatesChen}. Here we generalize this result
to the case $n\ge 1$.

Let $f\in C^1(\mathbb{R}_+\times\mathbb{R}^n,\mathbb{R}^n)$. In
order to apply Theorem \ref{ThTimePeri} to system
\eqref{Application1}, we choose
$\mc{C}:=C(\mathbb{R},\mathbb{R}^n),\mc{X}:=\mathbb{R}^n$, and
$\mc{E}\subset \mc{C}$ to be the set of all bounded functions from
$\R$ to $\R^n$. Using the solution maps $\{T(t)\}_{t\ge 0}$ of the
heat equation $\f{\partial u}{\partial t}=A \Delta u$, we write
\eqref{Application1} as the following integral form:
\begin{equation}\label{IntegralFormAppl1}
u(t;\phi)=T(t)\phi+\int_{0}^t T(t-s) f(s,u(s;\phi))ds.
\end{equation}
Define $Q_t[\phi]:=u(t;\phi),\forall \phi\in \mc{E}$. Let $0$ and
$\beta\gg 0$ be two fixed points of the Poincar\'e map $Q_\omega$ in
$\mc{X}$, and let $E$ be the set of all spatially homogeneous fixed
points of $Q_\omega$ in $\mc{X}_\beta$. We impose the following
assumptions:
\begin{enumerate}
\item[(C1)]The Jacobian matrix $\D_uf(t,u)$ is cooperative and
irreducible for all $t\ge 0$ and $u\ge 0$.
\item[(C2)]The spatially homogeneous system $u'=f(t,u)$ is of bistable type, that is,
$0$ and $\beta$ are two stable fixed points of $Q_\omega$ in the
sense that $s(\f{d}{du}Q_\omega[0])<0$ and
$s(\f{d}{du}Q_\omega[\beta])<0$, and any $\alpha\in E\setminus
\{0,\beta\}$ is a unstable in the sense that $s(\f{d}{du}
Q_\omega[\alpha])>0$, where $s(M)$ is the stability modulus of the
matrix $M$ defined by $s(M)=\max\{{\rm Re}\lambda:\text{$\lambda$ is
an eigenvalue}\}$.
\end{enumerate}

\begin{theorem}
Assume that (C1)-(C2) hold, and let $\beta (t)$ be the periodic
solution of $u'=f(t,u)$ with $\beta (0)=\beta$. Then there exists
$c\in\mathbb{R}$ such that \eqref{Application1} admits a
time-periodic traveling wave $U(t,x+ct)$ connecting $0$ to
$\beta(t)$.
\end{theorem}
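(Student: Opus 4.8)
The plan is to realize \eqref{Application1} as a monotone semiflow on $\mc{C}_\beta\subset\mc{C}=C(\R,\R^n)$ through the integral formulation \eqref{IntegralFormAppl1}, to verify that its Poincar\'e map $Q:=Q_\omega$ satisfies hypotheses (A1)--(A6) with $\beta=\beta(0)$, and then to invoke Theorem \ref{ThTimePeri}. Three of the structural hypotheses are essentially automatic. Translation invariance (A1) holds because the coefficients $A$ and $f$ in \eqref{Application1} are independent of $x$, so by uniqueness of solutions $Q_t$ commutes with every $T_y$. Continuity (A2) and compactness (A4) follow from the variation-of-constants formula \eqref{IntegralFormAppl1} together with the smoothing of the analytic semigroup $\{T(t)\}_{t\ge0}$ generated by $A\Delta$: for every $t>0$ the image $Q_t[\mc{C}_\beta]$ is a bounded, equicontinuous family, hence precompact in the compact open topology, so in particular $Q_\omega$ is compact.

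Next I would establish monotonicity and bistability. Monotonicity (A3) is the comparison principle for quasimonotone systems: since $\D_u f(t,u)$ is cooperative by (C1), the order on $\mc{C}_\beta$ is preserved by each $Q_t$. For the bistability structure I would verify the stronger hypothesis (A5$'$). The irreducibility in (C1), combined with cooperativity, makes the restricted flow $u'=f(t,u)$ on $\mc{X}_\beta$ strongly monotone for $t>0$, so $Q_\omega$ is eventually strongly monotone on $\mc{X}_\beta$; in particular $\beta=\beta(0)\gg0$ propagates to $\beta(t)\gg0$ for all $t$, giving the strongly positive periodic orbit required by Theorem \ref{ThTimePeri}. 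To convert the spectral conditions of (C2) into the strong stability of Definition \ref{DefStabFix}, I would linearize $Q_\omega$ at each fixed point: the linearized map there is the monodromy operator of a cooperative, irreducible periodic linear ODE and is therefore strongly positive, so by Perron--Frobenius it has a simple principal eigenvalue with a strongly positive eigenvector. The sign conditions in (C2) say exactly that $0$ and $\beta$ are linearly asymptotically stable while each intermediate $\alpha$ is linearly unstable; taking $e_0,e_\beta$ to be the principal eigenvectors at $0,\beta$ and estimating the nonlinear remainder in \eqref{IntegralFormAppl1} yields $Q_\omega[\eta e_0]\ll\eta e_0$ and $Q_\omega[\beta-\eta e_\beta]\gg\beta-\eta e_\beta$ for small $\eta>0$, while at each $\alpha$ the same eigenvector yields strong instability from both sides. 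Thus (A5$'$), hence (A5) by Proposition \ref{BistabilityMap}, is in force.

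The main obstacle is the counter-propagation condition (A6), which I would deduce from the \emph{instability} of every intermediate equilibrium $\alpha$. Restricting the semiflow to $[\alpha,\beta]_{\mc{C}}$ and to $[0,\alpha]_{\mc{C}}$ produces two monostable subsystems, both of which invade the unstable state $\alpha$; here $c_-^*(\alpha,\beta)$ is the leftward speed at which $\beta$ displaces $\alpha$ and $c_+^*(0,\alpha)$ the rightward speed at which $0$ displaces $\alpha$. Using the positive principal eigenvalue of the linearization at $\alpha$, one constructs exponentially localized subsolutions of \eqref{Application1} that grow and move at a strictly positive speed, and the comparison principle then bounds each of $c_-^*(\alpha,\beta)$ and $c_+^*(0,\alpha)$ below by a positive quantity depending only on that linearization (this is the standard positivity of the spreading speed for a monostable system whose invaded state is linearly unstable; see the spreading-speed references cited above). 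Consequently $c_-^*(\alpha,\beta)+c_+^*(0,\alpha)>0$, which is (A6). With (A1)--(A6) verified, Theorem \ref{ThTimePeri} yields a time-periodic traveling wave $U(t,x+ct)$, nondecreasing in $x$, with $U(t,-\infty)=0$ and $U(t,+\infty)=\beta(t)$, completing the proof.
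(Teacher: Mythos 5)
Your proposal is correct and follows essentially the same route as the paper: verify (A1)--(A6) for the Poincar\'e map $Q_\omega$ (the paper treats (A1)--(A5) as routine), reduce (A6) to positivity of the spreading speeds of the two monostable subsystems obtained by splitting at each intermediate equilibrium $\alpha$, derive that positivity from the linearization at the periodic orbit $\alpha(t)$, and then invoke Theorem \ref{ThTimePeri}. The only difference is one of detail: where you cite the ``standard'' positivity of the monostable spreading speed above a linearly unstable state, the paper makes this explicit through the comparison $Q_t\ge M_t$ (resp.\ $Q_t\le M_t$) near $\alpha(t)$, the principal Floquet multiplier $\rho(\mu)$ of the $\mu$-parameterized system \eqref{HomoLinearizedApp1}, and an integral estimate showing $\Phi(\mu)=\ln\rho(\mu)/\mu\to+\infty$ as $\mu\to+\infty$, which together with the instability of $\alpha$ gives $\inf_{\mu>0}\Phi(\mu)>0$.
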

\begin{proof}
It is easy to see that the discrete semiflow $\{Q_\omega^n\}_{n\ge
1}$ on $\mc{C}_\beta$ satisfies (A1)-(A5) with $Q=Q_\omega$. Next we
show that (A6) holds with $Q=Q_\omega$.

Note that for any $\alpha\in E\setminus \{0,\beta\}$,
$\{Q_\omega^n\}_{n\ge 1}:[\alpha,\beta]_{\mc{C}}\to
[\alpha,\beta]_{\mc{C}}$ performs a monostable dynamics, where
$\alpha$ is unstable and $\beta$ is stable. By the theory developed
in \cite{LiangZhao}, it follows that $Q_\omega$ admits leftward and
rightward spreading speeds $c_-^*(\alpha,\beta)$ and
$c_+^*(\alpha,\beta)$. Since $Q_\omega$ is reflectively invariant,
we further have
$c_-^*(\alpha,\beta)=c_+^*(\alpha,\beta):=c^*(\alpha,\beta)$, which
is called the spreading speed of this monostable subsystem. Note
that $\{Q_\omega^n\}_{n\ge 1}:[0,\alpha]_{\mc{C}}\to
[0,\alpha]_{\mc{C}}$ also performs a monostable dynamics, where $0$
is stable and $\alpha$ is unstable. Similarly, this monostable
subsystem also admits a spreading speed $c^*(0,\alpha)$. Let $M_t$
be the solution map of the linearized system of \eqref{Application1}
at the periodic solution $\alpha(t):=Q_t[\alpha]$:
\begin{equation}\label{linearizedApp1}
\f{\partial u}{\partial t}=A\Delta u+\D_uf(t,\alpha(t))u.
\end{equation}
By a similar argument as in the proof of \cite[Lemma
4.1]{WeinbLewisLi}, we see that for each $t>0$, there exists a
strongly positive vector $\eta\in \mathbb{R}^n$ such that
\begin{equation}\label{LinearContro1}
Q_t[u]\ge M_t[u]\quad \text{whenever}\quad u\in
[\alpha,\alpha+\eta]_{\mc{C}}
\end{equation}
and
\begin{equation}\label{LinearContro2}
Q_t[u]\le M_t[u]\quad \text{whenever}\quad u\in
[\alpha-\eta,\alpha]_{\mc{C}}.
\end{equation}
Let $\rho(\mu)$ be the principle Floquet multiplier of the following
linear periodic cooperative and irreducible system
\begin{equation}\label{HomoLinearizedApp1}
\f{dv}{dt}=[\mu^2A+\D_u f(t,\alpha(t))]v.
\end{equation}
Let $v(t,w)$ be the solution of \eqref{HomoLinearizedApp1}
satisfying $v(0,w)=w\in\mathbb{R}^n$. It is easy to see that
$u(t,x)=e^{-\mu x}v(t,w)$ is the solution of linear periodic system
\eqref{linearizedApp1}. Define $\Phi(\mu):=\ln \rho(\mu)/\mu$. From
\cite[Theorem 3.10]{LiangZhao} and inequalities
\eqref{LinearContro1}-\eqref{LinearContro2}, we then have
\begin{equation}\label{upperboundApp1}
c^*(\alpha,\beta)\ge \inf_{\mu>0} \Phi(\mu)\quad \text{and}\quad
c^*(0,\alpha)\ge \inf_{\mu>0} \Phi(\mu).
\end{equation}
Now we prove that $\Phi(+\infty)=+\infty$. Let
$\lambda(\mu)=\f{1}{\omega}\ln \rho(\mu)$. By the Floquet theory, it
then follows that there exists a positive $\omega$-periodic function
$\xi(t):=(\xi_1(t),\cdot\cdot\cdot,\xi_n(t))^T$ such that
$v(t):=e^{\lambda(\mu)t}\xi(t)$ is a solution of
\eqref{linearizedApp1}. In particular, we have
\begin{equation*}
\xi_1'(t)=(\mu^2-\lambda(\mu))\xi_1(t)+\sum_{i=1}^n\f{\partial}{\partial
u_i}f_1(t,\alpha(t))\xi_i(t).
\end{equation*}
Dividing $\xi_1(t)$ in both sides and integrating the above equality
from $0$ to $\omega$ gives
\begin{equation*}
0=(\mu^2-\lambda(\mu))\omega+\int_0^\omega
\sum_{i=1}^n\f{\partial}{\partial u_i}f_1(t,\alpha(t))\xi_i(t)/\xi_1
(t)dt,\quad \forall \mu>0.
\end{equation*}
Since the matrix $\D_uf(t,\alpha(t))$ is cooperative and $\xi(t)$ is
positive, we obtain
\begin{equation*}
0\ge (\mu^2-\lambda(\mu))\omega +\int_0^\omega \f{\partial}{\partial
u_1}f_1(t,\alpha(t)dt.
\end{equation*}
This implies that
\begin{equation*}
\Phi(\mu)= \f{\omega\lambda(\mu)}{\mu}  \ge \mu \omega
+\int_0^\omega \f{\partial}{\partial u_1}f_1(t,\alpha(t)dt,
\end{equation*}
and hence, $\Phi(+\infty)=+\infty$. By \cite[Lemma 3.8]{LiangZhao},
we then have $\inf_{\mu>0} \Phi(\mu)>0$. Thus, the assumption (A6)
with $Q=Q_\omega$ holds. Consequently, Theorem \ref{ThTimePeri}
completes the proof.
\end{proof}

\subsection{A reaction-diffusion-advection system in a cylinder}
In this subsection, we consider the following system
\begin{equation}\label{Application2}
\begin{cases}
\f{\partial u}{\partial t}=A \f{\partial^2 u}{\partial x^2}+ B
\Delta_y u+E(y)\f{\partial u}{\partial
x}+f(u), & x\in\mathbb{R}, y\in\Omega\subset \mathbb{R}^{m-1},t>0, \\
\f{\partial u}{\partial \nu}=0, & \text{on}\,
(0,+\infty)\times\mathbb{R}\times
\partial \Omega,
\end{cases}
\end{equation}
where $A,B$ are positively definite diagonal $n\times n$ matrix, $E$
is diagonal matrix of smooth functions of $y$, $\Omega$ is a bounded
and convex open subset in $\mathbb{R}^{m-1}$ with smooth boundary
$\partial \Omega$, $\Delta_y=\sum_{i=1}^{m-1}\partial^2/\partial
y_i^2$, and $\nu$ is the outer unit normal vector to
$\partial\Omega\times \mathbb{R}$.

The existence of bistable traveling waves for \eqref{Application2}
with $n=1$ was obtained in \cite{BereNiren}. Here we extend this
result to the case $n\ge 2$. Assume that $f\in
C^1(\mathbb{R}^n,\mathbb{R}^n)$ satisfies the following two
conditions:
\begin{enumerate}
\item[(D1)]The Jacobian matrix $\D f(u)$ is cooperative
and irreducible for all $u\ge 0$.
\item[(D2)]$f$ is of bistable type in the sense that it has exactly
three ordered zeros: $0< a< \beta$ and
$s(Df(0))<0,s(Df(a))>0,s(Df(\beta))<0$.
\end{enumerate}

\begin{theorem}
Assume that (D1)-(D2) hold. Then there exists $c\in\mathbb{R}$ such
that system \eqref{Application2} admits a traveling wave connecting
$0$ to $\beta$ with speed $c$.
\end{theorem}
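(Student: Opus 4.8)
The plan is to realize \eqref{Application2} as a continuous-time monotone semiflow on $\mc{C}=C(\R,\mc{X})$ with $\mc{X}=C(\bar\Omega,\R^n)$ and habitat $\mc{H}=\R$ (the unbounded cylinder axis), and then apply Theorem \ref{ThContCont}. Writing \eqref{Application2} in its integral (mild) form via the analytic semigroup generated by $A\partial_{xx}+B\Delta_y$ under the Neumann condition, I would set $Q_t[\phi]:=u(t;\phi)$ on $\mc{C}_\beta$. The cross-sectional ($x$-independent) system is $u_t=B\Delta_y u+f(u)$ on $\Omega$ with Neumann data; its time-$t$ map is the restriction $Q_t:\mc{X}_\beta\to\mc{X}_\beta$, whose equilibrium set is $\Sigma$.

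First I would verify (A1)--(A5) for every $Q_t$, $t>0$. Translation invariance (A1) is immediate since $A,B,E(y),f$ do not depend on $x$; joint continuity and compactness of $Q_t$ for $t>0$ in the compact-open topology follow from standard parabolic smoothing (so (A4) holds directly, with no need for (A4$'$)). Monotonicity (A3) and eventual strong monotonicity come from the cooperativity and irreducibility of $\D f$ in (D1) via the comparison principle. For the bistability I would check (A5$'$) for the semiflow with $E=\Sigma$ and then invoke Proposition \ref{BistabilitySemiflow} to pass to (A5) for each $Q_t$ with $E=\Sigma_t$: the stability of $0$ and $\beta$ is read from $s(\D f(0))<0$ and $s(\D f(\beta))<0$; the only intermediate constant equilibrium $a$ is strongly unstable since $s(\D f(a))>0$; and any nonconstant steady state of the cross-sectional system is unstable because $\Omega$ is convex (the Casten--Holland/Matano instability result in its cooperative-system form). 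Thus $\Sigma\setminus\{0,\beta\}$ consists of strongly unstable points and is totally unordered.

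The heart of the proof is the counter-propagation condition (A6) for $Q_1$ with $E=\Sigma$. Fix $\alpha\in\Sigma\setminus\{0,\beta\}$. On $[\alpha,\beta]_{\mc{C}}$ and $[0,\alpha]_{\mc{C}}$ the semiflow is monostable ($\alpha$ unstable, $\beta$ resp. $0$ stable), so by the spreading-speed theory for monotone semiflows \cite{LiangZhao} the numbers $c_-^*(\alpha,\beta)$ and $c_+^*(0,\alpha)$ of \eqref{defcmiunsstar}--\eqref{Defcplusstar} are the corresponding leftward and rightward speeds. Linearizing \eqref{Application2} at the steady state $\alpha(y)$ and inserting $e^{\mu(x+ct)}\phi(y)$ (resp. $e^{-\mu(x-ct)}\phi(y)$) produces two directional principal-eigenvalue problems, differing only by the sign of the advection term,
\begin{equation*}
B\Delta_y\phi+\big[A\mu^2\mp\mu E(y)+\D f(\alpha(y))\big]\phi=\lambda^{\pm}(\mu)\phi,\qquad \partial_\nu\phi=0,
\end{equation*}
whose principal eigenvalues exist with strongly positive eigenfunctions by Krein--Rutman (using (D1)). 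As in subsection 6.1, via \cite[Theorem 3.10]{LiangZhao} together with linear lower bounds of the type in \cite[Lemma 4.1]{WeinbLewisLi}, these give
\begin{equation*}
c_-^*(\alpha,\beta)\ge\inf_{\mu>0}\f{\lambda^-(\mu)}{\mu},\qquad c_+^*(0,\alpha)\ge\inf_{\mu>0}\f{\lambda^+(\mu)}{\mu}.
\end{equation*}

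The hard part is the positivity of the sum $c_-^*(\alpha,\beta)+c_+^*(0,\alpha)$: unlike subsection 6.1, the $y$-dependent advection $E(y)$ destroys the reflection symmetry, so I cannot equate the two speeds. My plan is to use two structural facts. First, the sum is invariant under the Galilean shift $x\mapsto x+vt$, which sends $E\mapsto E+vI$ and changes the two directional speeds oppositely, leaving their sum unchanged; this lets me normalize the advection. Second, the principal eigenvalue is convex in its matrix potential, so writing $A\mu^2+\D f(\alpha)=\tfrac12\big[(A\mu^2+\mu E+\D f(\alpha))+(A\mu^2-\mu E+\D f(\alpha))\big]$ yields
\begin{equation*}
\lambda^+(\mu)+\lambda^-(\mu)\ge 2\lambda^0(\mu),\qquad \lambda^0(\mu):=s\big(B\Delta_y+A\mu^2+\D f(\alpha)\big),
\end{equation*}
where $\lambda^0$ is exactly the advection-free symbol of subsection 6.1, for which $\inf_{\mu>0}\lambda^0(\mu)/\mu>0$ because the instability of $\alpha$ gives $\lambda^0(0)=s(B\Delta_y+\D f(\alpha))>0$ and $\lambda^0(+\infty)=+\infty$. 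Reducing the two directional bounds to this symmetric symbol, using the Galilean normalization and the convexity of $\mu\mapsto\lambda^{\pm}(\mu)$, should give $c_-^*(\alpha,\beta)+c_+^*(0,\alpha)>0$, i.e. (A6). I expect this reduction to be the genuine obstacle, since dominating the two separate infima $\inf_\mu\lambda^{\pm}(\mu)/\mu$ by the single symmetric quantity is precisely where the lack of reflection symmetry bites; should a clean eigenvalue comparison prove elusive, the fallback is the ``nearly necessary'' mechanism noted after (A6), deriving a contradiction from a persistent yet linearly unstable $\alpha$-plateau. With (A1)--(A6) established, Theorem \ref{ThContCont} delivers the desired nondecreasing bistable wave connecting $0$ to $\beta$.
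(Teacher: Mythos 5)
Your setup and reduction are the same as the paper's: realize \eqref{Application2} as a monotone semiflow on $\mc{C}_\beta$ with $\mc{X}=C(\bar\Omega,\R^n)$, verify (A1)--(A5) (compactness holds outright, and the convexity of $\Omega$ gives linear instability of every nonconstant cross-sectional steady state, hence (A5$'$)), and reduce (A6) to lower bounds $c_-^*(\alpha,\beta)\ge\inf_{\mu>0}\lambda^{\mp}(\mu)/\mu$, $c_+^*(0,\alpha)\ge\inf_{\mu>0}\lambda^{\pm}(\mu)/\mu$ in terms of the principal eigenvalues of the two directional linearizations at $\alpha(y)$, which differ only in the sign of the advection term. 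However, the step you yourself flag as the obstacle contains a genuine gap, and neither of your proposed mechanisms closes it. The potential-convexity inequality $\lambda^+(\mu)+\lambda^-(\mu)\ge 2\lambda^0(\mu)$ is correct but useless here: it bounds $\inf_\mu[\lambda^+(\mu)+\lambda^-(\mu)]/\mu$ from below, whereas what you must bound is the \emph{sum of two separate infima}, and the elementary inequality runs the wrong way, $\inf_\mu f+\inf_\mu g\le\inf_\mu(f+g)$; the two infima are attained at different points $\mu_1\neq\mu_2$ in general, so a bound at a common $\mu$ proves nothing. The Galilean normalization also cannot repair this: replacing $E$ by $E+vI$ changes $\lambda^{\pm}(\mu)$ to $\lambda^{\pm}(\mu)\pm v\mu$, so $\lambda^{\pm}(\mu)/\mu$ is merely shifted by the constants $\pm v$ and the minimizers $\mu_1,\mu_2$ are unchanged — the shift can never align them.

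The paper closes this gap with a one-line convexity argument in $\mu$ alone, for which you already had every ingredient. Since the two eigenvalue problems differ only by the sign of $\mu E(y)$, one has $\lambda^-(\mu)=\lambda^+(-\mu)$; that is, both speeds are governed by a \emph{single} function $\lambda^+$ on all of $\R$, convex in $\mu$, with $\lambda^+(0)=\lambda_0>0$ by the linear instability of $\alpha$. With $\mu_1,\mu_2\in(0,\infty)$ the respective minimizers, set $\theta=\f{\mu_2}{\mu_1+\mu_2}$, so that $\theta\mu_1+(1-\theta)(-\mu_2)=0$; then
\begin{equation*}
\f{\lambda^+(\mu_1)}{\mu_1}+\f{\lambda^+(-\mu_2)}{\mu_2}
=\f{\mu_1+\mu_2}{\mu_1\mu_2}\left[\theta\lambda^+(\mu_1)+(1-\theta)\lambda^+(-\mu_2)\right]
\ge\f{\mu_1+\mu_2}{\mu_1\mu_2}\,\lambda^+(0)
=\f{\mu_1+\mu_2}{\mu_1\mu_2}\,\lambda_0>0,
\end{equation*}
which is exactly (A6). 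Note that no reflection symmetry is needed and none is restored: the asymmetric convex combination passing through $\mu=0$ is precisely what substitutes for it, and it handles distinct minimizers automatically. Your fallback (a contradiction from a ``persistent yet linearly unstable $\alpha$-plateau'') is not an argument in the current abstract framework, so as written the proof of (A6), and hence of the theorem, is incomplete.
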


\begin{proof}
In order to employ Theorem \ref{ThContCont}, we choose
$\mc{X}:=C(\bar{\Omega},\mathbb{R}^n)$ and
$\mc{C}:=C(\mathbb{R},\mc{X})$ with the standard cones $\mc{X}^+$
and $\mc{C}^+$, respectively. Let $G(t,x,y,w)$ be the Green function
of the linear equation
\begin{equation}\label{DefineGreenFun}
\begin{cases}
\f{\partial u}{\partial t}=A \f{\partial^2 u}{\partial x^2}+ B
\Delta_y u+E(y)\f{\partial u}{\partial
x}, & x\in\mathbb{R}, y\in\Omega,t>0, \\
\f{\partial u}{\partial \nu}=0, & \text{on}\,
(0,+\infty)\times\mathbb{R}\times
\partial \Omega.
\end{cases}
\end{equation}
Then the solution of \eqref{DefineGreenFun} with initial value
$u(0,\cdot)=\phi(\cdot)\in \mc{C}$ can be expressed as
\begin{equation*}
u(t,x,y;\phi)=\int_{\mathbb{R}}\int_{\Omega}
G(t,x-z,y,w)\phi(z,w)dwdz.
\end{equation*}
Define $T(t)\phi=u(t,\cdot;\phi),\forall \phi\in \mc{C}_\beta$.
Using the constant variation formula, we write \eqref{Application2}
subject to $u(0,\cdot)=\phi(\cdot)\in \mc{C}_\beta$ as an integral
equation
\begin{equation}\label{IntegralFormAppl2}
u(t,x,y;\phi)=T(t)[\phi](x,y)+\int_{0}^t T(t-s)f(u(s,x,y))ds.
\end{equation}
By the linear operators theory, we see that for any $\phi\in
\mc{C}_\beta$, system \eqref{Application2} has a unique solution
$u(t;\phi)$ with $u(0;\phi)=\phi$, which exists globally on
$[0,+\infty)$. Define $Q_t[\phi]:=u(t,\phi)$. Then $\{Q_t\}_{t\ge
0}$ is a subhomogeneous semiflow on $\mc{C}_\beta$ (see
\cite[Section 5.3]{LiangZhao}). Also, assumption (D1) assures that
the semiflow $\{Q_t\}_{t\ge 0}$ restricted on $\mc{X}_\beta$ is
strongly monotone (see \cite{Smith}). Further, it is easy to see
that $Q_t,t\ge 0$, satisfies assumption (A1)-(A4). Since the domain
$\Omega$ is convex, it follows from the result in \cite{Wein1985}
that any non-constant steady state of the $x$-independent system
\begin{equation*}
\begin{cases}
\f{\partial u}{\partial t}= B \Delta_y u+f(u), & y\in,t>0, \\
\f{\partial u}{\partial \nu}=0, & \text{on}\, (0,+\infty)\times
\partial \Omega
\end{cases}
\end{equation*}
is linearly unstable. This then implies that  $Q_t$ satisfies
(A5$'$). Now it remains to show that (A6) holds for $Q_1$.

For each $x$-independent steady state $\alpha=\alpha(y)$ in
$[[0,\beta]]_{\mc{X}}$, system \eqref{Application2} performs a
monostable dynamics on $[\alpha,\beta]_{\mc{C}}$. To better
understand the dynamics of this subsystem, we make a transform
$g(u,y):=f(u+\alpha(y))$. Then its dynamics is equivalent to that of
the following system on $[0,\beta-\alpha]_{\mc{C}}$:
\begin{equation}\label{Monostable1}
\begin{cases}
\f{\partial u}{\partial t}=A \f{\partial^2 u}{\partial x^2}+ B
\Delta_y u+E(y)\f{\partial u}{\partial
x}+g(u,y), & x\in\mathbb{R}, y\in\Omega,t>0, \\
\f{\partial u}{\partial \nu}=0, &
\text{on}\,(0,+\infty)\times\mathbb{R}\times
\partial \Omega.
\end{cases}
\end{equation}
System \eqref{Monostable1} has exactly two $x$-independent steady
state $S_1:=0$ and $S_2:=\beta-\alpha\gg 0$. By the theory developed
in \cite{LiangZhaoJFA}, it follows that \eqref{Monostable1} has a
leftward spreading speed $\hat{c}_-^*$ in a strong sense. Let
$c_-^*(\alpha,\beta)$ be defined as in \eqref{defcmiunsstar} with
$Q=Q_1$. We then have $c_-^*(\alpha,\beta)\geq \hat{c}_-^*$.

To verify (A6) for $Q_1$, we first estimate the speed $\hat{c}_-^*$.
Consider the linearized system of \eqref{Monostable1} at equilibrium
$S_1$:
\begin{equation}\label{Linearized1}
\begin{cases}
\f{\partial u}{\partial t}=A \f{\partial^2 u}{\partial x^2}+ B
\Delta_y u+E(y)\f{\partial u}{\partial
x}+\f{\partial g(0,y)}{\partial u}u, & x\in\mathbb{R}, y\in\Omega,t>0, \\
\f{\partial u}{\partial \nu}=0, &
\text{on}\,(0,+\infty)\times\mathbb{R}\times
\partial \Omega.
\end{cases}
\end{equation}
Suppose $u(t,x,y):=e^{\mu x} \eta(t,y)$ is a solution of
\eqref{Linearized1}, then $\eta(t,y)$ satisfies the
$\mu$-parameterized linear parabolic equation
\begin{equation}\label{MuEq1}
\begin{cases}
\f{\partial v}{\partial t}=B \Delta_y u+[\mu^2 A+\mu
E(y)+\f{\partial g(0,y)}{\partial u}]v, & y\in\Omega,t>0, \\
\f{\partial v}{\partial \nu}=0, & \text{on}\,(0,+\infty)\times
\partial \Omega.
\end{cases}
\end{equation}
Let $\lambda^+(\mu)$ be the principle eigenvalue of the elliptic
problem:
\begin{equation}\label{MuEigenvalue+}
\begin{cases}
\lambda v=B \Delta_y v+[\mu^2 A+\mu
E(y)+\f{\partial g(0,y)}{\partial u}]v, & y\in\Omega, \\
\f{\partial v}{\partial \nu}=0, & \text{on}\,\partial \Omega.
\end{cases}
\end{equation}
By the theory in \cite[Section 3]{LiangZhao}, it follows that
$\hat{c}_-^*\ge \inf_{\mu>0} \f{\lambda^+(\mu)}{\mu}$, and
$\lambda^+(\mu)$ is convex. Then it is easy to see from
\eqref{MuEigenvalue+} that $\lim_{\mu\to
+\infty}\f{\lambda^+(\mu)}{\mu}=+\infty$ and $\lim_{\mu\to
0^+}\f{\lambda^+(\mu)}{\mu}=+\infty$, and hence,
$\f{\lambda^+(\mu)}{\mu}$ attains its infimum at some
$\mu_1\in(0,+\infty)$.

Similarly, system \eqref{Application2} performs a monostable
dynamics on $[0,\alpha]_{\mc{C}}$. To better understand the dynamics
of this subsystem, we make a transform $h(u,y):=-f(\alpha(y)-u)$.
Then its dynamics is equivalent to that of the following system on
$[0,\beta-\alpha]_{\mc{C}}$:
\begin{equation*}
\begin{cases}
\f{\partial u}{\partial t}=A \f{\partial^2 u}{\partial x^2}+ B
\Delta_y u+E(y)\f{\partial u}{\partial
x}+h(u,y), & x\in\mathbb{R}, y\in\Omega,t>0, \\
\f{\partial u}{\partial \nu}=0, &
\text{on}\,(0,+\infty)\times\mathbb{R}\times
\partial \Omega.
\end{cases}
\end{equation*}
By the same arguments, such system have a rightward spreading speed
$\hat{c}_+^*$, and  we have $\hat{c}_+^*\geq c_+^*(0,\alpha)$. Also,
by the same procedure as above,we define $\lambda^-(\mu)$ as the
principle eigenvalue of the following elliptic problem:
\begin{equation}\label{MuEigenvalue-}
\begin{cases}
\lambda v=B \Delta_y v+[\mu^2 A+\mu
E(y)+\f{\partial g(0,y)}{\partial u}]v, & y\in\Omega, \\
\f{\partial v}{\partial \nu}=0, & \text{on}\,\partial \Omega.
\end{cases}
\end{equation}
It then follows that $\hat{c}_+^*\ge \inf_{\mu>0}
\f{\lambda^-(\mu)}{\mu}$, and $\f{\lambda^-(\mu)}{\mu}$ attains its
infimum at some $\mu_2\in(0,+\infty)$. Clearly,
$\lambda^-(\mu)=\lambda^+(-\mu)$.

From assumption (D2), we see that $S_1\,(S_2)$ is a linearly
unstable \, (stable) steady state of the $x$-independent system
\begin{equation}\label{xIndependent1}
\begin{cases}
\f{\partial u}{\partial t}=B\Delta_y u+g(u,y),& y\in\Omega,t>0,\\
\f{\partial u}{\partial \nu}=0, & \text{on}\,(0,+\infty)\times
\partial \Omega.
\end{cases}
\end{equation}
More precisely, letting $\lambda_0$ be the principle eigenvalue of
the following elliptic problem:
\begin{equation}\label{EigenvalueAtAlpha}
\begin{cases}
\lambda u=B\Delta_y u+u\f{\partial g}{\partial u}(0,y),& y\in\Omega,\\
\f{\partial u}{\partial \nu}=0, & \text{on}\,
\partial \Omega,
\end{cases}
\end{equation}
then $\lambda_0>0$. Obviously, equations \eqref{MuEigenvalue+} and
\eqref{MuEigenvalue-} with $\mu=0$ both become equation
\eqref{EigenvalueAtAlpha}, and hence, $\lambda^+
(0)=\lambda_0=\lambda^-(0)>0$.

With the information above, now we can show that $Q_1$ satisfies
(A6). Let $\theta=\f{\mu_2}{\mu_1+\mu_2}$. Note that $\theta
\mu_1+(1-\theta)(-\mu_2)=0$. It then follows that
\begin{eqnarray}\label{VeriAppl2}
c_-^*(\alpha,\beta)+c_+^*(0,\alpha)
&& \ge \f{\lambda^+(\mu_1)}{\mu_1}+\f{\lambda^+(-\mu_2)}{\mu_2}\nonumber\\
&& =\f{\mu_1+\mu_2}{\mu_1\mu_2}[\theta
\lambda^+(\mu_1)+(1-\theta)\lambda^+(-\mu_2)]\nonumber\\
&& \ge \f{\mu_1+\mu_2}{\mu_1\mu_2} \lambda^+(\theta
\mu_1+(1-\theta)(-\mu_2))\nonumber\\
&& = \f{\mu_1+\mu_2}{\mu_1\mu_2}
\lambda^+(0)=\f{\mu_1+\mu_2}{\mu_1\mu_2} \lambda_0>0.
\end{eqnarray}
Consequently, Theorem \ref{ThContCont} completes the proof.
\end{proof}

\subsection{A parabolic equation with periodic diffusion}

In this subsection, we study the existence of spatially periodic
traveling waves of the parabolic equation
\begin{equation}\label{Application4}
u_t=(d(x)u_x)_x+f(u),\, \,  t>0,x\in\R,
\end{equation}
where $f(u)=u(1-u)(u-a),a\in(0,1)$, and $d(x)$ is a positive,
$C^1$-continuous, and $r$-periodic function on $\R$ for some real
number $r>0$.

For any $\phi\in C(\R,[0,1])$, equation \eqref{Application4} admits
a unique solution $u(t;\phi)$ with $u(0;\phi)=\phi$. Define
$Q_t:C(\R,[0,1])\to C(\R,[0,1])$ by $Q_t[\phi]=u(t;\phi)$. It then
follows that $\{Q_t\}_{t\ge 0}$ is a continuous, compact and
monotone semiflow on $C(\R,[0,1])$ equipped with the compact open
topology. Let $C_{per}(\R,[0,1])$ be the set of all continuous and
$r$-periodic functions from $\R$ to $[0,1]$. Then the semiflow
$\{Q_t\}_{t\ge 0}$ restricted on $C_{per}(\R,[0,1])$ is strongly
monotone. Choosing $\mc{H}=\R$ and $\mc{X}=\R$ in Theorem
\ref{ThMediaPeri}, one can easily verify that $\{Q_t\}_{t\ge 0}$
satisfies assumptions (A2)-(A4). If \eqref{Application4} admits the
bistability structure, then Proposition \ref{BistabilitySemiflow}
implies (A5$''$) and a similar argument as in the previous section
shows that (A6) also holds. Thus, we focus on finding sufficient
conditions on $d(x)$ under which \eqref{Application4} admits the
bistability structure.

Let $\bar{u}$ be an $r$-periodic steady state of
\eqref{Application4}. As in \cite{BerHamelRoq}, we define
$\lambda_1(\bar{u},d)$ as the largest number such that there exists
a function $\phi>0$ which satisfies
\begin{equation}\label{CharacteristicEq}
\begin{cases}
(d\phi_x)_x+f'(\bar{u})\phi=\lambda_1(\bar{u},d)\phi,\,x\in \R\\
\text{$\phi$ is $r$-periodic and $\|\phi\|_{\infty}=1$.}
\end{cases}
\end{equation}
We call $\lambda_1(\bar{u},d)$ the principle eigenvalue of
$\bar{u}$, and $\phi$ the corresponding eigenfunction. We say
$\bar{u}$ is linearly unstable if $\lambda_1(\bar{u},d)>0$, and
linearly stable if $\lambda_1(\bar{u},d)<0$. Define
\begin{equation*}
C_{per}^1:=\{\psi\in C^1(\R,\R):\, \, \psi(x)=\psi(x+r),\,\forall
x\in \R\}
\end{equation*}
with the $C^0$-norm induced topology. We say $\psi\in C_{per}^1$ has
the property (P) if every possible non-constant $r$-periodic steady
state of \eqref{Application4} with $d=\psi$ is linearly unstable,
that is, if the equation \eqref{Application4} with $d=\psi$ does not
admit any non-constant $r$-periodic steady state $\bar{u}$ such that
$\lambda_1(\bar{u},\psi)\le 0$. Define
\begin{equation*}
Y:=\{\psi\in C_{per}^1:\, \,  \psi(x)>0\, \text{and}\,\text{$\psi$
has the property (P)}\}.
\end{equation*}

\begin{lemma}\label{ConstantInY}
Any positive constant function is in $Y$.
\end{lemma}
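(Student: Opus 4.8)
The plan is to verify directly that a positive constant $\psi\equiv d_0>0$ enjoys property (P); membership $\psi\in Y$ then follows at once, since a positive constant is trivially in $C^1_{per}$ and strictly positive, so only property (P) requires work. Accordingly, I would fix an arbitrary non-constant $r$-periodic steady state $\bar u$ of \eqref{Application4} with $d\equiv d_0$, i.e. a non-constant $r$-periodic solution of $d_0\bar u''+f(\bar u)=0$ on $\R$, and show it is linearly unstable, meaning $\lambda_1(\bar u,d_0)>0$. Since $f$ is a smooth polynomial and $d_0$ is a constant, a standard bootstrap gives $\bar u\in C^\infty$, so $\bar u'$ is a smooth $r$-periodic function to which I may apply the eigenvalue equation in \eqref{CharacteristicEq}.

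The key step is to exhibit an explicit eigenfunction at the eigenvalue $0$. Differentiating the steady-state identity $d_0\bar u''+f(\bar u)=0$ in $x$ yields $d_0(\bar u')''+f'(\bar u)\bar u'=0$, which is precisely the eigenvalue equation of \eqref{CharacteristicEq} with $d=d_0$, $\lambda=0$, and eigenfunction $\phi=\bar u'$. Thus $0$ is an eigenvalue of the periodic problem attached to $\bar u$. Next I would show that this eigenfunction changes sign: because $\bar u$ is $r$-periodic we have $\int_0^r\bar u'(x)\,\di x=\bar u(r)-\bar u(0)=0$, and since $\bar u$ is non-constant, $\bar u'\not\equiv 0$; a continuous $r$-periodic function with zero mean that is not identically zero must take both positive and negative values, hence $\bar u'$ genuinely changes sign on $[0,r)$.

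Finally I would invoke the spectral characterization of the principal eigenvalue. By the theory of periodic self-adjoint Sturm--Liouville problems (equivalently, the Krein--Rutman theorem for the associated operator), $\lambda_1(\bar u,d_0)$ is the largest eigenvalue, it is simple, and it is the only eigenvalue admitting a one-signed eigenfunction; equivalently, any sign-changing eigenfunction corresponds to an eigenvalue strictly below $\lambda_1$. Since $\bar u'$ is a sign-changing eigenfunction with eigenvalue $0$, it cannot realize the principal eigenvalue, so $0<\lambda_1(\bar u,d_0)$ and $\bar u$ is linearly unstable. As $\bar u$ was an arbitrary non-constant $r$-periodic steady state, property (P) holds for $\psi\equiv d_0$, whence every positive constant function lies in $Y$. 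The one nonroutine ingredient is the Sturm--Liouville fact that a nodal eigenfunction cannot attain the principal eigenvalue; I expect this classical spectral input, rather than the elementary differentiation and sign argument, to be the point demanding care.
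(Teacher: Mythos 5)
Your proof is correct, but it is genuinely different from the paper's. You exploit translation invariance: differentiating $d_0\bar u''+f(\bar u)=0$ (valid only because $d$ is constant) exhibits $\bar u'$ as an eigenfunction of \eqref{CharacteristicEq} with eigenvalue $0$; periodicity forces $\int_0^r\bar u'=0$, so $\bar u'$ changes sign, and the classical Sturm--Liouville/Krein--Rutman facts (the principal eigenvalue of the periodic problem is the maximum of the spectrum, is simple, and is the only eigenvalue with a one-signed eigenfunction) then give $\lambda_1(\bar u,d_0)>0$ directly. The paper instead argues by contradiction from $\lambda_1(\bar u,\bar d)\le 0$: it forms the auxiliary function $\psi=e^{-\gamma t}\bigl(|\bar u_x|^2/\phi-M^2\phi\bigr)$ built from the positive eigenfunction $\phi$, derives a parabolic differential inequality for $\psi$, and applies the strong maximum principle at an interior maximum to force $\bar u_x/\phi$ to be constant, hence $\bar u_x\equiv 0$, a contradiction. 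Your route is shorter and more transparent, and the spectral input you flag is legitimate and is in fact already implicitly accepted by the paper (the proof of Lemma \ref{YOpenInX} invokes the variational characterization of $\lambda_1$ from \cite{BerHamelRoq}, which is exactly the statement that $\lambda_1$ is the maximum of the Rayleigh quotient, from which simplicity and the sign dichotomy follow by standard arguments). The paper's maximum-principle argument is more self-contained as a PDE computation and avoids any appeal to the global structure of the periodic spectrum. Both proofs use constancy of $d$ at the same essential point (your differentiation step; their cancellation in the $\psi$-inequality), and both extend verbatim to general $f\in C^1$, consistent with the paper's remark following the lemma.
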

\begin{proof}
Let $d(x)\equiv \bar{d}$ be given. If \eqref{Application4} has no
non-constant $r$-periodic steady state, we are done. Let $\bar{u}$
be a non-constant $r$-periodic steady state of \eqref{Application4}.
We need to prove $\lambda_1(\bar{u},\bar{d})> 0$. Assume, for the
sake of contradiction, that $\lambda_1(\bar{u},\bar{d})\le 0$. Let
$\phi$ be the positive eigenfunction associated with
$\lambda_1(\bar{u},\bar{d})$. Define $M:=\max_{0\le x\le
r}\{\f{|\bar{u}_x|}{\phi}\}$ and $\psi(x,t):=e^{-\gamma
t}(\f{|\bar{u}_x|^2}{\phi}-M^2\phi)$. It is easy to see that
$\psi(t,x)\le 0$ for all $x$ and $t$. Let
$\xi:=\f{|\bar{u}_x|^2}{\phi}$ and $\eta:=M^2\phi$. Then we have
\begin{equation*}
\xi_x=(|\bar{u}_x|^2\phi^{-1})_{x}=2\bar{u}_x
\bar{u}_{xx}\phi^{-1}-|\bar{u}_x|^2\phi^{-2}\phi_x,
\end{equation*}
and
\begin{equation*}
\xi_{xx}=2\phi^{-3}[\bar{u}_{xx}\phi-\bar{u}_x\phi_x]^2+\phi^{-3}[2\bar{u}_x\bar{u}_{xxx}
\phi^{2}-|\bar{u}_x|^2\phi\phi_{xx}].
\end{equation*}
Note that
\begin{equation*}
0=[\bar{d}\bar{u}_{xx}+f(\bar{u})]_x=\bar{d}\bar{u}_{xxx}+f'(\bar{u})\bar{u}_x.
\end{equation*}
It then follows that
\begin{eqnarray*}
&&e^{\gamma t}\left(\psi_t-\bar{d}\psi_{xx}+[\gamma-f'(\bar{u})]\psi\right)\\
&&=-[\bar{d} \xi_{xx}+f'(\bar{u})\xi]+\lambda_1(\bar{u},\bar{d}) \eta\\
&&=-2\bar{d}\phi^{-3}[\bar{u}_{xx}\phi-\bar{u}_x\phi_x]^2-\bar{d}
\phi^{-3}[2\bar{u}_x\bar{u}_{xxx}\phi^{2}-|\bar{u}_x|^2\phi\phi_{xx}]
-f'(\bar{u})|\bar{u}_x|^2\phi^{-1}+\lambda_1(\bar{u},\bar{d})\eta\\
&&\le
\bar{d}|\bar{u}_x|^2\phi^{-2}\phi_{xx}+f'(\bar{u})\phi^{-1}|\bar{u}_x|^2
-2f'(\bar{u})|\bar{u}_x|^2\phi^{-1}-2\bar{d}\phi^{-1}\bar{u}_x\bar{u}_{xxx}+\lambda_1(\bar{u},\bar{d})\eta\\
&&=\lambda_1(\bar{u},\bar{d})\xi+\lambda_1(\bar{u},\bar{d})\eta
-2\bar{u}_x\phi^{-1}[f'(\bar{u})\bar{u}_x+\bar{d}\bar{u}_{xxx}]\\
&&=\lambda_1(\bar{u},\bar{d})[\xi+\eta].
\end{eqnarray*}
Hence, $\psi_t-\bar{d}\psi_{xx}+[\gamma-f'(\bar{u})]\psi\le 0$
because $\lambda_1(\bar{u},\bar{d})\le 0$.

Since $\bar{u}$ is not a constant and $\psi(t,x)$ is $r$-periodic in
$x\in \R$, we can choose $x_0$ such that
$\psi(x_0,t)=\psi(x_0+r,t)=\min_{x\in\R}\psi(x,t)<0$, and hence,
$\psi_x|_{x=x_0}=\psi_x|_{x=x_0+r}=0$. Thus, $\psi(t,x)$ with $x\in
[x_0,x_0+r]$ satisfies the following equation
\begin{equation}\label{Eq:NeumannBV}
\begin{cases}
\psi_t-\bar{d}\psi_{xx}+[\gamma-f'(\bar{u})]\psi\le 0,x\in(x_0,x_0+r),\\
\psi_x|_{x=x_0}=\psi_x|_{x=x_0+r}=0,
\end{cases}
\end{equation}
and $\psi(t,x)$ attains its maximum $0$ at $(x^*,t)$ with
$x^*\in(x_0,x_0+r)$. By the strong maximum principle, we see that
$\psi(t,x)\equiv 0$, which implies that $\bar{u}_x/\phi$ is a
constant. Since $\bar{u}_x/\phi$ is $r$-periodic, it then follows
that $\bar{u}_x\equiv 0$, and hence, $\bar{u}$ is a constant, a
contradiction.
\end{proof}

\begin{remark}
By the proof above, it follows that the conclusion of Lemma
\ref{ConstantInY} is valid for any $f\in C^1$.
\end{remark}

\begin{lemma}\label{YOpenInX}
$Y$ is open in $C_{per}^1$.
\end{lemma}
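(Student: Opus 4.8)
The plan is to argue by contradiction via a compactness argument, handling the two defining conditions of $Y$ separately. Positivity is the easy part: since $\psi_0\in Y$ is continuous and $r$-periodic with $\psi_0>0$, it has $m:=\min_x\psi_0(x)>0$, so every $\psi\in C_{per}^1$ with $\|\psi-\psi_0\|_{C^0}<m/2$ still satisfies $\psi\ge m/2>0$; in particular the diffusion stays uniformly positive, which is exactly what the elliptic estimates below require. It therefore suffices to show that property (P) is preserved under small $C^0$-perturbations. Suppose not. Then there is a sequence $\psi_n\to\psi_0$ in $C^0$ with $\psi_n>0$ such that each equation \eqref{Application4} with $d=\psi_n$ admits a non-constant $r$-periodic steady state $\bar u_n$ with $\lambda_1(\bar u_n,\psi_n)\le 0$. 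The aim is to pass to a limit and produce either a non-constant steady state for $d=\psi_0$ with nonpositive principal eigenvalue (contradicting property (P) of $\psi_0$) or a forbidden constant.

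The key device handling the fact that the topology is only $C^0$ (so the $\psi_n'$ are uncontrolled) is to work with the flux $w_n:=\psi_n\,\bar u_{n,x}$ rather than with $\bar u_{n,xx}$. First, the maximum principle applied at the maximum and minimum of $\bar u_n$ over a period gives $f(\max\bar u_n)\ge 0$ and $f(\min\bar u_n)\le 0$; since $f(u)=u(1-u)(u-a)$ is positive exactly on $(a,1)$ and negative on $(0,a)$ and on $(-\infty,0)\cup(1,\infty)$, this forces $\bar u_n(\R)\subset[0,1]$, and non-constancy rules out $\max\bar u_n=0$ and $\min\bar u_n=1$, so $\min\bar u_n\le a\le\max\bar u_n$. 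Next, $\bar u_n$ solves $w_{n,x}=-f(\bar u_n)$, so $\|w_{n,x}\|_{C^0}$ is uniformly bounded; the periodicity constraint $\int_0^r \bar u_{n,x}\,\di x=\int_0^r w_n/\psi_n\,\di x=0$ together with $\psi_n\ge m/2$ bounds $w_n(0)$, hence $w_n$ is bounded in $C^1$ uniformly in $n$. Then $\bar u_{n,x}=w_n/\psi_n$ is uniformly bounded, so $\bar u_n$ is bounded in $C^1$. Passing to a subsequence, $w_n\to w_0$ and $\bar u_n\to\bar u_0$ in $C^0$ with $\bar u_{n,x}\to\bar u_{0,x}=w_0/\psi_0$, and the limit satisfies $(\psi_0\bar u_{0,x})_x=-f(\bar u_0)$; that is, $\bar u_0$ is an $r$-periodic steady state of \eqref{Application4} with $d=\psi_0$.

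It remains to pass to the limit in the eigenvalue and to exclude degeneration. Using the Rayleigh-quotient characterization of the principal periodic eigenvalue of \eqref{CharacteristicEq}, together with $\psi_n\to\psi_0$ uniformly (both bounded below by $m/2$) and $f'(\bar u_n)\to f'(\bar u_0)$ uniformly, I would show $\lambda_1(\bar u_n,\psi_n)\to\lambda_1(\bar u_0,\psi_0)$, whence $\lambda_1(\bar u_0,\psi_0)\le 0$. Now the dichotomy: if $\bar u_0$ is non-constant, this directly contradicts property (P) of $\psi_0$. If $\bar u_0\equiv c$ is constant, then the uniform bounds $\min\bar u_n\le a\le\max\bar u_n$ force $c=a$ in the limit; but for the constant $a$ the principal eigenfunction is constant and $\lambda_1(a,\psi_0)=f'(a)=a(1-a)>0$, contradicting $\lambda_1(\bar u_0,\psi_0)\le 0$. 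Either way we reach a contradiction, so $Y$ is open.

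The step I expect to be the main obstacle is precisely ruling out that the non-constant steady states $\bar u_n$ collapse to a constant in the limit, which a priori they might; the maximum-principle inequality $\min\bar u_n\le a\le\max\bar u_n$ (pinning the only possible constant limit to $u=a$) combined with continuity of the principal eigenvalue is what closes this gap. A secondary technical point needing care is justifying the uniform $C^1$ bound and the eigenvalue convergence using only $C^0$-control of $\psi_n$, which is exactly why the divergence-form flux $w_n$ is used throughout.
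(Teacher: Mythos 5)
Your proposal is correct and follows essentially the same route as the paper's proof: argue by contradiction, use the flux $w_n=\psi_n\bar u_{n,x}$ so that uniform $C^1$ bounds on $(\bar u_n,w_n)$ follow from only $C^0$ control of $\psi_n$, pass to a limiting $r$-periodic steady state for $\psi_0$, and invoke continuity of the principal eigenvalue through its variational (Rayleigh-quotient) characterization, exactly the step the paper settles by citing Eq.\ (5.2) of \cite{BerHamelRoq}. The one real difference is how the constant limit is excluded: the paper divides the eigenfunction equation by $\phi_n$ and integrates over a period to get $\int_0^r d_n[(\phi_n)_x]^2\phi_n^{-2}\,dx+\int_0^r f'(u_n)\,dx=\lambda_1(u_n,d_n)r\le 0$, which is incompatible with $u_n\to a$ because $f'(a)>0$; you instead pass $\lambda_1\le 0$ to the limit first and then note that the constant $a$ satisfies $\lambda_1(a,\psi_0)=f'(a)=a(1-a)>0$. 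Both mechanisms hinge on $f'(a)>0$ and on the same eigenvalue-continuity statement, so this is a reorganization rather than a genuinely different proof, and your version is if anything slightly more unified. One slip to fix: $f(u)=u(1-u)(u-a)$ is positive, not negative, on $(-\infty,0)$ (the cubic has leading coefficient $-1$); with the correct sign pattern ($f\ge 0$ exactly on $(-\infty,0]\cup[a,1]$ and $f\le 0$ exactly on $[0,a]\cup[1,\infty)$) your maximum-principle argument does force a non-constant $\bar u_n$ to satisfy $0\le\min\bar u_n\le a\le\max\bar u_n\le 1$, whereas with the signs as you stated them the inclusion $\bar u_n(\R)\subset[0,1]$ would not actually follow.
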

\begin{proof}
Clearly, Lemma \ref{ConstantInY} implies that $Y\ne \emptyset$. Let
$d^*\in Y$ be given. We need to show that $d^*$ is an interior point
of $Y$. Assume, for the sake of contradiction, that there is a
sequence of points $d_n\in C_{per}^1\setminus Y$ such that $d_n\to
d^*$ in $C_{per}^1$ as $n\to \infty$. Then \eqref{Application4} with
$d=d_n$ admits a non-constant $r$-periodic steady state $u_n$ with
the principle eigenvalue $\lambda_1(u_n,d_n)\le 0$. Using the
transformation $v_n=d_n(u_n)_x$, we see that $(u_n,v_n)$ is a
periodic solution of the following ordinary differential system:
\begin{equation}
\begin{cases}
(u_n)_x=v_n/d_n,\\
(v_n)_x=-f(u_n).
\end{cases}
\end{equation}
By elementary phase plane arguments, it then follows that
\begin{equation}\label{estmiate1}
0\le \inf_{x\in \R} u_n(x)\le a\le \sup_{x\in \R} u_n(x)\le 1,\quad
\forall n\geq 1,\, \, x\in \R.
\end{equation}
Thus, the sequence of functions $((u_n)_x,(v_n)_x)$ is uniformly
bounded and equicontinuous, and hence, $(u_n,v_n)$ has a uniformly
convergent subsequence, still denoted by $(u_n,v_n)$. Let
$(u^*,v^*)$ be the limiting function of $(u_n,v_n)$. Then $u^*$ is
an $r$-periodic steady state of \eqref{Application4} with $d=d^*$.
It is easy to see from \eqref{estmiate1} that $u^*$ is not the
constant function $0$ or $1$.

Let $\phi_n$ be the positive eigenfunction associated with
$\lambda_1(u_n,d_n)$. Then
\begin{equation}\label{eigensequence}
(d_n(\phi_n)_x)_x+f'(u_n(x))\phi_n=\lambda_1(u_n,d_n) \phi_n.
\end{equation}
Dividing both sides of (\ref{eigensequence}) by $\phi_n$ and
integrating from $0$ to $r$, we obtain
\begin{equation}\label{negativeintegral}
\int_0^r \f{d_n[(\phi_n)_x]^2}{\phi_n^2}dx+\int_0^r
f'(u_n(x))dx=\lambda_1(u_n,d_n)r \le 0.
\end{equation}
Since $f\in C^1$ and $f'(a)>0$, we see that $u^*$ cannot be the
constant $a$. Otherwise, the uniform convergence of $u_n$ to $a$
implies that $f'(u_n(x))>0$ for all $x\in [0,r]$ and sufficiently
large $n$, which contradicts (\ref{negativeintegral}). Thus, $u^*$
is a non-constant $r$-periodic function. Since $d^*\in Y$, we  have
$\lambda_1(u^*,d^*)>0$.

Note that $u_n\to u^*$ in $C(\R,\R)$ and $d_n\to d^*$ in
$C_{per}^1$. By the variational characterization of the principal
eigenvalue $\lambda_1(u_n,d_n)$ (see, e.g., Eq. (5.2) of
\cite{BerHamelRoq}), it then follows that $0\ge\lambda_1(u_n,d_n)\to
\lambda_1(u^*,d^*)>0$, a contradiction.
%
\end{proof}

The following counter-example shows that the parabolic equation
(\ref{Application4}) admits no bistability structure in the general
case of periodic function $d(x)$.

\begin{lemma}
Let either $f(u)=u(1-u^2)$, or $f(u)=u(1-u)(u-1/2)$.  Then there
exists a positive function $d\in C_{per}^1$ such that
\eqref{Application4} admits a pair of linearly stable, non-constant,
and $r$-periodic steady states.
\end{lemma}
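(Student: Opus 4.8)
The plan is to construct $d$ by reverse-engineering a steady state, exploiting the balanced structure of $f$. In both cases $f$ has three ordered zeros, with unstable middle zero $a_0$ ($a_0=0$ for $f(u)=u(1-u^2)$, $a_0=\tfrac12$ for $f(u)=u(1-u)(u-\tfrac12)$) and stable outer zeros $a_\pm$, and $f$ is \emph{odd about} $a_0$, i.e. $f(a_0+s)=-f(a_0-s)$; hence $f'$ is \emph{even about} $a_0$. My first observation is that it suffices to produce a single non-constant, $r$-periodic, linearly stable steady state $\bar u$. Indeed, the map $u\mapsto 2a_0-u$ (that is, $u\mapsto -u$, resp. $u\mapsto 1-u$) sends steady states of \eqref{Application4} to steady states, and since $f'(2a_0-u)=f'(u)$ it leaves the eigenvalue problem \eqref{CharacteristicEq} and the coefficient $d$ unchanged, hence preserves $\lambda_1$. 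Applied to a non-constant $\bar u$ it yields a distinct, equally stable steady state, so $\{\bar u,\ 2a_0-\bar u\}$ is the desired pair.

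To build $\bar u$ and $d$, I would prescribe a smooth, $r$-periodic profile $\bar u$ that is strictly decreasing on $(0,r/2)$, has maximum $\bar u(0)\in(a_0,a_+)$ and minimum $\bar u(r/2)=2a_0-\bar u(0)\in(a_-,a_0)$, is symmetric in the sense $\bar u(\tfrac r2-x)=2a_0-\bar u(x)$, and crosses $a_0$ at $x=r/4$ in a thin layer of width $\eta$ (so that $f'(\bar u)<0$ off an $O(\eta)$-neighborhood of $x=r/4,3r/4$). The steady-state equation $(d\bar u_x)_x+f(\bar u)=0$ is first order in $w:=d\bar u_x$: set $w(x)=-\int_0^x f(\bar u)\,ds$, which forces $w(0)=0$, and define $d:=w/\bar u_x$. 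The prescribed symmetry makes $f(\bar u)$ odd about $x=r/4$ on $(0,r/2)$, so $w(r/2)=0$ and, by the analogous symmetry on $(r/2,r)$, $w(r)=0$; thus $w$ is $r$-periodic and vanishes exactly at the critical points $x=0,r/2$, where $\bar u_x$ has simple zeros. Since $w_x=-f(\bar u)\neq0$ there (because $\bar u(0),\bar u(r/2)$ are not zeros of $f$), $w$ and $\bar u_x$ share simple zeros, so $d=w/\bar u_x$ extends to a positive, $r$-periodic, $C^1$ (indeed smooth) function; positivity follows because $w$ and $\bar u_x$ are both negative on $(0,r/2)$, and one checks that $d$ is $O(1)$ in the bulk and $O(\eta)$ in the steep transition layer. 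Hence $\bar u$ is a genuine non-constant $r$-periodic steady state of \eqref{Application4}.

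The remaining, and hardest, step is linear stability, i.e. $\lambda_1(\bar u,d)<0$. I would prove it by exhibiting a positive $r$-periodic supersolution of the linearized operator, $(d\phi_x)_x+f'(\bar u)\phi\le0$ with strict inequality somewhere: since the operator is self-adjoint on the circle, pairing with the principal eigenfunction shows such a $\phi$ forces $\lambda_1<0$. Take $\phi\approx1$ on the bulk, where $f'(\bar u)<0$ and $\phi$ is essentially flat so that $(d\phi_x)_x+f'(\bar u)\phi\approx f'(\bar u)<0$, and insert a narrow dip in $\phi$ across the layer at $x=r/4$ (and its reflection). There $f'(\bar u)$ is $O(1)$ and positive, but because the layer has width $O(\eta)$ and $d=O(\eta)$, a dip of curvature $O(\eta^{-2})$ yields $(d\phi_x)_x$ of size $O(\eta^{-1})$, which dominates the bad term $f'(\bar u)\phi=O(1)$; so the supersolution inequality holds throughout for $\eta$ small. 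I expect the main obstacle to be exactly this quantitative interface estimate: one must control the size of $d$ on the layer relative to its width and design the dip so that the negative curvature contribution rigorously beats $f'(\bar u)\phi$ while keeping $\phi>0$ and periodic. An equivalent route I would keep in reserve is to show $\bar u$ is a strict local minimizer of the energy $J[u]=\int_0^r[\tfrac12 d\,u_x^2-F(u)]\,dx$ (equivalently, positivity of the second variation $\int_0^r[d\phi_x^2-f'(\bar u)\phi^2]\,dx$) through a sharp-interface analysis pinning the interfaces at the minima of $d$; this reduces stability to non-degeneracy of the pinned configuration but requires lifting local minimizers through a $\Gamma$-limit, which is heavier machinery.
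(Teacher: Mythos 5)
Your steps 1--3 are correct and constitute a genuinely different route from the paper's. The paper does not construct $(\bar u,d)$ by hand: it invokes \cite[Theorem 3]{FuscoHale}, which supplies a linearly \emph{stable}, odd, increasing steady state of the Neumann problem on $(-1,1)$ whenever the diffusion coefficient is $C^1$-close (in graph distance) to a step function that vanishes on a middle subinterval, and then obtains the periodic pair by even reflection and translation. Your reverse-engineering ($w:=-\int_0^x f(\bar u)$, $d:=w/\bar u_x$, with the point symmetry forcing $w$ to vanish exactly at the simple zeros of $\bar u_x$) is sound, and it produces the same qualitative picture as Fusco--Hale: $d$ is $O(1)$ in the bulk and $O(\eta)$ across the transition layer, i.e.\ the interface sits at a deep minimum of $d$. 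Your symmetry reduction to a single stable state via $u\mapsto 2a_0-u$ is also fine.

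The gap is exactly where you anticipated it: step 4, linear stability, and the supersolution you sketch does not work. First, there is a pointwise obstruction: if $\phi>0$ is periodic and satisfies $(d\phi_x)_x+f'(\bar u)\phi\le 0$, then at any interior minimum $x_m$ of $\phi$ one has $\phi_x(x_m)=0$ and $\phi_{xx}(x_m)\ge 0$, hence
\begin{equation*}
0\ge (d\phi_x)_x(x_m)+f'(\bar u(x_m))\phi(x_m)=d(x_m)\phi_{xx}(x_m)+f'(\bar u(x_m))\phi(x_m)\ge f'(\bar u(x_m))\phi(x_m),
\end{equation*}
so the minimum of $\phi$ cannot lie where $f'(\bar u)>0$; a ``dip across the layer'' centered at $x=r/4$ is therefore inadmissible, since its bottom sits precisely where $f'(\bar u)\approx f'(a_0)>0$. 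Second, the quantitative mechanism is also wrong: to have $(d\phi_x)_x\sim -\eta^{-1}$ throughout the layer you need $\phi_{xx}\sim-\eta^{-2}$ there, which makes the slope exiting the layer of order $\eta^{-1}$; the flux $g:=d\phi_x$ is then of order $\eta^{-1}$ in the adjacent bulk (where $d=O(1)$), while the supersolution constraint there only allows $g_x\le |f'(\bar u)|\phi=O(1)$, so reversing this flux would take a length $O(\eta^{-1})$, during which $\phi$, falling with slope $\sim\eta^{-1}$, crosses zero. The conclusion itself is believable --- indeed a supersolution does exist, but with the opposite design: keep the flux small, $|d\phi_x|=O(\eta)$, let $\phi$ decrease \emph{monotonically} with $O(1)$ slope through each layer (total drop $O(\eta)$, using $d\ge c\eta$ there), and recover the $O(\eta)$ loss slowly across the bulk where $f'(\bar u)\le -c_0<0$. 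But that is a different argument from the one you propose, and as written your sketch fails; this delicate stability estimate is precisely the content that the paper avoids redoing by citing Fusco--Hale. Your fallback ($\Gamma$-limit/energy route) is likewise only named, not carried out, so the stability half of the lemma remains unproved in your proposal.
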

\begin{proof}
We only consider the case where $f(u)=u(1-u^2)$ since the other one
can be obtained under appropriate scalings. Our proof is based on
the main result in \cite[Theorem 3]{FuscoHale}. Without loss of
generality, we assume that $r=4$. In what follows, we use some
notations of \cite{FuscoHale}.

Let $l\in(0,1)$ be fixed and $c^0$ be the step function on $[-1,1]$
defined by
\begin{equation}
c^0(x)=\begin{cases} 1,& x\in [-1,-l]\cup(l,1],\\
0,& x\in (-l,l].
\end{cases}
\end{equation}
Define $D:=\{(x,y): x=\pm l, y\in [0,1]\}\cup$ graph of $c^0$. By
\cite[Theorem 3]{FuscoHale}, it then follows that for any positive
even function $c\in C^1([-1,1],\R^+)$ which is sufficiently closed
to $c^0$ (in the sense that the distance between $D$ and the graph
of $c$ is small enough), the following Neumann boundary problem
\begin{equation}
\begin{cases}
u_t=(cu_x)_x+u(1-u^2),\, \, x\in(-1,1)\\
u_x(t,\pm 1)=0
\end{cases}
\end{equation}
admits an odd increasing steady state $u_c$ which is linearly
stable. That is, there exist $\lambda_1<0$ and $\phi>0$ such that
\begin{equation}
\begin{cases}
(c\phi_x)_x+f'(u_c)\phi=\lambda_1\phi,\, \, x\in(-1,1)\\
\phi_x(\pm 1)=0.
\end{cases}
\end{equation}
In particular, we can choose $c$ such that $c_x(-1)=c_x(1)=0$. Since
$c$ is even and $f$ is odd, we see that $v_c(x):=u_c(-x)$ is also a
steady state, and $\lambda_1$ is the corresponding eigenvalue with
the positive eigenfunction $\phi(-x)$.

Now we can construct a linearly stable $4$-periodic steady state of
\eqref{Application4}. Define two $4$-periodic functions:
\begin{equation*}
\tilde{d}(x)=\begin{cases} c(x),&x\in [-1,1]\\
c(2-x),& x\in (1,3)
\end{cases}
\quad\text{and}\quad
w_1(x)=\begin{cases} u_c(x),&x\in [-1,1]\\
u_c(2-x),& x\in (1,3).
\end{cases}
\end{equation*}
Then $w_1(x)$ is a $4$-periodic steady state of \eqref{Application4}
with $d=\tilde{d}$. Let the positive $4$-periodic function $\rho(x)$
be defined by
\begin{equation*}
\rho(x)=\begin{cases} \phi(x),&x\in [-1,1]\\
\phi(2-x),& x\in (1,3).
\end{cases}
\end{equation*}
It follows that $\lambda_1$ and $\rho$ solve the following
eigenvalue problem
\begin{equation*}
\begin{cases}
(\tilde{d}\rho_x)_x+f'(w_1)\rho=\lambda_1\rho,\, \, \,x\in \R\\
\text{$\rho$ is $r$-periodic}.
\end{cases}
\end{equation*}
This implies that $w_1$ is a linearly stable periodic steady state
of \eqref{Application4} with $d=\tilde{d}$. Similarly, so is
$w_2(x):=w_1(x+2)$.
\end{proof}

As a consequence of Theorem \ref{ThMediaPeri}, together with Lemmas
\ref{ConstantInY} and \ref{YOpenInX}, we have the following result
on the existence of bistable traveling waves for
\eqref{Application4}.

\begin{theorem}\label{ThApplication4}
Let $\bar{d}$ be a given positive constant. Then there exists
$\delta_0>0$ such that for any $d\in C_{per}^1$ with
$\|d-\bar{d}\|_{C^0}<\delta_0$, \eqref{Application4} admits a
spatially periodic traveling wave solution $u(t,x):=V(x+ct,x)$ with
some speed $c\in \R$ and connecting $0$ to $1$. Besides, $V(\xi,x)$
is nondecreasing in $\xi$.
\end{theorem}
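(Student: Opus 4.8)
The plan is to apply Theorem \ref{ThMediaPeri} to the semiflow $\{Q_t\}_{t\ge 0}$ generated by \eqref{Application4} on $\mc{C}_\beta=C(\R,[0,1])$, with $\mc{H}=\R$, $\mc{X}=\R$ and $\beta(x)\equiv 1$. Since $\{Q_t\}_{t\ge 0}$ is a continuous, compact and monotone semiflow in the compact open topology, the hypotheses (A2)--(A4) hold routinely, so the entire content of the proof is to verify, for the relevant range of $d$, the bistability assumption (A5$''$) and the counter-propagation assumption (A6). The admissible perturbation size comes for free from the two lemmas: by Lemma \ref{ConstantInY} the constant $\bar d$ lies in $Y$, and by Lemma \ref{YOpenInX} the set $Y$ is open in $C^1_{per}$ with its $C^0$-induced topology; hence there is $\delta_0>0$ such that every $d\in C^1_{per}$ with $\|d-\bar d\|_{C^0}<\delta_0$ belongs to $Y$. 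I then fix such a $d$ and show the equation has the bistability structure.

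To establish (A5$''$), I would use that $f(u)=u(1-u)(u-a)$ yields $f'(0)=-a<0$, $f'(1)=a-1<0$ and $f'(a)=a(1-a)>0$, so that the constants $0$ and $1$ are linearly stable $r$-periodic steady states and the constant $a$ is linearly unstable. Because $d\in Y$, property (P) guarantees that every non-constant $r$-periodic steady state is linearly unstable as well, so all intermediate $r$-periodic steady states are linearly unstable. Linear stability/instability translates into strong stability/instability from above and below in the sense of Definition \ref{DefStabPer}. Moreover, the restriction of $\{Q_t\}_{t\ge0}$ to $C_{per}(\R,[0,1])=\Pi_\beta$ is strongly monotone, hence each $Q_t$ ($t>0$) is eventually strongly monotone on $\Pi_\beta$. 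Thus the sufficient condition stated after (A5$''$) applies, and passing to the conjugate homogeneous semiflow on $\mc{K}$, Proposition \ref{BistabilitySemiflow} upgrades this equilibrium-level bistability to the map level: all intermediate $r$-periodic steady states are totally unordered, so (A5$''$) holds for every $Q_t$.

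To verify (A6) for $P_1:=FQ_1F^{-1}$, I would follow the eigenvalue argument of subsection 6.2, now adapted to the spatially periodic setting. For each intermediate $r$-periodic steady state $\alpha$, the semiflow restricted to $[\alpha,\beta]_{\mc{C}}$ and to $[0,\alpha]_{\mc{C}}$ performs monostable dynamics, so $c_-^*(\alpha,\beta)$ and $c_+^*(0,\alpha)$ are bounded below by $\inf_{\mu>0}\lambda^{\pm}(\mu)/\mu$, where $\lambda^{\pm}(\mu)$ are the principal periodic eigenvalues of the Bloch-type problems obtained by substituting $u=e^{-\mu x}\phi(x)$ (with $\phi$ $r$-periodic) into the linearization of \eqref{Application4} at $\alpha$. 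These principal eigenvalues are convex in $\mu$, tend to $+\infty$ as $\mu\to+\infty$, and at $\mu=0$ reduce to $\lambda_1(\alpha,d)>0$ by the linear instability of $\alpha$. A convexity estimate identical in form to \eqref{VeriAppl2}, choosing $\mu_1,\mu_2>0$ and $\theta=\mu_2/(\mu_1+\mu_2)$ so that $\theta\mu_1-(1-\theta)\mu_2=0$, then gives $c_-^*(\alpha,\beta)+c_+^*(0,\alpha)>0$, which is (A6). With (A2)--(A6) in hand, Theorem \ref{ThMediaPeri} produces a nondecreasing spatially $r$-periodic traveling wave; writing its profile as $V(\xi,x)$ and using $\beta(x)\equiv 1$ yields the stated $u(t,x)=V(x+ct,x)$ connecting $0$ to $1$.

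The main obstacle will be the (A6) verification: transporting the cylinder-domain eigenvalue/convexity argument of subsection 6.2 to the spatially periodic problem. Concretely, one must set up the correct $\mu$-parameterized periodic principal eigenvalue $\lambda^{\pm}(\mu)$, check its convexity and its growth as $\mu\to+\infty$, and match its value at $\mu=0$ to $\lambda_1(\alpha,d)$, so that the linear instability of every intermediate periodic steady state converts into strict counter-propagation of the two monostable subsystems. Everything else—the choice of $\delta_0$, the stability/instability dictionary for $0,1,a$, and the non-ordering of intermediate states—follows directly from the established lemmas and the structure of $f$.
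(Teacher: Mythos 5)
Your proposal is correct and follows essentially the same route as the paper: Lemma \ref{ConstantInY} plus the $C^0$-openness of $Y$ (Lemma \ref{YOpenInX}) yield $\delta_0$, membership $d\in Y$ together with $f'(0)<0$, $f'(1)<0$, $f'(a)>0$ gives the bistability structure, Proposition \ref{BistabilitySemiflow} upgrades it to (A5$''$), and (A6) is verified by transplanting the convexity estimate \eqref{VeriAppl2} of subsection 6.2 to the periodic Bloch eigenvalue problem, after which Theorem \ref{ThMediaPeri} concludes. The only difference is that you spell out the (A6) eigenvalue computation that the paper compresses into ``a similar argument as in the previous section,'' which is a faithful elaboration rather than a different argument.
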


We remark that Theorem \ref{ThApplication4} is a $C^0$-perturbation
result in $C_{per}^1$, and hence, it improves the existence result
in \cite[Theorem 3.1]{XinReview}, where the $H^s$-perturbation is
used for some $s>2$.

\subsection{A nonlocal and time-delayed reaction-diffusion equation}
Let $\tau>0$ be a fixed real number. Choose
$\mc{X}:=C([-\tau,0],\mathbb{R}), \mc{Y}:=C(\mathbb{R},\mathbb{R})$
and $\mc{C}:=C([-\tau,0],\mc{Y})$. We equip $\mc{X}$ with the
maximum norm, $\mc{Y}$ and $\mc{C}$ with the similar norms as in
\eqref{NormDef}. Define $\mc{Y}_+:=C(\mathbb{R},\mathbb{R}_+)$. Let
$d$ be the metric in $\mc{C}(\mc{Y})$ induced by the norm. We are
interested in bistable traveling waves of the following nonlocal and
time-delayed reaction-diffusion equation:
\begin{equation}\label{Application3}
\begin{cases}
\f{\partial u(t,x)}{\partial t}=\f{\partial^2 u(t,x)}{\partial
x^2}+f(u_t)(x),&\quad  t>0, x\in\mathbb{R}\\
u_0=\phi\in \mc{C}, &\quad \theta\in[-\tau,0],
\end{cases}
\end{equation}
where $f: \mc{C}\to \mc{Y}$ is Lipschitz continuous and for each
$t\ge 0$, $u_t\in \mc{C}$ is defined by
\begin{equation*}
u_t(\theta,x):=u(t+\theta,x),\quad \forall
\theta\in[-\tau,0],x\in\mathbb{R}.
\end{equation*}
If the functional $f$ takes the form
$f(\phi)(x)=F(\phi(0,x),\phi(-\tau,x))$, then \eqref{Application3}
becomes a local and time-delayed reaction-diffusion equation:
\begin{equation} \label{localE}
\f{\partial u(t,x)}{\partial t}=\f{\partial^2 u(t,x)}{\partial
x^2}+F(u(t,x),u(t-\tau,x)).
\end{equation}
The bistable traveling waves of (\ref{localE}) were studied in
\cite{Schaaf}. If $f(\phi)(x)=-d \phi(0,x)+
\int_{\mathbb{R}}b(\phi(-\tau,y))k(x-y)\di y$, then
\eqref{Application3} becomes a nonlocal and time-delayed
reaction-diffusion equation:
\begin{equation}\label{nonlocalE}
\f{\partial u(t,x)}{\partial t}=\f{\partial^2 u(t,x)}{\partial
x^2}-d u(t,x)+ \int_{\mathbb{R}}b(u(t-\tau,y))k(x-y)\di y.
\end{equation}
The existence, uniqueness and stability of bistable waves of
(\ref{nonlocalE}) were established in \cite{MaWu07}.

Note that $\mathbb{R}$ can be regarded as a subspace of $\mc{X}$,
and the latter can also be regarded as a subspace of $\mc{C}$.
Define $\bar{f}:\mc{X}\to \mathbb{R}$ by
$\bar{f}(\varphi)=f(\varphi)$ and $\hat{f}:\mathbb{R}\to \mathbb{R}$
by $\hat{f}(\xi)=f(\xi)$.  In order to obtain the existence of
bistable waves for system \eqref{Application3}, we impose the
following assumptions on the functional $f$:
\begin{enumerate}
\item[(E1)]$0<\alpha<\beta$ are three equilibria and there are no other equilibria between
$0$ and $\beta$.
\item[(E2)] The functional $f:\mc{C}_\beta\to \mc{Y}$ is quasi-monotone
in the sense that
\begin{equation*}
\lim_{h\to 0^+}\f{1}{h}
d([\phi(0)-\psi(0)]+h[f(\phi)-f(\psi)];\mc{Y}_+)=0\quad
\text{whenever}\, \, \phi\ge\psi \, \, \text{in} \, \, \mc{C}_\beta.
\end{equation*}
\item[(E3)]Equilibria $0$ and $\beta$ are stable, and $\alpha$ is
unstable in the sense that $\hat{f}'(0)<0,\hat{f}'(\alpha)>0$ and
$\hat{f}'(\beta)<0$.
\item[(E4)]For each $\varphi\in \mc{X}_\beta$, the
derivative $\bar{L}(\varphi):=\D \bar{f} (\varphi)$ of $\bar{f}$ can
be represented as
\begin{equation*}
\bar{L}(\varphi)\chi=a(\varphi)\chi(0)+\int_{-\tau}^0\chi(\theta)\di_\theta
\eta(\varphi):=a(\varphi)\chi(0)+L_1(\varphi)\chi,
\end{equation*}
where $\eta(\varphi)$ is a positive Borel measure on $[-\tau,0]$ and
$\eta(\varphi)([-\tau,-\tau+\epsilon])>0$ for all small
$\epsilon>0$.
\item[(E5)]For any small number $\epsilon>0$, there exits a number
$\delta\in(0,\beta)$ and a linear operator $L_\epsilon:
\mc{C}_\beta\to \mc{Y}$ such that $L_\epsilon \phi\to \D
f(\alpha)\phi,\forall \phi\in \mc{C}_\beta$, as $\epsilon\to 0$ and
that
\begin{equation*}
f(\alpha+\phi)\ge L_\epsilon (\phi),\quad \text{and}\quad
f(\alpha-\phi)\le -L_\epsilon (\phi),\quad \forall \phi\in
\mc{C}_\delta.
\end{equation*}
\end{enumerate}

Using the solution maps $\{T(t)\}_{t\ge0}$ generated by the heat
equation $\f{\partial u(t,x)}{\partial t}=\f{\partial^2
u(t,x)}{\partial x^2}$, we write system \eqref{Application3} as the
integral form
\begin{equation}\label{IntegralFormAppl3}
\begin{cases}
u(t,\cdot)=T(t)\phi(0,\cdot)+\int_0^t T(t-r)f(u_r(\cdot,\cdot))\di r,&\quad t>0\\
u(\theta,\cdot)=\phi(\theta,\cdot),&\quad \theta\in[-\tau,0].
\end{cases}
\end{equation}
Note that traveling waves of system \eqref{IntegralFormAppl3} are
those of system \eqref{Application3}. It then remains to show
\eqref{IntegralFormAppl3} admits a bistable traveling wave.

\begin{theorem}
Under assumption (E1)-(E5), system \eqref{Application3} admits a
nondecreasing traveling wave $\phi(x+ct)$ with $\phi(-\infty)=0$ and
$\phi(+\infty)=\beta$.
\end{theorem}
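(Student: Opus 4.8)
Since \eqref{Application3} generates a continuous-time semiflow on the continuous habitat $\mc{H}=\R$ with fibre $\mc{X}=C([-\tau,0],\mathbb{R})$, the relevant abstract result is Theorem \ref{ThContCont}, whose compactness hypothesis I would weaken to (A4$'$) by appealing to Theorem \ref{Delay}. Concretely, for $\phi\in\mc{C}_\beta$ let $u(t;\phi)$ solve \eqref{IntegralFormAppl3} and put $Q_t[\phi]:=u_t$. The structural hypotheses are then recorded as follows: (A1) is immediate from the spatial homogeneity of the heat semigroup $\{T(t)\}$ and of $f$; the monotonicity (A3) follows from the quasi-monotonicity (E2) through the usual comparison principle for abstract functional differential equations (see \cite{Wu,Smith}); and the compactness is only of the type (A4$'$), which holds for each $Q_t$ exactly as in the discussion of \eqref{DelayEquation} in Section 5, the spatial smoothing being supplied by $\{T(t)\}$ and the history part by the shift identity (A4$'$)(i).

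For the bistability I would verify the stronger hypothesis (A5$'$) for the spatially homogeneous semiflow on $\mc{X}_\beta$ and then pass to (A5) for every time-$t$ map via Proposition \ref{BistabilitySemiflow}. Eventual strong monotonicity follows from (E2) together with the irreducibility encoded in the measure condition of (E4), while (E1) gives $\Sigma=\{0,\alpha,\beta\}$, so that $\Sigma\setminus\{0,\beta\}=\{\alpha\}$ is trivially totally unordered. The stability signs come from (E3): writing the characteristic function of the linearization at an equilibrium $e$ as $\Delta_e(\lambda)=\lambda-a(e)-\int_{-\tau}^0 e^{\lambda\theta}\di\eta(e)$, one computes $\Delta_e(0)=-\hat f'(e)$ and $\Delta_e'(\lambda)=1-\int_{-\tau}^0\theta e^{\lambda\theta}\di\eta(e)\ge 1$, so the rightmost real root (which dominates for cooperative delay systems) is negative at $0,\beta$ and positive at $\alpha$. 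In the sense of Definition \ref{DefStabFix} this yields strong stability of $0,\beta$ and strong instability of $\alpha$ from both sides, hence (A5$'$).

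The crux is the counter-propagation hypothesis (A6) for $Q_1$, which reduces to $c_-^*(\alpha,\beta)+c_+^*(0,\alpha)>0$ for the single intermediate state $\alpha$. Each of the subsystems on $[\alpha,\beta]_{\mc{C}}$ and $[0,\alpha]_{\mc{C}}$ is monostable with $\alpha$ as its unstable end, and carries a spreading speed by \cite[Section 3]{LiangZhao}. To bound these below I would invoke (E5): on $[\alpha,\beta]$ the substitution $w=u-\alpha$ with $f(\alpha+\phi)\ge L_\epsilon\phi$ gives $\f{\partial w}{\partial t}\ge w_{xx}+L_\epsilon[w_t]$, and on $[0,\alpha]$ the substitution $w=\alpha-u$ with $f(\alpha-\phi)\le -L_\epsilon\phi$ gives the analogous supersolution inequality, so each nonlinear speed dominates the corresponding linear one. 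Feeding the ansatz $u(t,x)=e^{-\mu x+\lambda t}$ into the linearized equation and using the representation (E4) of $\D\bar f(\alpha)$, I would let $\lambda^{\pm}(\mu)$ denote the principal eigenvalues of the resulting cooperative family (furnished by Krein--Rutman, one for each decay direction) and set $\Phi^{\pm}(\mu):=\lambda^{\pm}(\mu)/\mu$. Since $\alpha$ is unstable, $\lambda^{\pm}(0)>0$, whereas the $\mu^2$-term forces $\lambda^{\pm}(\mu)\to+\infty$ as $\mu\to+\infty$; hence $\Phi^{\pm}(\mu)\to+\infty$ as $\mu\to0^+$ and as $\mu\to+\infty$, and by convexity of $\lambda^{\pm}$ together with \cite[Lemma 3.8]{LiangZhao} the infima $\inf_{\mu>0}\Phi^{\pm}(\mu)$ are attained and strictly positive. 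Letting $\epsilon\to0$ then gives $c_-^*(\alpha,\beta)\ge\inf_{\mu>0}\Phi^{+}(\mu)>0$ and $c_+^*(0,\alpha)\ge\inf_{\mu>0}\Phi^{-}(\mu)>0$, so (A6) holds a fortiori. With (A1),(A3),(A4$'$),(A5),(A6) in hand, Theorem \ref{Delay} applied to Theorem \ref{ThContCont} produces a nondecreasing profile $\psi$ and a speed $c$ with $Q_t[\psi]=\psi(\cdot+ct)$, whose $\theta=0$ slice is the desired traveling wave of \eqref{Application3} connecting $0$ to $\beta$.

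The main obstacle is precisely the verification of (A6): one must set up the principal-eigenvalue and spreading-speed theory for the nonlocal, delayed linearization at $\alpha$ and then control the nonlinear speeds by the linear ones. The delicate points are the Krein--Rutman framework and the limit $\lambda^{\pm}(+\infty)=+\infty$ in the presence of both the delay and the kernel, and the uniform validity of the comparison inequalities as $\epsilon\to0$; once $\lambda^{\pm}(0)>0$ is matched to the instability of $\alpha$ from (E3), positivity of each spreading speed, and hence of their sum, follows as above.
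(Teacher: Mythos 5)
Your proposal is correct and follows essentially the same route as the paper: generate the monotone semiflow from the integral form, verify (A1), (A3), (A4$'$) as in Section 5, obtain the bistability from the linearizations at $0,\alpha,\beta$, verify (A6) via (E5) and the linear lower bounds on the spreading speeds of the two monostable subsystems from \cite[Theorem 3.10]{LiangZhao}, and conclude by Theorem \ref{Delay} applied to Theorem \ref{ThContCont}. The only differences are presentational: where you compute the characteristic function $\Delta_e(\lambda)$ and route the unordering of $\Sigma_t\setminus\{0,\beta\}$ explicitly through (A5$'$) and Proposition \ref{BistabilitySemiflow}, the paper instead cites Smith's principal eigenvalue theory (\cite[Theorem 5.5.1, Corollary 5.5.2]{Smith}) together with a Taylor expansion of $\bar{Q}_t$ near the equilibria to get the strong stability/instability inequalities directly.
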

\begin{proof}
From assumptions (E1)-(E2), we see that system
\eqref{IntegralFormAppl3} generates a monotone semiflow
$\{Q_t\}_{t\ge 0}$ on $\mc{C}_\beta$ with
\begin{equation*}
Q_t[\phi](\theta,x)=u_t(\theta,x;\phi), \quad \forall
(\theta,x)\in[-\tau,0]\times \mathbb{R},
\end{equation*}
where $u(t,x;\phi)$ is the unique solution of system
\eqref{IntegralFormAppl3} satisfying
$u_0(\cdot,\cdot;\phi)=\phi\in\mc{C}_\beta$. By similar arguments as
in section 5, it follows that $Q_t$ satisfies (A4) if $t>\tau$ and
(A4$'$) if $t\in(0,\tau]$.

Let $\bar{Q}_t$ be the restriction of $Q_t$ on $\mc{X}_\beta$.
Denote the derivative $\D\bar{Q}_t[\hat{0}]$ of $\bar{Q}_t$ by
$\bar{M}_{0,t}$, then $\bar{M}_{0,t}$ is the solution map of the
following functional equation:
\begin{equation}\label{xIndependent3}
\f{\di u}{\di t}=\bar{L}(0)u_t=a(0)u(t)+L_1(0)u_t.
\end{equation}
By assumptions (E2) and (E4), it follows that system
\eqref{xIndependent3} admits a principle eigenvalue $s_0$ with an
associated eigenfunctions $v_0:=e^{s_0\theta}$ (see \cite[Theorem
5.5.1]{Smith}). More precisely, $\bar{M}_{0,t}[v_0]=e^{s_0t}v_0$.
Furthermore, \cite[Corollary 5.5.2]{Smith} implies that $s_0<0$
since $\hat{f}'(0)<0$. Therefore, there exists $\delta_0(t)>0$ such
that
\begin{eqnarray*}
\bar{Q}_t[\delta v_0]&&=\bar{Q}_t[0]+\D \bar{Q}_t[0][\delta
v_0]+o(\delta^2)\\
&&=\delta \bar{M}_{0,t}[v_0]+o(\delta^2)\\
&&=\delta e^{s_0t}v_0+o(\delta^2)\\
&&=\delta v_0+\delta [e^{s_0t}-1]v_0+o(\delta^2)\ll \delta v_0,\quad
\forall \delta\in(0,\delta_0(t)].
\end{eqnarray*}
Similarly, there exists $\delta_\alpha(t), v_\alpha$ and
$\delta_\beta(t), v_\beta$ such that
\begin{equation*}
\bar{Q}_t[\beta-\delta v_\beta]\gg \beta-\delta v_\beta, \quad
\forall \delta\in(0,\delta_\beta(t)]
\end{equation*}
and
\begin{equation*}
\bar{Q}_t[\alpha+\delta v_\alpha]\gg \alpha+\delta v_\alpha,\quad
\bar{Q}_t[\alpha-\delta v_\alpha]\ll \alpha-\delta v_\alpha,\quad
\forall  \delta\in(0,\delta_\alpha(t)].
\end{equation*}
Till now, it remains to show (A6) is also true. Indeed, we see from
\cite[Theorem 2.17]{LiangZhao} that the solution semiflows
$\{Q_t\}_{t\ge0}$ restricted on $[0,\alpha]_{\mc{C}}$ and
$[\alpha,\beta]_{\mc{C}}$ admit a spreading speed $c^*(0,\alpha)$
and $c^* (\alpha,\beta)$, respectively. Let $M_\epsilon^t$ be the
solution maps of the linear system
\begin{equation*}
\begin{cases}
u(t,\cdot)=T(t)\phi(0,\cdot)+\int_0^t T(t-r)L_\epsilon (u_r(\cdot,\cdot))\di r,&\quad t>0\\
u(\theta,\cdot)=\phi(\theta,\cdot),&\quad \theta\in[-\tau,0].
\end{cases}
\end{equation*}
Then assumption (E5) guarantees that $Q_t[\phi]\ge
M^\epsilon_t[\phi]$ when $\phi\in \mc{C}_\delta$, where
$\delta=\delta(\epsilon)$ is defined in (E5). Therefore, we see from
\cite[ Theorem 3.10]{LiangZhao} that $c^*(0,\alpha)\ge \bar{c}$ and
$c^* (\alpha,\beta)\ge \bar{c}$, where $\bar{c}$ is positive number
determined by the linearized system of \eqref{Application3} at
$u\equiv \alpha$, and hence, (A6) holds. Consequently, Theorem
\ref{Delay} completes the proof.
\end{proof}

\begin{remark}\label{stabilityremark}
At this moment we are unable to present a general result on the
uniqueness and global attractivity of bistable waves under the
current abstract setting. However, one may use the convergence
theorem for monotone semiflows (see \cite[Theorem 2.2.4]{ZhaoBook})
and the similar arguments as in the proof of \cite[Theorem
10.2.1]{ZhaoBook} and \cite[Theorem 3.1]{XuZhao} to obtain the
global attractivity (and hence, uniqueness) of bistable waves for
four examples in this section.
\end{remark}

\section{Appendix}
In this appendix, we present certain properties of Banach lattices
and countable subsets in $\R$, and some convergence results for
sequences of monotone functions, including an abstract variant of
Helly's theorem.

\begin{proposition}\label{BasicProp}
A Banach lattice $\mc{X}$ has the following properties:
\begin{enumerate}
\item[(1)]For any $u,v\in \mc{X}$ with $v\in \mc{X}^+$,
if $-v\le u\le v$, then $\|u\|_{\mc{X}} \le \|v\|_{\mc{X}}$.
\item[(2)]If $u_k\to u$ and $v_k\to v$ in $\mc{X}$ with $u_k\ge v_k$,
then  $u\ge v$.
\end{enumerate}
\end{proposition}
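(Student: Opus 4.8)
The plan is to reduce both assertions to the defining monotonicity condition of the Banach lattice, $|x|_{\mc{X}}\le |y|_{\mc{X}}\Rightarrow \|x\|_{\mc{X}}\le\|y\|_{\mc{X}}$, where $|z|_{\mc{X}}=\sup\{z,-z\}$. The only extra input is the elementary identity $|v|_{\mc{X}}=v$ whenever $v\in\mc{X}^+$, which holds because $-v\le 0\le v$ forces $\sup\{v,-v\}=v$.

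For part (1), I would first note that the double inequality $-v\le u\le v$ says exactly that $v$ dominates both $u$ and $-u$, so $|u|_{\mc{X}}=\sup\{u,-u\}\le v$. Since $v\ge 0$ gives $|v|_{\mc{X}}=v$, this is precisely $|u|_{\mc{X}}\le|v|_{\mc{X}}$, and the monotonicity condition delivers $\|u\|_{\mc{X}}\le\|v\|_{\mc{X}}$ at once.

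For part (2), I would put $w_k:=u_k-v_k$ and $w:=u-v$; continuity of the vector operations gives $w_k\to w$ in $\mc{X}$, while $u_k\ge v_k$ means $w_k\in\mc{X}^+$. The goal $u\ge v$ is equivalent to $w^-:=\sup\{-w,0\}=0$. Using $w_k\ge 0$ one has $-w\le w_k-w\le|w_k-w|_{\mc{X}}$, and since also $0\le|w_k-w|_{\mc{X}}$, it follows that $0\le w^-\le|w_k-w|_{\mc{X}}$. As $w^-\ge 0$ gives $|w^-|_{\mc{X}}=w^-$, this reads $|w^-|_{\mc{X}}\le|w_k-w|_{\mc{X}}$, so the monotonicity condition yields $\|w^-\|_{\mc{X}}\le\|w_k-w\|_{\mc{X}}\to 0$. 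Hence $w^-=0$, i.e. $u\ge v$; in effect this is the closedness of the cone $\mc{X}^+$.

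Neither step poses a genuine obstacle: part (1) is almost immediate once $|v|_{\mc{X}}=v$ is recorded, and part (2) runs along the same lines, the only point needing care being the order estimate $w^-\le|w_k-w|_{\mc{X}}$ extracted from $w_k\ge 0$. Should a reader prefer, part (2) can alternatively be phrased by invoking that the lattice operations are norm contractions, or simply by citing the standard fact that the positive cone of a Banach lattice is closed; I would keep the self-contained computation above, since it reuses only the single tool already employed in part (1).
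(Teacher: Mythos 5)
Your proof is correct. The paper itself gives no argument for this proposition (it only remarks that Propositions \ref{BasicProp} and \ref{SequenceInC} ``can be easily proved'' and omits the proofs), so there is nothing to compare against; your reduction of both parts to the lattice monotonicity condition --- via $|u|_{\mc{X}}\le v=|v|_{\mc{X}}$ in part (1), and via $0\le w^-\le |w_k-w|_{\mc{X}}$ together with $\|w^-\|_{\mc{X}}\le\|w_k-w\|_{\mc{X}}\to 0$ in part (2) --- is a complete and correct filling-in of the routine verification the authors had in mind, with part (2) amounting to the standard fact that the positive cone of a Banach lattice is norm-closed.
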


\begin{proposition}\label{SequenceInC}
The space $\mc{C}$ has the following properties:
\begin{enumerate}
\item[(1)] Let $\phi$ be a monotone function in $\mc{C}$. If $x_k\in\mc{H}$
nondecreasingly tends to $x\in\mc{H}\cup \{+\infty\}$ and
$\lim_{k\to\infty}\phi(x_k)=u\in \mc{X}$, then $\lim_{y \uparrow
x}\phi(y)=u$. The similar result holds if $x_k$ nonincreasingly
tends to $x\in \mc{H}\cup \{-\infty\}$.
\item[(2)]Assume that $h,h_k:\mc{H}\to \mc{H}$ are continuous
and $\phi_k\to \phi$ in $\mc{C}$. If $h_k(x)\to h(x)$ uniformly for
$x$ in any bounded subset of $\mc{H}$, then $\phi_k\circ h_k\to
\phi\circ h$ in $\mc{C}$.
\end{enumerate}
\end{proposition}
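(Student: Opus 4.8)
The plan is to prove the two statements separately, in each case reducing uniform convergence in $\mc{X}$ to an order sandwich (for (1)) or a two-term triangle inequality (for (2)), and then invoking the lattice monotonicity of the norm recorded in Proposition \ref{BasicProp}(1).

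For statement (1) I would assume $\phi$ is nondecreasing (the nonincreasing case is identical after reversing the order). First fix $\epsilon>0$ and use $\phi(x_k)\to u$ to choose $N$ with $\|\phi(x_k)-u\|_{\mc{X}}<\epsilon$ for all $k\ge N$; in particular $\{\phi(x_k)\}$ is Cauchy, so $\|\phi(x_{k+1})-\phi(x_k)\|_{\mc{X}}$ is small for $k$ large. Since $x_k\uparrow x$, the half-open intervals $[x_k,x_{k+1})$ with $k\ge N$ exhaust $[x_N,x)$, so for any $y$ with $x_N<y<x$ there is some $k\ge N$ with $x_k\le y< x_{k+1}$, and monotonicity yields $0\le \phi(y)-\phi(x_k)\le \phi(x_{k+1})-\phi(x_k)$. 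Applying Proposition \ref{BasicProp}(1) with $v:=\phi(x_{k+1})-\phi(x_k)\in\mc{X}^+$ (note $-v\le 0\le \phi(y)-\phi(x_k)\le v$) gives $\|\phi(y)-\phi(x_k)\|_{\mc{X}}\le \|\phi(x_{k+1})-\phi(x_k)\|_{\mc{X}}\le \|\phi(x_{k+1})-u\|_{\mc{X}}+\|u-\phi(x_k)\|_{\mc{X}}<2\epsilon$. Hence $\|\phi(y)-u\|_{\mc{X}}\le \|\phi(y)-\phi(x_k)\|_{\mc{X}}+\|\phi(x_k)-u\|_{\mc{X}}<3\epsilon$ whenever $x_N<y<x$, which is exactly $\lim_{y\uparrow x}\phi(y)=u$. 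The same covering argument works verbatim when $x=+\infty$, the intervals now exhausting $[x_N,+\infty)$.

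For statement (2) I would fix a bounded set $M\subset\mc{H}$ and split
\[
\|\phi_k(h_k(x))-\phi(h(x))\|_{\mc{X}}\le \|\phi_k(h_k(x))-\phi(h_k(x))\|_{\mc{X}}+\|\phi(h_k(x))-\phi(h(x))\|_{\mc{X}}.
\]
The crucial preliminary step is to produce a single bounded set $B\subset\mc{H}$ containing $h_k(x)$ for all $x\in M$ and all $k$: since $M$ is bounded its closure is compact, so $h(M)$ is bounded; the uniform convergence $h_k\to h$ on $M$ then confines $\{h_k(x):x\in M\}$ to a fixed neighborhood of $h(M)$ for large $k$, while the finitely many remaining $h_k$ contribute bounded images as well. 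With such a $B$ fixed, the first term tends to $0$ uniformly in $x\in M$ because $\phi_k\to\phi$ uniformly on the bounded set $B$, which is the definition of convergence in the compact open topology. For the second term I would use that $\phi$ is uniformly continuous on the compact closure $\overline{B}$: given $\epsilon>0$ there is $\delta>0$ with $\|\phi(s)-\phi(t)\|_{\mc{X}}<\epsilon$ whenever $s,t\in\overline{B}$ and $|s-t|<\delta$, and then $h_k\to h$ uniformly on $M$ forces $|h_k(x)-h(x)|<\delta$ for all $x\in M$ once $k$ is large, making the second term $<\epsilon$. Combining, $\sup_{x\in M}\|\phi_k(h_k(x))-\phi(h(x))\|_{\mc{X}}\to 0$, i.e.\ $\phi_k\circ h_k\to\phi\circ h$ in $\mc{C}$.

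The routine parts are the triangle-inequality bookkeeping and the two uniform-convergence invocations. The one step that needs genuine care—and where I expect the main (if modest) obstacle to lie—is the equi-boundedness of $\{h_k(x):x\in M,\ k\ge1\}$ in statement (2): without it neither the uniform convergence $\phi_k\to\phi$ nor the uniform continuity of $\phi$ can be applied on a common compact set. In statement (1) the analogous delicate point is the passage from the order sandwich to the norm estimate, which is precisely what the Banach-lattice monotonicity in Proposition \ref{BasicProp}(1) supplies; once that is in hand the covering-by-intervals argument is straightforward. When $\mc{H}=\Z$ both statements degenerate, since bounded sets are finite and continuity is automatic, so no uniform-continuity subtlety arises.
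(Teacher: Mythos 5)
Your proof is correct. There is nothing in the paper to compare it against: the authors state only that Propositions \ref{BasicProp} and \ref{SequenceInC} ``can be easily proved'' and omit the arguments, so your writeup simply supplies the missing details, and it does so by what is surely the intended route---for (1), the interval-covering plus order sandwich, converted into a norm estimate via the lattice property in Proposition \ref{BasicProp}(1); for (2), first establishing equi-boundedness of $\{h_k(x): x\in M,\ k\ge 1\}$, then using uniform convergence of $\phi_k$ on that bounded set together with uniform continuity of $\phi$ on its compact closure. One pedantic remark on part (1): if $x\in\mc{H}$ is finite and $x_k=x$ for all large $k$, your covering argument becomes vacuous (there is no interval $(x_N,x)$ with $x_N<x$ to work with); in that degenerate case $u=\phi(x)$ and the claim follows instead from the continuity of $\phi\in\mc{C}$, and in any event the paper only ever invokes the result with $x=\pm\infty$, where your argument applies exactly as written.
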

Propositions \ref{BasicProp} and \ref{SequenceInC} can be easily
proved. Here we omit the proofs.

\begin{proposition}\label{SetProp1}
Assume that $D$ is a countable subset of $\mathbb{R}$. Then for any
$c\in\mathbb{R}$, there exists another countable subset $A$ of
$\mathbb{R}$ such that $(\R\setminus A)+cm\subset
\mathbb{R}\setminus D, \forall m\in \mathbb{Z}^+$.
\end{proposition}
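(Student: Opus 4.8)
The plan is to define $A$ to be exactly the set of those reals $x$ that some forward shift $x\mapsto x+cm$ carries into $D$, and then to check that this set is countable. Concretely, I would set
\[
A := \bigcup_{m=1}^{\infty} (D - cm), \qquad D - cm := \{d - cm : d \in D\},
\]
so that $A$ collects precisely those $x\in\R$ for which $x+cm\in D$ for at least one $m\in\Z^+$. The key modeling observation is that the condition to be \emph{avoided}, namely $x+cm\in D$, is equivalent to $x\in D-cm$; hence the natural candidate for $A$ is the union of the \emph{backward} translates $D-cm$, not the forward ones.

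The verification then splits into two elementary steps. First I would argue that $A$ is countable: for each fixed $m$ the map $d\mapsto d-cm$ is a bijection of $D$ onto $D-cm$, so each translate $D-cm$ is countable, and $A$ is a countable union (over $m\in\Z^+$) of countable sets, hence countable. Second I would verify the required inclusion by taking an arbitrary $x\in\R\setminus A$ and an arbitrary $m\in\Z^+$: since $x\notin A$ we have in particular $x\notin D-cm$, which unwinds to $x+cm\notin D$, i.e. $x+cm\in\R\setminus D$. Because $x$ and $m$ were arbitrary, this gives $(\R\setminus A)+cm\subset\R\setminus D$ for every $m\in\Z^+$, as claimed.

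There is no genuine obstacle in this argument; it is a direct set-theoretic construction. The only points that require care are the correct unwinding of the contrapositive (membership $x\in D-cm$ versus the shift condition $x+cm\in D$) and the decision to translate $D$ backward by $cm$ rather than forward, which is exactly what ensures that avoiding the single set $A$ simultaneously forces all forward shifts $x+cm$ (over all positive integers $m$) to miss $D$.
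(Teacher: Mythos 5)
Your proof is correct and is essentially identical to the paper's: the paper also takes $A=\bigcup_{m=1}^{\infty}(D-cm)$, notes it is a countable union of countable sets, and concludes; you merely spell out the contrapositive unwinding ($x\notin D-cm \iff x+cm\notin D$) that the paper leaves implicit.
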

\begin{proof}
It suffices to show the set $A:=\{x\in\mathbb{R}: \text{there exists
$m$ such that $x+cm\in D$}\}$ is countable. Indeed, we have
$A=\cup_{m=1}^\infty (D-cm)$. This implies $A$ is countable.
\end{proof}

\begin{proposition}\label{SetProp2}
For any countable subset $\Gamma_1$ of $\mathbb{R}$, there exists
another countable $\Gamma_2$ such that it is dense in $\mathbb{R}$
and $\Gamma_1\cap \Gamma_2=\emptyset$.
\end{proposition}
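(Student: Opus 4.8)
The plan is to produce a single explicit countable dense set that is disjoint from $\Gamma_1$ by translating the rationals. The starting observation is that $\mathbb{Q}$ is countable and dense in $\R$, and that both properties are invariant under translation: for any $\alpha\in\R$ the set $\mathbb{Q}+\alpha:=\{q+\alpha:q\in\mathbb{Q}\}$ is again countable and dense. Thus it suffices to find a single shift $\alpha$ for which $\mathbb{Q}+\alpha$ misses $\Gamma_1$ entirely, and then to set $\Gamma_2:=\mathbb{Q}+\alpha$.

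The second step is to identify which shifts are forbidden. We have $(\mathbb{Q}+\alpha)\cap\Gamma_1\neq\emptyset$ if and only if $q+\alpha=\gamma$ for some $q\in\mathbb{Q}$ and some $\gamma\in\Gamma_1$, i.e. $\alpha\in\Gamma_1-q$ for some rational $q$. Hence the set of bad shifts is $B:=\bigcup_{q\in\mathbb{Q}}(\Gamma_1-q)$. Since $\Gamma_1$ is countable, each translate $\Gamma_1-q$ is countable, so $B$ is a countable union of countable sets and is therefore itself countable.

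Finally, because $\R$ is uncountable while $B$ is countable, the complement $\R\setminus B$ is nonempty; I would pick any $\alpha\in\R\setminus B$. Then $\Gamma_2:=\mathbb{Q}+\alpha$ is countable and dense by the first step, and disjoint from $\Gamma_1$ by the choice of $\alpha$, which is exactly the assertion.

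As for difficulty, there is no genuine obstacle here: the argument reduces to a one-line cardinality count once the translation trick is in place. The only points that need (entirely routine) care are verifying that translation preserves the density of $\mathbb{Q}$ and that $B$ is countable as a countable union of countable sets; both are standard.
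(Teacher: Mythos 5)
Your proof is correct and is essentially the paper's own argument: both choose $\Gamma_2=\alpha+\mathbb{Q}$ for an $\alpha$ avoiding a countable bad set, and your bad set $\bigcup_{q\in\mathbb{Q}}(\Gamma_1-q)$ coincides (since $\mathbb{Q}=-\mathbb{Q}$) with the paper's union of cosets $\bigcup_{n}(\alpha_n+\mathbb{Q})$ covering $\Gamma_1$. The only difference is presentational, so nothing further is needed.
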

\begin{proof}
Since $\Gamma_1$ is countable and $\cup _{\alpha\in
\mathbb{R}}(\alpha+\mathbb{Q})=\mathbb{R}$, there must exist a
sequence $\alpha_n$ such that $\Gamma_1\subset\cup_{n=1}^\infty
(\alpha_n+\mathbb{Q})$. Note that $\cup_{n=1}^\infty
(\alpha_n+\mathbb{Q})$ is countable. Then we see that there exists
$\alpha\in \mathbb{R}$ such that $\alpha\not\in \cup_{n=1}^\infty
(\alpha_n+\mathbb{Q})$. This means that $\alpha-\alpha_n\not\in
\mathbb{Q}, \forall n\ge 1$, and hence, $(\alpha+\mathbb{Q})\cap
(\alpha_n+\mathbb{Q})=\emptyset,\forall n\ge 1$. Define
$\Gamma_2:=\alpha+\mathbb{Q}$. We then see that $\Gamma_2$ is
countable and dense in $\mathbb{R}$, and $\Gamma_1\cap
\Gamma_2=\emptyset$.
\end{proof}

\begin{proposition}\label{Convergence}
Assume that $f,f_n:\mathbb{R}\to \mc{X}$ are nondecreasing and the
set $D$ is dense in $\mathbb{R}$. If $s_n\to 0$, $f(s)$ is
continuous on $D$ and $f_n(s)\to f(s)$ for every $s\in D$, then
$f_n(s+s_n)\to f(s)$ for every $s\in D$.
\end{proposition}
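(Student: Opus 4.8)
The plan is to fix $s\in D$ and $\epsilon>0$ and to prove that $\|f_n(s+s_n)-f(s)\|_{\mc{X}}<\epsilon$ for all sufficiently large $n$. The only real difficulty is that the perturbed point $s+s_n$ need not lie in $D$, so the hypothesis $f_n(\cdot)\to f(\cdot)$ cannot be applied at $s+s_n$ directly. To circumvent this, I would trap $f_n(s+s_n)$ between the values of $f_n$ at two fixed points of $D$ straddling $s$, on which the pointwise convergence is available, and then let the straddling points close in on $s$.

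First I would use the density of $D$ together with the continuity of $f$ on $D$ at the point $s$ to select $p,q\in D$ with $p<s<q$ and $\|f(p)-f(s)\|_{\mc{X}}<\epsilon/4$, $\|f(q)-f(s)\|_{\mc{X}}<\epsilon/4$; such $p,q$ exist because one can take sequences in $D$ increasing to $s$ and decreasing to $s$ and invoke continuity. Since $s_n\to 0$, for all sufficiently large $n$ we have $p<s+s_n<q$, and the monotonicity of $f_n$ then yields $f_n(p)\le f_n(s+s_n)\le f_n(q)$.

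The key step is to convert this order sandwich into a norm estimate via Proposition \ref{BasicProp}(1). Writing $w_n:=f_n(s+s_n)-f(s)$, $c_n:=f_n(p)-f(s)$ and $d_n:=f_n(q)-f(s)$, the inequalities above give $0\le w_n-c_n\le d_n-c_n$, whence $\|w_n-c_n\|_{\mc{X}}\le\|d_n-c_n\|_{\mc{X}}$ and therefore $\|w_n\|_{\mc{X}}\le\|c_n\|_{\mc{X}}+\|d_n-c_n\|_{\mc{X}}=\|f_n(p)-f(s)\|_{\mc{X}}+\|f_n(q)-f_n(p)\|_{\mc{X}}$. Because $p,q\in D$, the assumed pointwise convergence gives $f_n(p)\to f(p)$ and $f_n(q)\to f(q)$, so passing to the limit produces $\limsup_{n\to\infty}\|w_n\|_{\mc{X}}\le\|f(p)-f(s)\|_{\mc{X}}+\|f(q)-f(p)\|_{\mc{X}}$. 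By the triangle inequality and the choice of $p,q$ the right-hand side is strictly smaller than $\epsilon$, and since $\epsilon>0$ is arbitrary I would conclude $f_n(s+s_n)\to f(s)$ in $\mc{X}$.

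The main obstacle, as noted, is precisely that $s+s_n$ falls outside the set $D$ on which convergence and continuity are assumed; the monotonicity of each $f_n$ (permitting the two-sided squeeze between $p$ and $q$) together with the order-to-norm passage furnished by Proposition \ref{BasicProp}(1) are exactly what resolve it. A minor point to keep track of is that $s_n$ may change sign, but this causes no trouble, since the squeeze requires only $p<s+s_n<q$, which holds for large $n$ irrespective of the sign of $s_n$.
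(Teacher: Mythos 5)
Your proof is correct and follows essentially the same route as the paper's: both arguments trap $f_n(s+s_n)$ between values of $f_n$ at two points of $D$ straddling $s$, convert the order sandwich into a norm bound via Proposition \ref{BasicProp}(1), and then invoke pointwise convergence on $D$ together with the continuity of $f$ at $s$ along $D$. The only cosmetic difference is that you center the squeeze at $f_n(p)$ with explicit $\epsilon/4$ bookkeeping, whereas the paper centers it at $f_n(s)$ and lets the straddling points shrink to $s$ at the end.
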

\begin{proof}
Let $s\in D$ be fixed. For any $\delta >0$, since $D-s$ is dense in
$\mathbb{R}$, we can choose $\delta _+\in (D-s)\cap (0,\delta)$ and
$\delta _-\in (D-s)\cap (-\delta,0)$. Clearly, $s+\delta _+\in D$,
$s+\delta _-\in D$. Thus, there exists an integer $N_{\delta}$ such
that $s+s_n\in (s+\delta _-, s+\delta _+), \, \forall n\geq
N_{\delta}$. Since
\begin{equation*}
f_n(s+\delta _-)-f_n(s+\delta _+)\leq f_n(s+s_n)-f_n(s)\leq
f_n(s+\delta _+)-f_n(s+\delta _-), \, \, \forall n\geq N_{\delta},
\end{equation*}
we have
\begin{equation*}
\|f_n(s+s_n)-f_n(s)\|_{\mc{X}}\leq \|f_n(s+\delta _+)-f_n(s+\delta
_-)\|_{\mc{X}},\,\, \forall n\geq N_{\delta}.
\end{equation*}
It then follows that
\begin{eqnarray*}
\|f_n(s+s_n)-f(s)\|_{\mc{X}}&\leq& \|f_n(s+s_n)-f_n(s)\|_{\mc{X}}+\|f_n(s)-f(s)\|_{\mc{X}}\nonumber \\
&\leq& \|f_n(s+\delta _+)-f_n(s+\delta _-)\|_{\mc{X}}+\|f_n(s)-f(s)\|_{\mc{X}}\nonumber \\
&\leq& \|f(s+\delta _+)-f_n(s+\delta _+)\|_{\mc{X}} +\|f(s+\delta
_+)-f(s+\delta _-)\|_{\mc{X}}\nonumber\\
&\,& \, +\|f_n(s+\delta _-)-f(s+\delta
_-)\|_{\mc{X}}+\|f_n(s)-f(s)\|_{\mc{X}}
\end{eqnarray*}
for all $n\geq N_{\delta}$. Now the pointwise convergence of $f_n$
in $D$ and the continuity of $f$ on $D$ complete the proof.
\end{proof}

To end this section, we prove a convergence theorem for sequences of
monotone functions from $\R$ to the special Banach lattice
$C(M,\R^d)$ defined in section 2, which is a variant of Helly's
theorem \cite[P.165]{Doob} for sequences of monotone functions from
$\R$ to $\R$.

\begin{theorem}\label{HellyTh}
Let $D$ be a dense subset of $\mathbb{R}$ and $f_n, n\ge 1$ be a
sequence of nondecreasing functions from $\R$ to the Banach lattice
$\mc{X}:=C(M,\R^d)$. Assume that
\begin{enumerate}
\item[(i)] for any $s\in D$, $f_n(s)$ is convergent in $\mc{X}$.
\item[(ii)]there exists a countable set $D_1\subset \R$ such that
for any $s\in \R\setminus D_1$, the limits $\lim\limits_{m\to
\infty}\lim\limits_{n\to \infty} f_n(s_{\pm, m})$ exist in $\mc{X}$,
where $s_{-,m} \uparrow s$ and $s_{+,m} \downarrow s$ with $s_{\pm,
m}\in D$.
\end{enumerate}
Then $f_n(s)$ is convergent in $\mc{X}$ almost for all $s\in \R$.
\end{theorem}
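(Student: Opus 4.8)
The plan is to reduce this Banach-lattice-valued statement to the classical scalar Helly phenomenon. The two ingredients are a sandwiching argument, which pins $f_n(s)$ between its values at nearby points of the dense set $D$, and a single auxiliary measure on $M$ that converts $\mc{X}$-valued monotonicity into ordinary scalar monotonicity so that jumps can be counted. First I would record a candidate limit: for $y\in D$ set $f(y):=\lim_{n\to\infty}f_n(y)$, which exists by (i) and is nondecreasing on $D$; for $s\in\R\setminus D_1$ put $f^-(s):=\lim_m f(s_{-,m})$ and $f^+(s):=\lim_m f(s_{+,m})$, which exist by (ii). Using the closedness of the cone $\mc{X}^+$ together with the monotonicity of $f$ on $D$, I would verify that $f^\pm(s)$ do not depend on the approximating sequences, that $f^-(s)\le f^+(s)$, and that $f^+(s)\le f^-(t)$ whenever $s<t$; in particular $f^-$ and $f^+$ are nondecreasing on $\R\setminus D_1$.

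Next I would prove convergence at every $s\in\R\setminus D_1$ with $f^-(s)=f^+(s)=:L$. Given $\epsilon>0$, choose $y_1=s_{-,m}<s<y_2=s_{+,m}$ in $D$ with $\|f(y_j)-L\|_{\mc{X}}<\epsilon$. From $f_n(y_1)\le f_n(s)\le f_n(y_2)$ we obtain $0\le f_n(s)-f_n(y_1)\le f_n(y_2)-f_n(y_1)$, so Proposition \ref{BasicProp}(1) gives $\|f_n(s)-f_n(y_1)\|_{\mc{X}}\le\|f_n(y_2)-f_n(y_1)\|_{\mc{X}}$; letting $n\to\infty$ on the right side (where $f_n(y_j)\to f(y_j)$) bounds $\limsup_n\|f_n(s)-L\|_{\mc{X}}$ by a fixed multiple of $\epsilon$, and hence $f_n(s)\to L$. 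Thus $f_n(s)$ converges at every such $s$.

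The crux is then to show that the jump set $J:=\{s\in\R\setminus D_1:\, f^+(s)\neq f^-(s)\}$ is countable, and this is where I expect the only genuine difficulty. The naive estimate fails: with the maximum norm, $\sum_{s}\|f^+(s)-f^-(s)\|_{\mc{X}}$ need not be controlled by $\|f^+(b)-f^-(a)\|_{\mc{X}}$, since disjointly supported bumps of fixed height can accumulate. To circumvent this I would use that $M$ is a compact, hence separable, metric space and therefore carries a finite Borel measure $\mu$ of full support (take $\mu=\sum_k 2^{-k}\delta_{x_k}$ for a countable dense set $\{x_k\}\subset M$). Define $\Psi(u):=\sum_{i=1}^d\int_M u_i\,d\mu$ for $u\in\mc{X}^+$; full support forces $\Psi(u)>0$ whenever $u>0$, because a nonzero nonnegative continuous coordinate is strictly positive on some open set of positive $\mu$-measure. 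For any $s_1<\cdots<s_k$ in $J\cap[a,b]$, disjointness of the jumps (from $f^+(s_l)\le f^-(s_{l+1})$) yields $\sum_l\bigl(f^+(s_l)-f^-(s_l)\bigr)\le f^+(b)-f^-(a)$, so $\sum_l\Psi\!\bigl(f^+(s_l)-f^-(s_l)\bigr)\le\Psi\!\bigl(f^+(b)-f^-(a)\bigr)<\infty$. Since every summand is positive, $J\cap[a,b]$ is countable for each bounded interval, whence $J$ is countable.

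Combining the three steps, $f_n(s)$ converges for every $s\in\R\setminus(D_1\cup J)$, and the exceptional set $D_1\cup J$ is countable, hence of Lebesgue measure zero. This gives convergence of $f_n(s)$ for almost all $s\in\R$, as claimed. I would remark that only the lattice monotonicity of the norm (Proposition \ref{BasicProp}(1)), the closedness of the cone, and the existence of a full-support measure on the compact space $M$ are used, so the argument is insensitive to the infinite-dimensionality of $\mc{X}$.
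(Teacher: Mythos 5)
Your proposal is correct, and its overall architecture matches the paper's exactly: define the candidate limit $f$ from the limits on $D$, prove convergence at every continuity point of $f$ by sandwiching $f_n(s)$ between values at nearby points of $D$ and invoking the lattice norm monotonicity (Proposition \ref{BasicProp}), and then show that the set of jump points is countable. The one place where you genuinely diverge is the jump-counting step. The paper reduces to the scalar case pointwise: if $f(s^-)<f(s^+)$, then by continuity of these functions on $M$ some coordinate is strictly larger at a point of a fixed countable dense subset $M_1\subset M$, so the jump set is contained in $\bigcup_{i\le d}\bigcup_{x\in M_1}\{s: \left(f(s^-)(x)\right)_i<\left(f(s^+)(x)\right)_i\}$, a countable union of jump sets of scalar nondecreasing functions, each of which is countable. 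You instead build a single strictly positive monotone functional $\Psi$ (integration against a full-support finite measure on $M$) and run the classical summability argument: over any interval the $\Psi$-masses of the jumps telescope to at most $\Psi\left(f^+(b)-f^-(a)\right)$, each mass is positive, so at most finitely many jumps exceed $1/m$ in mass, and countability follows. Both devices exploit the same structural fact (separability of the compact space $M$), but the paper's version is more elementary --- no measure theory, only the fact that a real-valued monotone function has countably many discontinuities --- while yours yields a quantitative bound on the number of large jumps per interval and transfers verbatim to any Banach lattice whose positive cone admits a strictly positive bounded linear functional, which makes the scope of the theorem explicit. One small point of care: in the telescoping estimate you should take the interval endpoints $a,b$ outside the countable set $D_1$ (or in $D$ itself, working with $f(a),f(b)$), so that the quantities $f^{\pm}(a)$, $f^{\pm}(b)$ are defined; since $D_1$ is countable this costs nothing.
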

\begin{proof}
Due to assumption (ii), we can define $f:\R\to \mc{X}$ by
\begin{equation}
f(s):=
\begin{cases}
\lim\limits_{x\uparrow s}\lim\limits_{x\in D, n\to \infty} f_n(x),&
s\in
\R\setminus D_1\\
\text{any value}, & s\in D_1.
\end{cases}
\end{equation}
We first show that the discontinuous points of $f$ are at most
countable. Define the sets
\begin{equation*}
A:=\{s\in\mathbb{R}\setminus D_1: f(s^-),f(s^+)\,\text{both exits}\}
\end{equation*}
and
\begin{equation*}
B:=\{s\in A: f(s^-)<f(s^+)\}.
\end{equation*}
For any $s\in B$, there exists $x_1\in M $ and $1\le i\le d$ such
that $\left(f(s^-)(x_1)\right)_i<\left(f(s^+)(x_1)\right)_i$. Recall
that $M$ is compact, so there is a countable dense subset $M_1$. It
then follows that there must be $x_2\in M_1$ such that
$\left(f(s^-)(x_2)\right)_i<\left(f(s^+)(x_2)\right)_i$. Therefore,
\begin{equation*}
B=\cup_{i=1}^m \cup_{x\in M_1} \{s\in
A:\left(f(s^-)(x)\right)_i<\left(f(s^+)(x)\right)_i\}.
\end{equation*}
Since for each fixed $i$ and $x$, $\left(f(s)(x)\right)_i$ is a
nondecreasing function from $\mathbb{R}\setminus D_1$ to
$\mathbb{R}$, we know that $ \{s\in
A:\left(f(s^-)(x)\right)_i<\left(f(s^+)(x)\right)_i\}$ is at most
countable, and hence so is the set $B$.

Now we can prove the conclusion. Assume that $s\in\mathbb{R}$ is a
continuous point of $f$. For any $\delta>0$, choose $\delta_-\in
D\cap (s-\delta,s)$ and $\delta_+\in D\cap (s,s+\delta)$. Then we
have
\begin{equation*}
f_n(\delta_-)-f_n(\delta_+) \le f_n(s)-f_n(\delta_-)\le
f_n(\delta_+)-f_n(\delta_-),\,\forall n \ge 1,
\end{equation*}
which, together with Proposition \ref{BasicProp}(2), implies that
\begin{equation}\label{3}
\|f_n(s)-f_n(\delta_-)\|_{\mc{X}}\le
\|f_n(\delta_+)-f_n(\delta_-)\|_{\mc{X}},\,\forall n \ge 1.
\end{equation}
In the other hand, by \eqref{3} and the triangular inequality we
have
\begin{eqnarray*}
\|f_n(s)-f(s)\|_{\mc{X}} &&\le
\|f_n(s)-f_n(\delta_-)\|_{\mc{X}}+\|f_n(\delta_-)-f(s)\|_{\mc{X}}\nonumber\\
&& \le \|f_n(\delta_+)-f_n(\delta_-)\|_{\mc{X}}
+\|f_n(\delta_-)-f(s)\|_{\mc{X}}\nonumber\\
&& \le \|f_n(\delta_+)-f(\delta_+)\|_{\mc{X}}
+\|f(\delta_+)-f(\delta_-)\|_{\mc{X}}\nonumber\\
&&\quad +\|f(\delta_-)-f_n(\delta_-)\|_{\mc{X}}+\|f_n(\delta_-)
-f(\delta_-)\|_{\mc{X}}\nonumber\\
&&\quad+\|f(\delta_-)-f(s)\|_{\mc{X}},\,\forall n \ge 1.
\end{eqnarray*}
Now the pointwise convergence of $f_n$ in $D$ and the continuity of
$f$ at $s$ complete the proof.
\end{proof}

\

\noindent {\bf Acknowledgment. }  J. Fang's research is supported in
part by the NSF of China (grant 10771045) and the Collaborative
Research Groups Program at HIT. X.-Q. Zhao's research is supported
in part by the NSERC of Canada and the MITACS of Canada.

\end{document}